\documentclass[11pt,letterpaper]{article}

\usepackage[margin=1in]{geometry}

\usepackage[T1]{fontenc}
\usepackage{lmodern}
\usepackage{microtype}

\usepackage{amsmath,amssymb}
\usepackage{amsthm}
\usepackage{bm}

\usepackage{graphicx}
\usepackage{longtable}
\usepackage{float}
\usepackage{booktabs}

\usepackage{comment}

\usepackage[authoryear,round]{natbib}
\bibpunct{(}{)}{;}{a}{,}{,}

\usepackage{url}

\usepackage[algo2e,ruled]{algorithm2e}
\usepackage{array,tabularx}

\usepackage[
    colorlinks=true,
    linkcolor=blue,
    citecolor=blue,
    urlcolor=blue
]{hyperref}

\theoremstyle{plain}
\newtheorem{theorem}{Theorem}[section]
\newtheorem{lemma}[theorem]{Lemma}
\newtheorem{proposition}[theorem]{Proposition}
\newtheorem{corollary}[theorem]{Corollary}

\theoremstyle{definition}
\newtheorem{definition}[theorem]{Definition}
\newtheorem{example}[theorem]{Example}

\newtheorem{assumption}{Assumption}

\theoremstyle{remark}
\newtheorem{remark}[theorem]{Remark}

\newcommand{\F}{\mathcal F}
\newcommand{\N}{\mathbb N}
\newcommand{\iid}{\overset{\mathrm{iid}}{\sim}}

\newcommand{\uiota}             {\mbox{\boldmath$\uiota$}}

\def\beq{\begin{equation}}
\def\eeq{\end{equation}}
\def\beqa{\begin{eqnarray}}
\def\eeqa{\end{eqnarray}}
\def\beqan{\begin{eqnarray*}}
\def\eeqan{\end{eqnarray*}}

\def\bc{\begin{center}}
\def\ec{\end{center}}
\def\btable{\begin{table}[htbp]}
\def\etable{\end{table}}
\def\bfig{\begin{figure}[htbp]}
\def\efig{\end{figure}}

\def\bi{\begin{itemize}}
\def\ei{\end{itemize}}

\def\E{\mathbb{E}}

\def\P{\mathbb{P}}

\def\F{\mathcal{F}}
\def\M{\mathcal{M}}

\def\N{\mathbb{N}}

\def\R{\mathbb{R}}

\renewcommand{\P}{\mathbb{P}}

\newcommand{\RNum}[1]{\uppercase\expandafter{\romannumeral #1\relax}}

\title{Change detection with conformal martingales:
new optimal constructions, and suboptimality of existing methods}

\author{
  Swapnaneel Bhattacharyya\\
  {\small Wharton School, University of Pennsylvania}\\
  {\small\texttt{swap@wharton.upenn.edu}}
  \and
  Aaditya Ramdas\\
  {\small Stanford University}\\
  {\small\texttt{aramdas@stanford.edu}}
}

\date{}

\begin{document}

\maketitle

\begin{abstract}
We study distribution-free sequential changepoint detection for independent observations with unknown and unrestricted pre- and post-change laws. We build on the conformal test martingales and associated e-detectors of~\cite{vovk2021testing}, which control the probability of false alarm (PFA) and the average run length (ARL) respectively. The  majority of these works focus on validity, with statistical efficiency usually left for simulations.
We develop a comprehensive theory of how conformal p-values behave under non-exchangeable data with a changepoint at an unknown time $T$. We use this to analyze the post-change growth and resulting detection delay of conformal martingale methods, 
and prove that the standard existing methods are suboptimal for PFA and ARL control, and can lead to delays that are $\Omega(T)$ and $\Omega(\sqrt{\text{ARL}})$ respectively. We propose different conformal e-processes and e-detectors that are provably minimax optimal, with delays $\Theta(\log T)$ and $\Theta(\log \text{ARL})$ respectively, and have much shorter delays in simulations. 
\end{abstract}

\noindent\textbf{Keywords:}
Distribution-free inference; sequential changepoint detection;
conformal martingales; e-detectors; probability of false alarm;
average run length; detection delay.
\par\medskip

\section{Introduction}
Rapid changes in public sentiment frequently occur on online platforms. A product
failure may lead to a sudden surge of negative reviews, while a major event can
abruptly shift the tone of discussions on social media. Detecting when such
changes occur is useful for identifying emerging issues, measuring public
responses, and monitoring systems in real time. In applications of this kind the
observations are rarely scalar, and neither the law generating them before the
change nor the law generating them afterwards is known, even up to a
finite-dimensional parameter. These considerations motivate sequential
changepoint detection on an arbitrary measurable space $\mathcal X$, which may
represent $\mathbb R^d$, images, text embeddings, or other complex data objects.
 
Formally, we describe the data structure under a distribution shift as follows. A
sequence of $\mathcal X$-valued random variables $\{X_n\}_{n\geq1}$ arrives
sequentially and, for some $T\in\mathbb N\cup\{\infty\}$, satisfies
\begin{equation}
\label{eq: data structure}
X_1,\ldots,X_T\stackrel{\mathrm{iid}}{\sim}P_0,
\qquad
X_{T+1},X_{T+2},\ldots\stackrel{\mathrm{iid}}{\sim}P_1,
\end{equation}
with independence across coordinates. The pre- and post-change laws $P_0$ and
$P_1$ are unknown, as is $T$. Throughout the paper, $T$ is the last pre-change
index, so $X_{T+1}$ is the first post-change observation, and $T=\infty$ encodes
the absence of a change. We write $\mathbb P_{P_0,T,P_1}$ for the joint law under
a change after time $T$ and $\mathbb P_{P_0,\infty}$ for the no-change law.
 
A change detector is a stopping time
$\tau\in\mathbb N\cup\{\infty\}$ in a specified filtration. If $\tau<\infty$, a change is
declared; If $\tau=\infty$, no alarm is raised. Under the no-change law, the event
$\{\tau<\infty\}$ is a false alarm. Its probability, $\operatorname{PFA}_{P_0}(\tau)
:=
\mathbb P_{P_0,\infty}(\tau<\infty),$
is a natural criterion when the objective is to control the probability of ever
making a false declaration over an indefinitely long stream. A procedure is
PFA-valid at level $\alpha\in(0,1)$ if $\sup_{P_0}\operatorname{PFA}_{P_0}(\tau)\leq\alpha.$
Detection performance is then assessed by how rapidly the procedure responds after
a change while respecting this anytime error guarantee.
 
A second operating regime, and the one classically adopted in sequential analysis,
is based instead on the average run length to false alarm, $\operatorname{ARL}_{P_0}(\tau)
:=
\mathbb E_{P_0,\infty}\tau.$ A procedure is ARL-valid at level
$\gamma\geq1$ if $\inf_{P_0}\operatorname{ARL}_{P_0}(\tau)\geq\gamma.$
The two criteria describe different statistical objectives, and no nontrivial procedure can
satisfy both. PFA control at a level below one leaves positive
probability of never stopping under no change and therefore implies infinite ARL.
Conversely, finite ARL implies that the procedure stops almost surely under no
change, so its PFA equals one. PFA is appropriate when false alarms have an unacceptably high cost, whereas ARL is a standard metric when false alarms may be infrequently tolerated. 
However, ARL control alone by itself is not sufficient for having satisfactory control over  false alarms.  A procedure may stop at an early time with substantial probability and still have a large average run length, provided it runs sufficiently long on the remaining paths. 
Our procedures also satisfy $\mathbb P_{P_0,\infty}\{\tau\leq m\} \leq m/\gamma$ in addition to ARL control.

Existing procedures for change detection typically proceed in one of two ways. Either
they aim at a change in a particular functional of the distribution, such as the
mean or the scale, and the monitoring statistic is designed accordingly, in which
case nothing is guaranteed against changes that leave the targeted functional
intact; or they posit a parametric model, or take the pre-change law to be known,
and monitor a likelihood ratio, in which case both validity and efficiency degrade
under misspecification. 

As a prominent exception that avoids both restrictions, \cite{vovk2021testing} constructs a distribution-free detector using conformal $p$-values. For observations $X_1,\cdots,X_n,$ define the randomized conformal $p$-values as
\begin{equation}
\label{eq: randomized pval}
    p_n(X_1,\ldots,X_n;\lambda_n)
    :=
    \frac{\sum_{i=1}^n \mathbf 1(X_i<X_n)+\lambda_n\sum_{i=1}^n\mathbf 1(X_i=X_n)}{n},
\end{equation} 
where $\lambda_n \sim U(0,1)$ independent of the data $\{X_1,\cdots,X_n\}$. Sometimes, if relevant the data is first transformed using a measurable score function $s: \mathcal{X} \to \R$ and the conformal $p$-values are computed on the score-transformed data $\{s(X_1),\cdots,s(X_n)\}$. Henceforth, $p_n(X_1,\cdots,X_n; \lambda_n)$ will be denoted $p_n$. Under the exchangeability of the data, the $p$-values $(p_1,\cdots,p_n)$ can be shown to be independent and identically distributed (i.i.d) uniform random variables (\cite{vovk2021testing}). Then with any sequence of nonnegative Borel-measurable ``calibrator'' functions $\{f_n\}_{n\in\mathbb N}$, satisfying $f_n:[0,1]^n\to[0,\infty]$ and, $\int_0^1 f_n(s_1,\ldots,s_n)\,ds_n
    =
    f_{n-1}(s_1,\ldots,s_{n-1}),$ 
the nonnegative martingale $S_n = f_n(p_1,\cdots,p_n)$ yields the $\alpha$ PFA level change detector $\tau^{\texttt{CTM}}(\frac{1}{\alpha}) := \inf\{ n: S_n \geq \frac{1}{\alpha}\}$. The functions $f_n$ are sometimes called \textit{betting densities}, and the process $\{f_n\}_{n \geq 1}$ is called \emph{betting martingale} and the martingale $\{S_n\}_n$ is called a \textit{Conformal Test Martingale}(CTM). With this construction, Vovk also defines the CUSUM detector $\tau^{\texttt{CUSUM}}(\gamma) := \inf\{n : \max_{i = 0,\cdots,n}\frac{S_n}{S_i} \geq \gamma\}$ and the Shiryaev-Roberts detector $\tau^\texttt{SR}(\gamma) := \inf \{n: \sum_{i=1}^{n-1}\frac{S_n}{S_i} \geq \gamma\}$
which he shows to satisfy $\E_{P_0, \infty}(\tau^\texttt{CUSUM}(\gamma)) \geq \E_{P_0, \infty}(\tau^\texttt{SR}(\gamma)) \geq \gamma$. Thus both the procedures have ARL validity at level $\gamma$.

While the conformal test martingale retains distribution-free PFA and ARL validity, the analysis in \cite{vovk2021testing}
offers no formal optimality guarantees, such as the rate of the detection
delay. As we show in Examples~\ref{eg: CTM log T delay}  and~\ref{ex:arl-single-bet-separation}, these methods can lead to detection delays that are linear in $T$ or of order $\sqrt{\gamma}$, respectively. 
In this work, we will supply a 
distribution-free framework in which both procedures with PFA and ARL guarantees are
accompanied by minimax optimality guarantees and detection delays of order $\log T$ and $\log \gamma$ for the PFA and ARL valid procedures, respectively. While our construction is directly motivated by the conformal martingales of
\cite{vovk2021testing}, our methods
 perform much better in theory and practice.

In the CTM construction, the choice of the betting density heavily influences the optimality of the corresponding detector: a betting density that bets well against one post-change law might bet poorly against another, and since $P_1$ is unknown, choosing the density optimally can be difficult. Therefore, instead of relying on a single sequence of betting densities, We consider a class of betting densities
$\mathcal F=\{f_\theta:\theta\in\Theta\}$ supported on $[0,1]$ and aggregate the corresponding martingale from each density, using a prior $\Pi$ on $\Theta$ to define,
\[M_n \equiv M_n(p_1,\cdots,p_n) := \int_{\Theta} f_\theta(p_1)\cdots f_\theta(p_n) d\Pi(\theta).\]
We call $\{M_n\}_n$ a \emph{conformal mixture martingale} (CMM). Mixing preserves the
martingale property, hence the corresponding detector 
\begin{equation}
    \label{eq: def CMM}
    \tau^{\texttt{CMM}}(b)
:=
\inf\left\{n\geq1:M_n\geq b \right\},
\end{equation}
with $b = \frac1\alpha$
has distribution-free PFA validity of level $\alpha \in (0,1)$, while allowing the
statistic to adapt to whichever $f_\theta$ is favorable for the realized change.
 
Mixing over $\Theta$ addresses the unknown post-change law, but not the unknown
changepoint. A martingale started at time one carries the entire pre-change
history in its capital; So when the change occurs late, the martingale has already accumulated a long pre-change history, and the post-change evidence must first overcome this accumulated dilution before the alarm can be raised. This structural limitation of CTM and CMM can lead to significantly delayed detection.

To overcome this dilution, we instead accumulate local evidence for the distribution change by running the martingale from every past candidate,
\[
M_{k,t} := \int_\Theta \prod_{j=k}^{t} f_\theta(p_j)\,d\Pi(\theta), \quad 1 \leq k, \leq t
\]
which measures only the evidence accrued between times $k$ and $t$, and we combine
these across past times $k$ using a deterministic sequence of nonnegative restart weights
$w=(w_k)_{k\ge1}$. Typically $w$ is taken to be non-decreasing to ensure that we impose higher priority for more recent observations, which substantially overcomes the delay caused by the dilution. Two constructions are natural in this setup, a weighted sum that
accumulates evidence over all starts and a weighted maximum that selects the most
promising one,
\begin{equation}
\label{eq:weighted-restarted-statistics}
S_t^{(w)}:=\sum_{k=1}^t w_kM_{k,t}, \hspace{.2cm}S_0^{(w)} := 0;
\qquad
Z_t^{(w)}:=\max_{1\leq k\leq t}w_kM_{k,t},\hspace{.2cm} Z_0^{(w)} := 0.
\end{equation}
Then the associated alarms $\tau_{S,w}(b)$ and $\tau_{Z,w}(b)$ can be defined as,
\begin{equation}
\label{eq:weighted-stopping-times}
\tau_{S,w}(b):=\inf\{t\geq1:S_t^{(w)}\geq b\},
\qquad
\tau_{Z,w}(b):=\inf\{t\geq1:Z_t^{(w)}\geq b\}.
\end{equation}
Interestingly, this restarted process aggregation significantly improves detection delay over Vovk's method using the conformal test martingale, as is evident in Figure~\ref{fig:vovkcomparison}, where we draw a direct comparison of the detection delays of our method and Vovk's method. The figure shows that our method detects the change orders of magnitude significantly faster, and that the gap widens the later the change occurs.

\begin{figure}[h!]
    \centering
    \includegraphics[width=0.47\linewidth]{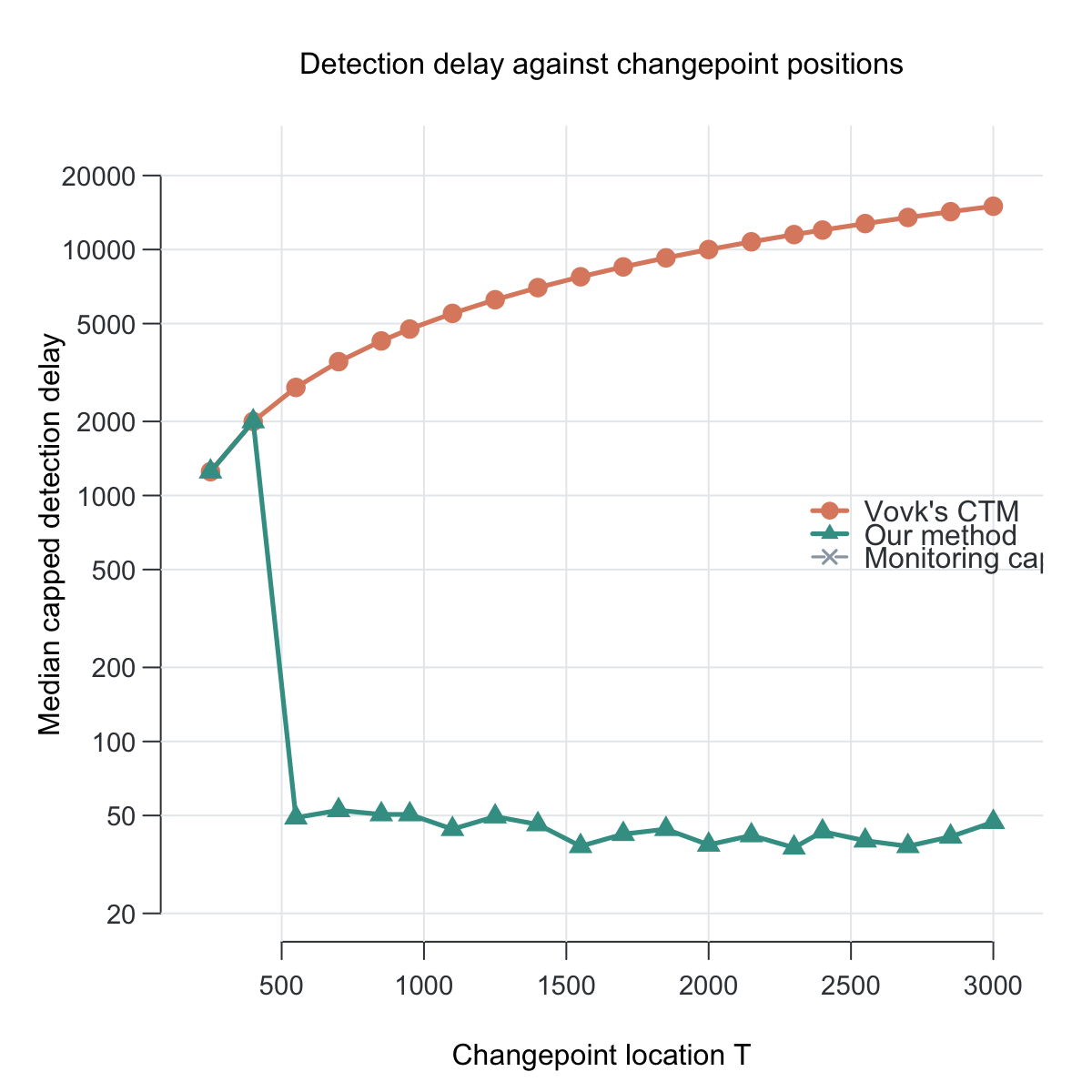}
    \caption{ \small Comparison of the detection delays (in log scale) of Vovk's conformal test martingale
and our method, for PFA control at level $\alpha = 0.05$. For different
changepoint locations, for both the methods detection delay is computed for the same $50$ independent streams generated with $P_0 \equiv N(0,1), P_1 \equiv N(1.2,1),$ 
 betting density $f(p) = 0.15\,p^{-0.85}$and weights $w_k \propto k^{-1.1}$. Each stream is monitored until time $T + 5T$, and the
reported capped delay is $(\tau - T) \wedge 5T$. Then we plot the median of the 
capped delay across the $50$ streams against $T$. As evident, our method offers a significant improvement in detection delay (as guaranteed in Theorems \ref{th:weighted-KL-upper} and \ref{th:weighted-oracle-inequality}) over Vovk's method (which can be $\Omega(T)$ as demonstrated in example \ref{eg: CTM log T delay}). It reaches $\sim$350 times at the right end of the plot.}
\label{fig:vovkcomparison}
    \label{fig:vovkcomparison}
\end{figure}

\begin{figure}[h!]
    \centering
    \includegraphics[width=0.9\linewidth]{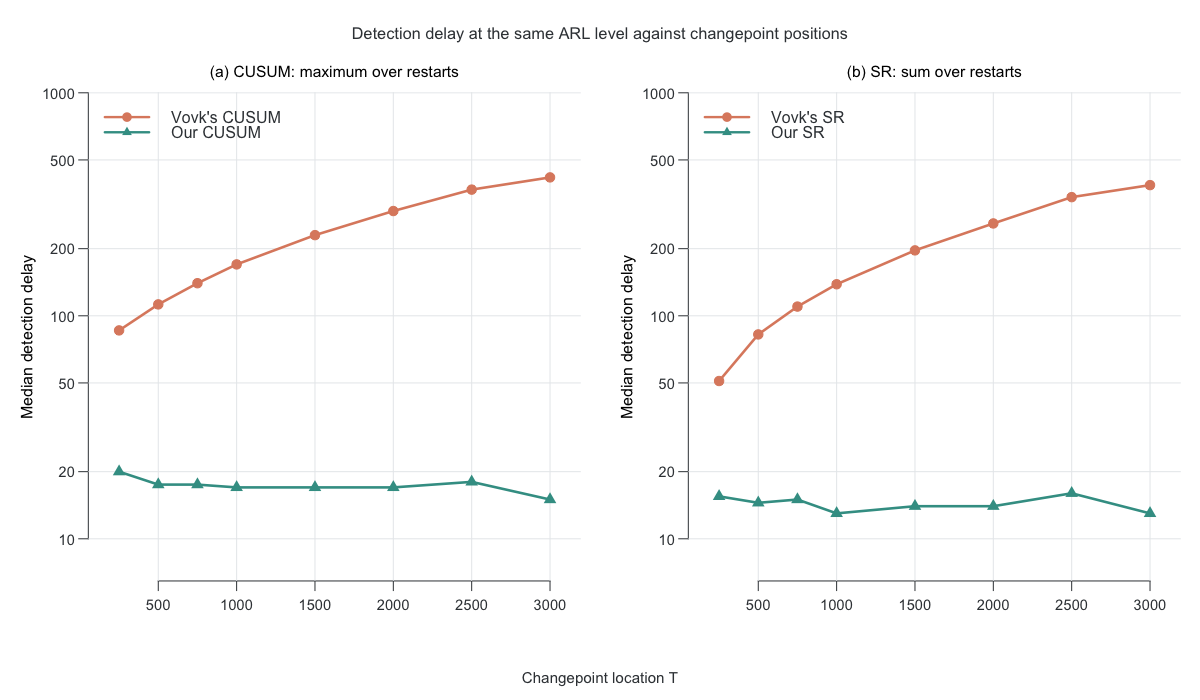}
    \caption{\small Comparison of the detection delays (in log scale) of Vovk's CUSUM and
Shiryaev--Roberts procedures and ours, for ARL control at level $\gamma = 10^{4}$.
For different changepoint locations, both the methods use the same $50$ independent streams, generated
with $P_0 \equiv N(0,1), P_1 \equiv N(1.2,1),$ betting class
$f_\theta(p) = \theta p^{\theta - 1}$ with $\theta$ ranging over
$\{0.15, 0.30, 0.50, 0.75, 1\}$ under a uniform prior and our method uses unit weights $w_k \equiv 1$. Each stream is monitored until time $T + 1500$, and the
reported capped delay is $(\tau - T) \wedge 1500$. The left panel compares the
maximum statistics and the right panel the sum statistics, so that each
procedure is matched with its counterpart. As evident, our method offers a
significant improvement in detection delay over Vovk's method, and the gap
widens as the change occurs later.}
    \label{fig:placeholder}
\end{figure}

The detectors in \eqref{eq:weighted-stopping-times} can be considered as the conformal analogues of the
Shiryaev--Roberts \citep{shiryaev1961problem} and CUSUM \citep{page1954continuous} 
statistics in this setup. For a general sequence of non-negative weights $w := (w_k)_{k \geq \infty},$ we also show that the alarms satisfies the optional horizon inequality $\mathbb P_{P_0,\infty}\{\tau_{S,w}(b)\le\sigma, \tau_{S,w}(b)\le\infty\} \le \mathbb E_{P_0,\infty}W_{\tau_{S,w}(b)\wedge\sigma}/b,$ for any stopping time $\sigma$, where $W_t := \sum_{k=1}^t w_k$. Thus when there is no change, our procedure is guaranteed not to stop too early. 

The role of the weights unifies the two operating regimes. If the weights
are summable with $\sum_{k =1}^\infty w_k\le1$, then the process $\Tilde{S}^{(w)}$ defined as
\begin{equation}
    \label{eq:conformal e process}
    \Tilde{S}^{(w)}_t := S_t^{(w)} + \sum_{k>t}w_k,
\end{equation}
 is a nonnegative martingale, hence an e-process, and $\tau_{S,w}(1/\alpha)$
and $\tau_{Z,w}(1/\alpha)$ have distribution-free PFA-validity at level $\alpha$; we call the process $\Tilde{S}^{(w)}$ to be a \emph{conformal e-process}, simply to distinguish it by name from conformal test martingales. If instead every weight does not exceeds one, then $S^{(w)}$ and $Z^{(w)}$ are e-detectors following the definition of
\cite{shin2023detectors}, and the corresponding stopping rules $\tau_{S,w}(\gamma)$
and $\tau_{Z,w}(\gamma)$ have distribution-free ARL validity at level $\gamma$; 
to differentiate from Vovk's  methods, we call the processes
\emph{conformal (SR and CUSUM) e-detector} respectively.

Besides distribution-free validity, we also discuss optimality properties of
our detection schemes, such as detection delay guarantees, and provide a
restricted minimax formulation for both the PFA and the ARL controlled methods. Our detection delay bounds are information-theoretic in nature, with both governed by the same information measure. Let $H$ denote the law
of the probability-integral transform of a post-change observation under the
pre-change law, i.e. $H(t) := \mathbb P_{Y\sim P_1}\bigl(F_{P_0}(Y)\le t\bigr).$
Under no change, i.e. $P_0 \equiv P_1$, $H=U(0,1)$, so
$D_{\mathrm{KL}}(H\|U)$ measures  separation between $P_0$ and $P_1$. 

Following these terms, we
show that our level $\alpha$ PFA-controlled process, run with nearly harmonic
weights, has a detection delay of order $\log T/D_{\mathrm{KL}}(H\|U)$, and that the level
$\gamma$ ARL-controlled process has a detection delay of
order at most $\log\gamma/D_{\mathrm{KL}}(H\|U)$. Thus, our method offers significant improvement in detection delay over Vovk's method using a conformal test martingale, which can take detection delays linear in changepoint position $T$, as shown in example~\ref{eg: CTM log T delay}. This improvement is also visually evident in Figure~\ref{fig:vovkcomparison}.

As $D_{\mathrm{KL}}(H\|U)$ increases, the pre- and post-change distributions become more separable, making the detection problem easier, thus leading to a faster detection. This is also in accordance with the delay bounds. We show that, these rates are sharp for both the PFA and ARL regime. In an asymptotic setting with respect to the location of the changepoint, over a restricted class of changepoints and alternatives described later, we prove explicit lower bounds under the PFA and ARL constraint separately and show that our method attains that lower bound, i.e.\ no detector can asymptotically detect faster than ours. In this sense the proposed construction is first order minimax optimal in both regimes.

Finally, we establish a data processing inequality for the score transformed change detection, showing that for among all score transformation only the likelihood ratio of post- and pre-change observation retains all the information as in the raw
data, and our results quantify the additional cost one needs to pay for a poorly chosen score transformation. Taken together, the paper develops a distribution-free framework with unified validity guarantees for PFA and ARL control with explicit delay guarantees and first order minimaxity.

\paragraph{Notation.}
For a probability distribution $P$, $F_P$ denotes its cumulative distribution function (CDF). For  $n \in \N$, let $[n] := \{1,2,\cdots,n\}$. We denote the collection of Borel sets on $\R$ by $\mathcal{B}(\R)$ and define $\mathcal{B}[a,b] := \{A : A \subseteq [a,b], A \in \mathcal{B}(\R)\}$. For data $X_1,\cdots,X_n$, denote their order statistics by $(X_{(1)},\cdots,X_{(n)})$ satisfying $X_{(1)} \leqslant \cdots \leqslant X_{(n)}$. 
$P_0 \ll P_1$ means that $P_0$ is absolutely continuous with respect to $P_1$. For a random variable $X$ having distribution $P_1$, we denote its expectation by $\E_{P_1}[Z]$. For random vectors $\mathbf{Y} := (Y_1,\cdots,Y_m)$ and $\mathbf{Z} := (Z_1,\cdots,Z_n)$, we write $\mathbf{Y} \perp \mathbf{Z}$ if  $\mathbf{Y}$ and $\mathbf{Z}$ are independent. For a stochastic process $\{M_n\}_{n\in\mathbb N}$, we write $M$ to denote the entire sequence, i.e., $M := \{M_n\}_{n\in\mathbb N}.$ We write $\|G\|_\infty:=\sup_x|G(x)|$ for a bounded real function $G$.

\subsection{Summary of contributions}
\begin{itemize}
\item \textbf{Conformal $e$-processes, conformal $e$-detectors, and unified
validity.} We introduce conformal $e$-processes and $e$-detectors,
obtained by aggregating locally restarted conformal mixture martingales with
deterministic restart weights, and use them to construct distribution-free online
change detectors. A single family of statistics serves both error criteria: the
total weight $\|w\|_1$ governs the probability of ever raising a false alarm and
the largest weight $\|w\|_\infty$ governs the average run length. Thus by choosing the weights appropriately, we can control either of PFA or ARL by the same statistic. In addition, our ARL-valid procedure also satisfies the optional horizon inequality \citep{ramdas2026complete} $\P_{P_0, \infty}(\tau_{Z,w}(\gamma) \leq \sigma) \leq \frac{\E_{P_0,\infty}(\sigma)}{\gamma}$ for any stopping time $\sigma$; thus, when there is no change, our stopping procedure is guaranteed not to stop too early.

\item \textbf{A fixed-changepoint impossibility result.} Since our aim is not merely validity but optimality, we first identify the regime where optimality can be discussed. With the changepoint $T$ held fixed as
the stream grows, the pre-change observations become a negligible fraction of the
sample and the monitoring statistic adapts to post-change observations. We formalize this as an impossibility theorem for universal supermartingales over a composite class of distributions, which motivates the optimality analysis to be done in a growing-changepoint regime, where $T \to\infty$ along a double array.

\item \textbf{Rigorous bounds of detection delay for the PFA and ARL controlled process} We show that our PFA and ARL controlled procedures with suitably chosen weights mentioned later have detection delay of order at most $\frac{\log T}{D_{\mathrm{KL}}(H||U)}$ and $\frac{\log \gamma}{D_{\mathrm{KL}}(H||U)}$, respectively where $T$ is the changepoint, $\gamma$ is the ARL level, and $H$ is the law of $F_{P_0}(Y)$ where $Y \sim P_1$ and $F_{P_0}$ is the cumulative distribution function of $P_0$. The rates vastly improve over the delay rates for the existing distribution free procedures for change detection using conformal test martingales \citep{vovk2021testing}; We provide a concrete example showing that CTM may have detection delay rate linear in $T$ for PFA regime and $\sqrt{\gamma}$ for ARL regime --- therefore our method offers exponential improvement of the delay rate while retaining distribution-free validity.
Our analysis also contains finite sample delay bounds to analyze the late alarms explicitly.

\item \textbf{Minimax optimality of our PFA and ARL valid procedures.}
We prove the score data-processing inequality and characterize equality, so the loss induced by a score is measured exactly by the reduction from $D_{\mathrm{KL}}(P_1\|P_0)$ to $D_{\mathrm{KL}}(H_s\|U)$. For a fixed simple pair, a late-changepoint range, and a fixed-power PFA window criterion, we prove a matching lower bound and uniform oracle RCMM upper bound. The classical Lorden and Pollak inequalities are cited and derived only as ARL benchmarks; no minimax claim is made for the conformal high-probability ARL upper bounds.

\item \textbf{Identifying optimal score transformations} Our method is not restricted to only real valued observations --- it applies to any objects such as text or image data. In such cases, typically the data are first reduced to real values by a score transformation, which could reduce the separation between the pre- and post-change laws and hence may increase detection delay. We quantify this loss explicitly and derive a data processing inequality for score transformed change detection and characterize the transformation which does not lose any information.
\end{itemize}

Taken together this work offers a distribution-free method for PFA and ARL controlled change detection with sharp rates and explicit late alarm bounds and minimax optimality.

\subsection{Related Work}
\label{subsec: related works}

\paragraph{Rank-based methods.}
Nonparametric online changepoint detection has been studied extensively in the univariate setting; \citet{ross15cpmpackage} contains an overview. A central feature of the univariate setting is that distributional information is fully encoded by the empirical distribution function, and hence by ranks of the observations. This observation has led to a class of rank-based sequential procedures, including the methods of \citet{gordon94efficient,hawkins10nonparametric,romano23changedetection}. Extending these ideas beyond one dimension is less straightforward, since there is no unique scalar ordering of multivariate observations and computationally tractable analogues of ranks are difficult to define.

\paragraph{Multivariate nonparametric methods.}
Several approaches have therefore been developed specifically for multivariate online changepoint detection; see \citet{wang24sequential} for a recent survey. Some procedures reduce the data stream to summary statistics motivated by geometric entropy minimization \citep{yilmaz17online,kurt20real}. These methods can be computationally attractive, but typically require knowledge of the pre-change distribution. Other nonparametric approaches rely on interpoint distances \citep{chen19sequential,chu22sequential}. While such methods avoid parametric modeling, their test statistics need not metrize the space of probability distributions, and hence there may exist alternatives under which the change is not detected asymptotically.

\paragraph{Kernel-based methods.}
Kernel-based two-sample methods provide another approach to nonparametric changepoint detection. Characteristic kernels \citep{fukumizu07kernel,sriperumbudur10hilbert} can separate probability distributions through their kernel mean embeddings, making them sensitive to broad classes of distributional changes. In an online setting, however, the direct use of kernel two-sample statistics can be computationally expensive, since standard empirical estimators require pairwise comparisons among observations. A line of work has addressed this computational barrier through approximations, block-based statistics, and recursive updates. \citet{zaremba13btest} introduced block-based two-sample statistics that reduce the cost of kernel computations by partitioning the data into smaller blocks. Building on this idea, \citet{li15mstatistic,li19scanbstatistics} proposed online changepoint procedures based on recursive estimation over sliding windows, and \citet{wei22online} extended this strategy to a grid of window sizes. Related streaming methods, such as NEWMA \citep{keriven20newma}, use recursive smoothing and random-feature approximations to obtain scalable online statistics. These approaches improve computational scalability, but their performance depends on tuning parameters such as window lengths, block sizes, smoothing rates, or feature approximations. The most recent method in this class is the Online RFF-MMD procedure of \citet{kalinke2026optimal}, which develops an online, window-free kernel-based detector using random Fourier features.

\paragraph{Conformal and $e$-value based methods.}
The use of conformal ranks for sequential testing of exchangeability goes back to
\citet{vovk2003testing}, with more flexible plug-in betting martingales developed by
\citet{fedorova2012plugin}. Their use for changepoint detection was made explicit by
\citet{volkhonskiy2017inductive}, who proposed inductive conformal martingales for
quickest changepoint detection. \citet{vovk2021testing} gives a modern treatment of
conformal test martingales for distribution-free online testing, while
\citet{vovk2021binary,nouretdinov2021continuous,vovk2022markov} investigate their
efficiency in specific changepoint and nonexchangeable model situations. Most directly
related to the motivation of the present work, \citet{vovk2021retrain} proposed
conformal versions of the Ville, CUSUM, and Shiryaev--Roberts procedures, observing
that the ordinary Ville procedure becomes less efficient as the monitoring horizon
grows. A related practical limitation was identified by \citet{eliades2022betting}:
after a long stationary period, a conformal martingale may become very small and
therefore require a long time to recover after a change; they address this issue by
modifying the betting function. \citet{vovk2023forgetting} studies more generally the
role of forgetting observations in conformal testing. More recently,
\citet{vovk2025cusum} established theoretical validity and efficiency properties for
the conformal CUSUM procedure, and \citet{prinster2025watch} introduced
weighted-conformal test martingales for adaptive monitoring under covariate and
concept shift. The weights in the latter construction modify conformal calibration
to accommodate distribution shift, whereas the restart weights in our construction
allocate evidence across candidate changepoints.

A complementary line of work constructs changepoint detectors directly from
$e$-values and $e$-processes. The $e$-detector framework of
\citet{shin2023detectors} aggregates $e$-processes started at successive candidate
changepoints and yields Shiryaev--Roberts and CUSUM $e$-detectors with nonasymptotic
average-run-length guarantees. \citet{vovk2025conformaletesting} develops conformal $e$-testing by replacing
conformal $p$-values with conformal $e$-values, including conformal CUSUM and reverse
Shiryaev--Roberts $e$-procedures. Finally, since conformal evidence is 
adapted to the filtration generated by the conformal p-values,
\citet{choe2026combining} studies the transfer and combination of $e$-processes across
different filtrations.

\paragraph{Outline of the paper.}
The rest of the paper is organized as follows. Section~\ref{sebsec: preliminaries} summarizes the notions and definitions used in the paper; section~\ref{subsec: change detection using CTM} explains the main limitations of existing methods of change detection. In section~\ref{subsec: our method} we describe our general procedure for change detection in detail along with the distribution-free validation guarantees, and section~\ref{sec:optimality} contains the optimality guarantees of our method including the rate of detection delay and minimax formulations of the PFA and ARL-controlled procedures. In section~\ref{sec: different betting class}, we discuss the role of choosing different betting classes in our procedure. Section~\ref{sec: change detection using CMM} contains detailed guarantees for change detection using conformal mixture martingale. Section~\ref{sec:simulations} demonstrates the procedure on simulated data and provides
empirical validation of the theoretical results.
Finally, we conclude the main paper with overall discussions in Section~\ref{sec:conclusion}.

Appendix~\ref{apd:regularity of betting class} contains a detailed discussion on the regularity properties of the betting classes. Appendix~\ref{apd:impossibility theorem} contains an impossibility theorem for the divergence of universally valid test supermartingales validating the choice of our reduced filtration. Finally, Appendix~\ref{Appendix: Proofs} contains proofs of all the results in this paper.

\section{Background and Main Results}
\label{sec:Distribution-free online changepoint detection using conformal martingales}

\subsection{Preliminaries}
\label{sebsec: preliminaries}

Before turning to our constructions, we define all the necessary terms used in
this paper. We work throughout on a filtered probability space
$(\Omega,\mathcal B,\{\mathcal F_n\}_{n\geq0},\mathbb P)$, where $\mathcal B$ is a
$\sigma$-field on $\Omega$ and $\{\mathcal F_n\}_{n\geq0}$ is a \emph{filtration},
that is, an increasing sequence of sub-$\sigma$-fields
$\mathcal F_0\subseteq\mathcal F_1\subseteq\cdots\subseteq\mathcal B$. The online monitoring system observes $X_1,X_2,\ldots$ which are $\mathcal
B$-measurable maps from $\Omega$ into the arbitrary measurable space $\mathcal X$. A sequence of random variables
$\{Y_n\}_{n\geq1}$ is \emph{adapted} to the filtration $\{\mathcal F_n\}_{n \geq 1}$ if $Y_n$ is $\mathcal
F_n$-measurable for every $n$. A random variable
$\tau:\Omega\to\mathbb N_0\cup\{\infty\}$ is a \emph{stopping time} with respect to
$\{\mathcal F_n\}_{n\in\mathbb N}$ if $\{\tau\leq n\}\in\mathcal F_n$ for every
$n$. For a nonnegative adapted process $D$ and a stopping time $\tau$ we adopt the
convention $D_\tau:=\liminf_{m\to\infty}D_{\tau\wedge m}$ on the event
$\{\tau=\infty\}$.
 
$(X_1,\ldots,X_n)$ is \emph{exchangeable} if 
for any permutation $\sigma$ of $\{1,2,\cdots,n\}$, the random vectors $(X_1,\cdots,X_n)$ and $(X_{\sigma(1)},\cdots,X_{\sigma(n)})$ are identically distributed. In this paper, we typically assume the data sequence $\{X_n\}_{n \geq 1}$ to be consisting of independent observations and we say a change in the distribution has occured if \eqref{eq: data structure} holds. In case the joint distribution of $\{X_n\}_{n \geq 1}$ is denoted by $\P_{P_0,T,P_1}$. An adapted, integrable process $\{M_n\}_{n\geq0}$ is a \emph{martingale} with
respect to $\{\mathcal F_n\}$ under $\mathbb P$ if
$\mathbb E_{\mathbb P}[M_n\mid\mathcal F_{n-1}]=M_{n-1}$ almost surely with
respect to $\mathbb P$ (henceforth abbreviated $\mathbb P$-a.s.) for every
$n\geq1$, and a \emph{supermartingale} if
$\mathbb E_{\mathbb P}[M_n\mid\mathcal F_{n-1}]\leq M_{n-1}$ $\mathbb P$-a.s. for
every $n\geq1$; it is \emph{nonnegative} if $M_n\geq0$ $\mathbb P$-a.s. for every
$n$. Since conditional expectations are defined only up to null sets, each such
identity or inequality is asserted outside a $\mathbb P$-null set. A
\emph{test martingale} under $\mathbb P$ is a nonnegative martingale with $M_0=1$
$\mathbb P$-a.s.

Let $\mathcal P$ be a class of probability measures on $(\Omega,\mathcal B)$. A
nonnegative random variable $E$ is an \emph{$e$-value} for $\mathcal P$ if
$\mathbb E_{\mathbb P}E\leq1$ for every $\mathbb P\in\mathcal P$; A nonnegative adapted process
$\{E_n\}_{n\geq0}$ is an \emph{$e$-process} for $\mathcal P$ if
$\mathbb E_{\mathbb P}E_\tau\leq1$ for every $\mathbb P\in\mathcal P$ and every
stopping time $\tau$, and a nonnegative adapted process $\{D_n\}_{n\geq0}$ is an
\emph{$e$-detector} for $\mathcal P$, if
$\mathbb E_{\mathbb P}D_\tau\leq\mathbb E_{\mathbb P}\tau$ for every
$\mathbb P\in\mathcal P$ and every stopping time $\tau$ (\cite{shin2023detectors}). Every nonnegative
supermartingale with $M_0\leq1$ is an $e$-process, by optional stopping followed by
Fatou's lemma, but the converse is not true.
 
If $\{M_n\}_{n\geq0}$ is a nonnegative supermartingale with respect to
$\{\mathcal F_n\}$ under $\mathbb P$, then \emph{Ville's inequality} states that
$\mathbb P\{\sup_{n\geq0}M_n\geq b\}\leq\mathbb E_{\mathbb P}M_0/b$ for every
$b>0$; in particular $\mathbb P\{\exists\,n\geq1:M_n\geq b\}\leq1/b$ for a test
martingale. The same bound holds for an $e$-process $E$, since
$\tau:=\inf\{n\geq1:E_n\geq b\}$ gives
$b\,\mathbb P\{\tau<\infty\}\leq\mathbb E_{\mathbb P}E_\tau\leq1$. Finally, for probability measures $P_1$ and $P_0$ on a common space, the
Kullback--Leibler divergence is defined as
$D_{\mathrm{KL}}(P_1\|P_0):=\int\log\frac{dP_1}{dP_0}\,dP_1$ if $P_1\ll P_0$ or $+\infty$
otherwise.

\subsection{Change detection using conformal test martingale and limitations}
\label{subsec: change detection using CTM}

Before we state our construction explicitly, we first discuss the key limitations of the existing distribution-free online change detection method using conformal martingales.   The idea of change-point detection using conformal test martingale (CTM) is introduced in \cite{vovk2021testing}. For random variables $X_1,\cdots,X_n$ defined over a probability space $(\Omega, \mathcal{B})$ taking values in an arbitrary set $\mathcal{X}$, conformal $p$-value $p_n$ based on $X_1,\cdots,X_n$ is defined as,
\begin{equation}
\label{eq: randomized pval}
    p_n(X_1,\ldots,X_n;\lambda_n)
    :=
    \frac{\sum_{i=1}^n \mathbf 1(X_i<X_n)+\lambda_n\sum_{i=1}^n\mathbf 1(X_i=X_n)}{n}.
\end{equation}
Throughout this paper, $\lambda_n$ will be referred to as \emph{randomizers} and will always be assumed that $\lambda_n\sim U(0,1)$ is independent of the data and other randomizers. Sometimes instead of computing the $p$-value directly from $X_i$s, data is transformed as $Z_i = s(X_i),$ where $s: \mathcal{X} \to \R$ is a measurable function, and the $p$-value is computed using $Z_i$s. Such functions are commonly referred as the \textit{score function}. When the score function is fixed or clear from the context, we suppress the notation $s$ and simply denote the conformal $p$-value by $p_n$. The following result is the basic distribution-free ingredient used throughout the paper.
\begin{proposition}[\cite{vovk2021testing}]
\label{prop: pval iid}
    Let $X_1,\cdots,X_n$ be exchangeable and for $k \in [n]$, define $p_k := p_k(X_1,\cdots,X_k;\lambda_k)$, where $\lambda_1,\cdots,\lambda_n \iid U(0,1)$ and $(\lambda_1,\cdots,\lambda_n) \perp (X_1,\cdots,X_n)$. Then $p_1,\cdots,p_n \iid U(0,1)$. 
\end{proposition}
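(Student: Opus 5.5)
The plan is to prove the statement by backward induction on $k$, using a reverse-filtration argument. For each $k$, let $\mathcal G_k$ be the $\sigma$-field generated by the unordered multiset (bag) $\{X_1,\ldots,X_k\}$ together with $(X_{k+1},\ldots,X_n)$ and $(\lambda_{k+1},\ldots,\lambda_n)$. Then $p_{k+1},\ldots,p_n$ are $\mathcal G_k$-measurable. Indeed, $p_j$ for $j>k$ depends only on the bag $\{X_1,\ldots,X_j\}$, on $X_j$, and on $\lambda_j$, because the counts $\sum_{i\le j}\mathbf 1(X_i<X_j)$ and $\sum_{i\le j}\mathbf 1(X_i=X_j)$ are symmetric in $X_1,\ldots,X_j$. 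Moreover, the bag $\{X_1,\ldots,X_j\}$ is determined by $\{X_1,\ldots,X_k\}$ together with $X_{k+1},\ldots,X_j$.

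The key claim is that, conditionally on $\mathcal G_k$, the variable $p_k$ is $U(0,1)$. If this holds, the conditional law of $p_k$ given $(p_{k+1},\ldots,p_n)$ is uniform, because $\sigma(p_{k+1},\ldots,p_n)\subseteq\mathcal G_k$ and the tower property applies. Peeling off $k=n,n-1,\ldots,1$ then shows that the joint law factorizes as a product of uniforms, so $p_1,\ldots,p_n$ are iid $U(0,1)$.

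To prove the claim, I use exchangeability of $(X_1,\ldots,X_n)$. It implies that, conditionally on the bag $\{X_1,\ldots,X_k\}$ and on $X_{k+1},\ldots,X_n$, the vector $(X_1,\ldots,X_k)$ is a uniformly random ordering of the bag. The formal version is that for any permutation $\pi$ of $[k]$, the vector $(X_{\pi(1)},\ldots,X_{\pi(k)},X_{k+1},\ldots,X_n)$ has the same law as the original. Consequently $X_k$ is distributed as a uniform draw from the $k$ slots of the bag. Also, $\lambda_k$ is independent of everything in $\mathcal G_k$ and of the $X$'s, since it is independent of the data and of the other randomizers.

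Condition on a bag with distinct values $a_1<\cdots<a_m$ having multiplicities $c_1,\ldots,c_m$, where $\sum_l c_l = k$. With probability $c_l/k$ we have $X_k=a_l$, and in that case
\[
p_k=\frac{C_{l-1}+\lambda_k c_l}{k},
\]
where $C_{l-1}=c_1+\cdots+c_{l-1}$. This is uniform on the interval $[C_{l-1}/k,\,C_l/k]$. These intervals tile $[0,1]$, and each has length $c_l/k$ equal to its probability. Mixing over $l$ therefore gives exactly $U(0,1)$, and the result does not depend on the conditioning values, which establishes the claim.

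The main obstacle is measure-theoretic rigor about conditioning on the bag when $\mathcal X$ is an arbitrary measurable space. I would handle it by avoiding regular conditional distributions and instead checking the identity
\[
\E\bigl[g(p_k)\,h\bigr]=\Bigl(\int_0^1 g\Bigr)\E[h]
\]
for bounded measurable $g$ and bounded $\mathcal G_k$-measurable $h$. To do this, average the left side over all $k!$ permutations of the first $k$ coordinates; this is allowed by exchangeability, since $h$ is invariant under such permutations. Then integrate out $\lambda_k$ pointwise, using the tiling computation above applied to the realized values. The pointwise computation only needs the indicators $\mathbf 1(X_i<X_j)$ and $\mathbf 1(X_i=X_j)$, which are implicitly assumed measurable, for instance via a score $s$ into $\R$.
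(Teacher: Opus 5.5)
Your argument is correct. The paper does not prove this proposition itself; it imports it from \cite{vovk2021testing}. Your reverse-filtration argument is essentially the standard proof from Vovk's work: condition on the bag of $X_1,\ldots,X_k$ together with the later observations and randomizers, show by permutation averaging and the tiling of $[0,1]$ that $p_k$ is uniform and independent of that $\sigma$-field, then chain $p_k\perp(p_{k+1},\ldots,p_n)$.

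The only justification the paper offers is a remark inside the proof of Example~\ref{eg: CTM log T delay}, and it takes a different, forward route. When values are almost surely distinct, the relative order of the first $n$ observations is a uniform random permutation. The sequential insertion ranks $R_k:=1+\sum_{i<k}\mathbf 1(X_i<X_k)$ are then independent and uniform on $\{1,\ldots,k\}$, so $p_k=(R_k-1+\lambda_k)/k$ is uniform and independent across $k$. That route is more elementary but needs the no-ties (continuous) setting, which holds in that example. Your computation handles ties directly and uses only exchangeability, which is what the proposition as stated requires.

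Two cosmetic points. First, the cleanest rigorous definition of your bag $\sigma$-field is the $\sigma$-field of events in $(X_1,\ldots,X_n,\lambda_{k+1},\ldots,\lambda_n)$ that are invariant under permutations of the first $k$ coordinates. This makes both the measurability of $p_{k+1},\ldots,p_n$ and the invariance of $h$ immediate. Second, your $\mathcal G_k$ clashes with the paper's $\mathcal G_n=\sigma(p_1,\ldots,p_n)$, so give it another name.
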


Proposition~\ref{prop: pval iid} turns exchangeability into iid uniform conformal $p$-values. Throughout the main development, we use the filtration $\mathcal G_n:=\sigma(p_1,\ldots,p_n),
n\geq1,$
and all stopping times are understood with respect to $(\mathcal G_n)_{n\geq0}$ unless stated otherwise. This filtration is the natural one for conformal martingale validity.


A sequence of nonnegative Borel-measurable functions $\{f_n\}_{n\in\mathbb N}$, with $f_n:[0,1]^n\to[0,\infty]$, is a \emph{betting martingale} if, for every $n\geq1$,
\begin{equation}
    \label{eq: betting martingale def}
    \int_0^1 f_n(s_1,\ldots,s_n)\,ds_n
    =
    f_{n-1}(s_1,\ldots,s_{n-1}).
\end{equation}
For example, one may take $f_n(s_1,\ldots,s_n)=\prod_{i=1}^n f(s_i)$ for a density $f$ on $[0,1]$. If $S_n=f_n(p_1,\ldots,p_n)$, with $S_0 := 1$ then $(S_n,\mathcal G_n)_{n \geq 0}$ is a nonnegative martingale under every exchangeable no-change law, and it is called a \textit{conformal test martingale}(CTM). For a threshold $b>0$, define
\begin{equation}
    \label{eq: CTM CUSUM SR}
    \tau^{\texttt{CTM}}(b)
    :=
    \inf\{n\geq1:S_n\geq b\}.
\end{equation}
Ville's inequality gives $\sup_{P_0}\mathbb P_{P_0,\infty}\{\tau^{\texttt{CTM}}(b)<\infty\}
\leq
\frac1b.$
In particular, setting $b=1/\alpha$ one can obtain a distribution-free PFA valid change detection for the distribution shift. Based on the CTM $S_n$, define the CUSUM \citep{page1954continuous} and Shirtayev-Roberts \cite{shiryaev1961problem} procedures as,
\begin{equation}
    \label{eq:VovkSR}
    \tau^{\texttt{CUSUM}}(\gamma) := \inf \bigg\{n : \max_{i = 0,\cdots,n}\frac{S_n}{S_i} \geq \gamma \bigg\}, \quad \tau^\texttt{SR}(\gamma) := \inf \bigg\{n: \sum_{i=1}^{n-1}\frac{S_n}{S_i} \geq \gamma \bigg\}. 
\end{equation}
As shown in \citep{vovk2021testing}, $\E_{P_0, \infty}\tau^\texttt{SR}(\gamma) \geq \gamma$ and since $\tau^\texttt{CUSUM}(\gamma) \geq \tau^\texttt{SR}(\gamma),$ hence $\E_{P_0, \infty}\tau^\texttt{SR}(\gamma) \geq \gamma$. Thus both of the procedures have ARL validity at level $\gamma$. Although this construction was introduced in \cite{vovk2021testing} for distribution-free sequential testing, no optimality guarantee of the procedure was discussed. As illustrated in the following examples, change detection using CTM can lead to long delays for both the PFA and ARL regimes.

\begin{example}
\label{eg: CTM log T delay}
    Consider a growing changepoint regime in a double array $\{X_{n,j}\}_{n,j \geq 1}$ where the $n$th row of this array consists of independent observations satisfying $X_{n,j} \iid U(0,1)$ for $j \leq n$ and $X_{n,j} \iid P_1$ for $j > n$ where $P_1(dx)=f_1(x)\,dx$ with $f_\theta(u)
=
\exp\left\{\theta\left(u-\frac12\right)-\psi(\theta)\right\}$ to be one member of the one-parameter natural exponential family with sufficient statistic $u - \frac{1}{2}$, natural parameter \(\theta\), and log-partition function

$$ \psi(\theta) = \log\int_0^1 e^{\theta(u-1/2)}\,du = \begin{cases} \displaystyle \log\left(\frac{2\sinh(\theta/2)}{\theta}\right), &\theta\neq0,\\[1.2ex] 0,&\theta=0. \end{cases} $$ For a fixed $\alpha \in (0,1)$, let $\tau_n^{\texttt{CTM}}(\frac{1}{\alpha})$ be $\tau^{\texttt{CTM}}(\frac{1}{\alpha})$ as defined in \eqref{eq: CTM CUSUM SR} computed on $\{X_{n,j}\}_{j \geq 1}$ with corresponding betting martingale $f_k(p_1,\cdots,p_k) := \prod_{i = 1}^k f_1(p_i)$. Let $\kappa:=\psi(1)=\log\left(2\sinh\left(\frac12\right)\right)$
and $c_{\mathrm{CTM}}
:=
\frac{\kappa}{\frac12-\kappa}
\approx 0.090096.$
Then, for every fixed $c\in(0,c_{\mathrm{CTM}})$, $\mathbb P_{P_0,n,P_1}(
\tau_n^{\texttt{CTM}}(\frac{1}{\alpha})
>
n+\lfloor cn\rfloor \mid
\tau_n^{\texttt{CTM}}(\frac{1}{\alpha})>n
)
\longrightarrow 1
$ as $n\to\infty.$
Thus, for all deterministic nonnegative 
$r_n=o(n)$, $\mathbb P_{P_0,n,P_1}\left(
\tau_n^{\texttt{CTM}}(\frac{1}{\alpha})-n>r_n
\;\middle|\;
\tau_n^{\texttt{CTM}}(\frac{1}{\alpha})>n
\right)
\longrightarrow 1.$ In particular, for every $K>0$, $\mathbb P_T (
\frac{\tau_n^{\texttt{CTM}}(\frac{1}{\alpha})-T}{\log T}>K
\mid
\tau_n^{\texttt{CTM}}(\frac{1}{\alpha})>T)
\longrightarrow 1,$ as $T \to \infty.$ 
\end{example}

Thus, conditional on stopping after the true changepoint (at time $n$), the detection delay $\tau_n^{\texttt{CTM}}(\frac{1}{\alpha})-n$ must grow at least linearly in the changepoint location $T$ while the benchmark in the change detection literature is a delay of order $\log T$. The following example shows that the ARL-valid method using CTM can also lead to longer delay than $\log \gamma.$

\begin{example}\label{ex:arl-single-bet-separation}
Let $(\gamma_n)_{n\ge1}$ be a deterministic sequence with $\gamma_n>1$ and $\gamma_n\to\infty$. As before, Consider a growing changepoint regime in a double array $\{X_{n,j}\}_{n,j \geq 1}$ where the $n$th row $\{X_{n,j}\}_{j \geq 1}$ is as in \ref{eq: data structure}, with $P_0 \equiv U(0,1), T_n:=\lfloor\sqrt{\gamma_n}\rfloor$ and 
$P_1(dx)=h(x)\mathbf{1}_{[0,1]}(x)\,dx$ with $h(u):=\frac12+3u(1-u)$. Consider the betting density $f(u):=u+\frac12$,
 and $\tau_{n}^\texttt{CUSUM}$ and $\tau_{n}^\texttt{SR}$ be $\tau^\texttt{CUSUM}(\gamma_n)$ and $\tau^\texttt{SR}(\gamma_n)$ respectively as in \eqref{eq:VovkSR}, computed on $\{X_{n,j}\}_{j \geq 1}$. 

 Then there exists a constant $c_*>0$, independent of $n$, such that $$\mathbb P_{P_0,T_n,P_1}\!\left(\tau_n^{\texttt{SR}}(\gamma_n)>T_n+\lfloor c_*T_n\rfloor\mid\tau_n^{\texttt{SR}}(\gamma_n)>T_n\right)\to1$$ and $\liminf_{n\to\infty}\mathbb E_{P_0,T_n,P_1}\!\left[\tau_n^{\texttt{SR}}(\gamma_n)-T_n\mid\tau_n^{\texttt{SR}}(\gamma_n)>T_n\right]/\sqrt{\gamma_n}\ge c_*$.  In particular, for every $K>0$, $\mathbb P_{P_0,T_n,P_1}\!\left((\tau_n^{\texttt{SR}}(\gamma_n)-T_n)/\log\gamma_n>K\mid\tau_n^{\texttt{SR}}(\gamma_n)>T_n\right)\to1$. The same conclusions hold for $\tau_n^{\texttt{CUSUM}}(\gamma_n)$.
\end{example}

Thus, conditional on no alarm before or at the changepoint, the detection delay of $\tau_n^{\texttt{SR}}(\gamma_n)$ is at least of order $\sqrt{\gamma_n}$ with probability tending to one. The proof of both the examples can be found in Appendix \ref{Appendix: Proofs}. As we will show, our new procedure will improve the detection delay to be of order $\log T,$ and $\log \gamma$ for PFA and ARL valid procedures respectively, therefore offering substantial improvement over CTM.

\subsection{Our method for distribution-free change detection}
\label{subsec: our method}
We now describe the construction that underlies all the procedures in this paper. In the CTM construction, the choice of the betting density heavily influences the optimality of the corresponding detector: a betting density that bets well against one post-change law might bet poorly against another, and since $P_1$ is unknown, choosing the density optimally can be difficult. Therefore, rather than choosing a single conformal betting function in advance, we instead aggregate the evidence over a class of betting functions. This mixture construction preserves the distribution-free calibration of conformal test martingales while giving the statistic enough flexibility to respond to different departures from the null behavior of the conformal $p$-values.

Formally, let $\mathcal{F} = \{f_\theta : \theta \in \Theta\}$ be a class of density functions on $[0,1]$. Each $f_\theta$ defines a valid conformal test martingale when evaluated along the conformal $p$-value sequence. We average the resulting martingales with respect to a probability measure $\Pi$ on $\Theta$. Onwards, we refer to $\Pi$ as the \textit{prior distribution} on $\mathcal F$. This produces a single martingale that accumulates evidence across the whole class of betting functions. This leads us to formally define,  
\begin{equation}
    \label{eq: conformal mixture martingale}
    M_n \equiv M_n(p_1,\cdots,p_n) := \int_{\Theta} f_\theta(p_1)\cdots f_\theta(p_n) d\Pi(\theta),
\end{equation}
where $p_1,\cdots,p_n$ are conformal $p$-values as in Proposition~\ref{prop: pval iid}. Using Tonelli's theorem, it follows that $\{(M_n,\mathcal G_n)\}_{n\in\mathbb N}$ is a nonnegative martingale under every exchangeable no-change law, with $\mathcal G_n=\sigma(p_1,\ldots,p_n)$. We call it the \textit{conformal mixture martingale} (CMM). 
With this construction, for any threshold $b>0$, define
\begin{equation}
    \label{eq: CMM Stopping time}
    \tau^{\texttt{CMM}}(b)
    :=
    \inf\{n\geq1:M_n\geq b\}.
\end{equation}
Under no change, Ville's inequality gives $\sup_{P_0}\mathbb P_{P_0,\infty}\{\tau^{\texttt{CMM}}(b)<\infty\}
\leq
\frac1b.$
Thus for $\alpha\in(0,1)$, $\tau^{\texttt{CMM}}(\frac{1}{\alpha})$ has distribution-free PFA validity of level $\alpha$. If $b>1$, then $\mathbb P_{P_0,\infty}\{\tau^{\texttt{CMM}}(b)=\infty\}\geq1-1/b$, so its ARL is infinite. The ordinary CMM is therefore a bounded-PFA baseline rather than a finite-ARL procedure. Since $M_n$ is a non-negative test martingale, Vovk's construction can be applied in this setting to define the ARL-valid procedures as, 
\begin{equation}
    \label{eq:CMM CUSUM SR}
    \tau^{\texttt{CMM-CUSUM}}(\gamma) := \inf \bigg\{n: \max_{i=0,\cdots,n} \frac{M_n}{M_i} \geq \gamma\bigg\}, \quad \tau^{\texttt{CMM-SR}}(\gamma) := \inf\bigg\{n: \sum_{i=0}^{n-1} \frac{M_n}{M_i} \geq \gamma \bigg\},
\end{equation}
which satisfies $\E_{P_0, \infty}\tau^\texttt{CMM-CUSUM}(\gamma) \geq \E_{P_0, \infty}\tau^\texttt{CMM-SR}(\gamma) \geq \gamma$. However, as evident in the following two examples, mixing over different betting functions alone does not suffice for faster detection. 

\begin{example}\label{ex:pfa-cmm-separation}
Fix $\alpha\in(0,1)$. Consider a growing changepoint regime in a double array $\{X_{n,j}\}_{n,j\ge1}$ whose $n$th row is as in \eqref{eq: data structure}, with $P_0\equiv U(0,1)$, $T_n:=n$, and $P_1(dx)=e^x(e-1)^{-1}\mathbf{1}_{[0,1]}(x)\,dx$. In \eqref{eq: conformal mixture martingale}, take $\Theta:=\{-1/2,1\}$, $f_\theta(u):=e^{\theta u}/\int_0^1e^{\theta v}\,dv$ for $u\in[0,1]$, and $\Pi:=\frac12\delta_{-1/2}+\frac12\delta_1$. Let $\tau_n^{\texttt{CMM}}(1/\alpha)$ be $\tau^{\texttt{CMM}}(1/\alpha)$ as in \eqref{eq: CMM Stopping time}, computed on $\{X_{n,j}\}_{j\ge1}$ with this mixture. Then there exists a constant $c_*>0$, independent of $n$ and $\alpha$, such that $\mathbb P_{P_0,T_n,P_1}(\tau_n^{\texttt{CMM}}(1/\alpha)>T_n+\lfloor c_*T_n\rfloor\mid\tau_n^{\texttt{CMM}}(1/\alpha)>T_n)\to1$. Consequently, for every deterministic nonnegative sequence $r_n=o(T_n)$, $\mathbb P_{P_0,T_n,P_1}(\tau_n^{\texttt{CMM}}(1/\alpha)-T_n>r_n\mid\tau_n^{\texttt{CMM}}(1/\alpha)>T_n)\to1$. In particular, for every $K>0$, $\mathbb P_{P_0,T_n,P_1}((\tau_n^{\texttt{CMM}}(1/\alpha)-T_n)/\log T_n>K\mid\tau_n^{\texttt{CMM}}(1/\alpha)>T_n)\to1$.
\end{example}

\begin{example}\label{ex:arl-cmm-separation}
Let $(\gamma_n)_{n\ge1}$ be a deterministic sequence with $\gamma_n>1$ and $\gamma_n\to\infty$. Consider a growing changepoint regime in a double array $\{X_{n,j}\}_{n,j\ge1}$ whose $n$th row is as in \eqref{eq: data structure}, with $T_n:=\lfloor\sqrt{\gamma_n}\rfloor$ and the same $P_0$, $P_1$, betting family $(f_\theta)_{\theta\in\Theta}$, and fixed prior $\Pi$ as in Example~\ref{ex:pfa-cmm-separation}. Let $\tau_n^{\texttt{CMM-SR}}(\gamma_n)$ and $\tau_n^{\texttt{CMM-CUSUM}}(\gamma_n)$ be the stopping times in \eqref{eq:CMM CUSUM SR}, computed on $\{X_{n,j}\}_{j\ge1}$ using the CMM in \eqref{eq: conformal mixture martingale}, with initial value $M_0=1$. Then there exists a constant $c_*>0$, independent of $n$, such that $\mathbb P_{P_0,T_n,P_1}(\tau_n^{\texttt{CMM-SR}}(\gamma_n)>T_n+\lfloor c_*T_n\rfloor\mid\tau_n^{\texttt{CMM-SR}}(\gamma_n)>T_n)\to1$ and $\liminf_{n\to\infty}\mathbb E_{P_0,T_n,P_1}[\tau_n^{\texttt{CMM-SR}}(\gamma_n)-T_n\mid\tau_n^{\texttt{CMM-SR}}(\gamma_n)>T_n]/\sqrt{\gamma_n}\ge c_*$. In particular, for every $K>0$, $\mathbb P_{P_0,T_n,P_1}((\tau_n^{\texttt{CMM-SR}}(\gamma_n)-T_n)/\log\gamma_n>K\mid\tau_n^{\texttt{CMM-SR}}(\gamma_n)>T_n)\to1$. The same conclusions hold for $\tau_n^{\texttt{CMM-CUSUM}}(\gamma_n)$.
\end{example}

The above example shows that the worst-case delays for the PFA and ARL-valid processes obtained using the aggregation of different betting densities can be of order $\Omega(T)$ and $\Omega(\sqrt{\gamma})$. Mixing over $\Theta$ addresses the unknown post-change law, but not the unknown
changepoint. A martingale started at time one carries the entire pre-change
history in its capital; So when the change occurs late, the martingale has already accumulated a long pre-change history, and the post-change evidence must first overcome this accumulated dilution before the alarm can be raised. This structural limitation of CMM can lead to significantly delayed detection. To overcome this dilution, we instead accumulate local evidence for the distribution change by running the martingale from every past candidate,
\begin{equation}
    \label{eq:M_kt}
    M_{k,t} := \int_\Theta \prod_{j=k}^{t} f_\theta(p_j)\,d\Pi(\theta), \quad 1 \leq k, \leq t,
\end{equation}
which measures only the evidence accrued between times $k$ and $t$. For fixed $k\geq1$, we can extend the above process to all times by setting
\[
  \Lambda^{(k)}_t :=
  \begin{cases}
    1, & t<k,\\[2pt]
    M_{k,t}, & t\geq k.
  \end{cases}
\]
By
Proposition~\ref{prop: pval iid}, joint measurability of $(\theta,s)\mapsto
f_\theta(s)$, and conditional Tonelli, the process
$\{(\Lambda^{(k)}_t,\mathcal G_t)\}_{t\geq0}$ is a nonnegative martingale with
$\Lambda^{(k)}_{k-1}=1$ under every exchangeable no-change law $\mathbb
P_{P_0,\infty}$; Thus by defining, $\Tilde{S}_t^{(w)} := \sum_{k=1}^\infty w_k \Lambda_t^{(k)}$ with the sum of weights $||w||_1 := \sum_{k=1}^\infty w_k \leq 1$, $\Tilde{S}_t^{(w)}$ is a non-negative martingale (hence an $e$-process), which leads to distribution-free PFA controlled change detection. 

On the other hand, if we take the all the weights to be one, then defining $S_t^{(w)} := \sum_{k=1}^t w_k M_{k,t} = \sum_{k=1}^t M_{k,t}$, the process $S^{(w)}$ is an $e$-detector, which leads to distribution-free ARL controlled chnage detection \citep{shin2023detectors}. Thus the choice of weights can lead to PFA or ARL control in our construction. We formalize this construction along with their distribution-free validity in the next result. The detection procedures can be found in Algorithm~\ref{alg: RCMM}.
\begin{definition}[Weighted restarted statistics and stopping times]
\label{def:weighted-restarted-statistics}
For $t\geq1$, define the weighted restarted sum and maximum
\begin{equation}
\label{eq:weighted-restarted-statistics}
S_t^{(w)}:=\sum_{k=1}^t w_kM_{k,t},
\qquad
Z_t^{(w)}:=\max_{1\leq k\leq t}w_kM_{k,t}.
\end{equation}
Set $S_0^{(w)}=Z_0^{(w)}=W_0:=0$.
For a threshold $b>0$, define
\begin{equation}
\label{eq:weighted-stopping-times}
\tau_{S,w}(b):=\inf\{t\geq1:S_t^{(w)}\geq b\},
\qquad
\tau_{Z,w}(b):=\inf\{t\geq1:Z_t^{(w)}\geq b\}.
\end{equation}
Throughout, $\inf\varnothing:=\infty$.
\end{definition}

\begin{theorem}[Distribution-free Unified PFA and ARL validity]
\label{th:weighted-null-validity}
Suppose $(X_j)_{j\geq1}$ is exchangeable and the randomizers are iid with standard uniform distribution and independent of the data. Let $w=(w_k)_{k\geq1}$ be deterministic with $w_k\in[0,\infty)$, and let $b>0$. Write $\|w\|_1:=\sum_{k=1}^{\infty}w_k$, $\|w\|_\infty:=\sup_{k\geq1}w_k$, $W_t:=\sum_{k=1}^t w_k$, and $W_\infty:=\|w\|_1$. All stopping times below are with respect to $(\mathcal G_t)_{t\geq0}$, with $\mathcal G_0$ trivial. Then the following holds.
\begin{enumerate}
\item If $\|w\|_1<\infty$, the process $\widetilde S_t^{(w)}
:=
S_t^{(w)}+\sum_{k>t}w_k,
\hspace{.2cm}t\geq0,$
is a nonnegative martingale with initial value $\widetilde S_0^{(w)}=\|w\|_1$. Consequently,
\begin{equation}
\label{eq:weighted-PFA-bound}
\mathbb P_{P_0, \infty}\{\tau_{S,w}(b)<\infty\}
\leq
\frac{\|w\|_1}{b},
\qquad
\mathbb P_{P_0, \infty}\{\tau_{Z,w}(b)<\infty\}
\leq
\frac{\|w\|_1}{b}.
\end{equation}
\item For every weight sequence and every stopping time $\tau$ satisfying $\mathbb E_{P_0,\infty}\tau<\infty$,
\begin{equation}
\label{eq:weighted-exact-edetector}
\mathbb E_{P_0, \infty}S_\tau^{(w)}
\leq
\mathbb E_{P_0, \infty}W_\tau,
\qquad
\mathbb E_{P_0, \infty}Z_\tau^{(w)}
\leq
\mathbb E_{P_0, \infty}W_\tau,
\end{equation}
where $\mathbb E_{P_0,\infty}W_\tau$ is allowed to be infinite. Consequently, for any stopping time $\sigma$,
\begin{equation}
\label{eq:weighted-finite-horizon-bound}
\begin{split}
    \mathbb P_{P_0,\infty}\{\tau_{S,w}(b)<\infty,\ \tau_{S,w}(b)\le\sigma\} \le \frac{\mathbb E_{P_0,\infty}W_{\tau_{S,w}(b)\wedge\sigma}}{b},\\
\mathbb P_{P_0,\infty}\{\tau_{Z,w}(b)<\infty,\ \tau_{Z,w}(b)\le\sigma\} \le \frac{\mathbb E_{P_0,\infty}W_{\tau_{Z,w}(b)\wedge\sigma}}{b}.
\end{split}
\end{equation}
If, in addition, $0<\|w\|_\infty<\infty$, then
$\mathbb E_{P_0, \infty}W_\tau
\leq
\|w\|_\infty\mathbb E_{P_0, \infty}\tau,$
so $S^{(w)}/\|w\|_\infty$ and $Z^{(w)}/\|w\|_\infty$ are e-detectors, and
\begin{equation}
\label{eq:weighted-ARL-bound}
\mathbb E_{P_0, \infty}\tau_{S,w}(b)
\geq
\frac{b}{\|w\|_\infty},
\qquad
\mathbb E_{P_0, \infty}\tau_{Z,w}(b)
\geq
\frac{b}{\|w\|_\infty}.
\end{equation}
In this case, \eqref{eq:weighted-finite-horizon-bound} further implies, for every stopping time $\sigma$,
$
\mathbb P_{P_0,\infty}\{\tau_{S,w}(b)<\infty,\ \tau_{S,w}(b)\leq \sigma\}
\vee
\mathbb P_{P_0,\infty}\{\tau_{Z,w}(b)<\infty,\ \tau_{Z,w}(b)\leq \sigma\}
\leq
\frac{\|w\|_\infty\,\mathbb E_{P_0,\infty}[\sigma]}{b}.
$
\end{enumerate}
\end{theorem}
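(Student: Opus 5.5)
The whole argument rests on one basic fact. By Proposition~\ref{prop: pval iid}, under $\mathbb P_{P_0,\infty}$ the conformal $p$-values are iid $U(0,1)$. Hence each $\Lambda^{(k)}$ is a nonnegative $(\mathcal G_t)$-martingale, equal to $1$ up to time $k-1$. This uses conditional Tonelli and $\int_0^1 f_\theta=1$: for $t\ge k$,
\[
\mathbb E[M_{k,t}\mid\mathcal G_{t-1}]=\int_\Theta\prod_{j=k}^{t-1}f_\theta(p_j)\int_0^1 f_\theta(u)\,du\,d\Pi(\theta)=M_{k,t-1},
\]
with $M_{k,k-1}:=1$.

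\textbf{Part 1.} Note that $\widetilde S^{(w)}_t=\sum_{k\ge1}w_k\Lambda^{(k)}_t$. This is a countable nonnegative combination of martingales with $\sum_k w_k\mathbb E\Lambda^{(k)}_t=\|w\|_1<\infty$. Monotone convergence for conditional expectations then gives the martingale property, with initial value $\|w\|_1$. Since $Z^{(w)}_t\le S^{(w)}_t\le\widetilde S^{(w)}_t$, both stopping events are contained in $\{\sup_t\widetilde S^{(w)}_t\ge b\}$. Ville's inequality then yields \eqref{eq:weighted-PFA-bound}.

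\textbf{Part 2.} I would write $S^{(w)}_\tau=\sum_{k\ge1}w_k M_{k,\tau}\mathbf 1\{k\le\tau\}$ and take expectations termwise by Tonelli. The key claim is $\mathbb E[M_{k,\tau}\mathbf 1\{\tau\ge k\}]\le\mathbb P(\tau\ge k)$. To prove it, note that $\{\tau\ge k\}\in\mathcal G_{k-1}$, and on this event $\Lambda^{(k)}_{k-1}=1$. The process $\Lambda^{(k)}_{t}\mathbf 1\{\tau\ge k\}$ for $t\ge k-1$ is a nonnegative martingale started at $\mathbf 1\{\tau\ge k\}$. Optional stopping at $\tau\wedge m$ gives $\mathbb E[\Lambda^{(k)}_{\tau\wedge m}\mathbf 1\{\tau\ge k\}]=\mathbb P(\tau\ge k)$. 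Since $\mathbb E\tau<\infty$ forces $\tau<\infty$ a.s., Fatou's lemma as $m\to\infty$ gives the claim. Summing over $k$ gives
\[
\mathbb E S^{(w)}_\tau\le\sum_k w_k\mathbb P(\tau\ge k)=\mathbb E W_\tau,
\]
and $Z^{(w)}\le S^{(w)}$ handles the maximum.

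The only delicate point is the handling of $\tau=\infty$ and infinite sums. I would avoid requiring $\mathbb E\tau<\infty$ for the horizon bound by applying the argument to the bounded stopping times $\rho_m:=\tau_{S,w}(b)\wedge\sigma\wedge m$. The termwise inequality needs only the finiteness of $\rho_m$, so $\mathbb E S_{\rho_m}\le \mathbb E W_{\rho_m}$. Then
\[
b\,\mathbb P\{\tau_{S,w}(b)\le\sigma\wedge m\}\le \mathbb E S_{\rho_m}\le\mathbb E W_{\tau_{S,w}(b)\wedge\sigma},
\]
where the last step uses that $W$ is nondecreasing. Letting $m\to\infty$ gives \eqref{eq:weighted-finite-horizon-bound}, and the $Z$ case is identical.

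When $0<\|w\|_\infty<\infty$, the bound $W_\tau\le\|w\|_\infty\tau$ is immediate. The e-detector property follows from the previous display, which holds for every stopping time after truncation and Fatou's lemma. For the ARL bound, if $\mathbb E\tau_{S,w}(b)=\infty$ there is nothing to prove. Otherwise, apply \eqref{eq:weighted-exact-edetector} with $\tau=\tau_{S,w}(b)$, which is finite a.s., and use $S_\tau\ge b$ to get $b\le\|w\|_\infty\mathbb E\tau$. The last display of the theorem follows from \eqref{eq:weighted-finite-horizon-bound} via $W_{\tau\wedge\sigma}\le\|w\|_\infty\sigma$.
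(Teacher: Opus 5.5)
Your proposal is correct and follows essentially the same route as the paper. It uses the per-restart martingales $\Lambda^{(k)}$, conditional monotone convergence plus Ville for Part 1, the per-restart bound $\mathbb E[M_{k,\tau}\mathbf 1\{\tau\ge k\}]\le\mathbb P(\tau\ge k)$ via optional stopping and Fatou summed by Tonelli for Part 2, and truncation at $\tau_{S,w}(b)\wedge\sigma\wedge m$ for the horizon bound.

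The only differences are cosmetic. You localize with the $\mathcal G_{k-1}$-measurable indicator where the paper uses the auxiliary stopping time $\sigma_k=\tau\vee(k-1)$; formally you should stop at $(\tau\wedge m)\vee(k-1)$, which changes nothing because the indicator vanishes otherwise. You also bound $W_{\rho_m}\le W_{\tau_{S,w}(b)\wedge\sigma}$ by monotonicity, where the paper invokes monotone convergence.
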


Thus, from \eqref{eq:weighted-finite-horizon-bound}, setting $\sigma$ to be any natural number yields that, when there is no change in the data distribution, our procedure satisfies the optional horizon inequality and therefore is guaranteed not to raise a false alarm too early. When $||w||_1 \leq 1,$ the resulting weighted process $\Tilde{S}^{(w)}$ is a test martingale, and hence an $e$-process. We call this a \textbf{conformal $e$-process}, just to distinguish it from Vovk's conformal test martingales. When $||w||_\infty \leq 1$, then the resulting weighted process is an $e$-detector following the definition of \cite{shin2023detectors}. We call that to be \textbf{conformal $e$-detector}. The above results provide the distribution-free validity of our change detectors. In the next section, we discuss their optimality properties. 

\begin{algorithm2e}[h!]
\LinesNumbered
\small
\caption{Change detection using weighted conformal martingales with PFA or ARL validity}
\label{alg: RCMM}
\KwIn{data stream $X_1,X_2,\ldots$ observed online; score function $s:\mathcal X\to\R$;
betting class $\mathcal F=\{f_\theta:\theta\in\Theta\}$ with prior $\Pi$;
regime $\texttt{PFA}$ at level $\alpha\in(0,1)$ or $\texttt{ARL}$ at level $\gamma\geq1$.}
\KwOut{alarm times $\tau_{S,w}(b)$ and $\tau_{Z,w}(b)$.}
\uIf{regime $=\texttt{PFA}$}{
  Choose weights $w=(w_k)_{k\geq1}$ with $w_k\geq0$ and
  $\|w\|_1=\sum_{k\geq1}w_k\leq1$,  set $b\leftarrow1/\alpha$\;
}
\Else{
  Choose weights $w=(w_k)_{k\geq1}$ with $w_k\geq0$ and
  $\|w\|_\infty\leq1$; set
  $b\leftarrow\gamma$\;
}
Initialize $S_0^{(w)}\leftarrow0$, $Z_0^{(w)}\leftarrow0$, and
$\tau_{S,w}(b)\leftarrow\infty$, $\tau_{Z,w}(b)\leftarrow\infty$\;
\For{$t=1,2,\ldots$}{
  Observe $X_t$ and draw $\lambda_t\sim U(0,1)$ independently of the data\;
  Compute the conformal $p$-value $p_t\leftarrow p_t^s(X_1,\ldots,X_t;\lambda_t)$
  as in \eqref{eq: randomized pval}\;
  Initialize the new restart: $G_{t,t}(\theta)\leftarrow f_\theta(p_t)$ for
  $\theta\in\Theta$\;
  \For{$k=1,\ldots,t-1$}{
    Update the local product
    $G_{k,t}(\theta)\leftarrow G_{k,t-1}(\theta)\,f_\theta(p_t)$\;
  }
  \For{$k=1,\ldots,t$}{
    Compute the local CMM
    $M_{k,t}\leftarrow\int_\Theta G_{k,t}(\theta)\,d\Pi(\theta)$\;
  }
  Form the weighted statistics
  $S_t^{(w)}\leftarrow\sum_{k=1}^t w_kM_{k,t}$ and
  $Z_t^{(w)}\leftarrow\max_{1\leq k\leq t}w_kM_{k,t}$\;
  \lIf{$S_t^{(w)}\geq b$ and $\tau_{S,w}(b)=\infty$}{$\tau_{S,w}(b)\leftarrow t$}
  \lIf{$Z_t^{(w)}\geq b$ and $\tau_{Z,w}(b)=\infty$}{$\tau_{Z,w}(b)\leftarrow t$}
  \lIf{$\tau_{S,w}(b)<\infty$ and $\tau_{Z,w}(b)<\infty$}{\Return $\tau_{S,w}(b)$, $\tau_{Z,w}(b)$}
}
\end{algorithm2e}
 
\section{Optimality properties}
\label{sec:optimality}
We now discuss the detection delay and minimax optimality of our procedures. The two fundamental questions often addressed assessing efficiency of any detector is, after a distribution change occurs, whether it can detect the change (consistency of a detector) and how long afterwards the alarm is raised (detection delay). Since the alarm is raised only when the monitoring statistic crosses the threshold depending on PFA or ARL level, consequently to detect the change at any level, the statistic needs to be arbitrary large in finite time.

However, in the usual setup, when the data satisfies (\ref{eq: data structure}), we can not immediately expect the statistic to be arbitrarily large almost surely. Because, as there is a finite number of observations before the distribution shift, the proportion of the observations from the pre-change distribution $P_0$ decreases over time, and eventually their influence on the overall sample becomes negligible. As a result, the statistic eventually behaves as though the observations were identically distributed as $P_1$, thereby diminishing the effect of the distribution shift. The next result formalizes this impossibility and thus motivates a growing changepoint regime. 

\subsection{An impossibility result} For a PFA-valid detector $\tau$, the consistency of the detector for a fixed changepoint requires $\mathbb{P}_{P_0,T,P_1}(\tau\le T+r\mid\tau>T)\to1$ as $r\to\infty$, for every fixed $T\in\mathbb{N}$ and every distinct pair $P_0,P_1$. The following proposition shows that this requirement cannot hold for a fixed changepoint $T$, when $P_0, P_1$ lie in a composite class of distributions. The result applies to arbitrary PFA-valid detectors, rather than only to conformal procedures or full-filtration supermartingales.

\begin{proposition}[Impossibility of universal consistency at a fixed changepoint]
\label{prop:fixed-changepoint-impossibility}
Let $\M$ be a convex class of probability distributions on $\mathcal{X}$ containing at least two distinct distributions, and fix $T\in\mathbb{N}$.

\begin{enumerate}
    \item Fix $\alpha\in[0,1)$, and let $\tau$ be a stopping time satisfying
    $
    \sup_{P\in\M}\mathbb{P}_{P,\infty}(\tau<\infty)\le\alpha.
    $
    Then, for every $P_0\in\M$,
    $
    \inf_{P_1\in\M\setminus\{P_0\}}
    \mathbb{P}_{P_0,T,P_1}(\tau<\infty)\le\alpha.
    $
    Moreover, for every $P_0,P_1\in\M$ such that $P_1\ll P_0$,
    $
    \mathbb{P}_{P_0,T,P_1}(\tau<\infty)<1.
    $

    \item Let $(E_n)_{n\ge0}$ be a nonnegative adapted process satisfying
    $
    \mathbb{E}_{P,\infty}[E_\sigma]\le1
    $
    for every $P\in\M$ and every bounded stopping time $\sigma$. Then, for every $P_0,P_1\in\M$ such that $P_0\ll P_1$,
    $
    \mathbb{P}_{P_0,T,P_1}\left(\sup_{n\ge0}E_n<\infty\right)=1.
    $
\end{enumerate}
\end{proposition}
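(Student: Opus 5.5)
The plan is to reduce every claim to the no-change law $\mathbb P_{P,\infty}$ for a suitable $P\in\M$, where the hypothesis on $\tau$ or $E$ applies directly. The key structural fact is that $\mathbb P_{P_0,T,P_1}$ and $\mathbb P_{P_1,\infty}$ differ only in the law of the first $T$ coordinates $(X_1,\dots,X_T)$. Both laws are products: the first $T$ coordinates, the tail $(X_{T+1},X_{T+2},\dots)$, and the randomizers $(\lambda_n)$, which have the same law under both. Hence, for any $P,Q$,
\[
\frac{d\mathbb P_{P_0,T,Q}}{d\mathbb P_{P,T,Q}}(X,\lambda)=\prod_{j=1}^T\frac{dP_0}{dP}(X_j)
\]
whenever $P_0\ll P$, and both total variation and absolute continuity are inherited from $P_0^{\otimes T}$ versus $P^{\otimes T}$. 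I would state this once as a lemma. The only technical step I expect to need care is this measure-theoretic bookkeeping on the infinite product space. It follows from Kolmogorov extension, or equivalently from writing the measures as products of a $T$-coordinate factor and a common tail factor.

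\textbf{Part 1, first claim.} I use convexity. Pick $Q\in\M$ with $Q\neq P_0$, and set $P_\varepsilon:=(1-\varepsilon)P_0+\varepsilon Q\in\M$ for $\varepsilon\in(0,1]$; then $P_\varepsilon\neq P_0$. By the lemma,
\[
\bigl|\mathbb P_{P_0,T,P_\varepsilon}(A)-\mathbb P_{P_\varepsilon,\infty}(A)\bigr|\le \mathrm{TV}\bigl(P_0^{\otimes T},P_\varepsilon^{\otimes T}\bigr)\le T\,\mathrm{TV}(P_0,P_\varepsilon)\le T\varepsilon
\]
for every event $A$. Taking $A=\{\tau<\infty\}$ and using PFA validity at $P_\varepsilon\in\M$ gives $\mathbb P_{P_0,T,P_\varepsilon}(\tau<\infty)\le\alpha+T\varepsilon$. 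Letting $\varepsilon\downarrow0$ yields the infimum bound.

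\textbf{Part 1, second claim.} Assume $P_1\ll P_0$. Then $P_1^{\otimes T}\ll P_0^{\otimes T}$, so by the lemma $\mathbb P_{P_1,\infty}\ll\mathbb P_{P_0,T,P_1}$. Since $P_1\in\M$, we have $\mathbb P_{P_1,\infty}(\tau=\infty)\ge1-\alpha>0$. The event $\{\tau=\infty\}$ therefore cannot be $\mathbb P_{P_0,T,P_1}$-null, which gives $\mathbb P_{P_0,T,P_1}(\tau<\infty)<1$.

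\textbf{Part 2.} First I show $\sup_n E_n<\infty$ almost surely under $\mathbb P_{P_1,\infty}$. For $b>0$ and $N\in\N$, let $\sigma:=\inf\{n:E_n\ge b\}\wedge N$, which is a bounded stopping time. On the event $\{\max_{n\le N}E_n\ge b\}$ we have $E_\sigma\ge b$, so Markov's inequality and the hypothesis give
\[
\mathbb P_{P_1,\infty}\Bigl(\max_{n\le N}E_n\ge b\Bigr)\le \frac1b .
\]
Letting $N\to\infty$ by monotone convergence and then $b\to\infty$ shows $\mathbb P_{P_1,\infty}(\sup_nE_n=\infty)=0$. Next, $P_0\ll P_1$ gives $\mathbb P_{P_0,T,P_1}\ll\mathbb P_{P_1,\infty}$ by the lemma, so this null event is also null under $\mathbb P_{P_0,T,P_1}$, which is the claim.
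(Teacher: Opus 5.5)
Your proof is correct and follows the paper's argument. The paper uses the same convex perturbation $P_\varepsilon=(1-\varepsilon)P_0+\varepsilon Q$ for the infimum bound, the same transfer via $\mathbb P_{P_1,\infty}\ll\mathbb P_{P_0,T,P_1}$ for the strict inequality, and the same stopping-at-the-first-crossing bound under $\mathbb P_{P_1,\infty}$ together with $\mathbb P_{P_0,T,P_1}\ll\mathbb P_{P_1,\infty}$ for Part 2. The only cosmetic difference is that you bound $\mathbb P_{P_0,T,P_\varepsilon}(\tau<\infty)$ additively by $\alpha+T\varepsilon$ via total variation, whereas the paper uses the domination $P_\varepsilon^{\otimes T}\ge(1-\varepsilon)^TP_0^{\otimes T}$ to get the multiplicative bound $\alpha/(1-\varepsilon)^T$.
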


Thus, for every fixed $P_0\in\M$, no distribution-free PFA-valid detector has power one against every $P_1\in\M\setminus\{P_0\}$ at a fixed changepoint, and no e-process valid under all no-change laws can be almost surely unbounded against every such alternative. The obstruction is that $\mathbb{P}_{P_0,T,P_1}$ and the no-change law $\mathbb{P}_{P_1,\infty}$ differ only in their first $T$ observations. Increasing the monitoring horizon supplies more observations from $P_1$, but no additional observations from the unknown pre-change law $P_0$. 

\begin{remark}
    \label{remark: impossibility theorem}
    The full data filtration does not circumvent this obstruction for universal test supermartingales as well; Theorem~\ref{th: impossibility theorem} in Appendix~\ref{apd:impossibility theorem} shows that any process $M$ which is a non-negative supermartingale with respect to the canonical filtration $\sigma(X_1,\cdots,X_n)$ under every $P\in\M$ remains a supermartingale under the fixed-changepoint alternative, thus $M_n \to M_\infty < \infty$ almost surely under the fixed changepoint alternative and in particular 
$\mathbb P_{P_0,T,P_1}
\left\{
\sup_{n\geq0}M_n\geq\frac1\alpha
\right\}
\leq\alpha.$
Thus, a level-$\alpha$ alarm based on a universal test supermartingale with respect to the canonical filtration has detection probability at most $\alpha$ under every fixed changepoint alternative. We therefore work with the reduced filtration of the conformal $p$-values, $\mathcal{G}_n:=\sigma(p_1,\ldots,p_n)$.
\end{remark}

This motivates studying consistency and detection delay for fixed separated alternatives while allowing the pre-change sample size to grow. We therefore take the data in the form of a double array $\{X_{n,j}\}_{n,j
\in \N}$, the $n$th row being an online sample with changepoint $T_n$, where
$T_n \in \N$ satisfies $T_n \to \infty$ as $n \to \infty$. This, in particular, ensures that the contribution of the pre-change observations remains non-negligible relative to the entire sample. 

\paragraph{Double-array setup.}
For an increasing sequence $\{T_n\}_{n\geq1}$ of natural numbers with $T_n\to\infty$, we assume that
\begin{equation}
\label{eqn:data structure}
X_{n,j}\stackrel{\mathrm{iid}}{\sim}P_0,\quad j\leq T_n;
\qquad
X_{n,j}\stackrel{\mathrm{iid}}{\sim}P_1,\quad j>T_n, \quad (X_{n,1},\cdots,X_{n,T_n}) \perp \{X_{n,j}\}_{j \geqslant T_n + 1}.
\end{equation}

Onwards, each statistic discussed so far will be applied to the $n$th row of the array to illustrate the case when the changepoint of the online data lies at $T_n$. For example, $S_{n,t}^{(w)}$ and $Z_{n,t}^{(w)}$ denote $S_{t}^{(w)}$ and $Z_{t}^{(w)}$ computed on $\{X_{n,j}\}_{j \geq 1}$ following definition~\ref{def:weighted-restarted-statistics} and define
\[
\tau_{n,S,w}(b):=\inf\{t\geq1:S_{n,t}^{(w)}\geq b\},
\qquad
\tau_{n,Z,w}(b):=\inf\{t\geq1:Z_{n,t}^{(w)}\geq b\}.
\]
For the analysis, we also need the additional assumption on the data that enforces the pre- and post-change distributions to be separated. 

\begin{assumption}[Post-change score-transform separation]
\label{assn:post-change-pvalue-separation}
The CDF $F_{P_0}$ and the function $H(t):=\mathbb P_{Y\sim P_1}\{F_{P_0}(Y)\leq t\},
\hspace{.2cm}t\in[0,1],$
are continuous, $H(0)=0$, and
\begin{equation}
\label{eq:Delta-separation}
\Delta:=\|H-F_{\mathrm U}\|_\infty
=\sup_{t\in[0,1]}|H(t)-t|>0.
\end{equation}
\end{assumption}

\begin{remark}
The continuity assumption on $H$ is mild. For example, if $F_{P_0}$ is continuous and $P_1\{x:F_{P_0}(x)=a\}=0$
for every $a\in[0,1],$
then $H(t):=\mathbb P_{Y\sim P_1}\{F_{P_0}(Y)\leq t\}$ is continuous. Thus the no-mass-on-level-sets condition ensures that $F_{P_0}(Y)$ has no atoms and hence that $H$ is continuous. When $F_{P_0}$ is strictly increasing, this condition follows from continuity of the post-change score law $P_1$.
\end{remark}

The variable $F_{P_0}(Y)$ is uniform when $Y\sim P_0$. If $F_{P_0}$ is continuous and strictly increasing, then $H=F_{\mathrm U}$ if and only if $P_1=P_0$, so $\Delta>0$ is equivalent to a genuine change in the score-transformed observations. However, it is imperative to note that the above condition is not required for the validity of our procedures, they are solely for the discussion of optimality properties. We also need some regularity conditions on the betting family. A detailed discussion on those conditions, along with concrete examples are in Appendix~\ref{apd:regularity of betting class}. 

\subsection{Upper bounds on detection delay} We now provide an upper bound on the detection delay for our detection method. At first, we state the general bound for any weight sequence $w := (w_k)_{k \geq 1}$ with $w_i \geq 0$, and then state the bounds separately for the PFA and ARL regime. As we will show later in this section, the established bounds are tight for both the PFA and ARL regimes in the first-order minimax sense over a class of late changepoints defined later, and the bounds are attained for our proposed method. 

The general bound is stated in terms of two
quantities: the effective boundary $\mathcal B_{n,w}(b_n)$, where $b_n$ is the threshold the
process must cross to raise the alarm, and the pre-change survival lower bound $q_{n,w}(b_n)$,
namely
\begin{equation}
\label{eq:weighted-survival-lower}
\mathcal B_{n,w}(b_n)
:=
\log\frac{b_n}{w_{T_n+1}},
\hspace{.3cm}
q_{n,w}(b_n)
:=
1-\min\left\{1,\frac{W_{T_n}}{b_n}\right\}
\leq
\mathbb P_{P_0,T_n,P_1}\{\tau_{n,S,w}(b_n)>T_n\},
\end{equation}
where the displayed inequality holds for $\tau_{n,Z,w}$ as well. The following result states the detection delay bound for a general weight sequence $w$.

\begin{theorem}[delay upper bound for general weight sequence]
\label{th:weighted-KL-upper}
Consider setup (\ref{eqn:data structure}) along with assumptions~\ref{assn:post-change-pvalue-separation} and~\ref{assn:local-KL-support} on the data and $(\F, \Pi)$. If
$\mathcal B_{n,w}(b_n)\to\infty,
\hspace{.2cm}
\mathcal B_{n,w}(b_n)=o(T_n),$
then, for every $A>1/D_{\mathrm{KL}}(H\|U)$,
\[\mathbb P_{P_0,T_n,P_1}
\left\{
\tau_{n,Z,w}(b_n)
\leq
T_n+A\mathcal B_{n,w}(b_n)
\right\}
\to1,\]
and hence the same statement holds for $\tau_{n,S,w}(b_n)$. If $\liminf_n q_{n,w}(b_n)>0$, then
$$\mathbb P_{P_0,T_n,P_1}\{\tau_{n,Z,w}(b_n)>T_n+A\mathcal B_{n,w}(b_n)\mid\tau_{n,Z,w}(b_n)>T_n\}\to0,$$
and
$\liminf_{n\to\infty}\mathbb P_{P_0,T_n,P_1}\{T_n<\tau_{n,Z,w}(b_n)\leq T_n+A\mathcal B_{n,w}(b_n)\}
\geq\liminf_{n\to\infty}q_{n,w}(b_n);$
the same statements hold for $\tau_{n,S,w}(b_n)$ as well.
\end{theorem}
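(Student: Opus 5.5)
We will reduce everything to the behaviour of the single restarted mixture $M_{n,T_n+1,t}$, since $Z_{n,t}^{(w)}\ge w_{T_n+1}M_{n,T_n+1,t}$ for $t\ge T_n+1$ and $S^{(w)}_{n,t}\ge Z^{(w)}_{n,t}$. This means $\tau_{n,Z,w}(b_n)$, and hence $\tau_{n,S,w}(b_n)$, occurs no later than the first $t>T_n$ with $\log M_{n,T_n+1,t}\ge\mathcal B_{n,w}(b_n)$. Set $m_n:=\lfloor A\mathcal B_{n,w}(b_n)\rfloor$, so $m_n\to\infty$ and $m_n=o(T_n)$. It then suffices to show
\[
\mathbb P_{P_0,T_n,P_1}\Bigl\{\tfrac1{m_n}\log M_{n,T_n+1,T_n+m_n}\ge \tfrac{1}{A}+\epsilon\Bigr\}\to1
\]
for some $\epsilon>0$ with $1/A+\epsilon< D_{\mathrm{KL}}(H\|U)$. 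Such an $\epsilon$ exists because $A>1/D_{\mathrm{KL}}(H\|U)$.

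\textbf{Step 1: post-change conformal $p$-values approximate $H$-distributed variables.} For $j=T_n+i$ with $1\le i\le m_n$, the conformal $p$-value $p_{n,j}$ is the randomized rank of $X_{n,j}$ among $T_n$ pre-change points and $i$ post-change points. Since $i/T_n\le m_n/T_n\to0$, the post-change contribution is $O(m_n/T_n)$. By Glivenko--Cantelli (or DKW) for the pre-change empirical CDF $\widehat F_{T_n}$, uniformly over $i\le m_n$,
\[
|p_{n,j}-F_{P_0}(X_{n,j})|\le \|\widehat F_{T_n}-F_{P_0}\|_\infty+O(m_n/T_n)=:\delta_n\to0
\]
in probability. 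This uses continuity of $F_{P_0}$, so ties have probability zero. Define $U_i:=F_{P_0}(X_{n,T_n+i})$. These are iid with law $H$, and they are independent of the pre-change sample.

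\textbf{Step 2: mixture lower bound via local KL support.} I take Assumption~\ref{assn:local-KL-support} to provide, for each $\eta>0$, a set $\Theta_\eta$ with $\Pi(\Theta_\eta)>0$ such that $\E_{U\sim H}\log f_\theta(U)\ge D_{\mathrm{KL}}(H\|U)-\eta$ on $\Theta_\eta$. I also take it to provide a modulus-of-continuity / envelope condition so that $\sup_{\theta\in\Theta_\eta}|\log f_\theta(u)-\log f_\theta(v)|$ is controlled when $|u-v|\le\delta$ (cf.\ Appendix~\ref{apd:regularity of betting class}). Restricting the integral to $\Theta_\eta$ and applying Jensen with respect to $\Pi(\cdot\mid\Theta_\eta)$ gives
\[
\log M_{n,T_n+1,T_n+m_n}\ge \log\Pi(\Theta_\eta)+\int_{\Theta_\eta}\sum_{i=1}^{m_n}\log f_\theta(p_{n,T_n+i})\,\frac{d\Pi(\theta)}{\Pi(\Theta_\eta)}.
\]
By Step 1 and the regularity, we may replace $p_{n,T_n+i}$ by $U_i$ at a cost of $m_n\,\omega(\delta_n)=o_P(m_n)$. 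The averaged quantity $\bar g(u):=\int_{\Theta_\eta}\log f_\theta(u)\,d\Pi/\Pi(\Theta_\eta)$ is a fixed function with $\E_H\bar g\ge D_{\mathrm{KL}}(H\|U)-\eta$. The weak law of large numbers for the triangular array of iid $\bar g(U_i)$, with $m_n\to\infty$, then gives $m_n^{-1}\sum_i\bar g(U_i)\ge D_{\mathrm{KL}}(H\|U)-2\eta$ with probability tending to one. The term $\log\Pi(\Theta_\eta)/m_n$ vanishes. Choosing $\eta$ small enough proves the display above and hence the first claim.

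\textbf{Step 3: conditional statements.} By \eqref{eq:weighted-survival-lower}, $\mathbb P\{\tau>T_n\}\ge q_{n,w}(b_n)$. Hence
\[
\mathbb P\{\tau>T_n+A\mathcal B\mid \tau>T_n\}\le \mathbb P\{\tau>T_n+A\mathcal B\}/q_{n,w}(b_n)\to0
\]
when $\liminf q>0$. Also $\mathbb P\{T_n<\tau\le T_n+A\mathcal B\}\ge \mathbb P\{\tau>T_n\}-\mathbb P\{\tau>T_n+A\mathcal B\}$, which yields the $\liminf$ bound. The survival bound itself follows from Theorem~\ref{th:weighted-null-validity} with $\sigma=T_n$, because the first $T_n$ observations are exchangeable.

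\textbf{Main obstacle.} I expect the hardest step to be the uniform replacement in Step 2 of $p$-values by $U_i$ inside $\log f_\theta$. Betting densities such as $\theta p^{\theta-1}$ are unbounded near $0$, so the log-perturbation is not uniformly Lipschitz. I would handle this by truncating near the endpoints: $H(0)=0$ and continuity of $H$ make $\mathbb P_H(U<\epsilon')$ small, and the randomized $p$-values are bounded below by roughly $\lambda/(T_n+i)$, so a truncated version of $\bar g$ with a small-mass remainder, controlled via the integrability provided by Assumption~\ref{assn:local-KL-support}, should suffice.
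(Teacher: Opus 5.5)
Your reduction to the single restart at $T_n+1$, the rank comparison in Step~1, and the survival/subtraction argument in Step~3 are all correct and match the paper. The gap is in Step~2, where you do not use Assumption~\ref{assn:local-KL-support} as stated. You replace it with two conditions of your own. The first is a Schwartz-type KL-support condition: positive prior mass on sets $\Theta_\eta$ on which $\int\log f_\theta\,dH\ge D_{\mathrm{KL}}(H\|U)-\eta$. The second is a modulus-of-continuity/envelope condition on the individual $\log f_\theta$. Neither is part of the hypothesis. The second is exactly what you need to replace $p_{n,T_n+i}$ by $U_i$ inside $\bar g$. Your ``main obstacle'' paragraph then appeals to integrability ``provided by'' the assumption for an endpoint truncation, which the assumption also does not supply. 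As written, the argument proves the theorem under different, additional hypotheses, not the stated ones.

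The missing idea is that Assumption~\ref{assn:local-KL-support} is a deterministic inequality valid for \emph{every} $z_1,\ldots,z_r\in[0,1]$. You can therefore apply it pathwise with the data-dependent choice $z_i=p_{n,T_n+i}$:
\[
\log M_{n,T_n+1,T_n+m_n}\ \geq\ \sum_{i=1}^{m_n}\phi(p_{n,T_n+i})-a\log m_n-c,\qquad \phi=\log h .
\]
All regularity is then carried by the single fixed function $\phi$, which the assumption requires to be bounded and Lipschitz. Your Step~1 gives $\bigl|\sum_i\{\phi(p_{n,T_n+i})-\phi(U_i)\}\bigr|\leq L_\phi m_n\delta_n=o_{\mathbb P}(m_n)$. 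The weak law for the bounded iid variables $\phi(U_i)$ gives $m_n^{-1}\sum_i\phi(U_i)\to\int\log h\,dH=D_{\mathrm{KL}}(H\|U)$ in probability, and $a\log m_n+c=o(m_n)$. Together these yield your target display with no Jensen step, no conditions on individual $f_\theta$, and no truncation; this is the paper's proof. The unbounded-density issue you worry about never arises, because the $f_\theta$ enter only through the mixture, which the assumption already lower-bounds.
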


 Thus, if $\liminf_n q_{n,w}(b_n)>0$, the corresponding conditional late-alarm probabilities tend to zero and the correct-window probabilities have liminf at least $\liminf_n q_{n,w}(b_n)$. 

 \begin{remark}
  \label{remark:exp-tilt-KL-approx} 
  The choice of $(\mathcal F,\Pi)$ in Theorem~\ref{th:weighted-KL-upper} is not rigid. For example, instead of choosing $(\mathcal{F}, \Pi)$ as per Assumption~\ref{assn:local-KL-support} as stated, we can choose the exponential-tilt pair
$(\mathcal F_{\exp},\Pi_{\exp})$ of Proposition~\ref{ex:countable-exp-tilt-family} with
$\theta_0>\|\phi-\int_0^1\phi(v)\,dv\|_\infty$, where $\phi:=\log (dH/dU)$ is bounded and lipschitz, and that
yields the same conclusion of Theorem~\ref{th:weighted-KL-upper}. A detailed illustration regarding choosing the betting class is available in the next section.
 \end{remark}

The above result is for general weight sequence $w := (w_k)_{k \geq 1}$. Therefore, choosing the weights appropriately and satisfying $\sum_{k=1}^\infty w_k \leq 1$ or $\| w\| \leq 1$ yields the corresponding delay bounds for the PFA or ARL controlled processes, respectively, outlined in the next two results.

\begin{corollary}[Detection delay for PFA controlled process]
\label{cor:RCMM-pointwise-KL}
Let $w_k\propto k^{-(1+\eta)}$ with $\eta>0$ and $0<\sum_{k=1}^{\infty}w_k\leq1$, fix $\alpha\in(0,1)$, and assume
Assumption \ref{assn:post-change-pvalue-separation} and ~\ref{assn:local-KL-support}. Then, for every
$A>(1+\eta)/D_{\mathrm{KL}}(H\|U)$, we have, $\mathbb P_{P_0,T_n,P_1}
\left\{
\tau_{n,Z,w}\bigg(\frac{1}{\alpha}\bigg)
\leq
T_n+A\log T_n
\right\}
\to1,$
\[
\liminf_{n\to\infty}
\mathbb P_{P_0,T_n,P_1}
\left\{
T_n<
\tau_{n,Z,w}\bigg(\frac{1}{\alpha}\bigg)
\leq
T_n+A\log T_n
\right\}
\geq 1-\alpha,
\]
while $\mathbb P_{P_0,T_n,P_1}
\left\{
\tau_{n,Z,w}\bigg(\frac{1}{\alpha}\bigg)
>
T_n+A\log T_n
\,\middle|\,
\tau_{n,Z,w}\bigg(\frac{1}{\alpha}\bigg)>T_n
\right\}
\to0,$ and all of these conclusions follows for $\tau_{n,S,w}.$
\end{corollary}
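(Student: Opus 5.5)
This corollary is a direct specialization of Theorem~\ref{th:weighted-KL-upper} with $b_n\equiv 1/\alpha$ and $w_k=c\,k^{-(1+\eta)}$, where $c:=\bigl(\sum_{k\ge1}k^{-(1+\eta)}\bigr)^{-1}\cdot\|w\|_1\in(0,\zeta(1+\eta)^{-1}]$ is a fixed positive constant. The plan is to verify the two hypotheses on the effective boundary, translate the window $A\mathcal B_{n,w}(b_n)$ into $A\log T_n$, and lower bound the survival quantity $q_{n,w}(1/\alpha)$ by $1-\alpha$.

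\emph{Boundary computation.} We have
\[
\mathcal B_{n,w}(1/\alpha)=\log\frac{(T_n+1)^{1+\eta}}{c\,\alpha}=(1+\eta)\log(T_n+1)+\log\frac{1}{c\alpha}.
\]
Since $T_n\to\infty$, this tends to infinity and is $O(\log T_n)=o(T_n)$. Hence Theorem~\ref{th:weighted-KL-upper} applies.

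\emph{Choosing the constant.} Given $A>(1+\eta)/D_{\mathrm{KL}}(H\|U)$, pick $A'$ with $1/D_{\mathrm{KL}}(H\|U)<A'<A/(1+\eta)$. This is possible because $A/(1+\eta)>1/D_{\mathrm{KL}}(H\|U)$. Then
\[
A'\mathcal B_{n,w}(1/\alpha)=A'(1+\eta)\log T_n+O(1)\le A\log T_n
\]
for all large $n$, since $A'(1+\eta)<A$ and $\log(T_n+1)-\log T_n\to0$. The event $\{\tau\le T_n+A'\mathcal B_{n,w}\}$ is therefore contained in $\{\tau\le T_n+A\log T_n\}$ eventually. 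Applying the theorem with $A'$ gives the first claim.

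\emph{Survival bound and conditional statements.} Because $W_{T_n}\le\|w\|_1\le1$, we get $q_{n,w}(1/\alpha)=1-\min\{1,\alpha W_{T_n}\}\ge1-\alpha>0$. So $\liminf_n q_{n,w}\ge1-\alpha$, and the second part of Theorem~\ref{th:weighted-KL-upper} (with $A'$) yields the following, after the same monotone inclusion of windows:
\begin{itemize}
\item the correct-window probability satisfies $\liminf\ge1-\alpha$;
\item the conditional late-alarm probability tends to $0$.
\end{itemize}
The conditioning event $\{\tau>T_n\}$ is unchanged, and the late event for $A\log T_n$ is contained in the late event for $A'\mathcal B_{n,w}$. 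Everything transfers verbatim to $\tau_{n,S,w}$, since the theorem states the same conclusions for it; alternatively, $S\ge Z$ implies $\tau_S\le\tau_Z$, while survival past $T_n$ is controlled by the same $q$.

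The only mildly delicate point is the slack between $A$ and $(1+\eta)A'$ needed to absorb the additive constant $\log(1/(c\alpha))$ and the shift from $T_n+1$ to $T_n$. This is why the statement requires strict inequality $A>(1+\eta)/D_{\mathrm{KL}}$. Otherwise the argument is bookkeeping, and no new probabilistic estimates are needed.
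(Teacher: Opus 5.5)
Your proposal is correct and follows the paper's own proof. Like the paper, you specialize Theorem~\ref{th:weighted-KL-upper} with $b_n\equiv 1/\alpha$, use $\mathcal B_{n,w}(1/\alpha)=(1+\eta)\log T_n+O(1)$, and get $q_{n,w}(1/\alpha)\geq 1-\alpha$ from $W_{T_n}\leq 1$. Your explicit intermediate constant $A'\in(1/D_{\mathrm{KL}}(H\|U),\,A/(1+\eta))$ is the same device the paper uses in the analogous proof of Corollary~\ref{cor:RCMM-log-delay}, and it makes rigorous the window comparison that this corollary's proof leaves implicit.
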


Thus, for appropriately chosen weights, the detection delay for the PFA-controlled process is of order $O_{\mathbb P}\!\left(\frac{\log T_n}{D_{\mathrm{KL}}(H\|U)}\right).$ The following choices of weights can improve the constant in the delay from $\frac{1+\eta}{D_{\mathrm{KL}}(H||U)}$ to $\frac{1}{D_{\mathrm{KL}}(H||U)}$.
\begin{remark}[Near-harmonic restart weights]
Let $\widetilde w_k:=c_{\mathrm{nh}}[k\{\log(e+k)\}^2]^{-1}$, where
$c_{\mathrm{nh}}^{-1}:=\sum_{\ell=1}^{\infty}
[\ell\{\log(e+\ell)\}^2]^{-1}$.
Then, for every $A>1/D_{\mathrm{KL}}(H\|U)$,
\[
\mathbb P_{P_0,T_n,P_1}
\left\{
\tau_{n,Z,\widetilde w}\bigg(\frac{1}{\alpha}\bigg)
\leq
T_n+A\{\log T_n+2\log\log T_n\}
\right\}
\to1,
\]
\[
\liminf_{n\to\infty}
\mathbb P_{P_0,T_n,P_1}
\left\{
T_n<
\tau_{n,Z,\widetilde w}\bigg(\frac{1}{\alpha}\bigg)
\leq
T_n+A\{\log T_n+2\log\log T_n\}
\right\}
\geq1-\alpha,
\]
\[
\mathbb P_{P_0,T_n,P_1}
\left\{
\tau_{n,Z,\widetilde w}\bigg(\frac{1}{\alpha}\bigg)
>
T_n+A\{\log T_n+2\log\log T_n\}
\,\middle|\,
\tau_{n,Z,\widetilde w}\bigg(\frac{1}{\alpha}\bigg)>T_n
\right\}
\to0,
\]
and all of these follows for $\tau_{n,S,w}$ as well.
\end{remark}

The next result establishes the upper bound on detection delay on the ARL-valid procedure of Algorithm~\ref{alg: RCMM} with appropriately chosen weights.
\begin{corollary}[Detection delay for ARL controlled process]
\label{cor:ARL-KL-sharp}
Let $w_k\equiv1$ and suppose that $\gamma_n\to\infty$ and
$\log\gamma_n=o(T_n)$. Under
Assumption~\ref{assn:post-change-pvalue-separation} and ~\ref{assn:local-KL-support}, for every
$A>1/D_{\mathrm{KL}}(H\|U)$, $\mathbb P_{P_0,T_n,P_1}
\left\{
\tau_{n,Z,w}(\gamma_n)
\leq
T_n+A\log\gamma_n
\right\}
\to1.$
If, in addition, $T_n=o(\gamma_n)$, then,
\[
\mathbb P_{P_0,T_n,P_1}
\left\{
\tau_{n,Z,w}(\gamma_n)>T_n
\right\}
\geq
1-\frac{T_n}{\gamma_n}
\to1, \quad
\mathbb P_{P_0,T_n,P_1}
\left\{
T_n<
\tau_{n,Z,w}(\gamma_n)
\leq
T_n+A\log\gamma_n
\right\}
\to1,
\]
$\mathbb P_{P_0,T_n,P_1}
\left\{
\tau_{n,Z,w}(\gamma_n)
>
T_n+A\log\gamma_n
\,\middle|\,
\tau_{n,Z,w}(\gamma_n)>T_n
\right\}
\to0,$ and all of these follows for $\tau_{n,S,w}$ as well.
\end{corollary}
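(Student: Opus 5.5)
This corollary is a specialization of Theorem~\ref{th:weighted-KL-upper} to $w_k\equiv1$ and $b_n=\gamma_n$, combined with the finite-horizon bound of Theorem~\ref{th:weighted-null-validity}. The plan has three steps.

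\textbf{Step 1: the effective boundary.} With unit weights, $w_{T_n+1}=1$, so $\mathcal B_{n,w}(\gamma_n)=\log\gamma_n$. The hypotheses $\gamma_n\to\infty$ and $\log\gamma_n=o(T_n)$ are then exactly the two conditions $\mathcal B_{n,w}(b_n)\to\infty$ and $\mathcal B_{n,w}(b_n)=o(T_n)$ required by Theorem~\ref{th:weighted-KL-upper}. Applying that theorem under Assumptions~\ref{assn:post-change-pvalue-separation} and~\ref{assn:local-KL-support}, for every $A>1/D_{\mathrm{KL}}(H\|U)$ we get
\[
\mathbb P_{P_0,T_n,P_1}\{\tau_{n,Z,w}(\gamma_n)\le T_n+A\log\gamma_n\}\to1,
\]
and the same statement for $\tau_{n,S,w}$.

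\textbf{Step 2: survival past the changepoint.} We lower bound $\mathbb P\{\tau>T_n\}$ by $1-T_n/\gamma_n$. Note that $W_{T_n}=T_n$, so $q_{n,w}(\gamma_n)=1-\min\{1,T_n/\gamma_n\}$, and the inequality displayed in \eqref{eq:weighted-survival-lower} gives the claim directly. To verify that inequality, observe that the event $\{\tau\le T_n\}$ is determined by $p_1,\dots,p_{T_n}$. These are functions of $X_{n,1},\dots,X_{n,T_n}$ and the randomizers only, and the first $T_n$ observations are iid $P_0$. Hence $\{\tau\le T_n\}$ has the same probability under $\mathbb P_{P_0,T_n,P_1}$ as under $\mathbb P_{P_0,\infty}$. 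Now apply \eqref{eq:weighted-finite-horizon-bound} with the deterministic stopping time $\sigma=T_n$; since $W_{\tau\wedge T_n}\le W_{T_n}=T_n$, this yields $\mathbb P\{\tau\le T_n\}\le T_n/\gamma_n$. Under $T_n=o(\gamma_n)$ this bound tends to $0$, so $\mathbb P\{\tau>T_n\}\ge 1-T_n/\gamma_n\to1$, and in particular $\liminf_n q_{n,w}(\gamma_n)=1>0$.

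\textbf{Step 3: the window and conditional statements.} Since $\liminf_n q_{n,w}(\gamma_n)>0$, the second half of Theorem~\ref{th:weighted-KL-upper} applies. It gives that the conditional late-alarm probability $\mathbb P\{\tau>T_n+A\log\gamma_n\mid\tau>T_n\}$ tends to $0$. It also gives $\liminf_n\mathbb P\{T_n<\tau\le T_n+A\log\gamma_n\}\ge\liminf_n q_{n,w}(\gamma_n)=1$, so the window probability tends to $1$. Alternatively, the window probability can be obtained directly: it is at least $\mathbb P\{\tau\le T_n+A\log\gamma_n\}-\mathbb P\{\tau\le T_n\}\to 1-0=1$.

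Every statement holds verbatim for $\tau_{n,S,w}$. Indeed, $S\ge Z$ gives $\tau_S\le\tau_Z$, which transfers the upper bounds. Theorem~\ref{th:weighted-null-validity} gives the same survival bound for $S$, and Theorem~\ref{th:weighted-KL-upper} covers $S$ explicitly.

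\textbf{Main obstacle.} The only delicate point is the justification in Step~2 that pre-change conformal $p$-values carry the no-change law. It requires checking that $p_t$ for $t\le T_n$ depends only on the first $t$ observations and on randomizers independent of the data. Everything else is bookkeeping on top of Theorem~\ref{th:weighted-KL-upper}.
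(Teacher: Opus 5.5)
Your proposal is correct and follows the paper's proof. Like the paper, you specialize Theorem~\ref{th:weighted-KL-upper} with $\mathcal B_{n,w}(\gamma_n)=\log\gamma_n$, and then use \eqref{eq:weighted-survival-lower} with $W_{T_n}=T_n$ to get $q_{n,w}(\gamma_n)\geq 1-T_n/\gamma_n$, which yields the window and conditional conclusions when $T_n=o(\gamma_n)$. The one addition is your Step~2, which justifies \eqref{eq:weighted-survival-lower} (pre-change and no-change laws agree on $\mathcal G_{T_n}$, combined with \eqref{eq:weighted-finite-horizon-bound} at $\sigma=T_n$), whereas the paper simply cites that inequality.
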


Thus the detection delay for the ARL-controlled process with appropriately chosen weights is of order $O_\P\bigg(\frac{\log \gamma_n}{D_{\mathrm{KL}}(H||U)}\bigg)$. As shown below, these bounds are first-order sharp: they match the corresponding information-theoretic lower bounds, and the proposed procedure attains the optimal leading detection-delay constant.

\subsection{A restricted minimax guarantee for PFA}

To formulate the minimax optimality of the proposed change detection method, we
need to understand the best detection window achievable by any valid detection
procedure. Let $P_1\ll P_0$ on $\mathcal X$ and write $L:=\frac{dP_1}{dP_0}$ and
$D:=D_{\mathrm{KL}}(P_1\|P_0)=\mathbb E_{P_1}\log L(X)\in(0,\infty)$; Let $\mathfrak T_{\rm raw}$ and $\mathfrak T_{\rm rand}$ be the set of all stopping times with respect to the filtrations $\mathcal A_t:=\sigma(X_1,\ldots,X_t)$ and $\mathcal
H_t:=\sigma(X_1,\ldots,X_t,\lambda_1,\ldots,\lambda_t)$ respectively, where the $\lambda_t$ are
external randomizers. Finally, fix $\alpha,\delta,\kappa\in(0,1)$ and let
$I_N(\kappa):=\{\lceil\kappa N\rceil,\ldots,N\}$ be the range of late
changepoints.

Take any procedure $\tau$ that is PFA-valid at level $\alpha$. Its required
detection window is the smallest $r$ such that, uniformly over every changepoint
$T\in I_N(\kappa)$, the procedure detects within $r$ observations with probability
at least $1-\delta$, namely $\min\{r\in\mathbb N:\inf_{T\in I_N(\kappa)}\mathbb
P_{P_0,T,P_1}\{T<\tau\leq T+r\}\geq1-\delta\}$. Minimizing this window over all
PFA-valid procedures gives the restricted minimax detection window: for
$\mathfrak T\in\{\mathfrak T_{\rm raw},\mathfrak T_{\rm rand}\}$,
\begin{align}
\label{eq:restricted-minimax-radius}
d_N^*(\mathfrak T;\alpha,\delta,\kappa)
:={}&
\inf_{\substack{\tau\in\mathfrak T:\\
\mathbb P_{P_0,\infty}(\tau<\infty)\leq\alpha}}
\min\left\{r\in\mathbb N:
\inf_{T\in I_N(\kappa)}
\mathbb P_{P_0,T,P_1}\{T<\tau\leq T+r\}
\geq1-\delta
\right\},
\end{align}
where the minimum of the empty set is infinity. The next result gives the
lower bound of $d_N^*$ as $N\to\infty$, and shows that the proposed method of PFA-valid change detection in algorithm~\ref{alg: RCMM} attains the lower bound in first order, thus establishing the first order restricted minimax optimality of our method for the late changepoints.



\begin{theorem}[Restricted PFA minimaxity for late changepoints]
\label{th:restricted-PFA-minimax}
Consider the setup of (\ref{eq: data structure}).
Fix $\alpha,\delta,\kappa\in(0,1)$ and a iid pair $(P_0,P_1)$ with $P_1\ll P_0$ and $0<D<\infty$.
\begin{enumerate}
\item For both nonrandomized and randomized stopping times,
\[
\liminf_{N\to\infty}
\frac{d_N^*(\mathfrak T_{\rm raw};\alpha,\delta,\kappa)}{\log N}
\geq\frac1D,
\qquad
\liminf_{N\to\infty}
\frac{d_N^*(\mathfrak T_{\rm rand};\alpha,\delta,\kappa)}{\log N}
\geq\frac1D.
\]
\item Suppose additionally that the $P_0$-CDF $F_{s^*,P_0}$ of the likelihood-ratio score $s^*(x)=L(x)$ is continuous and Assumption~\ref{assn:local-KL-support} holds for the pair $(\mathcal F,\Pi)$ with $H$ equal to the law of $F_{s^*,P_0}(s^*(X))$ under $X\sim P_1$. Choose a fixed $0<\bar\alpha\leq\alpha$ with $\bar\alpha<\delta$, and let $w_k:=c_{\mathrm{nh}}[k\{\log(e+k)\}^2]^{-1}$, where $c_{\mathrm{nh}}^{-1}:=\sum_{\ell=1}^{\infty}[\ell\{\log(e+\ell)\}^2]^{-1}$. Let $\tau\in\{\tau_{Z,w}(\frac{1}{\bar\alpha}),\tau_{S,w}(\frac{1}{\bar\alpha})\}$ denote either the weighted maximum or sum conformal restart rule computed using the score $s^*$ with these weights and threshold $1/\bar\alpha$. Then its PFA is at most $\bar\alpha$ and, for every $c>1/D$,
\[
\sup_{T\in I_N(\kappa)}
\mathbb P_{P_0,T,P_1}
\{\tau>T+\lceil c\log N\rceil\}
\longrightarrow0.
\]
It follows that
\[
\frac{d_N^*(\mathfrak T_{\rm rand};\alpha,\delta,\kappa)}{\log N}
\longrightarrow\frac1D,
\]
and the PFA-valid procedure of algorithm \ref{alg: RCMM} attains the lower bound.
\end{enumerate}
\end{theorem}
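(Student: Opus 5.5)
For the lower bound in part 1, I will use a change-of-measure argument of Lai type, working on the randomized filtration $\mathcal H_t$. The raw case follows because $\mathfrak T_{\rm raw}\subseteq\mathfrak T_{\rm rand}$, so $d_N^*(\mathfrak T_{\rm raw})\ge d_N^*(\mathfrak T_{\rm rand})$.

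Fix $c<1/D$ and set $r=\lceil c\log N\rceil$. Suppose $\tau$ is PFA-valid and has window $r$ at level $1-\delta$ on $I_N(\kappa)$. Take disjoint blocks $(T_i,T_i+r]$ with $T_i=\lceil\kappa N\rceil+ir$, for $i<m:=\lfloor(1-\kappa)N/r\rfloor$. Since $\tau$ is a single stopping time, the events $\{T_i<\tau\le T_i+r\}$ are disjoint, so
\[
\sum_i\mathbb P_{P_0,\infty}\{T_i<\tau\le T_i+r\}\le\alpha.
\]
Hence some block $i$ has null probability at most $\alpha/m$. On that block, $\mathbb P_{P_0,T_i,P_1}$ and $\mathbb P_{P_0,\infty}$ agree on $\mathcal H_{T_i}$. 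The randomizers have the same law under both measures, so the likelihood ratio on $\mathcal H_{T_i+r}$ is $\Lambda_r=\prod_{j=T_i+1}^{T_i+r}L(X_j)$.

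Pick $c<c'<1/D$ and split the detection event on $\{\log\Lambda_r\le c'D\log N\cdot(1+o(1))\}$:
\[
\mathbb P_{P_0,T_i,P_1}\{T_i<\tau\le T_i+r\}\le N^{c'D(1+o(1))}\frac{\alpha}{m}+\mathbb P_{P_1}\Bigl\{\tfrac1r\sum_{j\le r}\log L(X_j)>\tfrac{c'}{c}D\Bigr\}.
\]
The second term tends to zero by the weak law of large numbers, since $r\to\infty$ and $D<\infty$ is the mean. The first term is $O(N^{c'D-1}\log N)\to0$, because $m\asymp N/\log N$. This contradicts the required $1-\delta$ for large $N$, which gives $\liminf_N d_N^*/\log N\ge 1/D$.

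For the upper bound in part 2, PFA $\le\bar\alpha$ follows from Theorem~\ref{th:weighted-null-validity}(1), since $\|w\|_1=1$ and the threshold is $1/\bar\alpha$. For the delay, I will reuse the argument behind Theorem~\ref{th:weighted-KL-upper} and the near-harmonic remark, made uniform over $T\in I_N(\kappa)$. The proof of Theorem~\ref{th:weighted-KL-upper} only uses $T\to\infty$ with $\mathcal B=\log(b/w_{T+1})=\log T+2\log\log T+O(1)=o(T)$. These quantities are uniform over $T\in[\kappa N,N]$, since $\log T=\log N+O(1)$ there. I therefore expect the single-sequence convergence to upgrade to a supremum by a subsequence argument: pick near-maximizers $T_N\in I_N(\kappa)$ and apply the theorem to the double array with $T_n=T_N$, noting that $T_N\to\infty$. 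This yields $\sup_T\mathbb P\{\tau>T+c\log N\}\to0$ for $c>1/D_{\rm KL}(H\|U)$.

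It then remains to identify $D_{\rm KL}(H\|U)=D$ for the likelihood-ratio score $s^*=L$. With $F=F_{s^*,P_0}$ continuous, $F(L(X))$ is uniform under $P_0$. Its density under $P_1$ at $u$ is $\mathbb E_{P_0}[L\mid F(L)=u]=L$, because $F$ is monotone in $L$. So $dH/dU(F(L))=L$ and $D_{\rm KL}(H\|U)=\mathbb E_{P_1}\log L=D$; this is the equality case of the score data-processing inequality.

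Finally, the detection probability must also account for not alarming before $T$. Under the change measure, $\mathbb P\{\tau\le T\}=\mathbb P_{P_0,\infty}\{\tau\le T\}\le\bar\alpha$. So $\inf_T\mathbb P\{T<\tau\le T+r\}\ge 1-\bar\alpha-o(1)\ge1-\delta$ eventually, because $\bar\alpha<\delta$. Therefore $d_N^*\le\lceil c\log N\rceil$ for every $c>1/D$, and combined with part 1 this gives $d_N^*(\mathfrak T_{\rm rand})/\log N\to1/D$.

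The main obstacle is the uniformity over $T$ in part 2. The limit theorems in Theorem~\ref{th:weighted-KL-upper} concern the conformal $p$-values after the change, and their behaviour depends on the pre-change empirical CDF built from $T$ points. The subsequence argument above should handle this, provided the cited theorem genuinely holds along arbitrary sequences $T_n\to\infty$, which its statement asserts.
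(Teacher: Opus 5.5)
Your proof is correct. Part~1 is essentially the paper's argument: disjoint blocks of length $r$ inside $I_N(\kappa)$, the PFA budget $\alpha$ spread over roughly $N/\log N$ of them, and a change of measure on $\mathcal H_{\nu+r}$ restricted to the event where the log-likelihood ratio is at most about $rD$. Two small points are implicit in your writeup. First, integrability of $\log L$ under $P_1$, needed for your weak-law step, follows from $D<\infty$ because the negative part has $P_1$-expectation at most $e^{-1}$. Second, the conclusion $d_N^*>\lceil c\log N\rceil$ uses that the window criterion is monotone in $r$.

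Organizationally, you fix $r=\lceil c\log N\rceil$ with $c<1/D$, select one block by pigeonhole, and change measure forward. This sidesteps the paper's separate step showing that every feasible window sequence diverges.

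Part~2 is where you genuinely differ. The paper gets uniformity over $T\in I_N(\kappa)$ quantitatively. It applies the non-asymptotic bound of Proposition~\ref{prop:weighted-second-order} at every $\nu$ with failure probability $\varepsilon_N=1/\log N$, and checks that all correction terms in \eqref{eq:weighted-quantile-corollary} are $o(\log N)$ uniformly. This yields an explicit uniform rate. You get uniformity softly instead: $I_N(\kappa)$ is finite, so the supremum is attained at some $T_N^*\geq\kappa N$. Then Theorem~\ref{th:weighted-KL-upper} applied along $T_N^*$, with $A\in(1/D,c)$ absorbing the $2\log\log N$ term in $\mathcal B$, finishes with no new estimates.

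The only technicality is that the double-array setup is stated for increasing $T_n$. Either argue by contradiction and extract a strictly increasing subsequence of the $T_N^*$, or note that the proof of Theorem~\ref{th:weighted-KL-upper} is rowwise and never uses monotonicity.
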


\subsection{A restricted minimax guarantee for ARL}
We now formulate the corresponding optimality question for the ARL regime.
Retain the simple-pair setup, $L$, $D$, $I_N(\kappa)$, and the
stopping-time classes introduced before
Theorem~\ref{th:restricted-PFA-minimax}. In the PFA regime, the constraint
$\mathbb P_{P_0,\infty}(\tau<\infty)\leq\alpha$ already restricts the probability
of an alarm at any early time, since any alarm under no change is a false alarm.
The ARL constraint does not have this consequence: a procedure may stop at an
early time with substantial probability and still have a large average run length,
provided it runs sufficiently long on the remaining paths. A minimax statement
under the ARL constraint alone would therefore admit procedures whose small delay
is attributable to frequent alarms raised before the change. An additional
restriction on the false-alarm probability at every finite horizon is thus
required, and the unit-weight bounds in Theorem~\ref{th:weighted-null-validity}
provide one of the linear form $m/\gamma$. Accordingly, we impose both an ARL
constraint and linear control of early false alarms: for $\gamma>1$ and
$\mathfrak T\in\{\mathfrak T_{\rm raw},\mathfrak T_{\rm rand}\}$, define
\begin{equation}
\label{eq:restricted-arl-class}
\mathcal C_{\gamma}^{\rm lin}(\mathfrak T)
:=
\left\{
\tau\in\mathfrak T:
\gamma\leq\mathbb E_{P_0,\infty}\tau<\infty,\quad
\mathbb P_{P_0,\infty}(\tau\leq m)\leq\frac{m}{\gamma}
\quad\forall m\in\mathbb N
\right\}.
\end{equation}
This class contains the procedures constructed in this paper and excludes those
whose delay is small only on account of pre-change alarms. Fix $\kappa\in(0,1)$
and let $N_\gamma\in\mathbb N$ satisfy
\begin{equation}
\label{eq:restricted-arl-regime}
N_\gamma\longrightarrow\infty,
\qquad
\log\gamma=o(N_\gamma),
\qquad
N_\gamma=o(\gamma).
\end{equation}
For $N_\gamma<\gamma$, which holds eventually, define
\begin{align}
\label{eq:restricted-arl-risk}
\mathcal J_{\gamma}^{\kappa}(\tau)
&:=
\sup_{\nu\in I_{N_\gamma}(\kappa)}
\mathbb E_{P_0,\nu,P_1}[\tau-\nu\mid\tau>\nu],
\qquad \tau\in\mathcal C_{\gamma}^{\rm lin}(\mathfrak T),\\
\label{eq:restricted-arl-minimax}
\mathcal R_{\gamma}^{\rm lin}(\mathfrak T;\kappa)
&:=
\inf_{\tau\in\mathcal C_{\gamma}^{\rm lin}(\mathfrak T)}
\mathcal J_{\gamma}^{\kappa}(\tau).
\end{align}
The risk \eqref{eq:restricted-arl-risk} is conditioned on survival to the
changepoint, since a procedure that has already stopped cannot be regarded as
having detected the change, and the conditioning probabilities are at least
$1-N_\gamma/\gamma>0$. As in the PFA case, the changepoint is restricted to the
late window $I_{N_\gamma}(\kappa)$, and the two scale restrictions in
\eqref{eq:restricted-arl-regime} make a logarithmic detection window negligible
relative to the calibration sample and make pre-change false alarms asymptotically
negligible. The next result gives a lower bound on the minimax risk
$\mathcal R_{\gamma}^{\rm lin}(\mathfrak T;\kappa)$ and shows that our ARL
controlled procedure attains it to first order.

\begin{theorem}[Restricted ARL minimaxity for late changepoints]
\label{thm:restricted-arl-lower}
Assume \eqref{eq:restricted-arl-regime} and retain the setup $P_1\ll P_0$ with $0<D<\infty$.
\begin{enumerate}
    \item For any randomized or non-randomized $\mathfrak T\in\{\mathfrak T_{\rm raw},\mathfrak T_{\rm rand}\},$ $\liminf_{\gamma\to\infty}
\frac{\mathcal R_{\gamma}^{\rm lin}(\mathfrak T;\kappa)}
     {\log\gamma}
\geq\frac1D.$
    \item In addition, suppose the likelihood-ratio score $s^*(x):=L(x)$ has continuous $P_0$-distribution
function $F_{s^*,P_0}$, and
Assumption~\ref{assn:local-KL-support} holds for
$(\mathcal F,\Pi)$ with $H$ equal to the law of $F_{s^*,P_0}(s^*(X))$ under $X\sim P_1$. Fix $\rho\in(0,1)$ and for the sequence of weights $w := (w_k)_{k \geq 1}$ satisfying $w_k\equiv1,$ define $\overline M_{k,t}^{\rho}
:=
\rho+(1-\rho)M_{k,t},\hspace{.2cm}
\overline S_t^{(w,\rho)}
:=
\sum_{k=1}^t w_k\overline M_{k,t}^{\rho}, \hspace{.2cm} \Bar{\tau}_{S,w}(b)
=
\inf\{t\geq1:\overline S_t^{(w,\rho)}\geq b\},$ where $M_{k,t}$ is as in \eqref{eq:M_kt}, computed using the score $s^*$. Then
\[
\Bar{\tau}_{S,w}(\gamma)
\in\mathcal C_{\gamma}^{\rm lin}(\mathfrak T_{\rm rand}),
\qquad
\gamma\leq
\mathbb E_{P_0,\infty}\Bar{\tau}_{S,w}(\gamma)
\leq\left\lceil\frac{\gamma}{\rho}\right\rceil,
\]
\[
\limsup_{\gamma\to\infty}
\frac{
\displaystyle\sup_{\nu\in I_{N_\gamma}(\kappa)}
\mathbb E_{P_0,\nu,P_1}
[\Bar{\tau}_{S,w}(\gamma)-\nu\mid \Bar{\tau}_{S,w}(\gamma)>\nu]
}{\log\gamma}
\leq\frac1D,
\]
and
\[
\lim_{\gamma\to\infty}
\frac{\mathcal R_{\gamma}^{\rm lin}
(\mathfrak T_{\rm rand};\kappa)}{\log\gamma}
=
\lim_{\gamma\to\infty}
\frac{\mathcal J_{\gamma}^{\kappa}\{\Bar{\tau}_{S,w}(\gamma)\}}
{\log\gamma}
=\frac1D.
\]
\end{enumerate}

\end{theorem}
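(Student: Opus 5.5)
Part 1 (lower bound) is a change-of-measure argument in the spirit of Lai, adapted to the ARL-plus-linear constraint. Fix $\tau\in\mathcal C_\gamma^{\rm lin}(\mathfrak T)$ and $c<1/D$, and set $r=\lfloor c\log\gamma\rfloor$. Randomizers are independent of the data and have the same law under $\mathbb P_{P_0,\nu,P_1}$ and $\mathbb P_{P_0,\infty}$, so it suffices to treat $\mathfrak T_{\rm rand}$; $\mathfrak T_{\rm raw}\subseteq\mathfrak T_{\rm rand}$ then gives the raw case. The aim is to show that some $\nu\in I_{N_\gamma}(\kappa)$ has $\mathbb P_{P_0,\nu,P_1}(\tau\le\nu+r\mid\tau>\nu)\to0$. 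Given that, $\mathbb E[\tau-\nu\mid\tau>\nu]\ge r(1-o(1))$, and letting $c\uparrow 1/D$ finishes.

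The first step is to split the event $\{\nu<\tau\le\nu+r\}$ according to whether the log-likelihood ratio $\ell_{\nu,t}=\sum_{j=\nu+1}^{t}\log L(X_j)$ satisfies $\max_{t\le\nu+r}\ell_{\nu,t}\ge(1+\epsilon)Dr$.
\begin{itemize}
\item On the large-ratio part, the strong law (a maximal form of the SLLN for $\mathbb E_{P_1}\log L=D$ under the post-change law) makes the probability $o(1)$ uniformly in $\nu$, because the post-change block law does not depend on $\nu$.
\item On the complement, changing measure to $\mathbb P_{P_0,\infty}$ gives the bound
\[
\mathbb P_{P_0,\nu,P_1}(\nu<\tau\le\nu+r,\ \text{small LR})\le e^{(1+\epsilon)Dr}\,\mathbb P_{P_0,\infty}(\nu<\tau\le\nu+r).
\]
\end{itemize}
The linear constraint alone gives only $\mathbb P_{P_0,\infty}(\tau\le \nu+r)\le(\nu+r)/\gamma$, which is too weak because $\nu\approx N_\gamma$. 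Instead I would pigeonhole over blocks. The intervals $(\nu,\nu+r]$ with $\nu=\lceil\kappa N\rceil+ir$ are disjoint inside $[1,N+r]$, and the sum of their null probabilities is at most $\mathbb P_{P_0,\infty}(\tau\le N+r)\le (N+r)/\gamma$. Hence some block has null probability at most $(N+r)/(\gamma\cdot \#\text{blocks})\approx r/\gamma$. Multiplied by $e^{(1+\epsilon)Dr}\le\gamma^{(1+\epsilon)cD}$, this is $o(1)$ once $(1+\epsilon)cD<1$, using $r=O(\log \gamma)$ and $\log\gamma=o(N)$. Dividing by $\mathbb P(\tau>\nu)\ge1-N/\gamma\to1$ gives the conditional statement.

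Part 2 has two pieces, validity and delay. For validity, $\overline M^\rho_{k,t}$ is a convex combination of $1$ and a test martingale, so Theorem~\ref{th:weighted-null-validity}(2) applies verbatim with unit weights. That gives $\mathbb E\tau\ge\gamma$ and $\mathbb P(\tau\le m)\le m/\gamma$ (take $\sigma=m$; the bound holds because $\mathbb E W_{\tau\wedge m}\le m$). For the upper ARL bound, $\overline S_t\ge\rho t$ deterministically, so $\bar\tau\le\lceil\gamma/\rho\rceil$ surely. The statistic uses only conformal $p$-values and randomizers, so $\bar\tau\in\mathfrak T_{\rm rand}$.

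For the delay I would reuse the argument of Theorem~\ref{th:weighted-KL-upper} and Corollary~\ref{cor:ARL-KL-sharp}, with $b_n=\gamma$, $w\equiv1$ and $\mathcal B=\log\gamma$, noting that $\overline S\ge(1-\rho)S$. The only cost is an additive $\log(1/(1-\rho))$ in the boundary. Two upgrades are needed.
\begin{enumerate}
\item Uniformity over $\nu\in I_{N_\gamma}(\kappa)$. The conformal $p$-values after the change approximate $H$-distributed draws with errors controlled by $\nu\ge\kappa N_\gamma\to\infty$, so the in-probability bounds should hold uniformly over this range.
\item Convergence of conditional expectations rather than in probability. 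This is the main obstacle. I would bound the tail $\mathbb P(\bar\tau>\nu+j\mid\bar\tau>\nu)$ geometrically, by restarting in blocks of length $A\log\gamma$. Each block independently succeeds with probability bounded away from zero, giving
\[
\mathbb E[\bar\tau-\nu\mid\bar\tau>\nu]\le A\log\gamma\,(1+o(1)).
\]
The difficulty is that later conformal $p$-values keep drifting as the post-change fraction grows. I would handle this by restricting to blocks within $o(N_\gamma)$ of $\nu$. Beyond those blocks, the deterministic cap $\bar\tau\le\lceil\gamma/\rho\rceil$ together with a probability $e^{-\Omega(N_\gamma/\log\gamma)}$ of reaching them should make their contribution $o(\log\gamma)$. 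Checking that this tail is genuinely that small, given $N_\gamma$ can grow slowly, is where I expect trouble; failing that, I would refine the block count using $\log\gamma=o(N_\gamma)$ more carefully.
\end{enumerate}

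Combining parts 1 and 2 yields the stated limits, because the likelihood-ratio score makes $D_{\mathrm{KL}}(H\|U)=D$ by the score data-processing equality.
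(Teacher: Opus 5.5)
Your Part 1 is essentially the paper's argument: pigeonhole over disjoint windows of length $r\asymp\log\gamma$ inside $I_{N_\gamma}(\kappa)$, then a change of measure on the event that the window log-likelihood is at most $(1+\epsilon)Dr$. The paper uses the full-window sum rather than your running maximum, which suffices because the event lies in $\mathcal H_{\nu+r}$. Your validity argument in Part 2 also matches.

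The gap is where you suspected, in the expected-delay bound. Write $\bar\tau:=\bar\tau_{S,w}(\gamma)$. Since $(\bar\tau-\nu)^+$ is controlled beyond the local range only by the cap $U_\gamma=\lceil\gamma/\rho\rceil$, you need $U_\gamma\,\mathbb P_{P_0,\nu,P_1}\{\bar\tau>\nu+R\}\to0$ for some $R=o(N_\gamma)$. Beyond $o(N_\gamma)$, Lemma~\ref{lem:local-comparison} is no longer informative. So you need a non-detection probability of order $o(1/\gamma)$. Blocks of length $A\log\gamma$ that each succeed with probability merely bounded away from zero give only $q^{R/(A\log\gamma)}$, and this does not beat $\gamma$ throughout \eqref{eq:restricted-arl-regime}. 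For example, with $N_\gamma=\lfloor(\log\gamma)^{3/2}\rfloor$, any $R=o(N_\gamma)$ contains $o(\sqrt{\log\gamma})$ blocks, so $U_\gamma\,q^{R/(A\log\gamma)}\to\infty$. Recounting blocks cannot fix this; the per-window failure bound itself must be sharper. Two smaller points: constant success probability gives only $A\log\gamma/p$, not $A\log\gamma(1+o(1))$. Also, the blocks are independent only through the oracle variables $V_{n,j}=F_{P_0}(X_{n,j})$ on a DKW event, not for the conformal $p$-values themselves.

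The missing ingredient is a tail bound for the single restart at $\nu+1$ that is exponential in the window length $r$ itself. The paper gets it from Proposition~\ref{prop:weighted-second-order} applied to the augmented mixture. That mixture satisfies Assumption~\ref{assn:local-KL-support} with $c$ replaced by $c+\log\{1/(1-\rho)\}$. Set $B_\gamma=\log\{\gamma/(1-\rho)\}$, $r_{0,\gamma}=\lceil(1+\epsilon)B_\gamma/D\rceil$, and $x_\gamma(r)=rD-B_\gamma-a\log r-c-L_\phi r\eta_\gamma$. Hoeffding and DKW then give $\mathbb P_{P_0,\nu,P_1}\{\bar\tau>\nu+r\}\le\exp\{-2x_\gamma(r)^2/(rK_\phi^2)\}+2e^{-2\nu s^2}$. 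Once $r\ge2r_{0,\gamma}$, $x_\gamma(r)\ge Dr/4$, so the first term is at most $e^{-c_0r}$.

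Choosing $s_\gamma=\sqrt{2\log\gamma/(\kappa N_\gamma)}\to0$ makes the DKW term at most $2\gamma^{-4}$, uniformly over $\nu\geq\kappa N_\gamma$. The window $R_\gamma=\lfloor\sqrt{N_\gamma\log\gamma}\rfloor$ is $o(N_\gamma)$ yet has $R_\gamma/\log\gamma\to\infty$, so $U_\gamma(e^{-c_0R_\gamma}+2\gamma^{-4})\to0$. Summing the tail over $[r_{0,\gamma},R_\gamma)$ contributes $o(1)$, and monotonicity plus the pathwise cap handle $r\geq R_\gamma$. This gives $\sup_\nu\mathbb E_{P_0,\nu,P_1}(\bar\tau-\nu)^+\le r_{0,\gamma}+o(1)$, and dividing by $\mathbb P(\bar\tau>\nu)\geq1-N_\gamma/\gamma$ finishes the proof.

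Your block scheme can be repaired the same way. Hoeffding makes each block's failure probability $\gamma^{-c'}$ for some $c'>0$, not just a constant. A fixed number $m>1/c'$ of blocks, spanning $O(\log\gamma)=o(N_\gamma)$ observations, then already gives $o(1/\gamma)$.
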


Thus, the above result gives the explicit construction for the detector which attains the lower bound. In the above construction, the deterministic component $\rho \sum_{k=1}^t w_k$ in $\overline{S}_t^{(w,\rho)}$ forces the process to eventually cross the threshold while preserving the same PFA and ARL calibrations. So choosing a small $\rho$ regularizes the stopping time without changing first-order detection delay. The effect of adding $\rho$ has been discussed in detail in Proposition~\ref{prop:weighted-null-atom}.

The preceding results are asymptotic. Proposition \ref{prop:weighted-second-order} in Appendix \ref{Appendix: Proofs} gives a non-asymptotic explicit tail bound for the delay for a general weight sequence showing under suitable conditions, the probability of delayed alarm decreases exponentially in the number of post-change observations.

\subsection{Optimal score transformation}
In case the observations take values in an arbitrary measurable space $\mathcal X$, they need to be transformed using a measurable score function $s: \mathcal{X} \to \R$ before computing the conformal $p$-values using \eqref{eq: randomized pval}. The delay bounds obtained above are governed by $D_{\mathrm{KL}}(H\|U)$, where $H$ is the law of the score-transformed post-change observation, so the choice of $s$ influences the efficiency of the procedure. Typically, such a transformation loses information, which makes the detection problem harder. The next result quantifies this loss and provides the optimal score transformation.

\begin{proposition}[Score data processing and oracle equality]
\label{prop:score-data-processing}
Let $P_1\ll P_0$ on $\mathcal X$ with $L:=dP_1/dP_0$ and $D_{\mathrm{KL}}(P_1\|P_0)<\infty$, and let $s:\mathcal X\to\mathbb R$ be measurable with continuous CDF $F_{s,P_0}$ of $s(X)$ under $P_0$. Define $T_s(x):=F_{s,P_0}(s(x)),$
and let $H_s$ be the law of $T_s(X)$ under $X\sim P_1$. Then $T_s(X)\sim U$ under $P_0$ and
$D_{\mathrm{KL}}(H_s\|U)
\leq
D_{\mathrm{KL}}(P_1\|P_0).$
Equality holds if and only if $L$ is $\sigma(T_s)$-measurable, up to $P_0$-null sets. In particular, if the distribution of $L(X)$ under $P_0$ is continuous, the likelihood-ratio score $s^*(x):=L(x)$ satisfies
$D_{\mathrm{KL}}(H_{s^*}\|U)
=
D_{\mathrm{KL}}(P_1\|P_0).$
\end{proposition}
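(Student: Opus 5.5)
The argument is the data processing inequality for KL divergence, applied to the measurable map $T_s$, followed by a sufficiency analysis of when equality holds.

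\emph{Uniformity.} Since $F_{s,P_0}$ is continuous, the probability integral transform gives $P_0\{F_{s,P_0}(s(X))\le u\}=u$ for every $u\in[0,1]$. This holds even when $F_{s,P_0}$ is not strictly increasing: $\{F(s(X))\le u\}$ equals $\{s(X)\le q\}$ up to a null set, where $q$ is the largest point with $F(q)=u$. Hence $T_s(X)\sim U$ under $P_0$, and the pushforward $P_0\circ T_s^{-1}$ is $U$. Likewise, $P_1\circ T_s^{-1}=H_s$. Because $P_1\ll P_0$, we also get $H_s\ll U$.

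\emph{Density of $H_s$.} I will identify the density explicitly. Let $g$ be a Borel function with $g(T_s)=\mathbb E_{P_0}[L\mid\sigma(T_s)]$, which exists by Doob--Dynkin. For any Borel set $B$,
\[
H_s(B)=\mathbb E_{P_0}[L\,\mathbf 1\{T_s\in B\}]=\mathbb E_{P_0}[g(T_s)\mathbf 1\{T_s\in B\}]=\int_B g\,du,
\]
so $dH_s/dU=g$. Writing $\phi(x)=x\log x$, this gives
\[
D_{\mathrm{KL}}(H_s\|U)=\mathbb E_U[\phi(g)]=\mathbb E_{P_0}\bigl[\phi\bigl(\mathbb E_{P_0}[L\mid\sigma(T_s)]\bigr)\bigr]\le \mathbb E_{P_0}[\phi(L)]=D_{\mathrm{KL}}(P_1\|P_0),
\]
where the inequality is conditional Jensen for the convex function $\phi$. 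Integrability is fine: $\phi\ge -1/e$, and $\mathbb E_{P_0}\phi(L)=D_{\mathrm{KL}}(P_1\|P_0)<\infty$.

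\emph{Equality case.} This is the only delicate step. Equivalently, I will use the chain rule: $D_{\mathrm{KL}}(P_1\|P_0)-D_{\mathrm{KL}}(H_s\|U)=\mathbb E_{P_1}\log\frac{L}{g(T_s)}$, which is the expected KL divergence between the conditional laws of $X$ given $T_s$ under $P_1$ and $P_0$. So equality holds iff $L=g(T_s)$ $P_0$-a.s., i.e.\ iff $L$ is $\sigma(T_s)$-measurable. Directly, strict convexity of $\phi$ gives equality in conditional Jensen iff $L$ is a.s.\ equal to its conditional expectation. One subtlety is that $\phi$ is only strictly convex, not strongly so. I would handle this with the standard argument: the gap $\mathbb E_{P_0}[\phi(L)-\phi(g)-\phi'(g)(L-g)]$ has nonnegative integrand and equals the difference of divergences. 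The linear term integrates to $0$ after conditioning, and one truncates where $g=0$, on which set $L=0$ a.s. The gap therefore vanishes iff the integrand is $0$ a.s., iff $L=g(T_s)$ a.s. Conversely, if $L=h(T_s)$ a.s., then $\mathbb E_{P_0}[L\mid T_s]=L$ and equality is immediate.

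\emph{Oracle score.} For $s^*=L$ with continuous $F_{L,P_0}$, the map $F_{L,P_0}$ is nondecreasing. So $L$ is a function of $T_{s^*}=F_{L,P_0}(L)$ outside the union of the flat stretches of $F_{L,P_0}$. Each flat interval of a CDF carries $P_0$-probability zero, and there are countably many, so $L=F_{L,P_0}^{-1}(T_{s^*})$ $P_0$-a.s., using the generalized inverse. By the equality characterization, $D_{\mathrm{KL}}(H_{s^*}\|U)=D_{\mathrm{KL}}(P_1\|P_0)$.
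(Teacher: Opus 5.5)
Your proposal is correct and follows the paper's proof essentially step for step. Like the paper, you identify $dH_s/dU$ as the $P_0$-conditional expectation of $L$ given $T_s$, apply conditional Jensen to $x\log x$, characterize equality via strict convexity, and recover $L$ from $F_{L,P_0}(L)$ through the generalized inverse. Your additional care in the equality case (the Bregman-gap argument with the set $\{g(T_s)=0\}$ treated separately, plus the integrability checks) supplies details the paper leaves implicit rather than changing the route.
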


The loss due to transformation can be measured by the gap between $D_{\mathrm{KL}}(H_s\|U)$ and $D_{\mathrm{KL}}(P_1\|P_0)$. The loss is zero precisely when the likelihood ratio score is used, so the transformed observations retain the sufficient information as in the original data for discriminating $P_1$ from $P_0$.

\section{Change detection using different betting classes}
\label{sec: different betting class}
Recall that for online observations $\{X_n\}_{n \geq 1}$ satisfying \eqref{eq: data structure} with unknown $P_0, P_1, T$, to detct $T$, we compute the conformal $p$-values using \eqref{eq: randomized pval}, then find the delayed processes $M_{k,t}$ following \eqref{eq:M_kt} with suitable $\mathcal{F}, \Pi$ as discusssed in appendix~\ref{apd:regularity of betting class}. For a PFA level $\alpha \in (0,1),$ for any weight sequence $w$ satisfying $||w||_1 < 1,$ the stopping times $\tau_{S,w}(\frac{1}{\alpha})$ and $\tau_{Z,w}(\frac{1}{\alpha})$ as in definition~\ref{def:weighted-restarted-statistics} have the distribution-free PFA validity at level $\alpha$ as shown in Theorem~\ref{th:weighted-null-validity}. Similarly, for any ARL level $\gamma \in (0,\infty),$ with $\|w\|_\infty \leq 1,$ $\tau_{S,w}(\gamma)$ and $\tau_{Z,w}(\gamma)$ has ARL validity at level $\gamma$. To discuss optimality properties of the detection procedure, we use a growing changepoint regime in a double array setup outlined in \eqref{eqn:data structure}. For the $n$th row of the double array, the corresponding stopping times are $\tau_{n,Z,w}(\frac{1}{\alpha})$ and $\tau_{n,S,w}(\frac{1}{\alpha})$.

The delay bounds of Section~\ref{sec:optimality} were obtained assuming the betting class
$\mathcal F$ satisfies Assumption~\ref{assn:local-KL-support}, under which the constant in the detection delay came to be $D_{\mathrm{KL}}(H\|U)$, which is sharp in the sense of first-order minimax optimality. However, as we show in this section, there are choices of betting class that do not use any oracle knowledge of either of the pre- or post-change distributions and still can get the same order of detection delay; just the delay constant changes.

\subsection{Detection under a directional betting class}
\label{subsec: directional}

The previous optimality results are based on Assumption~\ref{assn:local-KL-support}, which requires the mixture to
reproduce the oracle log-density $\log h$ up to a logarithmic penalty. We now discuss what can
be guaranteed under  Assumption~\ref{assn:directional-local-detectability}, which requires only that the betting family $\mathcal{F}$ contain a single betting function $g$ whose average value
under the post-change law of the conformal $p$-values differs from its average under the
uniform law, so that betting along $g$ is profitable after the change.

As before, our discussion will be on the double array model in \eqref{eqn:data structure} under 
Assumption~\ref{assn:post-change-pvalue-separation} and~\ref{assn:directional-local-detectability}. For $0<\gamma<\theta_0/2$, following the notations as in Assumption~\ref{assn:directional-local-detectability}, define $I(\gamma):=\gamma|J|-C\gamma^2,\hspace{.2cm}
I_\star:=\sup_{0<\gamma<\theta_0/2}I(\gamma)>0.$
So $I_\gamma$ defines a betting path and $I^*$ corresponds to the one in the betting class which provides the largest guarantee in that path. For a nonasymptotic tail bound, fix any $\gamma_0\in(0,\theta_0/2)$ with $I_R:=I(\gamma_0)>0.$ If $C>0$, the supremum is obtained in the closure at
$\gamma=\min\{|J|/(2C),\theta_0/2\}$; hence,
\[
I_\star=
\begin{cases}
J^2/(4C), & |J|\leq C\theta_0,\\
\theta_0|J|/2-C\theta_0^2/4, & |J|>C\theta_0.
\end{cases}
\]

In this setup, the next result provides a finite-sample late-alarm bound for arbitrary restart weights.

\begin{theorem}[finite sample late-alarm bound]
\label{th:weighted-RCMM-late-alarm-tail}
Consider \eqref{eqn:data structure} with Assumption~\ref{assn:post-change-pvalue-separation} and suppose $(\mathcal{F}, \Pi)$ satisfies Assumption~\ref{assn:directional-local-detectability}. Let $\gamma_0$ and $I_R$ be as described above. Then there exist constants $c_0,c_1,c_2>0$ and $\delta_0\in(0,1)$, depending only on the distributions, the betting family, and $\gamma_0$, such that the following holds: for all sufficiently large $n$, every integer $r$ satisfying $1\leq r\leq\delta_0T_n$
and
\begin{equation}
\label{eq:weighted-tail-boundary}
\frac{I_R}{2}r-\log r-c_0
\geq
\mathcal B_{n,w}(b_n),
\end{equation}
we have $\mathbb P_{P_0,T_n,P_1}\{\tau_{n,S,w}(b_n)>T_n+r\}
\leq
c_1e^{-c_2r},$
and the same bound holds with $\tau_{n,Z,w}(b_n)$.
\end{theorem}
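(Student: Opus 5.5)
We reduce the late-alarm event to the failure of a single restarted mixture to grow, and then bound that failure with a Chernoff argument on the conformal $p$-values. Since $Z^{(w)}_{n,t}\le S^{(w)}_{n,t}$, it suffices to treat $\tau_{n,Z,w}$. Moreover, $\{\tau_{n,Z,w}(b_n)>T_n+r\}$ is contained in the event that the restart at $k=T_n+1$ has not crossed the threshold by time $t=T_n+r$:
\[
\{\tau_{n,Z,w}(b_n)>T_n+r\}\subseteq\Bigl\{\log M_{T_n+1,T_n+r}<\log\tfrac{b_n}{w_{T_n+1}}=\mathcal B_{n,w}(b_n)\Bigr\}.
\]
Assumption~\ref{assn:directional-local-detectability} should give a direction $g$ and a prior mass bound. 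Concretely, the mixture dominates the prior mass of a neighbourhood of the tilt $\gamma_0$ in direction $\mathrm{sign}(J)\,g$, and this yields a lower bound of the form
\[
\log M_{T_n+1,T_n+r}\ \ge\ \gamma_0\,\mathrm{sign}(J)\sum_{j=T_n+1}^{T_n+r}\bigl(g(p_j)-\textstyle\int g\bigr)-C\gamma_0^2 r-\log r-c_0'.
\]
Here the $-C\gamma^2$ term comes from the quadratic bound on the log-normalizer, and the $\log r$ term comes from the prior mass of a shrinking ($\sim 1/r$) window around $\gamma_0$, exactly as in a standard Laplace/mixture lower bound. Substituting this into the boundary condition \eqref{eq:weighted-tail-boundary}, the late-alarm event forces the centred sum $\sum_j \mathrm{sign}(J)(g(p_j)-\int g)$ to fall below $r(|J|-I_R/(2\gamma_0))$. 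This is at least a constant times $r$ below its target mean $rJ$, since $I_R=\gamma_0|J|-C\gamma_0^2$.

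The main obstacle is that the post-change conformal $p$-values $p_j$, $j>T_n$, are neither i.i.d.\ nor exactly $H$-distributed. Each $p_j$ is the rank of $X_{n,j}$ among all previous observations, which include the $T_n$ pre-change points and the $j-T_n-1$ earlier post-change points. I would handle this by coupling. Write $p_j=\widehat F_{j}(X_{n,j})$ (up to randomization), where $\widehat F_j$ is the empirical CDF of $X_{n,1},\dots,X_{n,j}$. Compare $p_j$ with the oracle variable $U_j:=F_{P_0}(X_{n,j})$, which are i.i.d.\ with law $H$ and have $\mathbb E\,g(U_j)-\int g = J$. On the range $r\le\delta_0T_n$, two sources of error appear.

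The first is the DKW error of the pre-change empirical CDF. On an event of probability $1-2e^{-2T_n\epsilon^2}$ it is bounded by $\epsilon$, and since $r\le \delta_0 T_n$ this probability is $1-e^{-c r}$.

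The second is the contamination by post-change points. Their fraction is at most $r/(T_n+r)\le\delta_0$, which moves $\widehat F_j$ by at most $\delta_0$ in sup norm.

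Using Lipschitz or bounded-variation regularity of $g$ from the assumption (or the continuity of $H$ in Assumption~\ref{assn:post-change-pvalue-separation}), $|g(p_j)-g(U_j)|$ is then bounded on average by $\omega(\epsilon+\delta_0)$. Choosing $\epsilon,\delta_0$ small makes this smaller than a fixed fraction of $|J|$.

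Once the coupling is in place, it remains to apply Hoeffding's inequality to the i.i.d.\ bounded variables $g(U_j)$; if $g$ is unbounded, use a Bernstein/sub-exponential variant coming from the exponential-moment part of the assumption. The deviation probability is at most $e^{-c r}$. Adding the DKW failure probability $e^{-cT_n}\le e^{-c r/\delta_0}$ gives the bound $c_1e^{-c_2 r}$, with constants depending only on $(P_0,P_1)$, the family, and $\gamma_0$. The condition that $n$ be "sufficiently large" absorbs the requirement that $T_n$ exceed a constant, so that the DKW step and the discreteness of ranks are negligible.
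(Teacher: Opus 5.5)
Your proposal is correct and follows essentially the same route as the paper: reduce to the single restart at $T_n+1$, lower-bound that mixture deterministically via the fixed bet $\theta_*=\gamma_0\operatorname{sign}(J)$ and a prior window of width of order $1/r$ (which produces the $-\log r$ term), couple $p_{n,j}$ to $V_{n,j}=F_{P_0}(X_{n,j})$ through the DKW-plus-contamination bound of Lemma~\ref{lem:local-comparison}, and finish with Hoeffding, absorbing $e^{-cT_n}$ into $e^{-c_2r}$ via $r\le\delta_0T_n$. Your hedges about unbounded or bounded-variation $g$ are unnecessary, because Assumption~\ref{assn:directional-local-detectability} already supplies a bounded Lipschitz $g$ with $\int_0^1 g=0$.
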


Thus, after the distribution shift occurs, the probability of late alarm decreases exponentially in the number of post-change observations. The following result finds the above bound conditional on the no false alarm event.

\begin{corollary}[Late-alarm bound on the no-false-alarm event]
\label{cor:weighted-conditional-delay}
Under the conditions of Theorem~\ref{th:weighted-RCMM-late-alarm-tail}, suppose $q_{n,w}(b_n)>0$. Then, for all sufficiently large $n$, every integer $r$ with $1\leq r\leq\delta_0T_n$ satisfying \eqref{eq:weighted-tail-boundary} obeys
\[
\mathbb P_{P_0,T_n,P_1}
\{\tau_{n,S,w}(b_n)>T_n+r\mid\tau_{n,S,w}(b_n)>T_n\}
\leq
\frac{c_1e^{-c_2r}}{q_{n,w}(b_n)},
\]
and the same bound holds for $\tau_{n,Z,w}$.
\end{corollary}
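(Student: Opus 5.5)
This corollary follows from Theorem~\ref{th:weighted-RCMM-late-alarm-tail} by elementary conditioning. Write $\tau$ for either $\tau_{n,S,w}(b_n)$ or $\tau_{n,Z,w}(b_n)$. For $r\geq1$ we have $\{\tau>T_n+r\}\subseteq\{\tau>T_n\}$, so
\[
\mathbb P_{P_0,T_n,P_1}\{\tau>T_n+r\mid\tau>T_n\}
=\frac{\mathbb P_{P_0,T_n,P_1}\{\tau>T_n+r\}}{\mathbb P_{P_0,T_n,P_1}\{\tau>T_n\}}.
\]
The conditional probability is well defined because the denominator is positive. The plan is to bound the numerator with the theorem and the denominator from below with the survival bound in \eqref{eq:weighted-survival-lower}.

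For the numerator, take $n$ large enough and $r$ satisfying $1\leq r\leq\delta_0T_n$ together with \eqref{eq:weighted-tail-boundary}. Theorem~\ref{th:weighted-RCMM-late-alarm-tail} then gives $\mathbb P_{P_0,T_n,P_1}\{\tau>T_n+r\}\leq c_1e^{-c_2r}$, with the same constants for the sum and maximum rules.

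For the denominator, we need to justify $q_{n,w}(b_n)\leq\mathbb P_{P_0,T_n,P_1}\{\tau>T_n\}$, which is the only step needing an argument. Since $\tau$ is a $(\mathcal G_t)$-stopping time, the event $\{\tau\leq T_n\}$ depends only on $(p_1,\ldots,p_{T_n})$. Each $p_j$ depends only on $X_1,\ldots,X_j$ and the randomizers, so this event depends only on $X_1,\ldots,X_{T_n}$ and $\lambda_1,\ldots,\lambda_{T_n}$. The observations $X_1,\ldots,X_{T_n}$ are iid $P_0$, so the law of this event is the same under $\mathbb P_{P_0,T_n,P_1}$ and under the no-change law $\mathbb P_{P_0,\infty}$. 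Now apply \eqref{eq:weighted-finite-horizon-bound} of Theorem~\ref{th:weighted-null-validity} with the deterministic stopping time $\sigma\equiv T_n$. Since $W_{\tau\wedge T_n}\leq W_{T_n}$ by nonnegativity of the weights, this gives $\mathbb P\{\tau\leq T_n\}\leq W_{T_n}/b_n$. Probabilities are at most $1$, so $\mathbb P\{\tau>T_n\}\geq1-\min\{1,W_{T_n}/b_n\}=q_{n,w}(b_n)$. The bound holds for both the $S$ and $Z$ statistics.

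Dividing the two bounds, and using the hypothesis $q_{n,w}(b_n)>0$, gives the claimed bound $c_1e^{-c_2r}/q_{n,w}(b_n)$ for both rules. The only care needed is the measurability argument behind the pre-change reduction to the null law. Everything else is a direct substitution of the previously established bounds.
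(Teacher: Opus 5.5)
Your proposal is correct and follows essentially the same route as the paper. The paper also bounds the numerator by Theorem~\ref{th:weighted-RCMM-late-alarm-tail} and bounds $\mathbb P_{P_0,T_n,P_1}\{\tau>T_n\}$ below by $q_{n,w}(b_n)$, using that the changepoint and no-change laws agree through time $T_n$ together with Theorem~\ref{th:weighted-null-validity}; your choice $\sigma\equiv T_n$ in \eqref{eq:weighted-finite-horizon-bound} and the measurability argument simply spell out that one-line step.
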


Since the bound above counts paths that alarmed before the change, we restate it on the event
that no false alarm has occurred, where the only cost is the survival factor $q_{n,w}(b_n)$. The above results so far discusses the non-asymptotic bounds for the late alarms. The following result, which is comparable in spirit with Theorem~\ref{th:weighted-KL-upper}  provides upper bound on detection delay for general weigh sequence $w$.

\begin{theorem}[upper bound on detection delay]
\label{th:weighted-RCMM-log-delay}
In the setup of Theorem~\ref{th:weighted-RCMM-late-alarm-tail},
assume $\mathcal B_{n,w}(b_n)\to\infty,
\hspace{.2cm}
\mathcal B_{n,w}(b_n)=o(T_n).$
Then, for every $A>1/I_\star$,
\begin{equation}
\label{eq:weighted-first-order-delay}
\mathbb P_{P_0,T_n,P_1}
\left\{
\tau_{n,Z,w}(b_n)
\leq
T_n+A\mathcal B_{n,w}(b_n)
\right\}
\to1,
\end{equation}
and hence the same statement holds for $\tau_{n,S,w}(b_n)$. If addition, $\liminf_n q_{n,w}(b_n)>0$, then
$\mathbb P_{P_0,T_n,P_1}\{\tau_{n,Z,w}(b_n)>T_n+A\mathcal B_{n,w}(b_n)\mid\tau_{n,Z,w}(b_n)>T_n\}\to0$
and
$$\liminf_{n\to\infty}
\mathbb P_{P_0,T_n,P_1}
\{T_n<\tau_{n,Z,w}(b_n)\leq T_n+A\mathcal B_{n,w}(b_n)\}
\geq
\liminf_{n\to\infty}q_{n,w}(b_n),$$
with the same conclusions for $\tau_{n,S,w}$. 

For an almost-sure conclusion, choose $\gamma_0\in(0,\theta_0/2)$ and $\epsilon>0$ such that
$A\{I(\gamma_0)-2\epsilon\}>1$. If $\sum_{n=1}^\infty
\exp\left\{
-\frac{\epsilon^2}{2\gamma_0^2\|g\|_\infty^2}
\left\lfloor A\mathcal B_{n,w}(b_n)\right\rfloor
\right\}<\infty$
and $\sum_{n=1}^\infty e^{-aT_n}<\infty$ for every $a>0$, then
$\mathbf 1\{\tau_{n,Z,w}(b_n)\leq T_n+A\mathcal B_{n,w}(b_n)\}\to1$ almost surely,
and likewise for $\tau_{n,S,w}(b_n)$.
\end{theorem}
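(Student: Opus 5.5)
The plan is to derive Theorem~\ref{th:weighted-RCMM-log-delay} from the finite-sample tail bound of Theorem~\ref{th:weighted-RCMM-late-alarm-tail}, together with a sharper first-order version of the same restart argument. The key structural fact is the trivial bound $Z_{n,t}^{(w)}\geq w_{T_n+1}M_{T_n+1,t}$. Hence $\tau_{n,Z,w}(b_n)\leq T_n+r$ as soon as $\log M_{T_n+1,T_n+r}\geq \mathcal B_{n,w}(b_n)$. Since $S^{(w)}\geq Z^{(w)}$, every conclusion for $\tau_{n,Z,w}$ transfers to $\tau_{n,S,w}$.

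\textbf{Step 1: reduce to a single betting function.} Fix $A>1/I_\star$ and choose $\gamma_0$ with $A\{I(\gamma_0)-2\epsilon\}>1$. Using Assumption~\ref{assn:directional-local-detectability}, I will lower bound the mixture $M_{T_n+1,t}$ by restricting the prior to a neighbourhood of the tilt in direction $g$ at strength $\gamma_0$. This gives
\[
\log M_{T_n+1,t}\geq \gamma_0\sum_{j=T_n+1}^t \bigl(g(p_j)-\textstyle\int g\bigr) - C\gamma_0^2 (t-T_n) - O(\log(t-T_n)),
\]
where the logarithmic term is the prior-mass penalty. This is the same expansion that produced the $\log r$ and $c_0$ terms in \eqref{eq:weighted-tail-boundary}.

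\textbf{Step 2: concentrate the post-change conformal $p$-values.} This is the main obstacle. For $j=T_n+r'$ with $r'\leq r=\lfloor A\mathcal B_{n,w}(b_n)\rfloor=o(T_n)$, the $p$-value $p_j$ is computed mostly against $T_n$ pre-change observations. The plan is to compare $p_j$ with $F_{P_0}(X_{n,j})$. By DKW applied to the pre-change empirical CDF, the deviation is at most $O(\sqrt{\log T_n/T_n})$ with probability at least $1-e^{-aT_n}$. The contamination from the at most $r'$ post-change points contributes $O(r/T_n)=o(1)$. Continuity of $H$ (Assumption~\ref{assn:post-change-pvalue-separation}) and regularity of $g$ then make $\int g\,dH-\int g$ within $o(1)$ of $J$ uniformly over the window, so the drift per step exceeds $I(\gamma_0)-\epsilon$ eventually. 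Conditionally on the pre-change sample, the $X_{n,j}$ are iid, so the centred sum of $\gamma_0 g(F_{P_0}(X_{n,j}))$ over $r$ steps falls below its mean by $\epsilon r$ with probability at most $\exp\{-\epsilon^2 r/(2\gamma_0^2\|g\|_\infty^2)\}$ by Hoeffding. The approximation error term is handled by the DKW event. Combining, with probability tending to one,
\[
\log M_{T_n+1,T_n+r}\geq \{I(\gamma_0)-2\epsilon\}r-O(\log r)>\mathcal B_{n,w}(b_n),
\]
because $A\{I(\gamma_0)-2\epsilon\}>1$ and $\mathcal B_{n,w}(b_n)\to\infty$. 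This proves \eqref{eq:weighted-first-order-delay}.

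\textbf{Step 3: conditional and window statements.} Write $E_n=\{\tau_{n,Z,w}(b_n)>T_n+A\mathcal B_{n,w}(b_n)\}$. The conditional late-alarm probability is at most $\mathbb P(E_n)/\mathbb P\{\tau>T_n\}\leq \mathbb P(E_n)/q_{n,w}(b_n)$, using the survival bound \eqref{eq:weighted-survival-lower}, which comes from Theorem~\ref{th:weighted-null-validity}. This ratio tends to zero when $\liminf_n q_{n,w}(b_n)>0$. For the window statement, $\mathbb P\{T_n<\tau\leq T_n+A\mathcal B_{n,w}(b_n)\}\geq \mathbb P\{\tau>T_n\}-\mathbb P(E_n)\geq q_{n,w}(b_n)-o(1)$.

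\textbf{Step 4: almost-sure conclusion.} The failure probability for row $n$ is bounded by the Hoeffding term plus $e^{-aT_n}$-type DKW terms. Both are summable by hypothesis, so Borel--Cantelli gives that $E_n$ occurs only finitely often almost surely. The point I expect to need the most care is Step 2: uniform control of the $p$-value approximation over the whole post-change window, and the dependence of $p_j$ on earlier post-change points, which is why the window is required to satisfy $r=o(T_n)$.
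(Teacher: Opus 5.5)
Your proposal is correct and is essentially the paper's proof. You lower-bound the restart term $w_{T_n+1}M_{T_n+1,T_n+r_n}$ by a shrinking prior neighbourhood of a single tilt, and transfer the post-change conformal $p$-values to the oracle variables $F_{P_0}(X_{n,j})$ via DKW and the deterministic $(2r+1)/T_n$ rank bound of Lemma~\ref{lem:local-comparison}. You then take the drift from the law of large numbers or Hoeffding, divide by and subtract the survival bound $q_{n,w}(b_n)$, and use Borel--Cantelli for the almost-sure part. (You never actually use Theorem~\ref{th:weighted-RCMM-late-alarm-tail}; its $I_R/2$ rate could not give the constant $1/I_\star$ anyway.)

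Two small corrections are needed.
\begin{itemize}
\item The tilt must be $\theta_*=\gamma_0\operatorname{sign}(J)$ rather than $+\gamma_0$. Otherwise your lower bound has drift $\gamma_0J-C\gamma_0^2<0$ when $J<0$.
\item For the almost-sure part, apply DKW at a fixed level $s$ with $\gamma_0L_gs\leq\epsilon/2$, where $L_g$ is a Lipschitz constant of $g$. This gives failure probability $2e^{-2T_ns^2}$, which is summable under the hypothesis $\sum_n e^{-aT_n}<\infty$. The level $\sqrt{\log T_n/T_n}$ you state only yields a polynomially small failure probability, not $e^{-aT_n}$.
\end{itemize}
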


The quantity $I_\star$ is the largest local log-evidence rate available along the betting path assumed in Assumption~\ref{assn:directional-local-detectability} in the betting family $\mathcal{F}$. So, for a general weight sequence $w$, the above result establishes the delay upper bound to be $\mathcal B_{n,w}(b_n)/I_\star$.

\begin{remark}[comparison of detection delays in oracle and non oracle setting]
   It is imperative to note that, as shown in Theorem~\ref{th:weighted-KL-upper}, the delay was shown to be of order at most $\mathcal B_{n,w}(b_n)/D_{\mathrm{KL}}(H||U)$ when the betting family and the pre- and post-change distributions are assumed to satisfy Assumption~\ref{assn:local-KL-support}, which was an oracle setup. Consequently, when the betting class is chosen to satisfy only
Assumption~\ref{assn:directional-local-detectability}, which requires no knowledge of the pre- and post-change law, the order of the detection delay is unaffected and only the constant changes,
with $D_{\mathrm{KL}}(H\|U)$ replaced by $I_\star$. The ratio
$D_{\mathrm{KL}}(H\|U)/I_\star$ therefore measures the cost of specifying the class without
knowledge of the pre- and post-change distributions. 
\end{remark}

In the subsequent results, we establish the corresponding delay bounds for the PFA and ARL valid procedures with the suitable choice of weights. 

\begin{corollary}[Tail decay bound for the PFA-controlled procedure]
\label{cor:RCMM-late-alarm-tail} In the setup of the previous theorem, fix $\alpha\in(0,1)$ and let $w_k
:=
\frac{k^{-(1+\eta)}}{\sum_{\ell=1}^\infty\ell^{-(1+\eta)}},
\hspace{.2cm}\eta>0.$
There exist constants $C_0,C_1,C_2>0$ and $\delta_0\in(0,1)$ such that, for all sufficiently large $n$ and every integer $r$ satisfying $C_0\{\log T_n+\log(1/\alpha)\}
\leq r\leq\delta_0T_n,$
we have $\mathbb P_{P_0,T_n,P_1}\{\tau_{n,Z,w}(\frac{1}{\alpha})>T_n+r\}
\leq
C_1e^{-C_2r}.$
The same conclusion holds for $\tau_{n,S,w}(\frac{1}{\alpha})$.
\end{corollary}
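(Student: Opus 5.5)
This is a direct specialization of Theorem~\ref{th:weighted-RCMM-late-alarm-tail} with $b_n:=1/\alpha$ and the summable polynomial weights $w_k=k^{-(1+\eta)}/\zeta(1+\eta)$, where $\zeta(1+\eta):=\sum_{\ell\ge1}\ell^{-(1+\eta)}$. The only work is to check that the boundary condition \eqref{eq:weighted-tail-boundary} holds for every $r$ in the stated range.

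First I compute the effective boundary:
\[
\mathcal B_{n,w}(1/\alpha)=\log\frac{1/\alpha}{w_{T_n+1}}=\log(1/\alpha)+(1+\eta)\log(T_n+1)+\log\zeta(1+\eta).
\]
Since $\log(T_n+1)\le\log T_n+1$ for $T_n\ge1$, there is a constant $K_\eta$, depending only on $\eta$, with
\[
\mathcal B_{n,w}(1/\alpha)\le(1+\eta)\log T_n+\log(1/\alpha)+K_\eta .
\]
It therefore suffices to find $C_0$ such that $r\ge C_0\{\log T_n+\log(1/\alpha)\}$ implies
\[
\frac{I_R}{2}r-\log r-c_0\ \ge\ (1+\eta)\log T_n+\log(1/\alpha)+K_\eta .
\]
Since $\log r\le \tfrac{I_R}{4}r+c'$ for a constant $c'$ depending on $I_R$, the left side is at least $\tfrac{I_R}{4}r-c_0-c'$. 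Choose
\[
C_0\ge\frac{8\max\{1+\eta,1\}}{I_R}.
\]
Then $\tfrac{I_R}{4}r\ge 2(1+\eta)\log T_n+2\log(1/\alpha)$. The surplus $(1+\eta)\log T_n$ absorbs the additive constants $c_0+c'+K_\eta$ once $n$ is large, because $T_n\to\infty$; this is where "sufficiently large $n$" enters. The case of large $\log(1/\alpha)$ is covered by the same inequality, since $\alpha$ is fixed.

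With \eqref{eq:weighted-tail-boundary} verified for every integer $r$ with $C_0\{\log T_n+\log(1/\alpha)\}\le r\le\delta_0T_n$, Theorem~\ref{th:weighted-RCMM-late-alarm-tail} gives
\[
\mathbb P_{P_0,T_n,P_1}\{\tau_{n,S,w}(1/\alpha)>T_n+r\}\le c_1e^{-c_2r},
\]
and the same bound for $\tau_{n,Z,w}(1/\alpha)$. Set $C_1=c_1$, $C_2=c_2$, and take $\delta_0$ as in Theorem~\ref{th:weighted-RCMM-late-alarm-tail}. The range of $r$ is nonempty for large $n$ because $\log T_n=o(T_n)$. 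The constants depend on $\eta$ through $K_\eta$ and $C_0$, which is permitted.

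The one point needing care is uniformity. The constants $c_0,c_1,c_2,\delta_0$ in Theorem~\ref{th:weighted-RCMM-late-alarm-tail} do not depend on $w$ or $b_n$, so the specialization goes through without modification. Absorbing the additive constants is the only step that requires $n$ to be large.
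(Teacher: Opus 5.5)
Your proposal is correct and follows the paper's proof. Both compute $\mathcal B_{n,w}(1/\alpha)=\log(1/\alpha)+(1+\eta)\log T_n+O(1)$, choose $C_0$ large enough that the lower bound on $r$ forces \eqref{eq:weighted-tail-boundary}, and then apply Theorem~\ref{th:weighted-RCMM-late-alarm-tail}, whose constants do not depend on $w$ or $b_n$. You make the absorption of the constants explicit, which the paper only asserts.
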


\begin{corollary}[Detection delay for the PFA-controlled procedure]
\label{cor:RCMM-log-delay}
In the setup of the previous theorem, for every $A>\frac{1+\eta}{I_\star},$
we have $\mathbb P_{P_0,T_n,P_1}
\left\{
\tau_{n,Z,w}(\frac{1}{\alpha})
\leq T_n+A\log T_n
\right\}
\to1,$
while $\liminf_{n\to\infty}
\mathbb P_{P_0,T_n,P_1}
\{T_n<\tau_{n,Z,w}(\frac{1}{\alpha})\leq T_n+A\log T_n\}
\geq1-\alpha$
and
$$\mathbb P_{P_0,T_n,P_1}\{\tau_{n,Z,w}(\frac{1}{\alpha})>T_n+A\log T_n\mid\tau_{n,Z,w}(\frac{1}{\alpha})>T_n\}\to0.$$

For the almost-sure statement, choose
$A'\in(1/I_\star,A/(1+\eta))$, then choose $\gamma_0$ and $\epsilon>0$ such that
$A'\{I(\gamma_0)-2\epsilon\}>1$, and impose the two summability conditions of Theorem~\ref{th:weighted-RCMM-log-delay} with
$\lfloor A'\mathcal B_{n,w}(1/\alpha)\rfloor$ in the first series. Under these conditions
$\mathbf 1\{\tau_{n,Z,w}(\frac{1}{\alpha})\leq T_n+A\log T_n\}\to1$ almost surely. The same conclusions hold for $\tau_{n,S,w}(\frac{1}{\alpha})$.
\end{corollary}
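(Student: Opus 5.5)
This corollary follows from Theorem~\ref{th:weighted-RCMM-log-delay} applied with $b_n\equiv 1/\alpha$ and the polynomial weights $w_k=k^{-(1+\eta)}/\zeta(1+\eta)$. The proof consists of computing the effective boundary $\mathcal B_{n,w}(1/\alpha)$ and the survival bound $q_{n,w}(1/\alpha)$, and then adjusting constants.

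\emph{Step 1: the effective boundary.} Since $w_{T_n+1}=(T_n+1)^{-(1+\eta)}/\zeta(1+\eta)$,
\[
\mathcal B_{n,w}(1/\alpha)=(1+\eta)\log(T_n+1)+\log\zeta(1+\eta)+\log(1/\alpha).
\]
This tends to infinity and is $o(T_n)$, so the hypotheses of Theorem~\ref{th:weighted-RCMM-log-delay} hold. Moreover $\mathcal B_{n,w}(1/\alpha)/\log T_n\to 1+\eta$.

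\emph{Step 2: choice of the constant.} Given $A>(1+\eta)/I_\star$, choose $A'\in(1/I_\star,A/(1+\eta))$. For large $n$,
\[
A'\mathcal B_{n,w}(1/\alpha)=A'(1+\eta)\log T_n+O(1)\le A\log T_n .
\]
Theorem~\ref{th:weighted-RCMM-log-delay} with constant $A'$ gives $\mathbb P\{\tau_{n,Z,w}\le T_n+A'\mathcal B_{n,w}\}\to1$. By monotonicity of the event in the window length, the first claim follows.

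\emph{Step 3: survival and the conditional statements.} Since $W_{T_n}\le\|w\|_1=1$, we have $q_{n,w}(1/\alpha)\ge 1-\alpha>0$, so $\liminf_n q_{n,w}\ge1-\alpha$. The conditional late-alarm limit and the correct-window $\liminf\ge 1-\alpha$ then follow from the second half of Theorem~\ref{th:weighted-RCMM-log-delay}. This is again applied with $A'$ and transferred to the larger window $A\log T_n$, using monotonicity in both the unconditional and the conditional probability.

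\emph{Step 4: almost-sure statement.} Run the almost-sure part of Theorem~\ref{th:weighted-RCMM-log-delay} with $A'$, $\gamma_0$ and $\epsilon$ as specified. This gives $\mathbf 1\{\tau_{n,Z,w}\le T_n+A'\mathcal B_{n,w}\}\to1$ almost surely. Since $A'\mathcal B_{n,w}\le A\log T_n$ eventually (deterministically), the same holds for the window $A\log T_n$.

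For $\tau_{n,S,w}$, note that $S^{(w)}_t\ge Z^{(w)}_t$ pointwise, so $\tau_{n,S,w}\le\tau_{n,Z,w}$. Thus the upper bounds transfer directly, and the survival bound $q_{n,w}$ holds for the sum statistic by \eqref{eq:weighted-survival-lower}.

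The only mildly delicate point is the slack between $A'(1+\eta)$ and $A$, which must absorb the additive constants $\log\zeta(1+\eta)+\log(1/\alpha)$ and the difference between $\log(T_n+1)$ and $\log T_n$. The strict inequality $A'<A/(1+\eta)$ handles this since $\log T_n\to\infty$.
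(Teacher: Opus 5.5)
Your proposal is correct and follows essentially the same route as the paper. The paper also computes $\mathcal B_{n,w}(1/\alpha)=(1+\eta)\log T_n+O(1)$ and picks $A'\in(1/I_\star,A/(1+\eta))$ so that $A'\mathcal B_{n,w}(1/\alpha)\le A\log T_n$ eventually; it then applies Theorem~\ref{th:weighted-RCMM-log-delay} with $A'$ and uses $W_{T_n}\le 1$ to get $q_{n,w}(1/\alpha)\ge 1-\alpha$ for the conditional and correct-window claims. Your explicit monotonicity transfers to the window $A\log T_n$, and your use of $\tau_{n,S,w}\le\tau_{n,Z,w}$ for the sum statistic, simply spell out steps the paper leaves implicit.
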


Thus, for a fixed $\alpha$, suitably chosen weights yield delay at most $A\log T_n$ for every $A>(1+\eta)/I_\star$. The following choice of weights can improve the constant in the delay bounds from $\frac{1+\eta}{I_*}$ to $\frac{1}{I_*}.$

\begin{remark}[Near-harmonic PFA weights]
The polynomial weights $w_k\asymp k^{-(1+\eta)}$ give
$-\log w_{T_n+1}=(1+\eta)\log T_n+O(1)$, so the delay window of
Corollary~\ref{cor:RCMM-log-delay} is $A\log T_n$ for $A>\frac{1+\eta}{I_\star}$. The summable
choice $w_k\asymp\frac{1}{k\{\log(e+k)\}^2}$ instead gives
$-\log w_{T_n+1}=\log T_n+2\log\log T_n+O(1)$, so that for every $A>\frac{1}{I_\star}$,
$\mathbb P_{P_0,T_n,P_1}\{\tau_{n,Z,w}(\tfrac{1}{\alpha})\leq T_n+A\{\log T_n+2\log\log T_n\}\}\to1,$
and the factor $1+\eta$ is removed at the cost of the lower-order term $2\log\log T_n$.
\end{remark}

Setting all weights $w_k$ to be one in Theorem~\ref{th:weighted-RCMM-log-delay}, we get the upper bound on detection delay for the ARL-controlled procedure to be $\frac{\log \gamma}{I_*}$. 

\begin{corollary}[Detection delay for the ARL-controlled procedure]
\label{cor:ARL-log-delay}
In the setup of the last theorem, let $w_k=1$ for all $k \geq 1,$ and let $\gamma_n\to\infty$ satisfy $\log\gamma_n=o(T_n)$. Then, for every $A>1/I_\star$,
\[
\mathbb P_{P_0,T_n,P_1}
\left\{
\tau_{n,S,w}(\gamma_n)
\leq T_n+A\log\gamma_n
\right\}
\to1,
\]
and the same statement holds for $\tau_{n,Z,w}(\gamma_n)$. If $T_n=o(\gamma_n)$, then
$\mathbb P_{P_0,T_n,P_1}\{T_n<\tau_{n,Z,w}(\gamma_n)\leq T_n+A\log\gamma_n\}\to1$
and
$\mathbb P_{P_0,T_n,P_1}\{\tau_{n,Z,w}(\gamma_n)>T_n+A\log\gamma_n\mid\tau_{n,Z,w}(\gamma_n)>T_n\}\to0,$
with the same conclusions for $\tau_{n,S,w}(\gamma_n)$.
\end{corollary}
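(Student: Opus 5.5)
This corollary follows by specializing Theorem~\ref{th:weighted-RCMM-log-delay} to unit weights $w_k\equiv1$ and threshold $b_n=\gamma_n$. The plan is to verify that theorem's hypotheses, then use Theorem~\ref{th:weighted-null-validity} to control pre-change survival.

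\textbf{Step 1: the boundary.} With unit weights, $w_{T_n+1}=1$, so $\mathcal B_{n,w}(\gamma_n)=\log(\gamma_n/w_{T_n+1})=\log\gamma_n$. Since $\gamma_n\to\infty$, we get $\mathcal B_{n,w}\to\infty$. The assumption $\log\gamma_n=o(T_n)$ is exactly the requirement $\mathcal B_{n,w}=o(T_n)$. Theorem~\ref{th:weighted-RCMM-log-delay} then gives, for every $A>1/I_\star$,
\[
\mathbb P_{P_0,T_n,P_1}\{\tau_{n,Z,w}(\gamma_n)\le T_n+A\log\gamma_n\}\to1.
\]
The same statement for $\tau_{n,S,w}$ follows from the inequality $S^{(w)}_t\ge Z^{(w)}_t$, which holds because the weights and $M_{k,t}$ are nonnegative. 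This inequality gives $\tau_{n,S,w}\le\tau_{n,Z,w}$.

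\textbf{Step 2: survival to the changepoint.} Assume $T_n=o(\gamma_n)$. The first $T_n$ observations are i.i.d.\ $P_0$, and $p_1,\dots,p_{T_n}$ depend only on $X_1,\dots,X_{T_n}$ and the randomizers. Hence the event $\{\tau\le T_n\}$ has the same probability under $\mathbb P_{P_0,T_n,P_1}$ as under $\mathbb P_{P_0,\infty}$. Apply the horizon bound \eqref{eq:weighted-finite-horizon-bound} of Theorem~\ref{th:weighted-null-validity} with the deterministic stopping time $\sigma=T_n$ and $W_{\tau\wedge T_n}\le T_n$:
\[
\mathbb P_{P_0,T_n,P_1}\{\tau_{n,S,w}(\gamma_n)\le T_n\}\le T_n/\gamma_n\to0.
\]
Therefore $q_{n,w}(\gamma_n)=1-\min\{1,T_n/\gamma_n\}\to1$, and the same holds for the maximum rule.

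\textbf{Step 3: the window and conditional statements.} Since $\liminf_n q_{n,w}(\gamma_n)=1>0$, the second half of Theorem~\ref{th:weighted-RCMM-log-delay} yields both remaining conclusions. First,
\[
\mathbb P_{P_0,T_n,P_1}\{\tau_{n,Z,w}(\gamma_n)>T_n+A\log\gamma_n\mid\tau_{n,Z,w}(\gamma_n)>T_n\}\to0.
\]
Second, the probability of the window event $\{T_n<\tau_{n,Z,w}(\gamma_n)\le T_n+A\log\gamma_n\}$ has liminf at least $1$, so it tends to $1$. The same conclusions hold for the sum rule, either by that theorem directly or by combining Step 1 with Step 2.

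\textbf{Main obstacle.} The only real difficulty is the pre-change reduction in Step 2, namely identifying the law of the pre-change $p$-values under the change model with their null law. Even this is immediate from the causal, sequential definition \eqref{eq: randomized pval} of the conformal $p$-values.
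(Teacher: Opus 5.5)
Your proposal is correct and is essentially the paper's proof. With unit weights $\mathcal B_{n,w}(\gamma_n)=\log\gamma_n$, so Theorem~\ref{th:weighted-RCMM-log-delay} gives the crossing statement, and the survival bound $q_{n,w}(\gamma_n)\geq 1-T_n/\gamma_n\to1$ from \eqref{eq:weighted-survival-lower} yields the window and conditional conclusions. You rederive that survival bound from Theorem~\ref{th:weighted-null-validity} via \eqref{eq:weighted-finite-horizon-bound} with $\sigma=T_n$, which is how the paper justifies it as well.
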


The bounds obtained so far control the probability that the delay exceeds a given window. The next result provides bounds on the expectation and moments of suitably truncated detection delay. Let $L_n:=\lfloor\delta_0T_n\rfloor$ and define $r_{n,w}^*(b_n)$ to be the smallest positive integer $r\leq L_n$ such that \eqref{eq:weighted-tail-boundary} holds for every integer $u\in[r,L_n]$. If no such integer exists, set $r_{n,w}^*(b_n):=\infty$. For all sufficiently large effective boundaries, the left-hand side of \eqref{eq:weighted-tail-boundary} is increasing on the relevant range, so this definition agrees with the first crossing of that boundary.

\begin{proposition}[Truncated expected delay and exponential moments]
\label{prop:weighted-truncated-delay}
Assume the conditions of Theorem~\ref{th:weighted-RCMM-late-alarm-tail} and $r_{n,w}^*(b_n)\leq L_n$. For all sufficiently large $n$ and either $\tau=\tau_{n,S,w}(b_n)$ or $\tau=\tau_{n,Z,w}(b_n)$,
\begin{equation}
\label{eq:weighted-truncated-mean}
\mathbb E_{P_0,T_n,P_1}
\left[
\{(\tau-T_n)^+\}\wedge L_n
\right]
\leq
r_{n,w}^*(b_n)
+\frac{c_1e^{-c_2r_{n,w}^*(b_n)}}{1-e^{-c_2}}.
\end{equation}
If $q_{n,w}(b_n)>0$, then $\mathbb E_{P_0,T_n,P_1}
\left[
\{(\tau-T_n)^+\}\wedge L_n
\mid \tau>T_n
\right]
\leq
\frac{r_{n,w}^*(b_n)+c_1e^{-c_2r_{n,w}^*(b_n)}/(1-e^{-c_2})}{q_{n,w}(b_n)}.$
Furthermore, for every $0\leq\lambda<c_2$,
\begin{equation}
\label{eq:weighted-truncated-exp}
\mathbb E_{P_0,T_n,P_1}
\exp\left[
\lambda
\left(
\{(\tau-T_n)^+\}\wedge L_n-r_{n,w}^*(b_n)
\right)_+
\right]
\leq
1+
\frac{c_1(e^\lambda-1)e^{-c_2r_{n,w}^*(b_n)}}
{1-e^{-(c_2-\lambda)}}.
\end{equation}
\end{proposition}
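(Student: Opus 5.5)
The plan is to obtain all three bounds from the exponential tail bound of Theorem~\ref{th:weighted-RCMM-late-alarm-tail}, using the tail-sum formula for the expectation of a nonnegative integer random variable. Write $Y:=\{(\tau-T_n)^+\}\wedge L_n$ and $r^*:=r_{n,w}^*(b_n)$. Then $Y$ takes values in $\{0,\ldots,L_n\}$, so
\[
\mathbb E Y=\sum_{u=0}^{L_n-1}\mathbb P(Y>u).
\]
For $u<L_n$ we have $\{Y>u\}=\{\tau>T_n+u\}$. I will split the sum at $u=r^*$.

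For $u\le r^*-1$, bound each term by $1$; these terms contribute at most $r^*$. For $r^*\le u\le L_n-1$, note that $u\ge r^*\ge1$, that $u\le L_n\le\delta_0T_n$, and that \eqref{eq:weighted-tail-boundary} holds at $u$ by the definition of $r^*$. Theorem~\ref{th:weighted-RCMM-late-alarm-tail} therefore applies for all sufficiently large $n$, with $n$ not depending on $u$. It gives $\mathbb P(\tau>T_n+u)\le c_1e^{-c_2u}$. Summing this geometric tail from $u=r^*$ gives at most $c_1e^{-c_2r^*}/(1-e^{-c_2})$, which proves \eqref{eq:weighted-truncated-mean}.

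The conditional statement follows by bounding $\mathbb E[Y\mid\tau>T_n]=\mathbb E[Y\mathbf 1\{\tau>T_n\}]/\mathbb P(\tau>T_n)$. The numerator is at most $\mathbb E Y$, since $Y\ge0$. For the denominator, use $\mathbb P(\tau>T_n)\ge q_{n,w}(b_n)$ from \eqref{eq:weighted-survival-lower}, which is valid for both the sum and the max rules.

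For the exponential moment, set $V:=(Y-r^*)_+$, which takes values in $\{0,\ldots,L_n-r^*\}$. Use the summation-by-parts identity
\[
\mathbb E e^{\lambda V}=1+\sum_{j\ge1}(e^{\lambda j}-e^{\lambda(j-1)})\,\mathbb P(V\ge j)=1+(e^\lambda-1)\sum_{j\ge1}e^{\lambda(j-1)}\mathbb P(V\ge j).
\]
For $j\ge1$ with $j\le L_n-r^*$, the event $\{V\ge j\}$ equals $\{Y>r^*+j-1\}=\{\tau>T_n+r^*+j-1\}$. The index $r^*+j-1$ lies in $[r^*,L_n-1]$, so the tail bound gives $\mathbb P(V\ge j)\le c_1e^{-c_2(r^*+j-1)}$. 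Hence the series is at most
\[
c_1e^{-c_2r^*}\sum_{j\ge1}e^{-(c_2-\lambda)(j-1)}=\frac{c_1e^{-c_2r^*}}{1-e^{-(c_2-\lambda)}},
\]
which converges because $\lambda<c_2$. This yields \eqref{eq:weighted-truncated-exp}.

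The main point requiring care is the applicability of Theorem~\ref{th:weighted-RCMM-late-alarm-tail} uniformly over every integer $u\in[r^*,L_n]$. This is exactly why $r^*$ is defined as the smallest $r$ for which \eqref{eq:weighted-tail-boundary} holds on the whole interval $[r,L_n]$, rather than as the first crossing. The constants $c_0,c_1,c_2,\delta_0$ and the threshold "sufficiently large $n$" from that theorem do not depend on $r$, so the uniformity is automatic. Beyond this, the rest is routine bookkeeping with the truncation at $L_n$: all $u\ge L_n$ contribute zero, which is why the sums are finite and no tail estimate beyond $\delta_0T_n$ is needed.
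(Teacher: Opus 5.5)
Your proposal is correct and follows the paper's proof essentially line by line. Both use the tail-sum formula for $\mathbb E[Y]$, split at $r_{n,w}^*(b_n)$, and apply the uniform geometric tail from Theorem~\ref{th:weighted-RCMM-late-alarm-tail} on $[r_{n,w}^*(b_n),L_n-1]$. Both obtain the conditional bound by dividing by the survival bound $q_{n,w}(b_n)$, and both use the same summation-by-parts identity for the exponential moment: your $\sum_{j\ge1}e^{\lambda(j-1)}\mathbb P(V\ge j)$ is the paper's $\sum_{u\ge0}e^{\lambda u}\mathbb P(W>u)$ after reindexing $u=j-1$.
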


The truncation at $L_n$ is inherited from the range restriction of
Theorem~\ref{th:weighted-RCMM-late-alarm-tail} and cannot be removed, since no tail control is
available once the post-change observations form a non-negligible fraction of the calibration
pool; the proposition therefore bounds a truncated expectation rather than
$\mathbb E_{P_0,T_n,P_1}\{(\tau-T_n)^+\}$ itself. Its leading term is $r^*_{n,w}(b_n)$, the
smallest window satisfying \eqref{eq:weighted-tail-boundary}, and the remainder is the
geometric sum of the exponential tail beyond that window. The finiteness of the exponential
moment for every $\lambda<c_2$ shows in addition that the delay concentrates around
$r^*_{n,w}(b_n)$ rather than merely being bounded in expectation.

Adding a null-bet component converts local delay control into a pathwise finite stopping guarantee when the sum of weights diverges.

\begin{proposition}[Null-bet modification]
\label{prop:weighted-null-atom}
Fix $\rho\in(0,1)$ and define $\overline M_{k,t}^{\rho}
:=
\rho+(1-\rho)M_{k,t},\hspace{.2cm}
\overline S_t^{(w,\rho)}
:=
\sum_{k=1}^t w_k\overline M_{k,t}^{\rho}, \hspace{.2cm}W_t:=\sum_{k=1}^t w_k.$ Under the null hypotheses of Theorem~\ref{th:weighted-null-validity}, equations~\eqref{eq:weighted-PFA-bound}--\eqref{eq:weighted-ARL-bound} continue to hold for $\overline S^{(w,\rho)}$ with the same applicable weight conditions. In addition, $\overline S_t^{(w,\rho)}\geq\rho W_t$
pathwise. Hence, if $W_t\to\infty$, $\inf\{t\geq1:\overline S_t^{(w,\rho)}\geq b\}
\leq
\inf\{t:\rho W_t\geq b\}
<\infty.$
for every data sequence. 
Furthermore, for every $n$ and $r\geq1$, $\overline S_{n,T_n+r}^{(w,\rho)}
\geq(1-\rho)w_{T_n+1}M_{n,T_n+1,T_n+r}.$
Consequently, all delay bounds of Sections~\ref{sec:optimality} and~\ref{subsec: directional} hold for $\overline S^{(w,\rho)}$ with $\mathcal B_{n,w}(b_n)$ replaced by $\mathcal B_{n,w}^{\rho}(b_n):=\log\frac{b_n}{(1-\rho)w_{T_n+1}}=\mathcal B_{n,w}(b_n)+\log\frac{1}{1-\rho}.$
\end{proposition}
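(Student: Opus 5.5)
The plan is to reduce everything to the structure already exploited in Theorem~\ref{th:weighted-null-validity}: each restart $\overline M^{\rho}_{k,\cdot}$ is again a nonnegative martingale with unit start. Extend $\overline\Lambda^{(k)}_t:=1$ for $t<k$ and $\overline\Lambda^{(k)}_t:=\rho+(1-\rho)\Lambda^{(k)}_t$ for $t\ge k$. Then $\overline\Lambda^{(k)}$ is a convex combination of the constant martingale $1$ and $\Lambda^{(k)}$, hence a nonnegative $(\mathcal G_t)$-martingale under every exchangeable no-change law with $\overline\Lambda^{(k)}_{k-1}=1$. Equivalently, $\overline M^\rho_{k,t}$ is the mixture martingale built from the enlarged prior $\rho\delta_{\rm null}+(1-\rho)\Pi$ on $\Theta\cup\{{\rm null}\}$, with $f_{\rm null}\equiv1$. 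This reformulation is convenient: the validity proof of Theorem~\ref{th:weighted-null-validity} used only the martingale property of each $\Lambda^{(k)}$ and nonnegativity, so it transfers verbatim.

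Concretely, $\sum_k w_k\overline\Lambda^{(k)}_t=\overline S^{(w,\rho)}_t+\sum_{k>t}w_k$ is a nonnegative martingale started at $\|w\|_1$ when $\|w\|_1<\infty$, and Ville's inequality gives \eqref{eq:weighted-PFA-bound}. For \eqref{eq:weighted-exact-edetector} I would repeat the same argument: $\mathbb E\,\overline S_\tau=\sum_k w_k\mathbb E[\overline\Lambda^{(k)}_\tau\mathbf 1\{\tau\ge k\}]$. By optional stopping at $\tau\vee(k-1)$ together with Fatou or a truncation, each term is at most $w_k\mathbb P(\tau\ge k)$, and summing yields $\mathbb E W_\tau$. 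The analogous bound for the maximum follows from the sum bound. The consequences \eqref{eq:weighted-finite-horizon-bound} and \eqref{eq:weighted-ARL-bound} are then derived exactly as before, by Markov's inequality at $\tau\wedge\sigma$ with $\overline S$ at least $b$ on the alarm event.

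The pathwise statements are immediate. Since $M_{k,t}\ge0$, we have $\overline S_t\ge\rho\sum_{k\le t}w_k=\rho W_t$. Hence if $W_t\to\infty$, the first time $\rho W_t\ge b$ is finite, and it dominates the alarm time. Likewise, dropping every summand except $k=T_n+1$ and dropping $\rho$ gives $\overline S_{n,T_n+r}\ge(1-\rho)w_{T_n+1}M_{n,T_n+1,T_n+r}$.

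For the delay transfer I would inspect the proofs of Theorems~\ref{th:weighted-KL-upper}, \ref{th:weighted-RCMM-late-alarm-tail} and \ref{th:weighted-RCMM-log-delay}. They only use the lower bound $S_{T_n+r}\ge w_{T_n+1}M_{T_n+1,T_n+r}$ and require $\log M_{T_n+1,T_n+r}\ge\mathcal B_{n,w}(b_n)$. Replacing $w_{T_n+1}$ by $(1-\rho)w_{T_n+1}$ therefore replaces $\mathcal B$ by $\mathcal B^\rho$. Since $\mathcal B^\rho-\mathcal B$ is a constant, the hypotheses $\mathcal B^\rho\to\infty$ and $\mathcal B^\rho=o(T_n)$ are unchanged.

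There is one further ingredient: the survival bound $q_{n,w}$ relies on the finite-horizon inequality \eqref{eq:weighted-finite-horizon-bound}, which has already been established for $\overline S$. The main obstacle is verifying this bookkeeping, namely that no step of the earlier delay proofs used an upper bound on $S$ specific to the unmodified statistic.
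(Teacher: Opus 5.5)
Your proposal is correct and follows the paper's proof. Each $\overline M^{\rho}_{k,\cdot}$ is a unit-start nonnegative martingale, so the validity argument of Theorem~\ref{th:weighted-null-validity} transfers unchanged, and the pathwise bounds follow from $\overline S_t^{(w,\rho)}=\rho W_t+(1-\rho)S_t^{(w)}$. The delay proofs use only the local-restart lower bound together with the survival bound $q_{n,w}$, which, as you note, is inherited from the transferred finite-horizon inequality. Hence $\mathcal B_{n,w}(b_n)$ is simply replaced by $\mathcal B_{n,w}^{\rho}(b_n)$.
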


The deterministic component $\rho W_t$ forces eventual threshold crossing while preserving the same PFA and ARL calibrations. Its statistical cost is a constant additive term in the effective boundary, so choosing a small $\rho$ regularizes the stopping time without changing first-order detection delay. The weighted statistic also adapts to any profitable betting density receiving positive prior mass.

\subsection{Detection under other betting classes}

Assumptions~\ref{assn:local-KL-support} and~\ref{assn:directional-local-detectability} both
constrain the betting class through its ability to represent, exactly or approximately, the
post-change law of the conformal $p$-values. Neither is necessary. The two results of this
section analyze the detection delay under conditions of different nature. The first result requires only that the prior assign positive mass to a single profitable betting density, with no smoothness or richness demanded of the class as a whole.

\begin{theorem}[Weighted oracle inequality]
\label{th:weighted-oracle-inequality}
Assume \eqref{eqn:data structure} along with Assumption~\ref{assn:post-change-pvalue-separation}, and let $w_{T_n+1}>0$. Suppose $\Pi$ puts mass $v_f>0$ at a betting density $f$, $\log f$ is bounded and Lipschitz, and $I_f(H):=\int_0^1\log f(u)\,dH(u)>0.$
Let $\mathcal C_{n,w,f}(b_n)
:=
\mathcal B_{n,w}(b_n)+\log(1/v_f).$
If $\mathcal C_{n,w,f}(b_n)\to\infty$ and $\mathcal C_{n,w,f}(b_n)=o(T_n)$, then, for every $A>1/I_f(H)$,
\[
\mathbb P_{P_0,T_n,P_1}
\left\{
\tau_{n,S,w}(b_n)
\leq
T_n+A\mathcal C_{n,w,f}(b_n)
\right\}
\to1,
\]
and the same statement holds for $\tau_{n,Z,w}(b_n)$. If $\liminf_n q_{n,w}(b_n)>0$, then
$\mathbb P_{P_0,T_n,P_1}\{\tau_{n,Z,w}(b_n)>T_n+A\mathcal C_{n,w,f}(b_n)\mid\tau_{n,Z,w}(b_n)>T_n\}\to0$
and
$$\liminf_{n\to\infty}\mathbb P_{P_0,T_n,P_1}\{T_n<\tau_{n,Z,w}(b_n)\leq T_n+A\mathcal C_{n,w,f}(b_n)\}
\geq\liminf_{n\to\infty}q_{n,w}(b_n),$$
with the same conclusions for $\tau_{n,S,w}(b_n)$.
\end{theorem}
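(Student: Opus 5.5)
The plan is to reduce everything to a single restarted martingale started at the changepoint and to keep only the atom $f$ of the prior. Since all summands are nonnegative,
\[
S_{n,t}^{(w)}\ \ge\ Z_{n,t}^{(w)}\ \ge\ w_{T_n+1}M_{n,T_n+1,t}\ \ge\ w_{T_n+1}\,v_f\prod_{j=T_n+1}^{t}f(p_{n,j}),
\]
so $\tau_{n,S,w}(b_n)\le\tau_{n,Z,w}(b_n)\le\sigma_n$, where
\[
\sigma_n:=\inf\Big\{t>T_n:\ \sum_{j=T_n+1}^{t}\log f(p_{n,j})\ \ge\ \mathcal C_{n,w,f}(b_n)\Big\}.
\]
It therefore suffices to show that, with $r_n:=\lfloor A\mathcal C_{n,w,f}(b_n)\rfloor$, the event $\{\sum_{j=T_n+1}^{T_n+r_n}\log f(p_{n,j})\ge\mathcal C_{n,w,f}(b_n)\}$ has probability tending to one. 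Because $A I_f(H)>1$, there is $\epsilon>0$ with $A(I_f(H)-2\epsilon)>1$. The goal is then to prove
\[
\frac1{r_n}\sum_{j=T_n+1}^{T_n+r_n}\log f(p_{n,j})\ \ge\ I_f(H)-2\epsilon
\]
with probability tending to one.

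The key step, and the main obstacle, is controlling the post-change conformal $p$-values, which are neither independent nor exactly $H$-distributed. The approach is a coupling. For $j=T_n+i$ with $1\le i\le r_n$, write $p_{n,j}=\frac{T_n}{j}\widehat F_{0}(X_{n,j})+\frac{1}{j}\cdot(\text{post-change rank terms})$, where $\widehat F_0$ is the empirical CDF of the $T_n$ pre-change points; this is valid since $H$ and $F_{P_0}$ are continuous, so ties occur with probability zero. Then
\[
|p_{n,j}-F_{P_0}(X_{n,j})|\ \le\ \|\widehat F_0-F_{P_0}\|_\infty+\frac{2r_n+1}{T_n}.
\]
By Dvoretzky--Kiefer--Wolfowitz and $r_n=o(T_n)$, this is $o_P(1)$ uniformly in $i\le r_n$. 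Since $\log f$ is Lipschitz with constant $\ell$, the averages of $\log f(p_{n,j})$ and $\log f(U_j)$, with $U_j:=F_{P_0}(X_{n,j})\iid H$, differ by at most $\ell\cdot o_P(1)<\epsilon$ with probability tending to one. The $U_j$ are independent of the pre-change block.

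Next apply Hoeffding to the bounded iid variables $\log f(U_j)$, which have mean $I_f(H)$. This gives $P\{\frac1{r_n}\sum\log f(U_j)<I_f(H)-\epsilon\}\le e^{-2r_n\epsilon^2/(2\|\log f\|_\infty)^2}\to0$, because $r_n\to\infty$ as $\mathcal C_{n,w,f}(b_n)\to\infty$. Combining the two bounds, on an event of probability tending to one the sum is at least $r_n(I_f(H)-2\epsilon)\ge\mathcal C_{n,w,f}(b_n)$ for large $n$, which gives the first claim for both $\tau_{n,Z,w}$ and $\tau_{n,S,w}$.

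For the conditional statements, $q_{n,w}(b_n)\le P\{\tau>T_n\}$ by \eqref{eq:weighted-survival-lower}. This inequality follows from \eqref{eq:weighted-finite-horizon-bound} with $\sigma=T_n$, since the pre-change segment has the no-change law. Then
\[
P\{\tau>T_n+Ar\mid\tau>T_n\}\ \le\ \frac{P\{\tau>T_n+A\mathcal C\}}{q_{n,w}(b_n)}\to0
\]
when $\liminf q_{n,w}(b_n)>0$. Also $P\{T_n<\tau\le T_n+A\mathcal C\}\ge P\{\tau>T_n\}-P\{\tau>T_n+A\mathcal C\}$, whose $\liminf$ is at least $\liminf q_{n,w}(b_n)$. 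These steps apply verbatim to $\tau_{n,S,w}$.
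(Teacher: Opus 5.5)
Your proposal is correct and follows essentially the same route as the paper: you keep only the restart at $T_n+1$ and the atom $f$, pass to the oracle variables $F_{P_0}(X_{n,j})$ via the rank comparison of Lemma~\ref{lem:local-comparison} (DKW plus the $(2r+1)/T_n$ term), and apply a law of large numbers with $r_n=\lfloor A\mathcal C_{n,w,f}(b_n)\rfloor$ (your Hoeffding bound plays the role of the paper's weak law). You then deduce the conditional and correct-window claims from \eqref{eq:weighted-survival-lower}, as the paper does; note only that your conditioning display should read $T_n+A\mathcal C_{n,w,f}(b_n)$ rather than $T_n+Ar$.
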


Thus, when no smoothness or richness is assumed of the betting class, the detection delay is of
order $\{\mathcal B_{n,w}(b_n)+\log(1/v_f)\}/I_f(H)$ instead of
$\mathcal B_{n,w}(b_n)/D_{\mathrm{KL}}(H\|U)$, as proved in
Theorem~\ref{th:weighted-KL-upper}. In particular, there is an additional penalty term
$\log(1/v_f)$, which quantifies the cost of hedging: distributing the prior over a larger
collection of candidates reduces $v_f$ for each and lengthens the delay by a constant amount,
while increasing the possibility that some member of the support has a large value of
$I_f(H)$. Since the conclusion holds for every atom of $\Pi$, the delay is bounded by
$A\min_f\{\mathcal B_{n,w}(b_n)+\log(1/v_f)\}/I_f(H)$ over its support, so that the procedure
attains the performance of the most favourable bet available to it. The choice $f=h$ gives
$I_h(H)=D_{\mathrm{KL}}(H\|U)$ and recovers the constant of
Theorem~\ref{th:weighted-KL-upper}.

The preceding rates depend on the post-change law through $D_{\mathrm{KL}}(H\|U)$, $I_\star$,
or $I_f(H)$, and are therefore established for each fixed $H$ rather than uniformly over a
class of alternatives. A guarantee of the latter kind requires a condition on the betting class
holding uniformly over the alternatives considered, together with a measure of departure from
the null defined independently of the class. Assumption~\ref{assn:uniform-local-mixture-richness}
imposes such a condition on the class, and the Kolmogorov separation
$\Delta(H):=\|H-F_{\mathrm U}\|_\infty$ measures the departure of $H$ from uniform distribution; For
$0<\Delta\leq\epsilon_0/2$ and $L<\infty$ we consider the local class
$\mathcal H_{\Delta,L}^{\mathrm{loc}}
:=\{H:\Delta\leq\|H-F_{\mathrm U}\|_\infty\leq\epsilon_0/2,\ H\text{ is }L\text{-Lipschitz}\}$,
over which the following bound is uniform and $H$ enters only through $\Delta$.

\begin{theorem}[Weighted nonparametric delay bound]
\label{th:weighted-nonparametric-upper}
Assume \eqref{eqn:data structure} along with Assumption~\ref{assn:post-change-pvalue-separation}, and suppose $(\mathcal{F}, \Pi)$ follows Assumption~\ref{assn:uniform-local-mixture-richness}. Let $H\in\mathcal H_{\Delta,L}^{\mathrm{loc}}$. Let $(r_n)$ be a deterministic positive-integer sequence with $r_n=o(T_n)$, and let $s_n>0$ satisfy $L\left(s_n+\frac{2r_n+1}{T_n}\right)
\leq
\frac\Delta4.$
If
\begin{equation}
\label{eq:weighted-nonparametric-condition}
\frac{\kappa}{4}r_n\Delta^2
-K\log\frac2\Delta-K
\geq
\mathcal B_{n,w}(b_n),
\end{equation}
then
\[
\mathbb P_{P_0,T_n,P_1}\{\tau_{n,S,w}(b_n)>T_n+r_n\}
\leq
2e^{-r_n\Delta^2/8}+2e^{-2T_ns_n^2},
\]
and the same bound holds for $\tau_{n,Z,w}$. 
\end{theorem}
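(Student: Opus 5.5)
Since $\tau_{n,Z,w}\ge\tau_{n,S,w}$ pathwise (because $S^{(w)}_t\ge Z^{(w)}_t$), it is enough to prove the bound for $\tau_{n,Z,w}$. We start from the single-restart lower bound
\[
Z^{(w)}_{n,T_n+r_n}\geq w_{T_n+1}M_{n,T_n+1,T_n+r_n}.
\]
Hence the event $\{\tau_{n,Z,w}(b_n)>T_n+r_n\}$ is contained in
\[
\Bigl\{\log M_{n,T_n+1,T_n+r_n}<\mathcal B_{n,w}(b_n)\Bigr\}.
\]
By \eqref{eq:weighted-nonparametric-condition}, it therefore suffices to show that, outside an event of probability at most $2e^{-r_n\Delta^2/8}+2e^{-2T_ns_n^2}$,
\[
\log M_{n,T_n+1,T_n+r_n}\geq \tfrac{\kappa}{4}r_n\Delta^2-K\log\tfrac2\Delta-K.
\]

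\textbf{Step 1: control the post-change conformal $p$-values.} For $j=T_n+1,\dots,T_n+r_n$, the $p$-value $p_j$ is a randomized rank of $X_j$ among $T_n$ pre-change points and at most $r_n$ post-change points. We compare it with $U_j:=F_{P_0}(X_j)$, whose law is $H$.
\begin{itemize}
\item By the DKW inequality on the pre-change empirical CDF, with probability at least $1-2e^{-2T_ns_n^2}$ the pre-change empirical CDF is within $s_n$ of $F_{P_0}$ uniformly.
\item The remaining contributions are the at most $r_n$ post-change points in the pool, the ties handled by $\lambda_j$, and the normalization by $j$ rather than $T_n$. Together they move the rank by at most $(2r_n+1)/T_n$.
\end{itemize}
So on this event, $|p_j-U_j|\le s_n+(2r_n+1)/T_n$ for all $j$ in the window. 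This is a deterministic coupling; the $p_j$ are not independent, but they are uniformly close to the i.i.d.\ $U_j\sim H$.

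\textbf{Step 2: use the mixture richness assumption.} I expect Assumption~\ref{assn:uniform-local-mixture-richness} to provide, for each $H$ in the local class, a subset of $\Theta$ with $\Pi$-mass at least $e^{-K\log(2/\Delta)-K}$ on which every $f_\theta$ satisfies three properties:
\begin{itemize}
\item $\log f_\theta$ is bounded and Lipschitz;
\item it has expected log-gain at least $\kappa\Delta^2$ under the law of interest;
\item its gain is robust to perturbing the argument by $\Delta/(4L)$-type errors, which the Lipschitz constant $L$ of $H$ converts into the $\Delta/4$ budget in the hypothesis $L(s_n+\frac{2r_n+1}{T_n})\le\Delta/4$.
\end{itemize}
Restricting the mixture integral to this set and applying Jensen's inequality gives
\[
\log M_{n,T_n+1,T_n+r_n}\ge\log\Pi(\text{set})+\inf_{\theta\in\text{set}}\sum_j\log f_\theta(p_j).
\]
Replacing $p_j$ by $U_j$ loses at most $r_n\kappa\Delta^2/4$ on the DKW event.

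\textbf{Step 3: concentration.} The sum $\sum_j\log f_\theta(U_j)$ is a sum of i.i.d.\ bounded terms, so Hoeffding's inequality gives a deficit of at most $r_n\kappa\Delta^2/2$ from the mean, outside probability $2e^{-r_n\Delta^2/8}$. The sub-Gaussian scale comes from $\|\log f_\theta\|_\infty\lesssim\Delta$ in the local class. Combining the three steps yields the displayed lower bound, and a union bound gives the stated probability.

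\textbf{Main obstacle.} The delicate point is making Step 3 uniform over $\theta$ in the good set. I would avoid needing a uniform statement by working with a single representative bet $\theta^*$ (or an average) supplied by the assumption, and transferring to the rest of the set via the Lipschitz bound. A second point to check is that the exponent constants match exactly $\Delta^2/8$; if the assumption's constants differ, I would adjust $\kappa/4$ accordingly.
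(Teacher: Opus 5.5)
Your reduction to the single restart term ($\tau_{n,S,w}\le\tau_{n,Z,w}$, $Z^{(w)}_{n,T_n+r_n}\ge w_{T_n+1}M_{n,T_n+1,T_n+r_n}$) is correct and matches the paper, and so is your Step~1 (DKW for the pre-change empirical CDF plus Lemma~\ref{lem:local-comparison}, giving $|p_{n,j}-V_{n,j}|\le\eta:=s_n+(2r_n+1)/T_n$). The gap is in Steps~2--3, which rest on a guess about Assumption~\ref{assn:uniform-local-mixture-richness} that does not match what it says. The assumption does not supply a set of bets of prior mass $e^{-K\log(2/\Delta)-K}$ whose log-densities are Lipschitz, $O(\Delta)$-bounded, and have expected gain $\kappa\Delta^2$ under $H$. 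Nothing else in the hypotheses guarantees such bets either: the threshold family of Proposition~\ref{ex:threshold-betting-family} has bets discontinuous in $u$, with $|\theta|$ up to $1/2$. Consequently your perturbation step and your Hoeffding step have no footing. The exponent you need, $2e^{-r_n\Delta^2/8}$, would also not come out of Hoeffding with an unspecified range. You have also misplaced the constant $L$: it is the Lipschitz constant of $H$, not a property of the bets.

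The missing idea is that the assumption is a deterministic lower bound on the log-mixture at any input $(z_1,\ldots,z_m)$, expressed only through the Kolmogorov distance of its empirical CDF. Since $\log M_{n,T_n+1,T_n+r}=\log\int\exp\{r\int\log f\,dG^p_r\}\,d\Pi(f)$, where $G^p_r$ is the empirical CDF of the $r$ post-change conformal $p$-values, you only need $\Delta/2\le\|G^p_r-F_{\mathrm U}\|_\infty\le\epsilon_0$ with the stated probability. No per-bet concentration or uniformity over $\theta$ is needed. Let $G^V_r$ be the empirical CDF of the iid oracle variables $V_{n,j}\sim H$. On your Step~1 event, $|p_{n,j}-V_{n,j}|\le\eta$ gives the sandwich $G^V_r(t-\eta)\le G^p_r(t)\le G^V_r(t+\eta)$. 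The $L$-Lipschitz property of $H$ then gives $\|G^p_r-H\|_\infty\le\|G^V_r-H\|_\infty+L\eta\le\|G^V_r-H\|_\infty+\Delta/4$. DKW for $G^V_r$ at level $\Delta/4$ costs exactly $2e^{-2r(\Delta/4)^2}=2e^{-r\Delta^2/8}$, and outside that event $\|G^p_r-H\|_\infty\le\Delta/2$. Since $H\in\mathcal H^{\mathrm{loc}}_{\Delta,L}$ means $\Delta\le\|H-F_{\mathrm U}\|_\infty\le\epsilon_0/2$, the triangle inequality yields $\Delta/2\le\|G^p_r-F_{\mathrm U}\|_\infty\le\epsilon_0$. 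The assumption with $m=r_n$ then gives $\log M_{n,T_n+1,T_n+r_n}\ge\frac{\kappa}{4}r_n\Delta^2-K\log\frac2\Delta-K$, and the boundary condition finishes the proof.
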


The delay is therefore of order $\{\mathcal B_{n,w}(b_n)+\log(1/\Delta)\}/\Delta^2$, subject to
the requirement that this quantity be $o(T_n)$. The dependence on the boundary is as before,
while the factor $\Delta^{-2}$ is the price of a guarantee that is uniform over
$\mathcal H^{\mathrm{loc}}_{\Delta,L}$ and requires neither a profitable direction nor a
density ratio to be specified. That this factor cannot be improved in its dependence on
$\Delta$ is seen from a single local alternative. Consider the family of densities with
respect to $U$ given by $g_\theta(u):=1+\theta\{\mathbf 1(u\leq1/2)-1/2\}$ for $|\theta|\leq1$,
with distribution function $G_\theta$. Then,
$\|G_\theta-F_{\mathrm U}\|_\infty=|\theta|/4$ together with
$D_{\mathrm{KL}}(G_\theta\|U)=\theta^2/8+O(\theta^4)$, so that its Kolmogorov separation is of
order $\theta$ while its KL distance is of order $\theta^2$. Choosing $|\theta|=4\Delta$
therefore places the alternative at Kolmogorov separation $\Delta$ with KL distance of order
$\Delta^2$, and substituting this into the lower bounds of Theorems~\ref{th:restricted-PFA-minimax}
and~\ref{thm:restricted-arl-lower} gives detection scales $\log N/\Delta^2$ and $\log\gamma/\Delta^2$,
matching the order of the upper bound above.

\section{Change detection using conformal mixture martingale}
\label{sec: change detection using CMM}

In this section we discuss a few properties of the conformal mixture martingale. While CMM is not central to our construction, the results developed here serve as the building block of the key results in this paper. Thus we present them for independent interest. 

\begin{theorem}[Local empirical separation of conformal $p$-values]
\label{thm:local-post-change-pvalues}
Let $\lambda_{n,j}\stackrel{\mathrm{iid}}{\sim}U(0,1)$ be independent of the array and define
$p_{n,j}:=p_j(X_{n,1},\ldots,X_{n,j};\lambda_{n,j})$. Let $r_n$ be a deterministic sequence with
$r_n\to\infty, \hspace{.2cm}
r_n=o(T_n), \hspace{.2cm}
\frac{r_n}{\sqrt{T_n}}\to\infty;$
put $m_n:=T_n+r_n$, and let $\widehat F_{n,m_n}$ be the empirical CDF of $p_{n,1},\ldots,p_{n,m_n}$. Then under 
\eqref{eqn:data structure} and Assumption~\ref{assn:post-change-pvalue-separation},
\begin{equation}
\label{eq:conv to Delta}
\frac{m_n}{r_n}
\sup_{t\in[0,1]}
\left|\widehat F_{n,m_n}(t)-t\right|
\xrightarrow{\mathbb P}
\Delta.
\end{equation}
Consequently, $\sqrt{m_n}\,
\sup_{t\in[0,1]}
\left|\widehat F_{n,m_n}(t)-t\right|
\xrightarrow{\mathbb P}
\infty.$
If, in addition, $\frac{r_n^2}{T_n\log n}\to\infty,$ then both convergences hold almost surely.
\end{theorem}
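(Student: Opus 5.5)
Write $p_j$ for $p_{n,j}$ and work on the $n$th row. By Proposition~\ref{prop: pval iid} the pre-change $p$-values $p_1,\dots,p_{T_n}$ are iid $U(0,1)$. By DKW, $\sup_t|\widehat F^{\rm pre}_{T_n}(t)-t|=O_{\mathbb P}(T_n^{-1/2})$, so the pre-change block contributes $T_n\cdot O_{\mathbb P}(T_n^{-1/2})/r_n=O_{\mathbb P}(\sqrt{T_n}/r_n)\to0$ to the quantity $\frac{m_n}{r_n}\sup_t|\widehat F_{n,m_n}(t)-t|$. The plan is therefore to decompose
\[
m_n\{\widehat F_{n,m_n}(t)-t\}=\sum_{j\le T_n}\{\mathbf 1(p_j\le t)-t\}+\sum_{j=T_n+1}^{m_n}\{\mathbf 1(p_j\le t)-t\}.
\]
It then suffices to show that $\sup_t\bigl|\frac1{r_n}\sum_{j=T_n+1}^{m_n}\mathbf 1(p_j\le t)-H(t)\bigr|\to0$ in probability, since $\sup_t|H(t)-t|=\Delta$. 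The final step combines the two pieces via the triangle inequality in both directions, which yields \eqref{eq:conv to Delta}. The consequence $\sqrt{m_n}\sup|\cdot|\to\infty$ follows because $\sqrt{m_n}\cdot r_n\Delta/m_n\asymp r_n/\sqrt{T_n}\to\infty$.

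The key step, and the main obstacle, is the post-change block, because those $p$-values are not independent. For $j=T_n+i$ with $1\le i\le r_n$, write
\[
p_j=\frac{T_n\widehat G_{T_n}(X_j)+\#\{T_n<l\le j: X_l<X_j\}+\lambda_j(\text{ties})}{j},
\]
where $\widehat G_{T_n}$ is the empirical CDF of the pre-change sample; ties have probability zero here since $F_{P_0}$ and $H$ are continuous. Because $j\le m_n$ and $r_n=o(T_n)$, we obtain uniformly in $j\le m_n$
\[
|p_j-F_{P_0}(X_j)|\le \|\widehat G_{T_n}-F_{P_0}\|_\infty+\frac{2r_n}{T_n}=:\epsilon_n,
\]
and $\epsilon_n=O_{\mathbb P}(T_n^{-1/2})+o(1)\to0$ by DKW applied to the pre-change $X$'s. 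The variables $U_j:=F_{P_0}(X_j)$ for $j>T_n$ are iid with law $H$. This gives the sandwich
\[
\mathbf 1(U_j\le t-\epsilon_n)\le\mathbf 1(p_j\le t)\le\mathbf 1(U_j\le t+\epsilon_n).
\]
By Glivenko--Cantelli/DKW for the $r_n\to\infty$ iid variables $U_j$, together with uniform continuity of $H$ (so that $\sup_t|H(t\pm\epsilon_n)-H(t)|\to0$), we get the desired uniform convergence to $H$ in probability.

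For the almost-sure version, the plan is to replace each in-probability DKW bound by the exponential DKW tail. Choose deviation thresholds $x_n$ with $T_nx_n/r_n\to0$ and $r_n x_n^2\gtrsim\log n$, and apply Borel--Cantelli across $n$. On the pre-change block, take $x_n=\sqrt{c\log n/T_n}$ with $c>1$, which is summable. The condition $r_n^2/(T_n\log n)\to\infty$ is exactly what makes $T_nx_n/r_n=\sqrt{cT_n\log n}/r_n\to0$. On the post-change block, use the deviation $\sqrt{c\log n/r_n}$, which tends to zero because $r_n\ge\sqrt{T_n\log n}\gg\log n$. The $\epsilon_n$ term is controlled by the same pre-change DKW event. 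Summability of the failure probabilities then gives almost sure convergence along $n$.
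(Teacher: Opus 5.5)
Your proposal is correct and follows essentially the same route as the paper. The paper splits $\frac{m_n}{r_n}(\widehat F_{n,m_n}-F_{\mathrm U})$ exactly as you do: a pre-change block of iid uniform conformal ranks contributing $O_{\mathbb P}(\sqrt{T_n}/r_n)$, a post-change block handled by the deterministic rank comparison with the oracle variables $F_{P_0}(X_{n,j})$ together with DKW and uniform continuity of $H$, and the deterministic term $H-F_{\mathrm U}$ of sup-norm $\Delta$; the almost-sure version likewise comes from summable DKW tails and Borel--Cantelli. Two small points need tidying. First, the $\epsilon_n$ term is governed by a separate DKW event for the raw pre-change sample, $\|\widehat G_{T_n}-F_{P_0}\|_\infty$, not by the event for the pre-change $p$-values, although it is equally summable at the same threshold. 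Second, your claim $r_n\gg\log n$ needs $T_n/\log n\to\infty$, which follows from combining $r_n=o(T_n)$ with $r_n^2/(T_n\log n)\to\infty$.
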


The proof of the above result proceeds by decomposing $\widehat F_{n,m_n}=\frac{T_n}{m_n}\widehat F_n^{\,0}
+\frac{r_n}{m_n}\widehat F_{n,r_n}^{\,1}$ i.e., into its pre- and post-change
empirical CDFs; the first term has uniform fluctuations of order $T_n^{-1/2}$ while
the second term converges to $H$, so after multiplication by $m_n/r_n$ the
deterministic separation $\sup_t|H(t)-t|=\Delta$ dominates precisely when
$r_n/\sqrt{T_n}\to\infty$, and the condition involving $\frac{r_n^2}{T_n \log n}$ makes the
concentration errors summable across rows leading to almost sure convergence.

Thus the empirical distribution of the post-change conformal $p$-values departs from uniformity at the exact
order $r_n/m_n$; the departure exceeds the $m_n^{-1/2}$ fluctuation of an exchangeable
sample, so the change is detectable in principle. This does not by itself bound
the delay. Since the mixture martingale accumulates log-capital at the rate at
which the $p$-values depart from uniformity, a deviation of order $r_n/m_n$ over
$r_n$ post-change observations yields log-capital of order $r_n^2/T_n$, so
crossing a fixed threshold requires $r_n$ of order $\sqrt{T_n}$. We make this
precise in the next result.

\begin{theorem}[Growth of the conformal mixture martingale after the changepoint]
\label{thm:CMM-growth-corrected}
Assume \eqref{eqn:data structure} and Assumption~\ref{assn:post-change-pvalue-separation}. Define $p_{n,j}$ as above and
$M_{n,j}:=M_j(p_{n,1},\ldots,p_{n,j})$ using \eqref{eq: conformal mixture martingale}. Let the sequence $r_n$ satisfy
\begin{equation}
\label{eq:r_n}
r_n\to\infty,
\qquad
r_n=o(T_n),
\qquad
\frac{r_n^2}{T_n\log T_n}\to\infty.
\end{equation}
Under either Assumption~\ref{assn:uniform-local-mixture-richness} or Assumption~\ref{assn:directional-local-detectability}, there exists $c_0>0$, depending only on $P_0,P_1$ and the constants of the fixed pair $(\mathcal F,\Pi)$ and not on the sequence $(r_n)$, such that $\mathbb P\left\{
\log M_{n,T_n+r_n}
\geq
c_0\frac{r_n^2}{T_n}
\right\}
\to1.$
In particular, $\log M_{n,T_n+r_n}\to\infty$ in probability. If, in addition,
\begin{equation}
\label{eq:CMM-growth-as-condition}
\frac{r_n^2}{T_n\log n}\to\infty,
\end{equation}
then $\mathbf{1}(
\log M_{n,T_n+r_n}
\geq
c_0\frac{r_n^2}{T_n}
)$ converges almost surely to one, and $\log M_{n,T_n+r_n}\to\infty$ almost surely as $n \to \infty.$
\end{theorem}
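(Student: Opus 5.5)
The plan is to lower-bound the mixture by its mass near a single well-chosen, weakly tilted betting density, and then to control the resulting log-product through the empirical CDF $\widehat F_{n,m_n}$ of the conformal $p$-values, with $m_n=T_n+r_n$. Concretely, I will pick a small subset $B_n\subset\Theta$ of parameters, of size comparable to $\gamma_n\asymp r_n/T_n$, along the betting path of Assumption~\ref{assn:directional-local-detectability} (or the analogous local path of Assumption~\ref{assn:uniform-local-mixture-richness}). This gives
\[
\log M_{n,m_n}\ \geq\ \log\Pi(B_n)+\inf_{\theta\in B_n}\sum_{j=1}^{m_n}\log f_\theta(p_{n,j}).
\]
The directional assumption should provide an expansion of the form $\log f_\gamma(u)\geq \gamma\{g(u)-\textstyle\int_0^1 g\}-C\gamma^2$ uniformly in $u$, with $\gamma$ signed along $\operatorname{sign}(J)$. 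Hence
\[
\sum_{j\le m_n}\log f_\gamma(p_{n,j})\ \geq\ \gamma\, m_n\int_0^1 g\,d(\widehat F_{n,m_n}-F_{\mathrm U})-C\gamma^2 m_n .
\]
Everything then reduces to showing that $m_n\int g\,d(\widehat F_{n,m_n}-F_{\mathrm U})=r_nJ+o_{\mathbb P}(r_n)+O_{\mathbb P}(\sqrt{T_n})$. Once this holds, taking $\gamma=|J|r_n/(2Cm_n)$ gives a log-product of at least $J^2r_n^2/(4Cm_n)(1-o(1))\geq c\,r_n^2/T_n$. The cross term is $\gamma\sqrt{T_n}=O(r_n/\sqrt{T_n})$, which is $o(r_n^2/T_n)$ because $r_n/\sqrt{T_n}\to\infty$. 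The prior penalty $\log(1/\Pi(B_n))$ should be $O(\log(T_n/r_n))=O(\log T_n)$ by the positive-density or polynomial small-ball part of the assumption, and this is $o(r_n^2/T_n)$ by \eqref{eq:r_n}. The constant $c_0$ then depends only on $J$, $C$ and $\theta_0$, not on $(r_n)$.

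For the key linear-functional estimate I will refine the decomposition used for Theorem~\ref{thm:local-post-change-pvalues}, namely $\widehat F_{n,m_n}=\frac{T_n}{m_n}\widehat F^{0}_n+\frac{r_n}{m_n}\widehat F^{1}_{n,r_n}$.

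\textbf{Pre-change block.} Since $X_{n,1},\dots,X_{n,T_n}$ are exchangeable, Proposition~\ref{prop: pval iid} makes $p_{n,1},\dots,p_{n,T_n}$ iid uniform. The DKW inequality then gives $\|\widehat F^0_n-F_{\mathrm U}\|_\infty=O_{\mathbb P}(T_n^{-1/2})$, with tail $2e^{-2T_ns^2}$. Integrating by parts against $g$, which is of bounded variation or Lipschitz, converts this into an $O_{\mathbb P}(\sqrt{T_n})$ contribution to $m_n\int g\,d(\widehat F-F_{\mathrm U})$.

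\textbf{Post-change block.} For $j=T_n+i$, the $p$-value $p_{n,j}$ is the randomized rank of $X_{n,j}$ among the $T_n$ pre-change points and at most $i\le r_n$ post-change points. It therefore differs from $\widehat F^{\,\text{pre}}_{T_n}(X_{n,j})$ by at most $r_n/T_n$, and that quantity is within $O_{\mathbb P}(T_n^{-1/2})$ of $F_{P_0}(X_{n,j})$, uniformly in $j$. The points $F_{P_0}(X_{n,j})$ are iid with law $H$. Combining Glivenko--Cantelli for $H$ with the uniform continuity of $H$ and $g$ yields $\int g\,d\widehat F^1_{n,r_n}\to\int g\,dH$ in probability. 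The post-change block thus contributes $r_n\int g\,d(H-F_{\mathrm U})+o_{\mathbb P}(r_n)=r_nJ+o_{\mathbb P}(r_n)$. Under Assumption~\ref{assn:uniform-local-mixture-richness} the same route applies, with a direction in the class achieving correlation of order $\Delta$ with $H-F_{\mathrm U}$ in place of $g$.

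I expect the main obstacle to be this post-change block: the post-change $p$-values are neither independent nor exactly equal to $F_{P_0}(X_{n,j})$. The plan is to handle it by uniform coupling to $F_{P_0}(X_{n,j})$ through the pre-change empirical CDF, as above, and to make sure the $o(r_n)$ error is uniform over the $\gamma$ range in $B_n$. For the almost-sure statement, all error events are controlled by exponential bounds: DKW for the pre-change sample and Hoeffding or DKW for the $H$-sample at deviation $\epsilon r_n/m_n$. These have probabilities of order $\exp(-c\,r_n^2/T_n)$, which are summable in $n$ under \eqref{eq:CMM-growth-as-condition}. Borel--Cantelli then shows that the indicator converges to one almost surely, and hence that $\log M_{n,T_n+r_n}\to\infty$ almost surely.
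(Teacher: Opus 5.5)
For Assumption~\ref{assn:directional-local-detectability} your argument is sound and is essentially the paper's. You tilt along $\operatorname{sign}(J)$ by an amount proportional to $r_n/m_n$ and split $\sum_{j\le m_n}g(p_{n,j})$ at $T_n$. The iid-uniform pre-change block is $O_{\mathbb P}(\sqrt{T_n})=o_{\mathbb P}(r_n)$, and coupling $p_{n,j}$ to $F_{P_0}(X_{n,j})$ gives $r_nJ+o_{\mathbb P}(r_n)$ from the post-change block. The only difference is the localization. The paper integrates over a neighbourhood of radius $(2Bm_n)^{-1}$ using the derivative bound. You take radius of order $r_n/m_n$ and rely on the uniform expansion $|\log f_\vartheta-\vartheta g|\le C\vartheta^2$: the concave map $\vartheta\mapsto|\vartheta|\,r_n|J|-Cm_n\vartheta^2$ stays of order $r_n^2/m_n$ on, say, $[\gamma_n/2,3\gamma_n/2]$. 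This dispenses with $B$, and both routes cost only $O(\log T_n)$ in prior penalty.

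The gap is the case of Assumption~\ref{assn:uniform-local-mixture-richness}. That assumption posits no path, no direction $g$, no expansion of $\log f$, and no small-ball prior mass. It is only an inequality for the log-mixture evaluated at an arbitrary empirical CDF $G_m$. Your claim that ``the same route applies, with a direction in the class achieving correlation of order $\Delta$'' is therefore not backed by anything you may use, and your lower bound $\log\Pi(B_n)+\inf_{B_n}\sum_j\log f_\theta(p_{n,j})$ cannot be instantiated.

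The missing idea is to apply the assumption directly. Since $\log M_{n,m_n}=\log\int\exp\{m_n\int\log f\,d\widehat F_{n,m_n}\}\,d\Pi(f)$ exactly, take $z_j=p_{n,j}$ and $G_{m_n}=\widehat F_{n,m_n}$. What you then need is not a linear functional but two-sided control of $d_n:=\|\widehat F_{n,m_n}-F_{\mathrm U}\|_\infty$: with probability tending to one, $\Delta r_n/(2m_n)\le d_n\le2\Delta r_n/m_n$.
\begin{itemize}
\item The lower bound gives $\kappa m_nd_n^2\ge\kappa\Delta^2r_n^2/(4m_n)$ and $K\log(1/d_n)=O(\log T_n)=o(r_n^2/T_n)$.
\item The upper bound ensures $0<d_n\le\epsilon_0$ eventually, so the assumption applies.
\end{itemize}
Both bounds come from the sup-norm version of your own decomposition, $\frac{m_n}{r_n}(\widehat F_{n,m_n}-F_{\mathrm U})=\frac{T_n}{r_n}(\widehat F^0_n-F_{\mathrm U})+(\widehat F^1_{n,r_n}-H)+(H-F_{\mathrm U})$. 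This is Theorem~\ref{thm:local-post-change-pvalues}, which uses $r_n/\sqrt{T_n}\to\infty$. Its almost-sure part under \eqref{eq:CMM-growth-as-condition} gives the almost-sure claim in this case.

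A smaller point concerns your almost-sure step, where the deviation levels must be chosen correctly.
\begin{itemize}
\item The post-change oracle average needs only a constant deviation $\epsilon$. Hoeffding then gives probability $e^{-cr_n}$, which is summable because \eqref{eq:CMM-growth-as-condition} together with $r_n\le T_n$ forces $r_n/\log n\to\infty$.
\item The coupling DKW also needs only a constant deviation, with probability $e^{-cT_n}$.
\item Only the pre-change sample needs deviation $\epsilon r_n/T_n$, giving $e^{-c\epsilon^2r_n^2/T_n}$.
\end{itemize}
If you control the $H$-sample at deviation $\epsilon r_n/m_n$, as your sentence suggests, you only get $e^{-cr_n^3/T_n^2}$. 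That bound need not be summable and is not of order $e^{-cr_n^2/T_n}$.
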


The above result provides the growth rate and the explicit threshold that the CMM crosses after the distribution shift. In the double-array setup, define
\begin{equation}
    \label{eq: CMM Stopping time double}
    \tau_n^{\texttt{CMM}}(b)
    :=
    \inf\{k\geq1:M_{n,k}\geq b\},
\end{equation}
and use $\tau_n^{\texttt{CMM}}(\frac{1}{\alpha})$ for a PFA level $\alpha\in(0,1)$. Since the log-capital at time $T_n+r_n$ is of order $r_n^2/T_n$ for every $r_n$ satisfying \eqref{eq:r_n}, the threshold enters the delay only through $\log b_n$. Theorem~\ref{thm:CMM-growth-corrected} is thus the main tool for bounding the detection delay of CMM alarms, which we develop in stages: a smaller $\alpha$ demands more accumulated evidence before stopping and hence a longer post-change delay, and the next result makes this dependence explicit.

\begin{theorem}[Upper bound on CMM detection delay]
\label{th: upper bound on detection delay}
Assume the setup of Theorem~\ref{thm:CMM-growth-corrected}. Let $b_n\to\infty$ satisfy $\log T_n=o(\log b_n),
\hspace{.2cm}
\log b_n=o(T_n).$
Then there exists a constant $C>0$ such that
\begin{equation}
    \label{eq:prob conv}
    \mathbb P\left\{
    T_n<\tau_n^{\texttt{CMM}}(b_n)
    \leq
    T_n+C\sqrt{T_n\log b_n}
    \right\}
    \to1.
\end{equation}
Consequently, $(\tau_n^{\texttt{CMM}}(b_n)-T_n)^+
=
O_{\mathbb P}\{\sqrt{T_n\log b_n}\},
\hspace{.2cm}
\frac{(\tau_n^{\texttt{CMM}}(b_n)-T_n)^+}{T_n}
\xrightarrow{\mathbb P}0.$
\end{theorem}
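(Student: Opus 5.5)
The plan is to split the event in \eqref{eq:prob conv} into two parts. The first is the absence of a false alarm up to $T_n$. The second is a threshold crossing by time $T_n+r_n$ for a suitable deterministic $r_n\asymp\sqrt{T_n\log b_n}$. I would handle the first by Ville's inequality and the second by Theorem~\ref{thm:CMM-growth-corrected}.

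\emph{Step 1: no alarm before the change.} In row $n$, the observations $X_{n,1},\ldots,X_{n,T_n}$ are iid from $P_0$, hence exchangeable. The randomizers are independent of them. By Proposition~\ref{prop: pval iid}, $p_{n,1},\ldots,p_{n,T_n}$ are therefore iid $U(0,1)$, because each $p_{n,k}$ with $k\le T_n$ depends only on $X_{n,1},\ldots,X_{n,k}$ and $\lambda_{n,k}$. Consequently $(M_{n,k})_{0\le k\le T_n}$ is a nonnegative test martingale with respect to $\mathcal G_k$ on this finite horizon. This follows by Tonelli, exactly as for \eqref{eq: conformal mixture martingale}. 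Ville's inequality applied to the process stopped at $T_n$ gives
\[
\mathbb P\{\tau_n^{\texttt{CMM}}(b_n)\le T_n\}\le \mathbb P\Bigl\{\max_{k\le T_n}M_{n,k}\ge b_n\Bigr\}\le \frac1{b_n}\to0 .
\]

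\emph{Step 2: crossing after the change.} Let $c_0>0$ be the constant of Theorem~\ref{thm:CMM-growth-corrected}. Put $C:=2c_0^{-1/2}$ and $r_n:=\lceil C\sqrt{T_n\log b_n}\rceil$. I then check the three conditions in \eqref{eq:r_n}:
\begin{itemize}
\item $r_n\to\infty$, since $b_n\to\infty$ and $T_n\to\infty$;
\item $r_n/T_n\asymp\sqrt{\log b_n/T_n}\to0$, by the assumption $\log b_n=o(T_n)$;
\item $r_n^2/(T_n\log T_n)\ge C^2\log b_n/\log T_n\to\infty$, by the assumption $\log T_n=o(\log b_n)$.
\end{itemize}
The key point is that $c_0$ does not depend on the sequence $(r_n)$, as stated in Theorem~\ref{thm:CMM-growth-corrected}. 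This is what allows $C$ to be fixed in advance in terms of $c_0$ without circularity. With probability tending to one,
\[
\log M_{n,T_n+r_n}\ge c_0 r_n^2/T_n\ge c_0C^2\log b_n=4\log b_n\ge\log b_n ,
\]
so $\tau_n^{\texttt{CMM}}(b_n)\le T_n+r_n$. Absorbing the ceiling into a slightly larger constant gives $r_n\le C'\sqrt{T_n\log b_n}$ for large $n$. Intersecting this event with the complement of the event in Step~1 yields \eqref{eq:prob conv}.

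\emph{Step 3: consequences.} The $O_{\mathbb P}$ statement follows immediately from \eqref{eq:prob conv}. The final claim follows because $\sqrt{T_n\log b_n}/T_n=\sqrt{\log b_n/T_n}\to0$.

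I expect the only delicate point to be the uniformity of $c_0$ over admissible sequences $(r_n)$. Without it, choosing $r_n$ in terms of $c_0$ would be circular. Since the theorem asserts this uniformity explicitly, the remaining work is the bookkeeping of the rate conditions in Step~2.
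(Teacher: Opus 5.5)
Your proposal is correct and follows essentially the same route as the paper. Both arguments take $r_n=\lceil C\sqrt{T_n\log b_n}\rceil$, verify the conditions in \eqref{eq:r_n}, apply Theorem~\ref{thm:CMM-growth-corrected} with its $(r_n)$-independent constant $c_0$ and any $C>c_0^{-1/2}$, and use Ville's inequality to rule out pre-change alarms. Your remark that the uniformity of $c_0$ is what makes fixing $C$ in advance non-circular is the same point the paper's proof relies on implicitly.
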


Inverting the growth rate $r^2/T_n$ of Theorem~\ref{thm:CMM-growth-corrected} against the boundary $\log b_n$ gives the delay scale $\sqrt{T_n\log b_n}$; the condition $\log T_n=o(\log b_n)$ ensures that the threshold dominates the logarithmic stochastic terms, so that the boundary alone determines the scale, while $\log b_n=o(T_n)$ keeps the crossing window negligible relative to the calibration sample. The delay is thus sublinear in the changepoint location, but it is still of order $\sqrt{T_n}$ at a fixed threshold, far above the $\log T_n$ benchmark; this gap is what the restart weights in our procedure (algorithm~\ref{alg: RCMM}) are designed to remove.

The previous result gives an upper bound on the delay. The next result quantifies how
quickly the probability of a late alarm decays beyond the delay scale.

\begin{theorem}[Tail decay bound for the CMM stopping time]
\label{th:CMM-late-alarm-tail}
Assume the setup of Theorem~\ref{thm:CMM-growth-corrected} and suppose Assumption~\ref{assn:directional-local-detectability} holds. Let $b_n>1$ be deterministic. There exist constants $C_0,C_1,C_2>0$ and $\delta_0\in(0,1)$ such that, for all sufficiently large $n$ and every integer $r$ satisfying $C_0\sqrt{T_n\{\log T_n+\log b_n\}}
\leq r\leq\delta_0T_n,$
we have $\mathbb P\{\tau_n^{\texttt{CMM}}(b_n)>T_n+r\}
    \leq
    C_1\exp\left(-C_2\frac{r^2}{T_n}\right).$
\end{theorem}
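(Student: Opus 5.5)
The plan is to reduce late alarm to a deficit of the CMM at the single time $m:=T_n+r$. Since $\{\tau_n^{\texttt{CMM}}(b_n)>T_n+r\}\subseteq\{M_{n,m}<b_n\}$, it suffices to bound $\mathbb P\{\log M_{n,m}<\log b_n\}$ by $C_1\exp(-C_2 r^2/T_n)$.

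First I lower bound the mixture by the betting path of Assumption~\ref{assn:directional-local-detectability}. Restricting $\Pi$ to the neighbourhood of the path at parameter $\gamma$ of order $r/T_n$ should give
\[
\log M_{n,m}\geq \gamma\sum_{j=1}^{m}\{g(p_{n,j})-\textstyle\int_0^1 g\}-C\gamma^2 m-K\log(1/\gamma)-K.
\]
Here $C\gamma^2$ is the second-order term of the assumption, $K$ is a constant, and the $\log(1/\gamma)$ term is the prior-mass penalty, so it is $O(\log T_n)$.

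The key step is to show that the centered sum $\Sigma_m:=\sum_{j\le m}\{g(p_{n,j})-\int g\}$ is at least $c\,r|J|$ except on an event of probability $\exp(-cr^2/T_n)$ (sign chosen along $J$). I would do this by writing $g(p_{n,j})$ through the empirical CDF $\widehat F_{n,m}$ and integrating by parts:
\[
\Sigma_m=m\int g\,d(\widehat F_{n,m}-F_{\mathrm U}).
\]
Then I use the decomposition sketched after Theorem~\ref{thm:local-post-change-pvalues}: the pre-change block contributes fluctuations of order $\sqrt{T_n}$, and the post-change block has mean drift $rJ+O(r^2/T_n)$, because post-change $p$-values are close to $F_{P_0}(Y)\sim H$ up to an $O(r/T_n)$ calibration error. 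This requires $r\le\delta_0 T_n$ with $\delta_0$ small.

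Concentration comes from bounded-differences (McDiarmid) inequalities in the independent inputs $X_{n,j},\lambda_{n,j}$. The rank-based $p$-values make each coordinate's influence on $\Sigma_m$ bounded by $O(\|g\|_\infty+\mathrm{Lip}(g))$, which gives deviation probability $\exp(-c\,x^2/m)$ at level $x$. With $x=c'r|J|$ and $m\asymp T_n$, this is $\exp(-C_2r^2/T_n)$.

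On the good event, taking $\gamma=c''r/T_n$ yields
\[
\log M_{n,m}\ge c\,r^2/T_n-O(\log T_n).
\]
This exceeds $\log b_n$ once $r\ge C_0\sqrt{T_n(\log T_n+\log b_n)}$, which gives the claimed bound.

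I expect the main obstacle to be the bounded-differences control of the rank statistic $\Sigma_m$ with the sharp $r^2/T_n$ exponent. Changing one $X_{n,i}$ shifts every later $p_{n,j}$ by about $1/j$, so the total influence is roughly $\sum_j \mathrm{Lip}(g)/j\sim\log m$. This would cost a logarithmic factor in the exponent. If that happens, my first fallback is to replace the pre-change block by the fixed-$F_{P_0}$ transforms using the DKW inequality, which has tail $e^{-2T_ns^2}$ with $s\asymp r/T_n$. That is exactly $e^{-cr^2/T_n}$, and what remains is a sum of independent bounded terms handled by Hoeffding.
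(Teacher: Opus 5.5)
Your skeleton matches the paper's proof, and the argument is correct. The paper also reduces to $\{M_{n,m}<b_n\}$ with $m=T_n+r$ and tilts along the directional path at a parameter of order $r/T_n$ (it takes $\theta=\gamma\operatorname{sign}(J)\,r/m$ with $\gamma$ fixed small). It pays a prior-mass penalty and shows $\operatorname{sign}(J)\sum_{j\le m}g(p_{n,j})\ge r|J|/2$ off an event of probability $O(e^{-cr^2/T_n})$.

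The difference is the concentration step. The paper's version is your fallback, read as follows.
\begin{itemize}
\item Pre-change block: $p_{n,1},\ldots,p_{n,T_n}$ are exactly iid uniform (Proposition~\ref{prop: pval iid}), so Hoeffding bounds this block by $2e^{-cr^2/T_n}$ with no replacement at all. A DKW bound on $T_n$ points could not control $p_{n,j}$ for small $j$ anyway; for instance $p_{n,1}=\lambda_{n,1}$.
\item Post-change block: DKW is applied only to $\widehat F_{P_0,T_n}$ inside the post-change ranks, via Lemma~\ref{lem:local-comparison}, at a constant tolerance of order $|J|/L_g$ (tail $e^{-cT_n}$). Hoeffding handles the iid oracle terms $g(V_{n,j})$ (tail $e^{-cr}$).
\item Both of these tails are absorbed into $e^{-cr^2/T_n}$ because $r\le\delta_0T_n$.
\end{itemize}

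Your primary McDiarmid route also works, and the logarithmic loss you anticipate does not occur.
\begin{itemize}
\item Changing $X_{n,i}$ moves $p_{n,i}$ by at most $1$ and each later $p_{n,j}$ by at most $1/j$. Its influence is therefore $c_i\le 2\|g\|_\infty+L_g\log(m/i)$.
\item Large influences are confined to a few early indices: $\sum_{i\le m}\log^2(m/i)\le 2m+\log^2 m$. Hence $\sum_i c_i^2=O(m)$, the randomizers add another $O(m)$, and McDiarmid gives $\exp(-cx^2/m)$ with no log factor.
\item You must then check the mean separately. Pre-change terms have mean exactly $0$. Post-change means differ from $J$ by at most $L_g\{\mathbb E\|\widehat F_{P_0,T_n}-F_{P_0}\|_\infty+(2r+1)/T_n\}=O(T_n^{-1/2})+O(\delta_0)$.
\item Your ``$O(r/T_n)$ calibration error'' omits the $T_n^{-1/2}$ term, but it is harmless.
\end{itemize}

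McDiarmid gives a single inequality for the unsplit statistic. The paper's split is more elementary, and it shows that only the pre-change noise, of size $\sqrt{T_n}$, drives the $r^2/T_n$ exponent.

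One small correction: the prior penalty is $\log m+O(1)$, not $K\log(1/\gamma)$. It comes from needing a neighbourhood of radius $1/(2Bm)$ to keep $m$ log-bets within $1/2$ of the path value. Since you only use the penalty as $O(\log T_n)$, your conclusion is unaffected.
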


The preceding results describe the delay when the false-alarm level is allowed to vary. In many applications, the level $\alpha$ is fixed in advance, and the relevant question is whether the alarm occurs within a deterministic post-change window whose length is negligible relative to $T_n$. The following growth result for the CMM gives such a statement directly: any sequence $r_n$ satisfying \eqref{eq:r_n} is sufficient for eventual threshold crossing.

\begin{corollary}[CMM delay at a fixed threshold]
\label{cor:CMM-fixed-threshold}
In the setup of Theorem~\ref{thm:CMM-growth-corrected}, fix $b>1$. For every deterministic sequence $r_n$ satisfying \eqref{eq:r_n}, $\mathbb P\{\tau_n^{\texttt{CMM}}(b)\leq T_n+r_n\}
\to1,$
and the false-alarm bound implies $\liminf_{n\to\infty}
\mathbb P\{T_n<\tau_n^{\texttt{CMM}}(b)\leq T_n+r_n\}
\geq1-\frac1b.$
If \eqref{eq:CMM-growth-as-condition} also holds, then $\mathbf 1\{(\tau_n^{\texttt{CMM}}(b)-T_n)^+\leq r_n\}
\longrightarrow1$ almost surely.
In particular, if $T_n=\Theta(n)$ and
$r_n=a_n\sqrt{n\log n}$ with
$a_n\to\infty$ and $a_n=o\{\sqrt{n/\log n}\}$, then $(\tau_n^{\texttt{CMM}}(b)-T_n)^+
\leq
a_n\sqrt{n\log n}$ eventually almost surely. 
\end{corollary}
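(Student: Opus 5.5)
The corollary has three parts: eventual detection, the lower bound on the window probability, and the almost-sure statement. I plan to derive each from Theorem~\ref{thm:CMM-growth-corrected}, with $b$ held fixed.

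\emph{Eventual detection.} Since $\tau_n^{\texttt{CMM}}(b)$ is a first-passage time, we have the inclusion
\[
\{\log M_{n,T_n+r_n}\geq\log b\}\subseteq\{\tau_n^{\texttt{CMM}}(b)\leq T_n+r_n\}.
\]
Theorem~\ref{thm:CMM-growth-corrected} gives $\mathbb P\{\log M_{n,T_n+r_n}\geq c_0 r_n^2/T_n\}\to1$. The condition \eqref{eq:r_n} forces $r_n^2/T_n\to\infty$, so $c_0 r_n^2/T_n\geq\log b$ for all large $n$. This yields $\mathbb P\{\tau_n^{\texttt{CMM}}(b)\leq T_n+r_n\}\to1$.

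\emph{Window probability.} The first $T_n$ observations of row $n$ are i.i.d.\ $P_0$, so by Proposition~\ref{prop: pval iid} the $p$-values $p_{n,1},\ldots,p_{n,T_n}$ are i.i.d.\ uniform. The event $\{\tau_n^{\texttt{CMM}}(b)\leq T_n\}$ depends only on these $p$-values. Its probability therefore equals its probability under the no-change law, which Ville's inequality bounds by $1/b$. Hence
\[
\mathbb P\{T_n<\tau\leq T_n+r_n\}\geq \mathbb P\{\tau\leq T_n+r_n\}-\frac1b,
\]
and taking the liminf gives the bound $1-1/b$.

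\emph{Almost-sure statement.} Under \eqref{eq:CMM-growth-as-condition}, Theorem~\ref{thm:CMM-growth-corrected} gives $\mathbf 1\{\log M_{n,T_n+r_n}\geq c_0r_n^2/T_n\}\to1$ almost surely. Combined with $c_0r_n^2/T_n\geq\log b$ eventually, this gives $\tau_n^{\texttt{CMM}}(b)\leq T_n+r_n$ eventually almost surely. That is equivalent to $(\tau_n^{\texttt{CMM}}(b)-T_n)^+\leq r_n$ eventually.

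\emph{Specialization $T_n=\Theta(n)$, $r_n=a_n\sqrt{n\log n}$.} It remains to check \eqref{eq:r_n} and \eqref{eq:CMM-growth-as-condition}.
\begin{itemize}
\item $r_n\to\infty$ holds trivially.
\item $r_n/T_n\asymp a_n\sqrt{\log n/n}\to0$ because $a_n=o(\sqrt{n/\log n})$.
\item $r_n^2/(T_n\log T_n)\asymp a_n^2 n\log n/(n\log n)=a_n^2\to\infty$, using $\log T_n\sim\log n$.
\item The same computation gives $r_n^2/(T_n\log n)\asymp a_n^2\to\infty$.
\end{itemize}
Integer rounding of $r_n$ is harmless.

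The step I expect to need the most care is the window bound. There I must justify that the pre-change segment of the row has exactly the no-change law for the $p$-values. This holds because $p_j$ depends only on $X_{n,1},\ldots,X_{n,j}$ and $\lambda_{n,j}$, so Ville applies up to time $T_n$. The rest is direct bookkeeping.
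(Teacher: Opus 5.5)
Your proposal is correct and follows essentially the same route as the paper's proof. Crossing comes from Theorem~\ref{thm:CMM-growth-corrected} together with $r_n^2/T_n\to\infty$, the window bound from Ville's inequality on the pre-change segment, the almost-sure claim from the almost-sure part of the growth theorem, and the specialization from a direct check of \eqref{eq:r_n} and \eqref{eq:CMM-growth-as-condition}.
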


The fixed-threshold statement separates threshold crossing from a correct post-change alarm. Crossing by $T_n+r_n$ occurs with probability tending to one along the array, whereas the unconditional correct-window probability is limited by the possibility of a pre-change false alarm and is therefore guaranteed only up to the PFA level. This distinction is necessary for any bounded-PFA procedure.

\paragraph{Deterministic monitoring horizon.}
So far, the CMM stopping time has been defined on an indefinitely long stream. In an implemented monitoring system, however, the procedure is run only up to a deterministic operating horizon, possibly large but finite. It is therefore useful to verify that truncating the alarm at this horizon does not alter the local delay analysis on the scale considered above.

For an integer-valued deterministic monitoring horizon $U_n\geq T_n$, the truncated stopping time is $\tau_n^{\texttt{CMM}}(b_n)\wedge U_n.$
The next lemma shows that, under a mild growth restriction on $U_n$, the probability of an alarm later than a suitable local window remains negligible even after multiplication by the horizon. This is the quantity needed to control the contribution of late alarms to truncated moments.

\begin{lemma}[Negligible contribution of late CMM alarms]
\label{lem:late-alarm-negligible}
Assume the setup of Theorem~\ref{th:CMM-late-alarm-tail}. Fix a constant threshold $b>1$ and $\gamma\in(0,1/2)$. Let $U_n\geq T_n$ satisfy
\begin{equation}
\label{eq:threshold size}
\log(T_nU_n)=o(T_n^{1-2\gamma}).
\end{equation}
Then there exists a deterministic integer sequence $d_n$ satisfying $d_n\to\infty, \hspace{.2cm}
d_n=o(T_n), \hspace{.2cm}
\frac{d_n}{T_n^{1-\gamma}}\to0,$
such that $U_n\,\mathbb P\{\tau_n^{\texttt{CMM}}(b)>T_n+d_n\}
\to0.$
\end{lemma}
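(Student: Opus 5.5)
The plan is to apply Theorem~\ref{th:CMM-late-alarm-tail} with the constant threshold $b_n\equiv b$. We choose $d_n$ just inside the tail regime, and then check that $U_n C_1\exp(-C_2 d_n^2/T_n)\to0$. With $b$ fixed, Theorem~\ref{th:CMM-late-alarm-tail} supplies constants $C_0,C_1,C_2>0$ and $\delta_0\in(0,1)$. For all large $n$ and every integer $r$ with $C_0\sqrt{T_n(\log T_n+\log b)}\leq r\leq\delta_0T_n$, it gives $\mathbb P\{\tau_n^{\texttt{CMM}}(b)>T_n+r\}\leq C_1\exp(-C_2r^2/T_n)$. It is therefore enough to find an integer $d_n$ in this range with $d_n/T_n^{1-\gamma}\to0$ and $d_n^2/T_n-C_2^{-1}\log U_n\to\infty$.

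We take $d_n:=\lceil T_n^{1-\gamma}\epsilon_n\rceil$ for a slowly vanishing sequence $\epsilon_n\downarrow0$, to be chosen from the growth condition. Set $\beta_n:=\log(T_nU_n)/T_n^{1-2\gamma}$, which tends to $0$ by \eqref{eq:threshold size}, and put $\epsilon_n:=\max\{\beta_n^{1/4},\,T_n^{-\gamma/2}\}$, so that $\epsilon_n\to0$. We then verify the requirements one at a time.

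First, $d_n/T_n^{1-\gamma}\to0$ because $\epsilon_n\to0$. This also gives $d_n=o(T_n)$, and eventually $d_n\leq\delta_0T_n$.

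Second, $d_n\geq T_n^{1-3\gamma/2}$. Since $\gamma<1/2$, this means $d_n/\sqrt{T_n\log T_n}\geq T_n^{1/2-3\gamma/2}/\sqrt{\log T_n}$, which is a problem when $\gamma\geq1/3$. The lower bound should instead come from the $\beta_n$ part. We have $d_n^2/T_n\geq T_n^{1-2\gamma}\beta_n^{1/2}=\beta_n^{-1/2}\log(T_nU_n)$, and this is at least $\beta_n^{-1/2}\log T_n$ because $U_n\geq1$. Hence $d_n^2/T_n\gg\log T_n$, which is exactly the condition $d_n\geq C_0\sqrt{T_n(\log T_n+\log b)}$ for large $n$. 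In particular $d_n\to\infty$. The $T_n^{-\gamma/2}$ term is then unnecessary, and we simply drop it, taking $\epsilon_n=\beta_n^{1/4}$. The only fallback needed is the case $\beta_n=0$, which cannot occur once $T_n\geq2$.

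Third, the tail bound gives
\[
U_n\,\mathbb P\{\tau_n^{\texttt{CMM}}(b)>T_n+d_n\}\leq C_1\exp\Bigl\{\log U_n-C_2\beta_n^{-1/2}\log(T_nU_n)\Bigr\}.
\]
Once $C_2\beta_n^{-1/2}\geq2$, the exponent is at most $-\log(T_nU_n)\leq-\log T_n\to-\infty$, so the product tends to $0$.

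The main obstacle is the calibration of $d_n$. It has to stay below $T_n^{1-\gamma}$ while still having $d_n^2/T_n$ dominate both $\log U_n$ and the $\log T_n$ entry condition of Theorem~\ref{th:CMM-late-alarm-tail}. Condition \eqref{eq:threshold size} is exactly what leaves room between these two scales, and the choice $\epsilon_n=\beta_n^{1/4}$ splits that room. The remaining routine points are the rounding by a ceiling, which does not change the asymptotics, and checking that "sufficiently large $n$" in Theorem~\ref{th:CMM-late-alarm-tail} is uniform in $r$, which the theorem states.
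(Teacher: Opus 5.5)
Your proof is correct and takes the paper's route: both apply Theorem~\ref{th:CMM-late-alarm-tail} at the fixed threshold $b$ and place $d_n$ on the scale $\sqrt{T_n\log(T_nU_n)}$, which \eqref{eq:threshold size} makes $o(T_n^{1-\gamma})$. The only difference is calibration. The paper takes $d_n=\lceil A\sqrt{T_n\log(T_nU_n)}\rceil$ with a fixed $A>C_0$ and $C_2A^2>2$, giving the bound $C_1(T_nU_n)^{1-C_2A^2}$; your $d_n=\lceil T_n^{1-\gamma}\beta_n^{1/4}\rceil=\lceil\beta_n^{-1/4}\sqrt{T_n\log(T_nU_n)}\rceil$ is a slowly diverging multiple of the same scale, so no constant has to be tuned against $C_2$.
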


The condition permits the monitoring horizon to grow substantially faster than any polynomial, provided its logarithm is negligible relative to $T_n^{1-2\gamma}$. The lemma also gives a direct truncated-delay consequence:
\[
\begin{split}
\mathbb E\left[(\{\tau_n^{\texttt{CMM}}(b)\wedge U_n\}-T_n)^+\right]
&\leq
 d_n
 +U_n\,\mathbb P\{\tau_n^{\texttt{CMM}}(b)>T_n+d_n\}\\
&=
 d_n+o(1)
=
o(T_n^{1-\gamma}).
\end{split}
\]
Thus a deterministic operating horizon satisfying \eqref{eq:threshold size} does not change the local CMM delay scale, while making the monitored stopping time finite by construction. This result also validates the simulation studies presented in the next section.

\section{Simulation study}
\label{sec:simulations}

In this section, we present simulation evidence corresponding to the main theoretical results. We abbreviate our method as RCMM in this section for brevity. We first examine false-alarm control under the no-change model for the ordinary CMM and the maximum-type restarted statistic with summable weights. We then turn to changepoint alternatives and compare the delay of the ordinary CMM with this PFA-controlled RCMM instantiation. All simulations are carried out at the raw-data level. That is, observations are first generated from the stated pre- and post-change distributions, randomized conformal $p$-values are then computed sequentially from the observed data following \eqref{eq: randomized pval}, and the resulting $p$-value sequence is supplied to the CMM and RCMM stopping rules. Since the data distributions used below are continuous, ties occur with probability zero, but the randomized form is retained to match the definition of the conformal $p$-values. In the simulations, both procedures use the same left-tail betting density $f(p)=0.15p^{-0.85}$. The restarted procedure is the PFA-controlled maximum statistic $Z_t^{(w)}$ with weights proportional to $k^{-(1+\eta)}$, $\eta=0.10$, normalized to have total mass one, and threshold $1/\alpha$.

We first examine false-alarm behavior under the no-change model. In each Monte Carlo replication, observations are generated as $X_i\iid N(0,1)$ for all $1\le i\le U$. Since there is no changepoint, any alarm before the monitoring horizon $U$ is counted as a false alarm. For a stopping time $\tau$, the empirical finite-horizon PFA is computed from $B=1000$ independent replications as
$\widehat{\mathrm{PFA}}
=
\frac{1}{B}
\sum_{b=1}^{B}\mathbf 1\{\tau^{(b)}\le U\},$
where $\tau^{(b)}$ denotes the stopping time found in replication $b$. The Monte Carlo standard error is computed using the binomial approximation, $\widehat{\mathrm{se}}
=
\sqrt{
\frac{\widehat{\mathrm{PFA}}\{1-\widehat{\mathrm{PFA}}\}}{B}
},$
and the plotted $95\%$ Monte Carlo intervals are $\widehat{\mathrm{PFA}}
\pm
1.96\,\widehat{\mathrm{se}},$
truncated to the interval $[0,1]$ when necessary.

Figure~\ref{fig1:empirical PFA alpha = 0.05} reports the empirical PFA for the nominal level $\alpha=0.05$ using horizon $U=20000$. Figure~\ref{fig2:empirical PFA several alpha} and Table~\ref{tab:pfa-nominal-levels} repeat the same experiment over several nominal levels using horizon $U=2000$. Across all cases, the empirical false-alarm probabilities remain below the corresponding nominal levels. This agrees with the distribution-free validity of RCMM in Theorem~\ref{th:weighted-null-validity} and the corresponding martingale validity of the ordinary CMM. The lower finite-horizon PFA observed for RCMM should be interpreted as conservativeness induced by the restart weights in this simulation, not as a stronger theoretical false-alarm guarantee; both methods target the same distribution-free PFA control.

\begin{figure}[h!]
    \centering
    \includegraphics[width=0.6\linewidth]{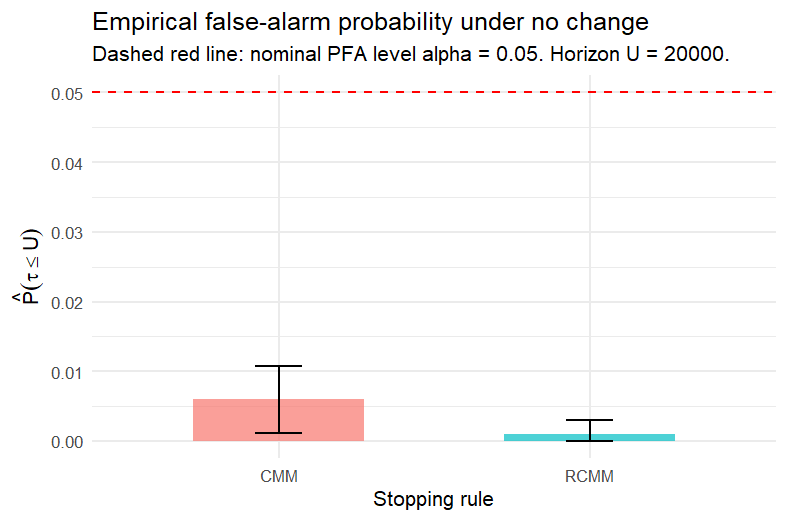}
\caption{\footnotesize Empirical false-alarm probability under the no-change model at PFA level $\alpha=0.05$. 
In each of the 1000 replications, the data are generated as $X_i\iid N(0,1)$ for $1\le i\le U$, with monitoring horizon $U=20000$ and no changepoint. 
The plotted bars report the empirical probability $\widehat{\mathbb P}(\tau\le U)$ over the simulation replications, and the vertical intervals give approximate Monte Carlo 95\% confidence intervals. 
The dashed red line is the target PFA level. 
Both CMM and RCMM remain below the nominal level, consistent with the distribution-free false-alarm control of CMM and that of RCMM as shown in Theorem~\ref{th:weighted-null-validity}.}
    \label{fig1:empirical PFA alpha = 0.05}
\end{figure}

\begin{figure}[h!]
    \centering
    \includegraphics[width=0.6\linewidth]{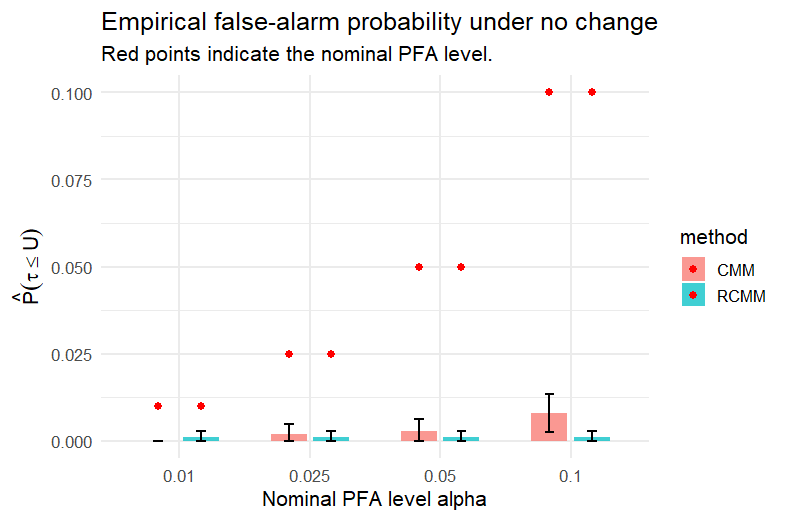}
\caption{\footnotesize Empirical false-alarm probabilities across nominal PFA levels under the no-change model. 
For each value of $\alpha$, 1000 independent replications are generated with $X_i\iid N(0,1)$ for $1\le i\le U$, using monitoring horizon $U=2000$ and no changepoint. 
The bars show $\widehat{\mathbb P}(\tau\le U)$ for CMM and RCMM, the black intervals are approximate Monte Carlo 95\% confidence intervals, and the red points mark the nominal PFA levels. 
Across all nominal levels considered, the empirical false-alarm probabilities remain below the target levels, in agreement with the distribution-free validity for our method (RCMM) as shown in Theorem~\ref{th:weighted-null-validity}.}
    \label{fig2:empirical PFA several alpha}
\end{figure}

\begin{table}[h!]
\centering
\caption{\footnotesize Empirical false-alarm probabilities under the no-change model. 
In each of 1000 Monte Carlo (MC) replications, the data are generated under the no-change model, with
$X_i\iid N(0,1)$ for all $1\le i\le U$, where $U = 2000$ is the finite monitoring horizon. 
The column $\widehat{\mathrm{PFA}}$ reports the empirical probability of stopping by time $U$, and the final column gives an approximate Monte Carlo 95\% confidence interval. 
Across all PFA levels, both CMM and RCMM remain below the target false-alarm level, in accordance with the distribution-free PFA control validity.}
\label{tab:pfa-nominal-levels}
\begin{tabular}{lcccc}
\toprule
Method & PFA bound $\alpha$ & $\widehat{\mathrm{PFA}}$ & Replications & 95\% MC interval \\
\midrule
CMM  & 0.010 & 0.000 & 1000 & $[0.000,\;0.000]$ \\
RCMM & 0.010 & 0.001 & 1000 & $[0.000,\;0.003]$ \\
CMM  & 0.025 & 0.002 & 1000 & $[0.000,\;0.005]$ \\
RCMM & 0.025 & 0.001 & 1000 & $[0.000,\;0.003]$ \\
CMM  & 0.050 & 0.003 & 1000 & $[0.000,\;0.006]$ \\
RCMM & 0.050 & 0.001 & 1000 & $[0.000,\;0.003]$ \\
CMM  & 0.100 & 0.008 & 1000 & $[0.002,\;0.014]$ \\
RCMM & 0.100 & 0.001 & 1000 & $[0.000,\;0.003]$ \\
\bottomrule
\end{tabular}
\end{table}

We next compare detection delay. For each changepoint location $T$, data are generated according to $X_i\iid N(0,1), i \leq T$ and $X_i\iid N(1.2,1), i >T.$ For each value of $T$, we use $50$ independent Monte Carlo replications. The stream is monitored until time $T+5T=6T$. Thus the largest observable post-change delay is $5T$. For a replication $b = 1,2,\cdots,B,$ with no false alarm, the capped delay is
\[
D_{\mathrm{cap}}^{(b)}(T)
=
\begin{cases}
\tau^{(b)}-T, & T\le \tau^{(b)}\le 6T,\\
5T, & \text{if no alarm occurs by time }6T.
\end{cases}
\]
False-alarm replications are recorded separately and are not used in the post-change delay summaries. The reported detection probability is the fraction of no-false-alarm replications satisfying $T\le \tau^{(b)}\le 6T$. The median capped delay is the median of $D_{\mathrm{cap}}^{(b)}(T)$ over the no-false-alarm replications, and the normalized quantity to compare that with $\log T$ in Figure~\ref{fig:vovkcomparisonlogT} and~\ref{fig3:detection delay scaling} is
$\frac{\operatorname{median}\{D_{\mathrm{cap}}^{(b)}(T)\}}{\log T}.$

Figure~\ref{fig:vovkcomparisonlogT} shows a clear difference in the detection delay rates of Vovk's method using CTM and our method (RCMM). For our method (RCMM), the median capped detection delay divided by $\log T$ stays bounded over a large range of the changepoint positions, while for Vovk's method it diverges, validating that the detection delay for our PFA-valid method has delay of order $\log T,$ while for Vovk's method the rate can be $\gg \log T.$ Figure~\ref{fig3:detection delay scaling} draws the same comparison between CMM and our method RCMM, and makes the source of the improvement clearer. Aggregating over the betting class does help relative to committing to a single betting function, but it is not what drives the key improvement: the delay of CMM also degrades as the changepoint location grows. The improvement therefore comes from the weighted restarted construction rather than from the mixture.

 For the ordinary CMM, the median capped delay grows rapidly on the $\log T$ scale and, in the representative settings of Table~\ref{tab:cmm-rcmm-delay-brief}, equals the cap. This reflects the fact that the ordinary CMM carries the accumulated pre-change history and therefore does not operate on the logarithmic delay scale in these experiments. In contrast, RCMM remains stable after normalization by $\log T$ and detects before the cap with increasing probability as $T$ grows. This behavior is consistent with the logarithmic detection-delay guarantee in Theorem~\ref{th:weighted-KL-upper}. The comparison illustrates the role of restarting: by allowing a local martingale to begin near the changepoint, RCMM avoids the dilution that slows the ordinary CMM.



\begin{figure}[h!]
    \centering
    \includegraphics[width=.9\linewidth]{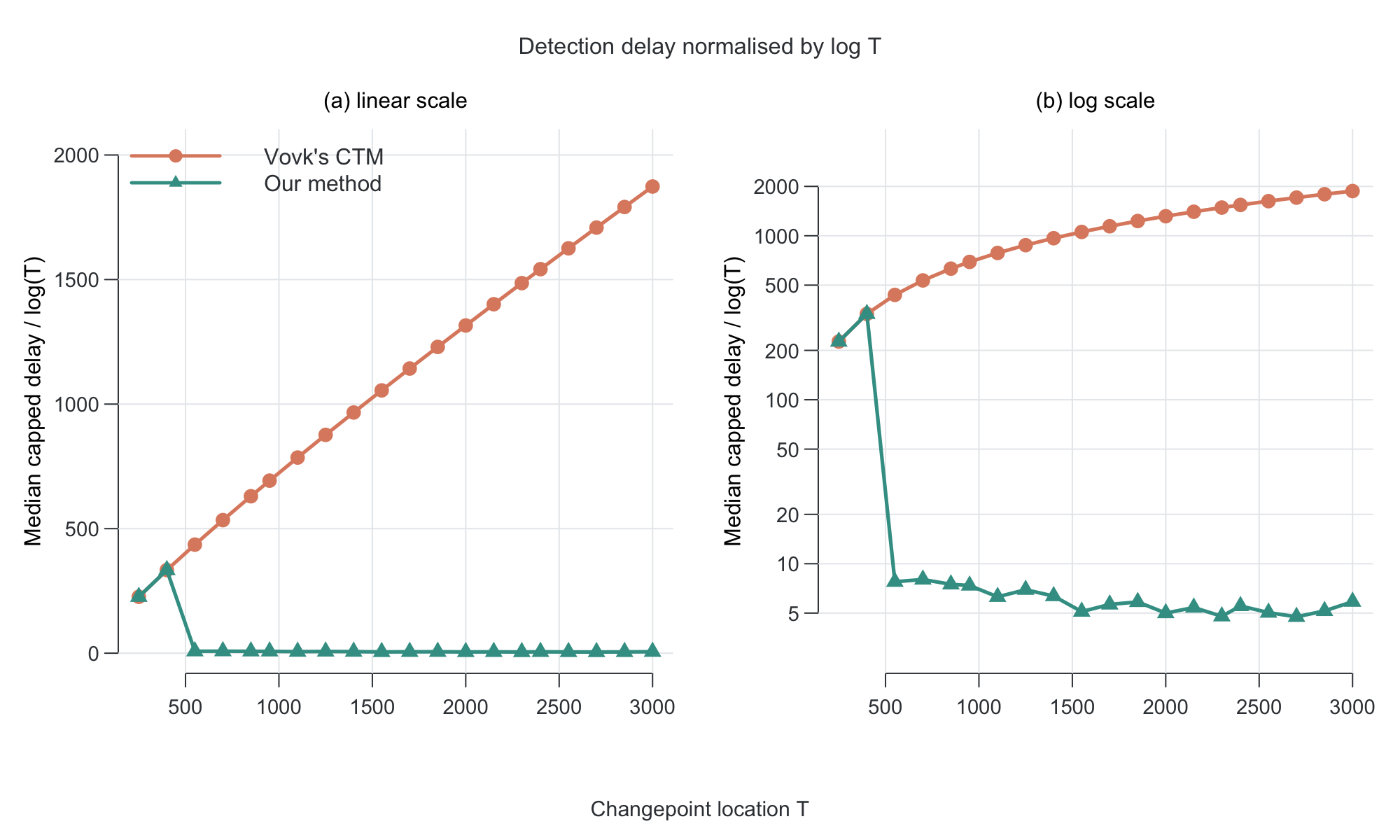}
    \caption{\small Detection-delay scaling of Vovk's conformal test martingale and our
method. For different changepoint locations, $50$ independent streams are
generated with $P_0 \equiv N(0,1), P_1 \equiv N(1.2,1),$ PFA validity
$\alpha = 0.05,$ betting density $f(p) = 0.15\,p^{-0.85}$ and weights
$w_k \propto k^{-1.1}$. Both panels plot the median capped delay divided by
$\log T$ against the changepoint location $T$, on a linear scale (left) and a
logarithmic scale (right), the latter being needed since the two curves differ
by more than two orders of magnitude. The normalized delay of our method stays
flat over the whole range, giving empirical evidence that its detection delay is
of order $\log T$, as shown in Theorem~\ref{th:weighted-KL-upper}. That of
Vovk's method instead diverges, reflecting the dilution caused by the
accumulated pre-change history.}
    \label{fig:vovkcomparisonlogT}
\end{figure}

\begin{figure}[h!]
    \centering
    \includegraphics[width=0.5\linewidth]{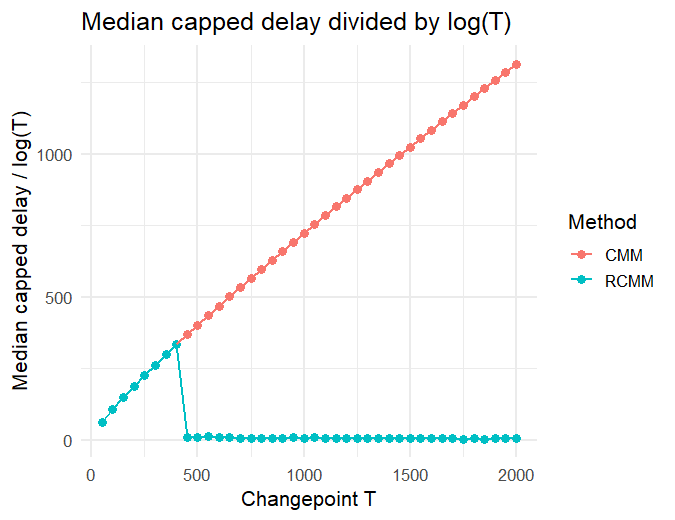}
\caption{\footnotesize Detection-delay scaling for CMM and RCMM in a raw-data changepoint simulation. 
For each changepoint location $T$, independent $50$ replications are generated with $X_i\iid N(0,1)$ for $i\le T$ and $X_i\iid N(1.2,1)$ for $i>T$. 
Sequential randomized conformal $p$-values are computed from the raw observations and then supplied to the CMM and RCMM stopping rules. 
The stream is monitored until time $T+5T$, and the reported capped delay is $(\tau-T)\wedge 5T$. 
The figure plots the median capped delay (across $50$ simulations) divided by $\log T$ as $T$ varies. 
RCMM remains stable on the logarithmic scale, consistent with the rates in Corollary~\ref{cor:RCMM-pointwise-KL}, whereas the ordinary CMM grows rapidly on this normalization, reflecting the slower delay behavior caused by accumulated pre-change history.}
    \label{fig3:detection delay scaling}
\end{figure}

\begin{table}[h!]
\centering
\caption{\footnotesize Detection-delay comparison for CMM and RCMM in a raw-data simulation. 
For each changepoint location $T$, 50 independent replications are generated with
$X_i\iid N(0,1)$ for $i\leq T$ and $X_i\iid N(1.2,1)$ for $i>T$. For each of the replications the stream is monitored until time $6T$, and the reported capped delay is $(\tau-T)\wedge 5T$; thus, if no alarm occurs before the monitoring horizon, the delay is recorded as $5T$. The CMM does not detect before the cap in these representative settings, so its median capped delay equals the cap. 
In contrast, RCMM detects with increasing probability and has median delay on the scale of $\log T$, in agreement with Theorem~\ref{th:weighted-KL-upper}; the contrast is consistent with the slower delay behavior of the CMM.}
\label{tab:cmm-rcmm-delay-brief}
\begin{tabular}{rrrrrrrr}
\toprule
& & \multicolumn{3}{c}{CMM} & \multicolumn{3}{c}{RCMM} \\
\cmidrule(lr){3-5}\cmidrule(lr){6-8}
$T$ & Cap 
& Det. & Med. cap & Med. cap$/\log T$
& Det. & Med. cap & Med. cap$/\log T$ \\
\midrule
250  & 1250  & 0.00 & 1250  & 226.39  & 0.28 & 1250.0 & 226.39 \\
500  & 2500  & 0.00 & 2500  & 402.28  & 0.68 & 37.5   & 6.03 \\
1000 & 5000  & 0.00 & 5000  & 723.82  & 0.86 & 37.0   & 5.36 \\
1500 & 7500  & 0.00 & 7500  & 1025.54 & 0.94 & 38.5   & 5.29 \\
2000 & 10000 & 0.00 & 10000 & 1315.63 & 1.00 & 34.5   & 4.54 \\
\bottomrule
\end{tabular}
\end{table}

Overall, the simulations support the two main theoretical conclusions. Under the no-change model, both CMM and RCMM remain calibrated at the prescribed PFA levels. Under the changepoint model, restarting substantially improves the detection scale: RCMM exhibits logarithmic-delay behavior in the changepoint location, whereas the ordinary CMM is slowed by the pre-change history accumulated before the change occurs.

\section{Conclusion}
\label{sec:conclusion}

We have developed a distribution-free online changepoint detection method based on
conformal martingales. Since the efficiency of conformal martingales can depend heavily on the betting function, instead of relying on a single betting function, we instead aggregrade them over a class of betting functions. To avoid the dilution caused by a long pre-change record, the resulting martingale is initiated afresh at every time point, so that each instance accumulates evidence local to a candidate change, and these instances are then aggregated through a weight
sequence. This construction retains the distribution-free validity of the process while offers a major upgrade in the performance. 

The weights also permits a single construction to serve both error criteria. The total weight
$\|w\|_1$ controls the probability of false alarm, while the largest weight
$\|w\|_\infty$ controls the average run length, so that summable and unit weights calibrate
the same statistic to the two formulations without modification of the procedure. Intermediate choices interpolate between them, and the detection guarantees are established for an arbitrary weight sequence implying a delay of order $\log \frac{b_n}{w_{T_n+1}}/c$ where the constant $c$ typically depends on the 
choice of the betting class, and are shown to be deviation from uniformity $D_{\mathrm{KL}}(H||U)$ or largest betting evidence $I_*$ or the expected log value of the bet under post-change law i.e. $I_f(H)$ depending on the choice of the betting classes.

For appropriate weights the delay is atmost of order $\frac{\log T}{D_{\mathrm{KL}}(H||U)}$ for the PFA-controlled process and $\frac{\log \gamma}{D_{\mathrm{KL}}(H||U)}$ for ARL-controlled process, both of which matches with the minimax rate in first order. Apart from asymptotic delay rates, we also establish explicit late alarm bounds, and show that the late alarm probabilities vanishes exponentially over the number of post-change observations. Taken together, this work establishes a unified framework for distribution-free change
detection with sharp rates and explicit bounds, offering substantial improvement over existing
methods both in the scope of the theory and in the rates attained.

\paragraph{Acknowledgments.} Positive results were developed solely by the authors, and based on these, ChatGPT helped develop examples showing that existing methods are suboptimal. It was also used at final stages to assist with writing, proofchecking and simulations.




\small
\bibliographystyle{plainnat}
\bibliography{pmlr-sample}

@article{vovk2021testing,
  title={Testing randomness online},
  author={Vovk, Vladimir},
  journal={Statistical Science},
  volume={36},
  number={4},
  pages={595--611},
  year={2021},
  publisher={JSTOR}
}

@article{page1954continuous,
  title={Continuous inspection schemes},
  author={Page, Ewan S},
  journal={Biometrika},
  volume={41},
  number={1/2},
  pages={100--115},
  year={1954},
  publisher={JSTOR}
}

@inproceedings{shiryaev1961problem,
  title={The problem of the most rapid detection of a disturbance in a stationary process},
  author={Shiryaev, Albert N},
  booktitle={Soviet Math. Dokl},
  volume={2},
  number={795-799},
  pages={103},
  year={1961}
}

@article{ramdas2022testing,
  title={Testing exchangeability: Fork-convexity, supermartingales and e-processes},
  author={Ramdas, Aaditya and Ruf, Johannes and Larsson, Martin and Koolen, Wouter M},
  journal={International Journal of Approximate Reasoning},
  volume={141},
  pages={83--109},
  year={2022},
  publisher={Elsevier}
}

@article{chen19sequential,
  title={Sequential change-point detection based on nearest neighbors},
  author={Hao Chen},
  journal={The Annals of Statistics},
  volume={47},
  number={3},
  pages={1381--1407},
  year={2019}
}

@article{chu22sequential,
  title={Sequential change-point detection for high-dimensional and non-{E}uclidean data},
  author={Lynna Chu and Hao Chen},
  journal={IEEE Transactions on Signal Processing},
  volume={70},
  pages={4498--4511},
  year={2022}
}

@inproceedings{fukumizu07kernel,
  title={Kernel measures of conditional dependence},
  author={Kenji Fukumizu and Arthur Gretton and Xiaohai Sun and Bernhard Sch{\"o}lkopf},
  booktitle={Advances in Neural Information Processing Systems (NeurIPS)},
  pages={489--496},
  year={2007}
}

@article{gordon94efficient,
  title={An efficient sequential nonparametric scheme for detecting a change of distribution},
  author={Louis Gordon and Moshe Pollak},
  journal={The Annals of Statistics},
  volume={22},
  number={2},
  pages={763--804},
  year={1994}
}

@article{hawkins10nonparametric,
  title={A nonparametric change-point control chart},
  author={Douglas M. Hawkins and Qiqi Deng},
  journal={Journal of Quality Technology},
  volume={42},
  number={2},
  pages={165--173},
  year={2010}
}

@article{keriven20newma,
  title={N{EWMA}: {A} new method for scalable model-free online change-point detection},
  author={Nicolas Keriven and Damien Garreau and Iacopo Poli},
  journal={IEEE Transactions on Signal Processing},
  volume={68},
  pages={3515--3528},
  year={2020}
}

@article{kurt20real,
  title={Real-time nonparametric anomaly detection in high-dimensional settings},
  author={Mehmet N. Kurt and Yasin Yilmaz and Xiaodong Wang},
  journal={IEEE Transactions on Pattern Analysis and Machine Intelligence},
  volume={43},
  number={7},
  pages={2463--2479},
  year={2020}
}

@inproceedings{li15mstatistic,
  title={M-statistic for kernel change-point detection},
  author={Shuang Li and Yao Xie and Hanjun Dai and Le Song},
  booktitle={Advances in Neural Information Processing Systems ({NeurIPS})},
  pages={3366--3374},
  year={2015}
}

@article{li19scanbstatistics,
  title={Scan {$B$}-statistic for kernel change-point detection},
  author={Shuang Li and Yao Xie and Hanjun Dai and Le Song},
  journal={Sequential Analysis},
  volume={38},
  number={4},
  pages={503--544},
  year={2019}
}

@article{romano23changedetection,
  title={A log-linear non-parametric online changepoint detection algorithm based on functional pruning},
  author={Gaetano Romano and Idris A. Eckley and Paul Fearnhead},
  journal={IEEE Transactions on Signal Processing},
  volume={72},
  pages={594--606},
  year={2024}
}

@article{ross15cpmpackage,
  title={Parametric and nonparametric sequential change detection in {R}: The cpm package},
  author={Gordon J. Ross},
  journal={Journal of Statistical Software},
  volume={66},
  pages={1--20},
  year={2015}
}

@article{sriperumbudur10hilbert,
  title={Hilbert space embeddings and metrics on probability measures},
  author={Bharath K. Sriperumbudur and Arthur Gretton and Kenji Fukumizu and Bernhard Sch{\"o}lkopf and Gert Lanckriet},
  journal={Journal of Machine Learning Research},
  volume={11},
  number={50},
  pages={1517--1561},
  year={2010}
}

@article{wang24sequential,
  title={Sequential change-point detection: Computation versus statistical performance},
  author={Haoyun Wang and Yao Xie},
  journal={Wiley Interdisciplinary Reviews: Computational Statistics},
  volume={16},
  number={1},
  pages={e1628},
  year={2024}
}

@techreport{wei22online,
  title={Online kernel {CUSUM} for change-point detection},
  author={Song Wei and Yao Xie},
  type={Technical report},
  year={2022},
  url={https://arxiv.org/abs/2211.15070}
}

@inproceedings{yilmaz17online,
  title={Online nonparametric anomaly detection based on geometric entropy minimization},
  author={Yasin Yilmaz},
  booktitle={{IEEE} International Symposium on Information Theory ({ISIT})},
  pages={3010--3014},
  year={2017}
}

@inproceedings{zaremba13btest,
  title={B-test: {A} non-parametric, low variance kernel two-sample test},
  author={Wojciech Zaremba and Arthur Gretton and Matthew B. Blaschko},
  booktitle={Advances in Neural Information Processing Systems (NeurIPS)},
  pages={755--763},
  year={2013}
}

@article{kalinke2026optimal,
  title={Optimal Online Change Detection via Random Fourier Features},
  author={Kalinke, Florian and Gavioli-Akilagun, Shakeel},
  journal={Advances in Neural Information Processing Systems},
  volume={38},
  pages={9866--9901},
  year={2025}
}

@article{shin2023detectors,
  title={E-detectors: A nonparametric framework for sequential change detection},
  author={Shin, Jaehyeok and Ramdas, Aaditya and Rinaldo, Alessandro},
  journal={The New England Journal of Statistics in Data Science},
  volume={2},
  number={2},
  pages={229--260},
  year={2024},
  doi={10.51387/23-NEJSDS51},
  publisher={New England Statistical Society}
}

@inproceedings{vovk2003testing,
  title     = {Testing exchangeability on-line},
  author    = {Vovk, Vladimir and Nouretdinov, Ilia and Gammerman, Alex},
  booktitle = {Proceedings of the Twentieth International Conference on Machine Learning},
  pages     = {768--775},
  year      = {2003},
  publisher = {AAAI Press}
}

@inproceedings{fedorova2012plugin,
  title     = {Plug-in martingales for testing exchangeability on-line},
  author    = {Fedorova, Valentina and Gammerman, Alex and Nouretdinov, Ilia and Vovk, Vladimir},
  booktitle = {Proceedings of the 29th International Conference on Machine Learning},
  pages     = {1639--1646},
  year      = {2012},
  publisher = {Omnipress}
}

@inproceedings{volkhonskiy2017inductive,
  title     = {Inductive Conformal Martingales for Change-Point Detection},
  author    = {Volkhonskiy, Denis and Burnaev, Evgeny and Nouretdinov, Ilia and Gammerman, Alexander and Vovk, Vladimir},
  booktitle = {Proceedings of the Sixth Workshop on Conformal and Probabilistic Prediction and Applications},
  pages     = {132--153},
  year      = {2017},
  volume    = {60},
  series    = {Proceedings of Machine Learning Research},
  publisher = {PMLR}
}

@inproceedings{vovk2021binary,
  title     = {Conformal testing in a binary model situation},
  author    = {Vovk, Vladimir},
  booktitle = {Proceedings of the Tenth Symposium on Conformal and Probabilistic Prediction and Applications},
  pages     = {131--150},
  year      = {2021},
  volume    = {152},
  series    = {Proceedings of Machine Learning Research},
  publisher = {PMLR}
}

@inproceedings{nouretdinov2021continuous,
  title     = {Conformal changepoint detection in continuous model situations},
  author    = {Nouretdinov, Ilia and Vovk, Vladimir and Gammerman, Alex},
  booktitle = {Proceedings of the Tenth Symposium on Conformal and Probabilistic Prediction and Applications},
  pages     = {300--302},
  year      = {2021},
  volume    = {152},
  series    = {Proceedings of Machine Learning Research},
  publisher = {PMLR}
}

@inproceedings{vovk2021retrain,
  title     = {Retrain or not retrain: conformal test martingales for change-point detection},
  author    = {Vovk, Vladimir and Petej, Ivan and Nouretdinov, Ilia and Ahlberg, Ernst and Carlsson, Lars and Gammerman, Alex},
  booktitle = {Proceedings of the Tenth Symposium on Conformal and Probabilistic Prediction and Applications},
  pages     = {191--210},
  year      = {2021},
  volume    = {152},
  series    = {Proceedings of Machine Learning Research},
  publisher = {PMLR}
}

@inproceedings{vovk2022markov,
  title     = {Conformal testing: binary case with Markov alternatives},
  author    = {Vovk, Vladimir and Nouretdinov, Ilia and Gammerman, Alex},
  booktitle = {Proceedings of the Eleventh Symposium on Conformal and Probabilistic Prediction with Applications},
  pages     = {207--218},
  year      = {2022},
  volume    = {179},
  series    = {Proceedings of Machine Learning Research},
  publisher = {PMLR}
}

@inproceedings{eliades2022betting,
  title     = {A Betting Function for Addressing Concept Drift with Conformal Martingales},
  author    = {Eliades, Charalambos and Papadopoulos, Harris},
  booktitle = {Proceedings of the Eleventh Symposium on Conformal and Probabilistic Prediction with Applications},
  pages     = {219--238},
  year      = {2022},
  volume    = {179},
  series    = {Proceedings of Machine Learning Research},
  publisher = {PMLR}
}

@inproceedings{vovk2023forgetting,
  title     = {The power of forgetting in statistical hypothesis testing},
  author    = {Vovk, Vladimir},
  booktitle = {Proceedings of the Twelfth Symposium on Conformal and Probabilistic Prediction with Applications},
  pages     = {347--366},
  year      = {2023},
  volume    = {204},
  series    = {Proceedings of Machine Learning Research},
  publisher = {PMLR}
}

@inproceedings{vovk2025cusum,
  title     = {Validity and efficiency of the conformal {CUSUM} procedure},
  author    = {Vovk, Vladimir and Nouretdinov, Ilia and Gammerman, Alexander},
  booktitle = {Proceedings of the Fourteenth Symposium on Conformal and Probabilistic Prediction with Applications},
  pages     = {576--594},
  year      = {2025},
  volume    = {266},
  series    = {Proceedings of Machine Learning Research},
  publisher = {PMLR}
}

@inproceedings{prinster2025watch,
  title     = {{WATCH}: Adaptive Monitoring for {AI} Deployments via Weighted-Conformal Martingales},
  author    = {Prinster, Drew and Han, Xing and Liu, Anqi and Saria, Suchi},
  booktitle = {Proceedings of the 42nd International Conference on Machine Learning},
  pages     = {49830--49859},
  year      = {2025},
  volume    = {267},
  series    = {Proceedings of Machine Learning Research},
  publisher = {PMLR}
}

@article{vovk2025conformaletesting,
  title   = {Conformal e-testing},
  author  = {Vovk, Vladimir and Nouretdinov, Ilia and Gammerman, Alex},
  journal = {Pattern Recognition},
  volume  = {168},
  pages   = {111841},
  year    = {2025},
  doi     = {10.1016/j.patcog.2025.111841}
}

@article{choe2026combining,
  title   = {Combining Evidence Across Filtrations},
  author  = {Choe, Yo Joong and Ramdas, Aaditya},
  journal = {Journal of the Royal Statistical Society Series B: Statistical Methodology},
  pages   = {},
  year    = {2026},
  doi     = {}
}

@article{ramdas2026complete,
  title={A complete characterization of sequential testability and change detectability in iid models},
  author={Ramdas, Aaditya},
  journal={arXiv preprint arXiv:2609.05752},
  year={2026}
}
\normalsize

\clearpage
\appendix

\begin{center}
    \textbf{\Large Appendices}
\end{center}

\paragraph{Organization of the Appendices.} Appendix~\ref{apd:regularity of betting class} contains a detailed discussion on the regularity properties of the betting classes. Appendix~\ref{apd:impossibility theorem} contains an impossibility theorem for the divergence of universally valid test supermartingales validating the choice of our reduced filtration. Finally, Appendix~\ref{Appendix: Proofs} contains omitted lemmas and proofs of all the results in this paper.

\section{Regularity of the betting class}
\label{apd:regularity of betting class}
To establish the detection delay bounds and minimax optimality of our procedure, we assume the betting class $\F = \{f_\theta: \theta \in \Theta\}$ satisfies some regularity conditions. In this appendix, we describe those conditions along with concrete examples of classes of function which satisfy the conditions. Each of the assumptions will lead to a separate and valid construction of the betting class. For the first assumption, assume $H$ to be the law of $F_{P_0}(Y)$ for $Y\sim P_1$ and $H \ll U$ and write $h:=\frac{dH}{dU},
$ and $
D_{\mathrm{KL}}(H\|U)
=
\int_0^1\log h(u)\,dH(u)
\in(0,\infty).$

\begin{assumption}[Local KL support]
\label{assn:local-KL-support} Let $\phi:=\log h$. Assume $\phi$ is bounded and Lipschitz. Assume there exist constants $a,c\geq0$ such that, for every $r\geq1$ and every $z_1,\ldots,z_r\in[0,1]$,
\begin{equation}
\label{eq:local-KL-support}
\log\int_\Theta\prod_{i=1}^r f_\theta(z_i)\,d\Pi(\theta)
\geq
\sum_{i=1}^r\log h(z_i)-a\log r-c.
\end{equation}
\end{assumption}

If $\Pi$ puts mass $v_h>0$ at $h$, then Assumption~\ref{assn:local-KL-support} holds with $a=0$ and $c=\log(1/v_h)$. Smooth finite-dimensional families can produce a logarithmic prior penalty, but its coefficient depends on the local likelihood geometry and the neighborhood used in the uniform lower bound; no universal identification of $a$ with the parameter dimension is assumed here.

\begin{proposition}[An example]
\label{prop:exp-tilt-KL-support}
Let $\mathcal X=[0,1], P_0=U(0,1)$. Fix $\Theta_0>0$ and $\theta_\star\in(-\Theta_0,\Theta_0)\setminus\{0\}.$
For $u\in[0,1]$, define $g(u):=u-\frac12,
\hspace{.2cm}
\psi(\theta):=
\log\int_0^1 e^{\theta g(v)}\,dv, \hspace{.2cm} f_\theta(u)
:=
\exp\{\theta g(u)-\psi(\theta)\}.$
for $\theta\in[-\Theta_0,\Theta_0]$,
Equivalently,
\[
\psi(\theta)
=
\begin{cases}
\displaystyle
\log\left\{\frac{2\sinh(\theta/2)}{\theta}\right\},
& \theta\neq0,\\[1ex]
0,&\theta=0.
\end{cases}
\]
Let $P_1$ have density $h(u):=f_{\theta_\star}(u)$
with respect to $U=U(0,1)$, and let $\Pi=U[-\Theta_0,\Theta_0]$ on the betting family
$\mathcal F=\{f_\theta:\theta\in[-\Theta_0,\Theta_0]\}$.
Then the transformed post-change law is $H=P_1$ and
$dH/dU=h$. Define $d_\star:=\min\{1,\Theta_0-|\theta_\star|\}>0.$
Then, for every $r\ge1$ and every $z_1,\ldots,z_r\in[0,1]$, $\log\int_{\Theta}
\prod_{i=1}^r f_\theta(z_i)\,d\Pi(\theta)
\ge
\sum_{i=1}^r\log h(z_i)-\log r-c_\star,$
where $c_\star
:=
\frac{d_\star}{2}
+
\log\frac{2\Theta_0}{d_\star}.$
Hence Assumption~\ref{assn:local-KL-support} holds with $a=1$ and $c=c_\star$. Moreover,
$\phi(u):=\log h(u)
=
\theta_\star\left(u-\frac12\right)-\psi(\theta_\star)$
is bounded and Lipschitz, and
$D_{\mathrm{KL}}(H\|U)
=
\theta_\star\psi'(\theta_\star)-\psi(\theta_\star)
>0.$
\end{proposition}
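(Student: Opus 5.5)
The plan is to reduce everything to an explicit one-dimensional exponential-family computation and then lower-bound the mixture integral by restricting it to a short one-sided interval next to $\theta_\star$.

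\textbf{Identifying $H$ and $h$.} Since $P_0=U(0,1)$, we have $F_{P_0}(y)=y$ on $[0,1]$. Hence $F_{P_0}(Y)=Y$ for $Y\sim P_1$, so $H=P_1$ and $dH/dU=h=f_{\theta_\star}$. Boundedness and Lipschitz continuity of $\phi(u)=\theta_\star(u-\tfrac12)-\psi(\theta_\star)$ are immediate, since $\phi$ is affine on $[0,1]$. For the KL identity, write
\[
D_{\mathrm{KL}}(H\|U)=\mathbb E_{f_{\theta_\star}}\bigl[\theta_\star g(U)-\psi(\theta_\star)\bigr]=\theta_\star\psi'(\theta_\star)-\psi(\theta_\star),
\]
using $\psi'(\theta)=\mathbb E_{f_\theta}g$. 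Positivity follows from strict convexity of $\psi$: $g$ is nondegenerate under every tilt, so $\psi''>0$. Together with $\psi(0)=0$, strict convexity gives $\psi(0)>\psi(\theta_\star)+\psi'(\theta_\star)(0-\theta_\star)$ for $\theta_\star\neq0$, which is exactly $\theta_\star\psi'(\theta_\star)-\psi(\theta_\star)>0$.

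\textbf{The support inequality.} Put $S:=\sum_{i=1}^r g(z_i)$. Then
\[
\log\prod_{i=1}^r f_\theta(z_i)-\sum_{i=1}^r\log h(z_i)=(\theta-\theta_\star)S-r\{\psi(\theta)-\psi(\theta_\star)\}.
\]
Since $\psi''(\theta)=\operatorname{Var}_{f_\theta}(g)\le 1/4$ (because $g$ takes values in $[-\tfrac12,\tfrac12]$), Taylor's theorem gives
\[
\psi(\theta)-\psi(\theta_\star)\le\psi'(\theta_\star)(\theta-\theta_\star)+\tfrac18(\theta-\theta_\star)^2 .
\]
Therefore the log-ratio is at least
\[
(\theta-\theta_\star)\{S-r\psi'(\theta_\star)\}-\tfrac r8(\theta-\theta_\star)^2 .
\]
The linear term has an unknown sign, so we choose the side of $\theta_\star$ on which it is nonnegative. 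Set $\ell:=d_\star/r$ and let $N:=[\theta_\star,\theta_\star+\ell]$ if $S\ge r\psi'(\theta_\star)$, and $N:=[\theta_\star-\ell,\theta_\star]$ otherwise. Because $\ell\le d_\star\le\Theta_0-|\theta_\star|$, the interval $N$ lies inside $[-\Theta_0,\Theta_0]$. On $N$ the log-ratio is at least $-r\ell^2/8=-d_\star^2/(8r)\ge -d_\star/2$.

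Restricting the integral to $N$, whose prior mass is $\Pi(N)=d_\star/(2\Theta_0 r)$, yields
\[
\log\int_\Theta\prod_{i=1}^r f_\theta(z_i)\,d\Pi(\theta)\ge\sum_{i=1}^r\log h(z_i)-\log r-\log\frac{2\Theta_0}{d_\star}-\frac{d_\star}{2}.
\]
This is the claim with $a=1$ and $c=c_\star$.

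\textbf{Main obstacle.} The only delicate point is the sign of the linear term $(\theta-\theta_\star)\{S-r\psi'(\theta_\star)\}$. We cannot center a symmetric window at $\theta_\star$, because the linear term would then be negative on half of it and, for adversarial $z_i$, of size up to order $r\ell$. The one-sided choice of $N$ removes this problem. It requires room of length $\ell$ on both sides of $\theta_\star$ inside $[-\Theta_0,\Theta_0]$, and that is exactly what the definition of $d_\star$ provides. Taking the window length of order $1/r$ then balances the quadratic loss against the prior-mass loss, which produces the $\log r$ penalty.
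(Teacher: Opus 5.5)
Your proof is correct, but it handles the support inequality differently from the paper.

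\textbf{How the paper argues.} The paper uses the symmetric window $I_r=[\theta_\star-d_\star/(2r),\theta_\star+d_\star/(2r)]$ and does not decompose the log-ratio at all. It notes that $\partial_\theta\log f_\theta(u)=g(u)-\psi'(\theta)$, with $|g|\le\frac12$ and $|\psi'|\le\frac12$. The mean-value theorem then gives $|\log f_\theta(z_i)-\log h(z_i)|\le|\theta-\theta_\star|\le d_\star/(2r)$. The total loss over the $r$ terms is therefore at most $d_\star/2$, and the prior mass $d_\star/(2\Theta_0 r)$ is the same as yours. This needs only a uniform bound on the $\theta$-score, which is the same device the paper uses under Assumption~\ref{assn:directional-local-detectability}. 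So it carries over to non-exponential families.

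\textbf{The obstacle you describe is not real.} Since $|S-r\psi'(\theta_\star)|\le r$, the linear term on a symmetric window of half-width $O(1/r)$ is only $O(1)$. That constant is simply absorbed into $c_\star$.

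\textbf{What your route buys.} You use the exponential-family curvature bound $\psi''\le\frac14$ together with a sign-adapted one-sided window, so only the quadratic term is lost. With $\ell=d_\star/r$ this merely improves the constant, from $d_\star/2$ to $d_\star^2/(8r)$.

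Your closing remark is inaccurate, though. A quadratic loss $r\ell^2/8$ balances against the prior-mass loss at $\ell\asymp r^{-1/2}$, not at $\ell\asymp 1/r$. Take $\ell=d_\star/\sqrt r$; this still satisfies $\ell\le d_\star\le\Theta_0-|\theta_\star|$, so the window stays inside $[-\Theta_0,\Theta_0]$. Your argument then gives the stronger bound with $\tfrac12\log r$ in place of $\log r$ and constant $\log(2\Theta_0/d_\star)+d_\star^2/8$. That is Assumption~\ref{assn:local-KL-support} with $a=1/2$, the BIC-type penalty, which the paper's first-order argument cannot reach.

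\textbf{Positivity of the KL divergence.} Your proof via strict convexity of $\psi$ is a minor variant of the paper's, which simply observes that $f_{\theta_\star}\not\equiv1$.
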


Thus the above proposition gives an example of a betting family satisfying Assumption~\ref{assn:local-KL-support}.
When the betting mixture supports the oracle density ratio, the directional rate can be replaced by the full KL information. The next assumptions give alternative ways to construct the betting class without any knowledge of the pre- or post-change distribution.

\begin{assumption}[Uniform local mixture richness]
\label{assn:uniform-local-mixture-richness}
The fixed measurable pair $(\mathcal F,\Pi)$ defining the CMM has constants $\kappa>0$, $K\geq0$, and $\epsilon_0>0$ such that, for every $m\geq1$ and every $z_1,\ldots,z_m\in[0,1]$, with $G_m(t):=\frac1m\sum_{j=1}^m\mathbf 1\{z_j\leq t\}$ and $F_{\mathrm U}(t):=t$,
whenever $0<\|G_m-F_{\mathrm U}\|_\infty\leq\epsilon_0,$
we have
\[
\log\int_{\mathcal F}
\exp\left\{
m\int_0^1\log f(u)\,dG_m(u)
\right\}
d\Pi(f)
\geq
\kappa m\|G_m-F_{\mathrm U}\|_\infty^2
-
K\log\frac{1}{\|G_m-F_{\mathrm U}\|_\infty}
-
K.
\]
\end{assumption}

The left-hand side in the above inequality is the log-mixture capital evaluated at an empirical distribution $G_m$ of $p$-values. The condition requires that, whenever $G_m$ is close to but distinct from uniform, the mixture places enough mass on betting functions that exploit this discrepancy. The leading term is quadratic in the Kolmogorov distance from uniformity, matching the local scale of many goodness-of-fit alternatives, while the logarithmic correction accounts for the prior mass needed to average over the function class. This assumption is used to convert post-change separation of the conformal $p$-values into deterministic growth of the conformal mixture martingale. The following proposition gives a precise example of such a betting family. 

\begin{proposition}[Threshold betting family]
\label{ex:threshold-betting-family}
For $a\in[0,1]$ and $\theta\in[-1/2,1/2]$, define $g_a(u):=\mathbf 1\{u\leq a\}-a, \hspace{.2cm}
f_{a,\theta}(u):=1+\theta g_a(u), \hspace{.2cm}u\in[0,1].$
Let $\mathcal F_{\mathrm{thr}}
:=
\{f_{a,\theta}:a\in[0,1],\theta\in[-1/2,1/2]\},$
equip $[0,1]\times[-1/2,1/2]$ with its Borel sigma-field, and let $\Pi_{\mathrm{thr}}$ be its uniform probability measure. The kernel $(a,\theta,u)\mapsto f_{a,\theta}(u)$ is jointly measurable, and the pair $(\mathcal F_{\mathrm{thr}},\Pi_{\mathrm{thr}})$ satisfies
Assumption~\ref{assn:uniform-local-mixture-richness}.
\end{proposition}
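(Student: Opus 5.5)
The statement to prove is Proposition~\ref{ex:threshold-betting-family}: the threshold family $(\mathcal F_{\mathrm{thr}},\Pi_{\mathrm{thr}})$ satisfies Assumption~\ref{assn:uniform-local-mixture-richness}.

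\emph{Measurability and validity.} These are immediate. The map $(a,\theta,u)\mapsto 1+\theta(\mathbf 1\{u\le a\}-a)$ is Borel, since $\{(a,u):u\le a\}$ is closed. Each $f_{a,\theta}$ is a density on $[0,1]$, because $\int_0^1 g_a=0$. It is bounded below by $1/2$, because $|g_a|\le1$ and $|\theta|\le 1/2$.

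\emph{Reduction to a one-direction bound.} Write $\Delta_m:=\|G_m-F_{\mathrm U}\|_\infty\in(0,\epsilon_0]$; I will take $\epsilon_0$ to be a small absolute constant, say $1/8$. Note that
\[
\int\log f_{a,\theta}\,dG_m=\int_0^1\log\bigl(1+\theta g_a(u)\bigr)\,dG_m(u).
\]
Since $|\theta g_a|\le1/2$, the inequality $\log(1+x)\ge x-x^2$ for $|x|\le1/2$ gives
\[
m\int\log f_{a,\theta}\,dG_m\ \ge\ m\theta\bigl(G_m(a)-a\bigr)-m\theta^2 .
\]
Here I used $\int g_a\,dG_m=G_m(a)-a$ and $g_a^2\le1$.

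\emph{Choice of the threshold direction.} Pick $a_*$ with $|G_m(a_*)-a_*|\ge\Delta_m/2$. If $G_m(a_*)<a_*$ while the supremum is only approached through left limits, I take $a_*$ slightly to the left. Because $G_m$ is a step function, for every $a$ in an interval of length at least $\Delta_m/4$ adjacent to $a_*$ we have $|G_m(a)-a|\ge\Delta_m/4$ with a fixed sign $\sigma$. This holds because $G_m(a)-a$ has slope $-1$ between jumps: moving $a$ by at most $\Delta_m/4$ within the flat piece in the appropriate direction changes it by at most $\Delta_m/4$. The step function may jump inside such an interval. To handle this, I take the interval on the side where $G_m$ is constant and the deviation moves toward increasing magnitude, or else argue via monotonicity: $G_m(a)-a\ge G_m(a_*)-a_*-(a-a_*)$ for $a\ge a_*$, since $G_m$ is nondecreasing. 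This monotonicity argument is the cleanest route. If $G_m(a_*)-a_*\ge\Delta_m/2$, use $a\in[a_*,a_*+\Delta_m/4]$. If $G_m(a_*)-a_*\le-\Delta_m/2$, use $a\in[a_*-\Delta_m/4,a_*]$, where $G_m(a)-a\le G_m(a_*)-a_*+(a_*-a)$. Both intervals stay inside $[0,1]$ once $\Delta_m\le\epsilon_0$, after using the fact that $|G_m(a)-a|$ vanishes near the boundary appropriately; otherwise I truncate, losing a constant factor.

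\emph{Choice of $\theta$ and integration against the prior.} On this set of $a$, take $\theta$ with sign $\sigma$ and $|\theta|\in[\Delta_m/16,\Delta_m/8]$. Then
\[
m\theta\bigl(G_m(a)-a\bigr)-m\theta^2\ \ge\ m\Bigl(\tfrac{\Delta_m^2}{64}-\tfrac{\Delta_m^2}{64}\Bigr),
\]
which is too tight, so I adjust the constants: take $|\theta|\in[\Delta_m/16,\Delta_m/8]$ together with the deviation bound $\Delta_m/4$. This gives $|\theta|\cdot\Delta_m/4-\theta^2\ge \Delta_m^2/64-\Delta_m^2/64$, which is still borderline, so instead use $|\theta|\in[\Delta_m/32,\Delta_m/16]$. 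This yields at least $\Delta_m^2/128-\Delta_m^2/256=\Delta_m^2/256$. The $\Pi_{\mathrm{thr}}$-mass of this rectangle is at least $(\Delta_m/4)(\Delta_m/32)$, because the uniform density on $[0,1]\times[-1/2,1/2]$ equals $1$. Since the integrand is nonnegative, restricting the integral to the rectangle gives
\[
\log\int\exp\Bigl\{m\int\log f\,dG_m\Bigr\}\,d\Pi\ \ge\ \frac{m\Delta_m^2}{256}-2\log\frac1{\Delta_m}-\log128 .
\]
This is the assumption with $\kappa=1/256$, $K=\log128$ (which is at least $2$), and $\epsilon_0=1/8$.

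\emph{Main obstacle.} The only delicate step is the step-function argument that supplies a $\Pi$-nonnegligible set of thresholds $a$ with a deviation of fixed sign and size of order $\Delta_m$. The monotonicity of $G_m$ resolves it, along with the boundary bookkeeping.
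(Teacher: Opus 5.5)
Your proof is correct and is essentially the paper's argument. It uses the same bound $\log(1+x)\ge x-x^2$, the same monotonicity of $G_m$ to get a threshold interval of length of order $\Delta_m$ on which $G_m(a)-a$ has fixed sign and size of order $\Delta_m$, and the same rectangle of prior mass of order $\Delta_m^2$. Only the constants differ: the paper starts from $|G_m(a_0)-a_0|\ge 3\Delta_m/4$ and takes $|\theta|\in[\Delta_m/16,\Delta_m/8]$, obtaining $\kappa=1/64$, $K=2+\log 64$, $\epsilon_0=1$.

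The boundary step needs neither truncation nor a small $\epsilon_0$, so you can drop that hedge. In the positive case $G_m(a_*)\le 1$ forces $a_*\le 1-\Delta_m/2$, and in the negative case $G_m(a_*)\ge 0$ forces $a_*\ge \Delta_m/2$. Hence both intervals of length $\Delta_m/4$ lie in $[0,1]$ automatically.
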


The threshold family gives a concrete omnibus construction: by varying $a$, the bet can target departures of the empirical $p$-value distribution at different parts of $[0,1]$, while $\theta$ controls the direction and strength of the bet. For some results, however, it is useful to impose a more local condition, requiring only that the betting class contain a smooth one-dimensional path that is aligned with the post-change departure from uniformity.

\begin{assumption}[Directional local detectability]
\label{assn:directional-local-detectability}
The fixed measurable pair $(\mathcal F,\Pi)$ defining the CMM is such that there exists constants $\theta_0>0$, $C<\infty$, $B<\infty$, $c_\Pi>0$, a bounded Lipschitz function $g:[0,1]\to\mathbb R$, and a collection of densities $\{f_\theta:|\theta|\leq\theta_0\}\subseteq\mathcal F$ such that, with $H$ as in Assumption~\ref{assn:post-change-pvalue-separation}, $f_0\equiv1,
\hspace{.2cm}
\int_0^1g(u)\,du=0, \hspace{.2cm}
J:=\int_0^1g(u)\,dH(u)\neq0,$
and, for every $|\theta|\leq\theta_0$, $\sup_{u\in[0,1]}
\left|
\log f_\theta(u)-\theta g(u)
\right|
\leq C\theta^2,$
and $\sup_{u\in[0,1]}
\left|
\frac{\partial}{\partial\theta}\log f_\theta(u)
\right|
\leq B.$
Furthermore, for all sufficiently small $\epsilon>0$ and all $\theta\in[-\theta_0/2,\theta_0/2]$, $\Pi\{f_\vartheta:|\vartheta-\theta|\leq\epsilon\}\geq c_\Pi\epsilon .$
\end{assumption}

This condition is weaker in spirit than the preceding richness assumption. Rather than requiring the mixture to exploit every small Kolmogorov departure from uniformity, it requires one regular direction $g$ for which the transformed post-change variable $F_{P_0}(Y)$, $Y\sim P_1$, has nonzero mean shift relative to uniformity. The expansion of $\log f_\theta$ gives a controlled first-order approximation to the log-bet along this direction, and the sign of $J=\int g\,dH$ determines which side of the path yields positive growth. The derivative bound and local prior mass condition ensure that the mixture martingale retains this growth after averaging over nearby betting functions. This is the form used in the delay analysis, where the post-change evidence rate is reduced to a scalar quantity determined by $J$ and the local quadratic remainder. The next result gives a precise construction for the setup. 

\begin{proposition}[Countable exponential-tilt family]
\label{ex:countable-exp-tilt-family}
Fix $\theta_0>0$. Let $\mathcal G=\{g_\ell:\ell\geq1\}$ be a countable collection of Lipschitz functions on $[0,1]$ such that $\int_0^1g_\ell(u)\,du=0,
\hspace{.2cm}
\|g_\ell\|_\infty\leq1,$
and such that $\mathcal G$ is dense, in the uniform norm, in the unit ball $\mathcal C_{0,1}
:=
\left\{g\in C[0,1]:\int_0^1g(u)\,du=0,\ \|g\|_\infty\leq1\right\}.$
For $\ell\geq1$ and $\theta\in[-\theta_0,\theta_0]$, define $f_{\ell,\theta}(u)
:=
\exp\{\theta g_\ell(u)-\psi_\ell(\theta)\},
$ and $
\psi_\ell(\theta)
:=
\log\int_0^1\exp\{\theta g_\ell(u)\}\,du.$
Let $\{w_\ell\}_{\ell\geq1}$ be strictly positive weights satisfying
$\sum_{\ell=1}^{\infty}w_\ell=1$, and define
$\mathcal F_{\exp}
:=
\{f_{\ell,\theta}:\ell\geq1,\ |\theta|\leq\theta_0\}.$
Equip $\mathbb N\times[-\theta_0,\theta_0]$ with the product sigma-field, and let $\Pi_{\exp}$ be the probability measure that first draws $\ell$ with probability $w_\ell$ and then, conditional on $\ell$, draws $\theta$ uniformly on $[-\theta_0,\theta_0]$. The betting kernel is jointly measurable. Then, for any continuous CDF $H$ on $[0,1]$ such that $H\neq F_{\mathrm U}$, the pair
$(\mathcal F_{\exp},\Pi_{\exp})$ satisfies Assumption~\ref{assn:directional-local-detectability}.
\end{proposition}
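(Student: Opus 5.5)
The plan is to exhibit one index $\ell$ for which the one-parameter path $\theta\mapsto f_{\ell,\theta}$ meets every requirement of Assumption~\ref{assn:directional-local-detectability}, taking $g:=g_\ell$, the same $\theta_0$, $C=1/2$, $B=2$ and $c_\Pi=w_\ell/(2\theta_0)$. The argument has four steps: choose $\ell$ so that $J\neq0$, then check the analytic bounds, the prior-mass bound and measurability.

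\emph{Step 1 (a profitable direction; the main obstacle).} We need some $g_\ell$ with $J_\ell:=\int_0^1 g_\ell\,dH\neq0$. Since $H\neq F_{\mathrm U}$, there is an $a\in(0,1)$ with $H(a)\neq a$. The function $\bar g:=\mathbf 1\{u\le a\}-a$ is bounded and mean zero under $U$, and $\int\bar g\,dH=H(a)-a\neq0$. However, $\bar g$ is neither continuous nor necessarily in the unit ball, so we approximate it in two moves.
\begin{itemize}
\item[(i)] Take continuous piecewise-linear $\phi_k$ equal to $1$ on $[0,a]$, equal to $0$ on $[a+1/k,1]$, and linear in between. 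Then $\phi_k\to\mathbf 1\{\cdot\le a\}$ pointwise except possibly at $a$. Because $H$ is continuous, $\{a\}$ is null for both $H$ and $U$, so dominated convergence gives $\int\phi_k\,dH\to H(a)$ and $\int\phi_k\,du\to a$.
\item[(ii)] Set $\tilde g_k:=\phi_k-\int_0^1\phi_k\,du$. This is continuous, mean zero, and satisfies $\|\tilde g_k\|_\infty\le1$ because $\phi_k\in[0,1]$. Moreover $\int\tilde g_k\,dH\to H(a)-a\neq0$.
\end{itemize}
Fix $k$ with $|\int\tilde g_k\,dH|=:2\beta>0$, so that $\tilde g_k\in\mathcal C_{0,1}$. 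By uniform density of $\mathcal G$ in $\mathcal C_{0,1}$, choose $\ell$ with $\|g_\ell-\tilde g_k\|_\infty<\beta$. Then $|J_\ell|\ge2\beta-\beta>0$. This $g:=g_\ell$ is Lipschitz, bounded by $1$ and mean zero, as the assumption requires.

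\emph{Step 2 (local expansion and derivative bound).} With $f_\theta:=f_{\ell,\theta}$ we have $\psi_\ell(0)=0$, hence $f_0\equiv1$. Also $\log f_\theta(u)-\theta g(u)=-\psi_\ell(\theta)$. The function $\psi_\ell$ is smooth, with $\psi_\ell'(0)=\int g=0$. Its second derivative $\psi_\ell''(\theta)$ is the variance of $g$ under the tilted density $f_{\ell,\theta}$, which is at most $\|g\|_\infty^2\le1$. Taylor's theorem therefore gives $|\psi_\ell(\theta)|\le\theta^2/2$, which is the required bound with $C=1/2$. For the derivative, $\partial_\theta\log f_\theta(u)=g(u)-\psi_\ell'(\theta)$, and $\psi_\ell'(\theta)$ is a tilted mean of $g$, so $|\psi_\ell'(\theta)|\le1$. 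This gives the bound with $B=2$.

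\emph{Step 3 (prior mass and measurability).} For $|\theta|\le\theta_0/2$ and $\epsilon\le\theta_0/2$, the interval $[\theta-\epsilon,\theta+\epsilon]$ lies inside $[-\theta_0,\theta_0]$. Hence
\[
\Pi_{\exp}\{(\ell,\vartheta):|\vartheta-\theta|\le\epsilon\}\ge w_\ell\cdot\frac{2\epsilon}{2\theta_0}=\frac{w_\ell}{\theta_0}\,\epsilon\ge c_\Pi\,\epsilon,
\]
with $c_\Pi=w_\ell/(2\theta_0)$. Here the neighbourhood is read at the level of parameters, as in the assumption's indexing of $f_\vartheta$. For joint measurability, on each slice $\{\ell\}\times[-\theta_0,\theta_0]\times[0,1]$ the map $(\theta,u)\mapsto f_{\ell,\theta}(u)$ is continuous. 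Since $\mathbb N$ is countable and carries the discrete $\sigma$-field, the kernel is measurable on the product space.

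Collecting Steps 1–3 verifies every clause of Assumption~\ref{assn:directional-local-detectability}. The only delicate point is Step 1: it combines continuity of $H$ (to kill the atom at $a$) with uniform density of $\mathcal G$. That density is what allows a fixed countable, distribution-free family to contain a direction profitable against an arbitrary $H\neq F_{\mathrm U}$.
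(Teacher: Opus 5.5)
Your proof is correct and follows the same route as the paper: you find a separating direction in $\mathcal C_{0,1}$ and pass to a nearby $g_\ell$ by uniform density. You then verify the Taylor bound with $C=1/2$, the derivative bound with $B=2$, and the local prior mass on the fixed slice $\ell$. The one difference is minor: you build the separating continuous function explicitly by smoothing $\mathbf 1\{u\le a\}$, whereas the paper invokes the fact that continuous functions separate finite signed Borel measures; also, since $\phi_k(a)=1$ for every $k$, your convergence holds at $a$ as well, so continuity of $H$ is not needed in that step.
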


The proposition gives a concrete way to build a betting class that is not tailored to a particular post-change distribution. Since the exponential tilts range over a dense set of mean-zero directions, any continuous non-uniform distribution of $F_{P_0}(Y)$, $Y\sim P_1$, creates positive drift for at least one path in the mixture. We now use these conditions to establish growth results of the CMM after the changepoint.

\section{Impossibility of divergence for universal nonparametric test supermartingales under canonical filtration}
\label{apd:impossibility theorem}

The following result establishes that no universally valid nonparametric supermartingale can be consistent against a fixed changepoint alternative.

\begin{theorem}(Impossibility of divergence for universal nonparametric test supermartingales)
\label{th: impossibility theorem}
Let $(\Omega,\mathcal B)$ be a measurable space, let $\mathcal M$ be a convex composite class of probability measures on it, and $\{X_n\}_{n \in \N}$ be a sequence of independent random variables defined on the canonical product space $(\Omega^{\mathbb N},\mathcal B^{\otimes\mathbb N})$. Consider the hypotheses
\begin{align}
\label{eq: hypothesis}
H_0 &: X_i\stackrel{\mathrm{iid}}{\sim}P_0\text{ for all }i\geq1,\text{ for some }P_0\in\mathcal M,\\
\label{eq: hypothesis2}
H_1 &: X_i \iid P_0 \text{ for } i \le T, X_i \iid P_1\neq P_0 \text{ for } i > T, \text{ for some } T \in \N, P_0,P_1\in \mathcal M.
\end{align}
(Importantly, $P_0$, $P_1$, and $T$ are all unknown and arbitrary.) Let $\mathcal A_n:=\sigma(X_1,\ldots,X_n)$, and let $(M_n,\mathcal A_n)_{n\geq0}$ be nonnegative and adapted, with deterministic $M_0\leq1$. Write $M_n=m_n(X_1,\ldots,X_n)$ for a measurable function $m_n:\Omega^n\to[0,\infty]$. Assume that, for every $P\in\mathcal M$, $\mathbb E_{P^{\otimes\infty}}(M_n\mid\mathcal A_{n-1})
\leq M_{n-1}
\hspace{.2cm} P^{\otimes\infty}\text{-almost surely},
\hspace{.2cm} n\geq1.$
Then $(M_n,\mathcal A_n)_{n\geq0}$ is a test supermartingale under every distribution in $H_1$.
\end{theorem}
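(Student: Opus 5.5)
We need to show that under $\mathbb P_{P_0,T,P_1}$ the process $M$ satisfies $\mathbb E(M_n\mid\mathcal A_{n-1})\le M_{n-1}$ a.s. for every $n$. The plan is to exploit convexity of $\mathcal M$. The supermartingale inequality, holding almost surely under every iid law $P^{\otimes\infty}$ with $P\in\mathcal M$, will be upgraded to a \emph{pointwise} inequality on the support of suitable mixtures. That pointwise inequality then transfers to the product law $P_0^{\otimes T}\otimes P_1^{\otimes\infty}$, which is absolutely continuous with respect to a mixture iid law.

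\textbf{Step 1 (kernel form).} Since $X_n$ is independent of $\mathcal A_{n-1}$ under any product law, the conditional expectation is explicit. Under $P^{\otimes\infty}$ we have $\mathbb E(M_n\mid\mathcal A_{n-1})=\int m_n(X_1,\dots,X_{n-1},y)\,P(dy)$. Hence the hypothesis reads
\[
\int m_n(x_{1:n-1},y)\,P(dy)\le m_{n-1}(x_{1:n-1})\quad\text{for }P^{\otimes(n-1)}\text{-a.e. }x_{1:n-1}.
\]
Under the alternative, $X_n$ is again independent of $\mathcal A_{n-1}$, with law $Q_n:=P_0$ if $n\le T$ and $Q_n:=P_1$ if $n>T$. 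So it suffices to show
\[
\int m_n(x_{1:n-1},y)\,Q_n(dy)\le m_{n-1}(x_{1:n-1})\quad\text{for }\mu_{n-1}\text{-a.e. }x_{1:n-1},
\]
where $\mu_{n-1}:=\bigotimes_{i<n}Q_i$.

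\textbf{Step 2 (mixture).} Let $R:=\tfrac12(P_0+P_1)\in\mathcal M$ by convexity. Apply the hypothesis with $P=R$. The map $A(x):=\int m_n(x,y)\,R(dy)-m_{n-1}(x)$ is $\le 0$ for $R^{\otimes(n-1)}$-a.e. $x$. Since $Q_i\le 2R$ for each $i$, we get $\mu_{n-1}\ll R^{\otimes(n-1)}$, so the inequality for $R$ also holds $\mu_{n-1}$-a.e. However, we need the integral against $Q_n$, not against $R$. To obtain it, use the whole segment $R_s:=(1-s)P_0+sP_1\in\mathcal M$ for $s\in(0,1)$. Each $R_s$ dominates $\mu_{n-1}$ (with density bounded by $\max(1/s,1/(1-s))^{n-1}$). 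Hence, $\mu_{n-1}$-a.e., we have
\[
(1-s)a_0(x)+s\,a_1(x)\le m_{n-1}(x)\quad\text{where }a_j(x):=\int m_n(x,y)\,P_j(dy).
\]
Take this for all rational $s\in(0,1)$; a countable union of null sets is null. Letting $s\downarrow0$ and $s\uparrow1$ (using monotone convergence if some $a_j=\infty$, in which case the inequality forces $m_{n-1}=\infty$) gives $a_0\le m_{n-1}$ and $a_1\le m_{n-1}$, $\mu_{n-1}$-a.e. Taking $a_j$ with $P_j=Q_n$ yields the required inequality.

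\textbf{Step 3 (conclusion).} Nonnegativity, adaptedness and $M_0\le1$ are unaffected by the change of law, so $M$ is a test supermartingale under every $\mathbb P_{P_0,T,P_1}$.

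The main obstacle is the measure-theoretic bookkeeping in Step 2. Conditional expectations are only defined a.s., so everything must be phrased through the fixed measurable functions $m_n$. One also needs Tonelli to ensure that $a_j$ is measurable, and must handle values equal to $+\infty$. The key observation is that using a single mixture $R$ is not enough; it is the endpoints of the convex segment, reached as a limit, that deliver the bound for each $Q_n$ separately.
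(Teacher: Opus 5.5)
Your proof is correct and follows essentially the same route as the paper: you mix $P_0$ and $P_1$ over countably many rational weights in $\mathcal M$, use that $P_0^{\otimes T}\otimes P_1^{\otimes(n-1-T)}$ is absolutely continuous with respect to each mixture product to transfer the kernel inequality, and send the weight to an endpoint. The only differences are minor: the paper treats $n\le T$ directly from the hypothesis at $P=P_0$, and it drops the nonnegative term before letting the weight go to zero, which makes your separate handling of the value $+\infty$ unnecessary.
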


 Since under the alternate $M$ is nonnegative supermartingale, Doob's convergence theorem gives $M_n\to M_\infty<\infty$ almost surely. Furthermore, Ville's inequality gives, for every $\alpha\in(0,1)$,
\begin{equation}
\label{eq:fixed-change-ville}
\mathbb P_{P_0,T,P_1}
\left\{
\sup_{n\geq0}M_n\geq\frac1\alpha
\right\}
\leq\alpha.
\end{equation}
Thus a level-$1/\alpha$ alarm based on a universal full-filtration test supermartingale has detection probability at most $\alpha$ under every fixed changepoint alternative covered by the theorem. This is stronger than mere failure of divergence and makes explicit why null validity alone does not provide power against the composite alternative.

The result is also consistent with the fork-convexity perspective of \cite{ramdas2022testing}. When the laws in $\mathcal{M}$ are dominated by a common $\sigma$-finite measure, the corresponding likelihood-ratio processes are well defined relative to that common reference measure, giving the local absolute-continuity structure used in fork-convex formulations. If, in addition, $\mathcal{M}$ is fork-convex, then likelihood-ratio processes under the distribution-shift from $P_0$ to $P_1$ at a fixed time may be viewed as fork-convex combinations of likelihood-ratio processes generated under the two stationary laws $P_0^{\otimes \infty}$ and $P_1^{ \otimes \infty}$. Since fork-convex combinations preserve the nonnegative supermartingale property, the likelihood ratio process remains a supermartingale under the distribution shift and hence converges almost surely to an almost surely finite random variable by Doob's supermartingale convergence theorem. Thus, the impossibility of its divergence implies that a consistent test for the distribution change alternative can not be constructed using the likelihood ratio process. Theorem~\ref{th: impossibility theorem} generalizes this impossibility beyond the specific likelihood ratio process to the general class of universal nonparametric test supermartingales. Since fork convexity implies convexity, assuming $\M$ to be fork convex yields the same impossibility conclusions. 

In Theorem~\ref{th: impossibility theorem}, the requirement that $\M$ is composite and both $P_0, P_1 \in \M$ is crucial. Otherwise, for example, if $P_0,T$ are known, $P_1\ll P_0$ and $0<D_{\mathrm{KL}}(P_1\|P_0)<\infty$, then
$L_{T,n}:=\prod_{i=T+1}^n\frac{dP_1}{dP_0}(X_i),
\hspace{.2cm}n\geq T,$
with $L_{T,T}=1$ is a $P_0$-martingale after time $T$, and the strong law gives $\frac{1}{n-T}\log L_{T,n}
\longrightarrow
\mathbb E_{P_1}\log\frac{dP_1}{dP_0}(X)
=D_{\mathrm{KL}}(P_1\|P_0)
\hspace{.2cm}\mathbb P_{P_0,T,P_1}\text{-a.s,}$
implying $L_{T,n} \to \infty$ almost surely under $H_1$. The next proposition illustrates the impossibility for simple hypotheses.

\begin{proposition}[Exponential decay of a fixed conformal product bet]
\label{prop: convergence of conformal product martingale}
Let $Z_i=s(X_i)$ be independent scalar scores with
$Z_i\stackrel{\mathrm{iid}}{\sim}P_0$ for $i\leq T$ and
$Z_i\stackrel{\mathrm{iid}}{\sim}P_1$ for $i>T$, where $F_{P_0}$ and $F_{P_1}$ are continuous. Let $p_i$ be the randomized conformal ranks in \eqref{eq: randomized pval}, with independent randomizers $\lambda_i\stackrel{\mathrm{iid}}{\sim}U(0,1)$. Let $f:[0,1]\to(0,\infty)$ be continuous, satisfy $\int_0^1f(u)\,du=1$, and not be identically one. For
$M_n:=\prod_{i=1}^nf(p_i)$, $\frac1n\log M_n
\longrightarrow
\int_0^1\log f(u)\,du
=:-\mu_f<0$ almost surely.
\end{proposition}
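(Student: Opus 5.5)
The plan is to compare each conformal rank $p_i$ with the probability-integral transform $F_{P_1}(Z_i)$ for large $i$. The average of $\log f(p_i)$ then reduces to an ordinary strong law of large numbers for iid uniforms.

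\textbf{Step 1 (reduction to the post-change regime).} Since $f$ is continuous and strictly positive on the compact set $[0,1]$, the function $\log f$ is bounded and uniformly continuous. Hence the first $T$ terms of $\frac1n\log M_n=\frac1n\sum_{i=1}^n\log f(p_i)$ contribute $O(T/n)\to0$. Because $F_{P_0}$ and $F_{P_1}$ are continuous, ties occur with probability zero, so almost surely
\[
p_i=\frac{1}{i}\#\{j<i:Z_j<Z_i\}+\frac{\lambda_i}{i}.
\]

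\textbf{Step 2 (uniform approximation of ranks).} Write $\widehat G_i(z):=\frac1i\#\{j\le i:Z_j<z\}$. For $i>T$,
\[
\sup_z\bigl|\widehat G_i(z)-F_{P_1}(z)\bigr|\le \frac{2T}{i}+\sup_z\bigl|\widehat F^{(1)}_{i-T}(z)-F_{P_1}(z)\bigr|,
\]
where $\widehat F^{(1)}_{m}$ is the empirical CDF (strict-inequality version) of $Z_{T+1},\dots,Z_{T+m}$. By Glivenko--Cantelli applied to the iid post-change scores, the right side tends to $0$ almost surely. Therefore
\[
\epsilon_i:=|p_i-F_{P_1}(Z_i)|\le \sup_z\bigl|\widehat G_i(z)-F_{P_1}(z)\bigr|+\frac1i\longrightarrow0 \quad\text{a.s.}
\]
By uniform continuity of $\log f$, $|\log f(p_i)-\log f(F_{P_1}(Z_i))|\le \omega(\epsilon_i)\to0$, where $\omega$ is the modulus of continuity of $\log f$. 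The Cesàro averages of these differences therefore vanish almost surely.

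\textbf{Step 3 (SLLN and sign).} For $i>T$, the variables $U_i:=F_{P_1}(Z_i)$ are iid $U(0,1)$ because $F_{P_1}$ is continuous. Since $\log f$ is bounded, the strong law gives $\frac1n\sum_{i=T+1}^n\log f(U_i)\to\int_0^1\log f(u)\,du$ almost surely. Combining this with Steps 1 and 2 yields the stated limit.

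Finally, by strict Jensen, $\int_0^1\log f\,du<\log\int_0^1 f\,du=0$. The inequality is strict because $f$ is continuous and not identically one, so $f$ is non-constant on a set of positive Lebesgue measure. This gives $\mu_f>0$.

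The only delicate point is Step 2, namely the uniformity needed to pass from ranks to $F_{P_1}(Z_i)$. It is handled by using the sup-norm Glivenko--Cantelli bound at the random point $Z_i$, rather than a pointwise bound.
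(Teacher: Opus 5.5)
Your proof is correct, but it takes a different route from the paper's.

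\textbf{Your route.} You compare $p_i$ with the population probability-integral transform $U_i=F_{P_1}(Z_i)$. You then control the random error $|p_i-U_i|$ by Glivenko--Cantelli on the post-change block, plus the $2T/i$ contamination term. This is essentially the strategy of the paper's Lemma~\ref{lem:local-comparison}, carried over to the fixed-$T$ setting.

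\textbf{The paper's route.} The paper instead compares $p_{T+k}$ with $q_k$, the randomized conformal rank of $Z_{T+k}$ computed \emph{within} the post-change block $Z_{T+1},\ldots,Z_{T+k}$, using the same randomizer. By Proposition~\ref{prop: pval iid}, the $q_k$ are exactly iid $U(0,1)$. The numerator of $p_{T+k}$ is a pre-change contribution lying in $[0,T]$ plus $kq_k$. This gives the purely deterministic bound $|p_{T+k}-q_k|\le T/(T+k)$.

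\textbf{What each buys.} The paper's comparison needs no empirical-process theorem and gives an explicit error rate. It also needs no continuity of $F_{P_1}$: both the exact uniformity of $q_k$ and the $T/(T+k)$ bound hold even with ties. Your version uses continuity of $F_{P_1}$ twice, to make $U_i$ uniform and to discard ties. It also gives only an almost surely vanishing error, with no rate unless you add DKW. In exchange, your comparison identifies the limit directly through the population CDF, in the same form the paper uses in its double-array results.

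The closing steps coincide: uniform continuity of $\log f$, Ces\`aro averaging, the strong law, and strict Jensen.
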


Thus for the simple null, $M_n\to0$ exponentially fast almost surely. The impossibility results above show that all test supermartingales in the original filtration are trivial, so we have to work in a reduced filtration in order to develop nontrivial test supermartingales.

\section{Omitted Lemmata and Proofs}
\label{Appendix: Proofs}

We first present two structural results omitted in the main paper; we first state it below, and prove it later. The delay guarantees in section \ref{sec:optimality} are asymptotic. The following proposition gives a non-asymptotic explicit tail bound for the delay for a general weight sequence.

\begin{proposition}[non-asymptotic delay bound]
\label{prop:weighted-second-order}
Assume the conditions of Theorem~\ref{th:weighted-KL-upper}. Following the notations of Assumption~\ref{assn:local-KL-support}, let $L_\phi$ be the Lipschitz constant of $\phi=\log h$ and let $K_\phi:=\sup_{u\in[0,1]}\phi(u)-\inf_{u\in[0,1]}\phi(u).$
For any $s>0$, $x>0$, and integer $r\geq1$, if
\begin{equation}
\label{eq:weighted-quantile-condition}
rD_{\mathrm{KL}}(H\|U)-x
-L_\phi r\left(s+\frac{2r+1}{T_n}\right)
-a\log r-c
\geq
\mathcal B_{n,w}(b_n),
\end{equation}
then
\begin{equation}
\label{eq:weighted-quantile-tail}
\mathbb P_{P_0,T_n,P_1}\{\tau_{n,S,w}(b_n)>T_n+r\}
\leq
\exp\left\{-\frac{2x^2}{rK_\phi^2}\right\}
+2e^{-2T_ns^2},
\end{equation}
and the same bound holds for $\tau_{n,Z,w}(b_n)$.
Consequently, with probability at least $1-\delta$, both stopping times are at most $T_n+r$ whenever
\begin{align}
\label{eq:weighted-quantile-corollary}
rD_{\mathrm{KL}}(H\|U)-a\log r-c
-K_\phi\sqrt{\frac{r\log(2/\delta)}{2}} 
-L_\phi r\left\{
\sqrt{\frac{\log(4/\delta)}{2T_n}}
+\frac{2r+1}{T_n}
\right\}
\geq
\mathcal B_{n,w}(b_n).
\end{align}
\end{proposition}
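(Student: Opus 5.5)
The plan is to lower bound the statistic by a single restart started right at the change and then control that restart's log-capital. Set $m:=T_n+r$. Since $S_{n,m}^{(w)}\geq Z_{n,m}^{(w)}\geq w_{T_n+1}M_{n,T_n+1,m}$, we have
\[
\{\tau_{n,S,w}(b_n)>T_n+r\}\subseteq\{\tau_{n,Z,w}(b_n)>T_n+r\}\subseteq\{\log M_{n,T_n+1,m}<\mathcal B_{n,w}(b_n)\}.
\]
So it suffices to bound $\mathbb P\{\log M_{n,T_n+1,m}<\mathcal B_{n,w}(b_n)\}$. Assumption~\ref{assn:local-KL-support} applied to the $r$ post-change $p$-values $z_i=p_{n,T_n+i}$ gives
\[
\log M_{n,T_n+1,m}\geq\sum_{i=1}^r\phi(p_{n,T_n+i})-a\log r-c .
\]
It then remains to show $\sum_{i=1}^r\phi(p_{n,T_n+i})\geq rD_{\mathrm{KL}}(H\|U)-x-L_\phi r\{s+(2r+1)/T_n\}$ outside an event of the stated probability. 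Condition \eqref{eq:weighted-quantile-condition} then yields $\log M\geq\mathcal B_{n,w}(b_n)$.

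To do this, compare each conformal $p$-value with its oracle $U_i:=F_{P_0}(X_{n,T_n+i})$. These oracles are iid with law $H$ and are independent of the pre-change sample. Let $\widehat F^0_n$ be the empirical CDF of the $T_n$ pre-change observations, evaluated through $F_{P_0}$. Ties have probability zero by continuity, so
\[
p_{n,T_n+i}=\frac{T_n\widehat F^0_n(X_{n,T_n+i})+\#\{\text{post-change }j<T_n+i:X_j<X_{T_n+i}\}+\lambda}{T_n+i}.
\]
This differs from $U_i$ by at most $\|\widehat F^0_n-F_{\mathrm U}\|_\infty$ (on the uniform scale) plus $(2i+1)/T_n$-type terms. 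Those terms come from the $i$ post-change counts, the randomizer, and the change of denominator $T_n$ to $T_n+i$, and each is bounded by $i/(T_n+i)+1/(T_n+i)+i/(T_n+i)$. Using $i\leq r$ gives $|p_{n,T_n+i}-U_i|\leq\|\widehat F^0_n-F_{\mathrm U}\|_\infty+(2r+1)/T_n$.

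By the DKW inequality, with Massart's constant, $\|\widehat F^0_n-F_{\mathrm U}\|_\infty\leq s$ except on an event of probability at most $2e^{-2T_ns^2}$. On the complement, the Lipschitz property of $\phi$ gives
\[
\sum_i\phi(p_{n,T_n+i})\geq\sum_i\phi(U_i)-L_\phi r\{s+(2r+1)/T_n\}.
\]
Next, $\phi(U_i)$ are iid with mean $\int\phi\,dH=D_{\mathrm{KL}}(H\|U)$ and range at most $K_\phi$. Hoeffding's inequality therefore gives $\sum_i\phi(U_i)\geq rD_{\mathrm{KL}}(H\|U)-x$ except with probability $\exp\{-2x^2/(rK_\phi^2)\}$. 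A union bound gives \eqref{eq:weighted-quantile-tail} for both stopping times.

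For the high-probability consequence, take $x=K_\phi\sqrt{r\log(2/\delta)/2}$ and $s=\sqrt{\log(4/\delta)/(2T_n)}$. Each failure probability is then at most $\delta/2$, and \eqref{eq:weighted-quantile-condition} becomes \eqref{eq:weighted-quantile-corollary}.

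The main obstacle is the deterministic coupling bound $|p_{n,T_n+i}-U_i|\leq\|\widehat F^0_n-F_{\mathrm U}\|_\infty+(2r+1)/T_n$. It needs careful bookkeeping of the denominator change and the post-change rank contributions, and I expect the constant $2r+1$ to come out exactly from that bookkeeping. I would first write $p$ as a convex combination of $\widehat F^0_n$ and a post-change fraction and bound each deviation separately. A secondary point is that the DKW event must concern the pre-change sample alone, independent of the $U_i$; this holds because the rows are independent across the change.
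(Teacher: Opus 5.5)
Your proposal is correct and matches the paper's proof step for step. Both bound the statistic below by the single restart at $T_n+1$ and apply Assumption~\ref{assn:local-KL-support}. Both transfer from the conformal ranks to the oracle variables $F_{P_0}(X_{n,j})$ via the deterministic comparison with the $(2r+1)/T_n$ term, which the paper packages as Lemma~\ref{lem:local-comparison}. Both finish with DKW, Hoeffding, a union bound, and the same choices of $x$ and $s$. Your remark that the DKW event must be independent of the $U_i$ is harmless but unnecessary, since the union bound needs no independence.
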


Thus, under suitable conditions, the probability of delayed alarm decreases exponentially in the number of post-change observations. The following lemma has been repeatedly used in the proofs of the main results.

\begin{lemma}[Local comparison with the oracle transform]
\label{lem:local-comparison}
Under \eqref{eqn:data structure}, suppose $F_{P_0}$ and $H$ are continuous, $H(0)=0$, and form the conformal ranks with iid uniform randomizers independent of the array. For every $n$ and positive integer $r$, define
\[
V_{n,j}:=F_{P_0}(X_{n,j}),
\qquad T_n<j\leq T_n+r.
\]
Then $V_{n,T_n+1},\ldots,V_{n,T_n+r}$ are iid with CDF $H$, and
\begin{equation}
\label{eq:local-comparison}
\max_{T_n<j\leq T_n+r}|p_{n,j}-V_{n,j}|
\leq
\|\widehat F_{P_0,T_n}-F_{P_0}\|_\infty
+\frac{2r+1}{T_n}
\quad\text{almost surely},
\end{equation}
where
\[
\widehat F_{P_0,T_n}(x)
:=
\frac1{T_n}\sum_{i=1}^{T_n}\mathbf 1\{X_{n,i}\leq x\}.
\]
If $r=r_n$ is a deterministic positive-integer sequence with $r_n\to\infty$ and $r_n=o(T_n)$, the left-hand side of \eqref{eq:local-comparison} converges to zero in probability. For every bounded Lipschitz function $\varphi$,
\[
\frac1{r_n}\sum_{j=T_n+1}^{T_n+r_n}\varphi(p_{n,j})
\xrightarrow{\mathbb P}
\int_0^1\varphi(u)\,dH(u).
\]
If, in addition, $T_n/\log n\to\infty$ and $r_n/\log n\to\infty$, the preceding convergences hold almost surely.
\end{lemma}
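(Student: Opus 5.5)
The plan is to compare each post-change conformal $p$-value directly with the empirical CDF of the pre-change block, and that empirical CDF with $F_{P_0}$. Throughout, fix a row $n$ and an index $j$ with $T_n<j\leq T_n+r$.

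\emph{Distributional claim.} The $X_{n,j}$ for $j>T_n$ are iid $P_1$. Since $V_{n,j}=F_{P_0}(X_{n,j})$ is a fixed measurable map applied to each of them, the $V_{n,j}$ are iid with CDF $P_{Y\sim P_1}\{F_{P_0}(Y)\le t\}=H(t)$.

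\emph{Decomposition of the rank numerator.} Split the numerator of $p_{n,j}$ in \eqref{eq: randomized pval} into the contribution of the pre-change indices $i\le T_n$ and of the post-change indices $T_n<i\le j$. There are at most $j-T_n\le r$ post-change indices, including $i=j$ itself.

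For the pre-change part, ties $X_{n,i}=X_{n,j}$ with $i\le T_n$ have probability zero. Indeed, $P_0$ has no atoms because $F_{P_0}$ is continuous, so by Fubini and independence of the two blocks, $\mathbb P\{X_{n,i}=X_{n,j}\}=\int P_0(\{x\})\,dP_1(x)=0$. Hence, almost surely, the pre-change part equals $T_n\widehat F_{P_0,T_n}(X_{n,j})$.

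Consequently $p_{n,j}=\{T_n\widehat F_{P_0,T_n}(X_{n,j})+e_j\}/j$ with $0\le e_j\le r$. This gives
\[
|p_{n,j}-\widehat F_{P_0,T_n}(X_{n,j})|\le \Bigl|\tfrac{T_n}{j}-1\Bigr|+\tfrac{e_j}{j}\le \tfrac{r}{T_n}+\tfrac{r}{T_n}\le\tfrac{2r+1}{T_n},
\]
using $0\le\widehat F\le1$ and $j\ge T_n$. Adding $|\widehat F_{P_0,T_n}(X_{n,j})-F_{P_0}(X_{n,j})|\le\|\widehat F_{P_0,T_n}-F_{P_0}\|_\infty$ and taking the maximum over $j$ yields \eqref{eq:local-comparison}. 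The bound is uniform in $j$ because both error terms are.

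\emph{Asymptotics.} By the DKW inequality, $\mathbb P\{\|\widehat F_{P_0,T_n}-F_{P_0}\|_\infty>\epsilon\}\le 2e^{-2T_n\epsilon^2}\to0$, and $(2r_n+1)/T_n\to0$ because $r_n=o(T_n)$, so the maximal discrepancy tends to zero in probability. For bounded Lipschitz $\varphi$ with constant $L$,
\[
\Bigl|\tfrac1{r_n}\sum_j\varphi(p_{n,j})-\tfrac1{r_n}\sum_j\varphi(V_{n,j})\Bigr|\le L\max_j|p_{n,j}-V_{n,j}|\to0 .
\]
The average of $\varphi(V_{n,j})$ over $r_n$ iid bounded terms concentrates at $\int_0^1\varphi\,dH$ by Hoeffding's inequality, with tail $2\exp\{-r_n\epsilon^2/(2\|\varphi\|_\infty^2)\}$. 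Together these give convergence in probability.

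\emph{Almost-sure upgrade.} Under $T_n/\log n\to\infty$ and $r_n/\log n\to\infty$, both the DKW and Hoeffding tails are eventually at most $n^{-2}$ for each fixed $\epsilon>0$, hence summable in $n$. Borel--Cantelli along the rows then gives almost sure convergence.

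The step needing most care is the rank bookkeeping: handling the $1/j$ versus $1/T_n$ normalisation, ties between the two blocks, and the randomizer term, so that the constant $(2r+1)/T_n$ comes out uniformly in $j$. The rest is standard concentration.
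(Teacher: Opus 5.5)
Your proposal is correct and follows the paper's proof step for step. Like the paper, you split the rank numerator at $T_n$, rule out cross-ties so the pre-change part equals $T_n\widehat F_{P_0,T_n}(X_{n,j})$, and bound the normalization and post-change contributions by $r/T_n$ each; you then use DKW, the Lipschitz transfer, a law of large numbers (Hoeffding in your case), and Borel--Cantelli for the almost-sure upgrade. The one small difference is that you absorb the post-change comparisons and the randomizer into a single term $e_j\in[0,r]$, which removes the need for the paper's extra step showing $P_1$ is atomless so that within-block ties vanish.
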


\textbf{Proof of Example~\ref{eg: CTM log T delay}.} For the $n$th row, let $p_{n,j}$ denote the conformal $p$-value
computed at time $j$, and write
\[
M_{n,k}
:=
f_k(p_{n,1},\ldots,p_{n,k})
=
\prod_{j=1}^k f_1(p_{n,j}),
\qquad k\geq 1.
\]
For brevity, set
\[
\tau_n
:=
\tau_n^{\texttt{CTM}}\left(\frac{1}{\alpha}\right)
=
\inf\left\{
k\geq 1:M_{n,k}\geq\frac{1}{\alpha}
\right\}.
\]
Because $X_{n,1},\ldots,X_{n,n}$ are IID under $P_0$, the
distribution-free property of randomized conformal $p$-values gives $p_{n,1},\ldots,p_{n,n}
\stackrel{\mathrm{iid}}{\sim}\operatorname{Unif}(0,1)$ under $\mathbb P_n.$
Indeed, the relative ordering of the first $n$ observations is a
uniformly distributed permutation, the corresponding sequential
insertion ranks are independent discrete uniform random variables,
and the auxiliary randomization makes each conformal $p$-value
uniform on $[0,1]$. Since $\log f_1(u)=u-\frac12-\kappa,
\kappa=\psi(1),$
we have
\[
\frac{1}{n}\log M_{n,n}
=
\frac{1}{n}\sum_{j=1}^n
\left(p_{n,j}-\frac12\right)-\kappa.
\]
For every $\varepsilon>0$, Chebyshev's inequality gives
\[
\mathbb P_n\left(
\left|
\frac{1}{n}\sum_{j=1}^n
\left(p_{n,j}-\frac12\right)
\right|>\varepsilon
\right)
\leq
\frac{1}{12n\varepsilon^2}
\longrightarrow 0.
\]
Consequently,
\[
\frac{1}{n}\log M_{n,n}
\xrightarrow{\mathbb P_n}
-\kappa.
\]
Moreover, $(M_{n,k})_{k=0}^n$ is a nonnegative martingale under
$\mathbb P_n$. Indeed, $\int_0^1f_1(u)\,du=1$ and the first $n$
conformal $p$-values are independent and uniform. Ville's inequality
therefore implies
\[
\mathbb P_n(\tau_n\leq n)
=
\mathbb P_n\left(
\max_{1\leq k\leq n}M_{n,k}\geq\frac{1}{\alpha}
\right)
\leq\alpha,
\]
and hence $\mathbb P_n(\tau_n>n)\geq1-\alpha.$
We next note that $0<\kappa<\frac12.$
The first inequality follows from strict Jensen's inequality:
\[
\int_0^1e^{u-1/2}\,du
>
\exp\left\{\int_0^1\left(u-\frac12\right)\,du\right\}
=1.
\]
The second follows from
\[
\int_0^1e^{u-1/2}\,du<e^{1/2}.
\]
Thus $b:=\frac12-\kappa>0,
c_{\mathrm{CTM}}=\frac{\kappa}{b}.$
Fix $c\in(0,c_{\mathrm{CTM}})$. Then $\kappa-cb>0$. Define
\[
\delta:=\frac{\kappa-cb}{2}>0
\]
and the event
\[
A_n
:=
\left\{
\log M_{n,n}\leq-(\kappa-\delta)n
\right\}.
\]
The convergence above gives $\mathbb P_n(A_n)\to1$. Since
$\mathbb P_n(\tau_n>n)\geq1-\alpha$,
\[
\mathbb P_n(A_n^c\mid\tau_n>n)
\leq
\frac{\mathbb P_n(A_n^c)}{1-\alpha}
\longrightarrow0.
\]
For every $u\in[0,1]$,
\[
\log f_1(u)
=
u-\frac12-\kappa
\leq
\frac12-\kappa
=b.
\]
Therefore, on $A_n$, uniformly over
$0\leq r\leq\lfloor cn\rfloor$,
\begin{align*}
\log M_{n,n+r}
&=
\log M_{n,n}
+
\sum_{j=n+1}^{n+r}\log f_1(p_{n,j})\\
&\leq
-(\kappa-\delta)n+rb\\
&\leq
-(\kappa-\delta)n+cnb =
-\delta n
<0.
\end{align*}
Because $\alpha\in(0,1)$, we have
$0<\log(1/\alpha)$. Hence, on
$A_n\cap\{\tau_n>n\}$, the CTM cannot cross the threshold
$1/\alpha$ at any time between $n+1$ and
$n+\lfloor cn\rfloor$. It follows that
\begin{align*}
&\mathbb P_n\left(
\tau_n>n+\lfloor cn\rfloor
\;\middle|\;
\tau_n>n
\right)\\
&\qquad\geq
\mathbb P_n(A_n\mid\tau_n>n)
\longrightarrow1.
\end{align*}
Finally, let $(r_n)$ be any deterministic nonnegative sequence
satisfying $r_n=o(n)$. For any fixed
$c\in(0,c_{\mathrm{CTM}})$, we have
$r_n\leq\lfloor cn\rfloor$ for all sufficiently large $n$. Therefore,
\[
\mathbb P_n\left(
\tau_n-n>r_n
\;\middle|\;
\tau_n>n
\right)
\geq
\mathbb P_n\left(
\tau_n>n+\lfloor cn\rfloor
\;\middle|\;
\tau_n>n
\right)
\longrightarrow1.
\]
Substituting back
$\tau_n=\tau_n^{\texttt{CTM}}(1/\alpha)$ proves the result. \hfill$\blacksquare$
\bigskip

\textbf{Proof of example \ref{ex:arl-single-bet-separation}.}
Fix $n$, write $T:=T_n$, $\gamma:=\gamma_n$, and $\mathbb P_n:=\mathbb P_{P_0,T_n,P_1}$, and let $\mathbb E_n$ denote expectation under $\mathbb P_n$. Suppress the row index in the observations, randomizers, and conformal p-values, and write $\tau^A:=\tau_n^A(\gamma_n)$ for $A\in\{\texttt{CUSUM},\texttt{SR}\}$. The conformal p-values are computed using the identity score and the independent uniform randomizers assumed throughout the paper. By \eqref{eq:VovkSR} and cancellation within the product martingale, the corresponding statistics are
\[
R_t:=\sum_{k=1}^t\prod_{j=k}^t f(p_j),
\qquad
C_t:=\max_{1\le k\le t}\prod_{j=k}^t f(p_j).
\]
In particular, $C_t\le R_t$ and $\tau^{\texttt{SR}}\le\tau^{\texttt{CUSUM}}$ pathwise.

Set $R_0:=0$ and $\mathcal G_t:=\sigma(p_1,\ldots,p_t)$. Under $\mathbb P_{P_0,\infty}$, the conformal p-values are iid uniform. Since $\int_0^1 f(u)\,du=1$, the recursion $R_t=f(p_t)(1+R_{t-1})$ gives
\[
\mathbb E_{P_0,\infty}[R_t\mid\mathcal G_{t-1}]
=
R_{t-1}+1.
\]
Thus $(R_t-t)_{t\ge0}$ is a martingale. At either alarm, $R_{\tau^A}\ge\gamma$, so optional sampling at the bounded stopping time $\tau^A\wedge T$ yields
\[
\gamma\,\mathbb P_{P_0,\infty}(\tau^A\le T)
\le
\mathbb E_{P_0,\infty}R_{\tau^A\wedge T}
=
\mathbb E_{P_0,\infty}(\tau^A\wedge T)
\le T.
\]
The alternative and no-change laws agree through time $T$. Consequently,
\[
\mathbb P_n(\tau^A>T)
\ge
1-\frac{T}{\gamma}
>0,
\]
and this lower bound tends to one.

The density $h$ is symmetric about $1/2$, and therefore $\mathbb E_{P_0}f(X)=\mathbb E_{P_1}f(X)=1$. Define
\[
q:=
\max\left\{
\int_0^1 f(u)^{3/4}\,du,\,
\int_0^1 h(u)f(u)^{3/4}\,du
\right\}.
\]
Strict concavity of $x\mapsto x^{3/4}$ and nonconstancy of $f(X)$ under both laws imply $0<q<1$. Put
\[
\eta:=-\frac{\log q}{6},
\qquad
c_*:=\frac{\eta}{4}=-\frac{\log q}{24},
\qquad
\bar q:=\sqrt q,
\qquad
r:=\lfloor c_*T\rfloor.
\]
The constants $\eta$, $c_*$, and $\bar q$ do not depend on $n$, and $r\ge1$ for all sufficiently large $n$.

Let $\widehat F_T$ denote the empirical CDF of $X_1,\ldots,X_T$, and define $E_T:=\{\|\widehat F_T-F_{P_0}\|_\infty\le\eta\}$. The Dvoretzky--Kiefer--Wolfowitz inequality gives
\[
\mathbb P_n(E_T^c)\le 2e^{-2T\eta^2}.
\]
Both distributions are continuous, so ties occur with probability zero. For $\ell\ge1$, splitting the global conformal rank into pre-change and post-change contributions gives
\[
p_{T+\ell}
=
\frac{T\widehat F_T(X_{T+\ell})+V_\ell}{T+\ell},
\qquad
V_\ell:=
\sum_{i=T+1}^{T+\ell-1}
\mathbf{1}\{X_i<X_{T+\ell}\}
+\lambda_{T+\ell}
\in[0,\ell].
\]
Since $F_{P_0}(X_{T+\ell})=X_{T+\ell}$ almost surely, it follows that
\[
\begin{aligned}
|p_{T+\ell}-X_{T+\ell}|
&\le
\frac{T}{T+\ell}
\|\widehat F_T-F_{P_0}\|_\infty
+
\frac{|V_\ell-\ell X_{T+\ell}|}{T+\ell}\\
&\le
\|\widehat F_T-F_{P_0}\|_\infty
+
\frac{\ell}{T+\ell}.
\end{aligned}
\]
In particular,
\[
\max_{T<j\le T+r}|p_j-X_j|
\le
\|\widehat F_T-F_{P_0}\|_\infty
+\frac{2r+1}{T}.
\]
On $E_T$, the right-hand side is at most $\eta+2c_*+1/T\le2\eta$ for all sufficiently large $n$. Since $\log f$ is $2$-Lipschitz on $[0,1]$, we obtain, simultaneously for $T<j\le T+r$,
\[
f(p_j)\le e^{4\eta}f(X_j)
\qquad\text{on }E_T.
\]

Define the comparison multipliers
\[
Y_j:=
\begin{cases}
f(p_j),&j\le T,\\
e^{4\eta}f(X_j),&j>T.
\end{cases}
\]
These variables are independent under $\mathbb P_n$: the pre-change conformal p-values are iid uniform, and the post-change observations are independent of the entire pre-change sample and its randomizers. Moreover,
\[
\mathbb E_nY_j^{3/4}
\le
\begin{cases}
q\le\bar q,&j\le T,\\
e^{3\eta}q=\bar q,&j>T.
\end{cases}
\]
Put $\widetilde R_t:=\sum_{k=1}^t\prod_{j=k}^tY_j$. On $E_T$, we have $R_t\le\widetilde R_t$ simultaneously for $1\le t\le T+r$. Subadditivity of $x\mapsto x^{3/4}$ and independence give the unconditional moment bound
\[
\begin{aligned}
\mathbb E_n\widetilde R_t^{3/4}
&\le
\sum_{k=1}^t
\mathbb E_n\left(\prod_{j=k}^tY_j\right)^{3/4}\\
&=
\sum_{k=1}^t
\prod_{j=k}^t\mathbb E_nY_j^{3/4}\\
&\le
\sum_{\ell=1}^t\bar q^\ell
\le
\frac{\bar q}{1-\bar q}.
\end{aligned}
\]
Hence Markov's inequality and a union bound yield
\[
\begin{aligned}
\mathbb P_n(\tau^{\texttt{SR}}\le T+r)
&\le
\mathbb P_n(E_T^c)
+
\sum_{t=1}^{T+r}
\mathbb P_n(\widetilde R_t\ge\gamma)\\
&\le
2e^{-2T\eta^2}
+
\frac{(T+r)\bar q}{(1-\bar q)\gamma^{3/4}}
\longrightarrow0,
\end{aligned}
\]
because $T=\lfloor\sqrt\gamma\rfloor$ and $r\le c_*T$. Since $\tau^{\texttt{SR}}\le\tau^{\texttt{CUSUM}}$, the same conclusion holds for $\tau^{\texttt{CUSUM}}$. Therefore, for each $A\in\{\texttt{CUSUM},\texttt{SR}\}$,
\[
\mathbb P_n(\tau^A>T+r\mid\tau^A>T)
=
\frac{\mathbb P_n(\tau^A>T+r)}
{\mathbb P_n(\tau^A>T)}
\longrightarrow1.
\]

The conditional expectations are finite. To see this, consider the iid no-change law $\mathbb P_{P_1,\infty}$ and put $L:=\lceil\log\gamma/\log(5/4)\rceil$. A block of $L$ consecutive conformal p-values in $(3/4,1)$ produces a product greater than $(5/4)^L\ge\gamma$, and hence forces a CUSUM alarm by the end of that block. Under $\mathbb P_{P_1,\infty}$, disjoint blocks have independent success events of probability $4^{-L}$. Thus
\[
\mathbb E_{P_1,\infty}\tau^{\texttt{SR}}
\le
\mathbb E_{P_1,\infty}\tau^{\texttt{CUSUM}}
\le
L4^L
<\infty.
\]
The randomizers have the same independent law under both measures, and the data laws differ only in their first $T$ coordinates. Since $h\ge1/2$ on $[0,1]$,
\[
\frac{d\mathbb P_n}{d\mathbb P_{P_1,\infty}}
=
\prod_{j=1}^T\frac{1}{h(X_j)}
\le 2^T.
\]
Consequently, $\mathbb E_n\tau^A\le2^TL4^L<\infty$. Together with $\mathbb P_n(\tau^A>T)>0$, this establishes finiteness of the conditional expectations for every $n$.

Finally,
\[
\mathbb E_n[\tau^A-T\mid\tau^A>T]
\ge
r\,\mathbb P_n(\tau^A>T+r\mid\tau^A>T).
\]
Since $r/\sqrt{\gamma_n}\to c_*$, the preceding conditional probability limit gives
\[
\liminf_{n\to\infty}
\frac{
\mathbb E_{P_0,T_n,P_1}
[\tau_n^A(\gamma_n)-T_n
 \mid\tau_n^A(\gamma_n)>T_n]
}{\sqrt{\gamma_n}}
\ge c_*.
\]
Also, $r/\log\gamma_n\to\infty$. For every fixed $K>0$, we therefore have $r>K\log\gamma_n$ for all sufficiently large $n$, and hence
\[
\mathbb P_n\left(
\frac{\tau^A-T}{\log\gamma_n}>K
\,\middle|\,
\tau^A>T
\right)
\ge
\mathbb P_n(\tau^A>T+r\mid\tau^A>T)
\longrightarrow1.
\]
Substituting back $\tau^A=\tau_n^A(\gamma_n)$ proves all the conclusions for both procedures.\hfill$\square$
\bigskip

\textbf{Proof of example \ref{ex:pfa-cmm-separation}.}
Put $\mathbb P_n:=\mathbb P_{P_0,T_n,P_1}$, and let $p_{n,j}$ denote the global conformal p-value in row $n$. Write $M_{n,t}$ for the CMM in \eqref{eq: conformal mixture martingale} evaluated on this row, with $M_{n,0}:=1$. For $\theta\in\mathbb R$, define
\[
\psi(\theta):=
\log\int_0^1e^{\theta(u-1/2)}\,du.
\]
Then
\[
f_\theta(u)
=
\exp\{\theta(u-1/2)-\psi(\theta)\},
\qquad
\theta\in\{-1/2,1\}.
\]
The function $\psi$ is even and strictly convex, with $\psi(0)=0$. Indeed, symmetry follows by the substitution $u\mapsto1-u$, and $\psi''(\theta)$ is the strictly positive variance of $u-1/2$ under the corresponding tilted density. Consequently, with $\kappa:=\psi(1/2)$,
\[
\psi(-1/2)=\kappa>0,
\qquad
\psi(1)>\kappa.
\]

Under $\mathbb P_n$, the first $T_n$ conformal p-values are iid uniform. Let
\[
\mathcal E_n:=
\left\{
\left|
\sum_{j=1}^{T_n}(p_{n,j}-1/2)
\right|
\le\frac{\kappa T_n}{2}
\right\}.
\]
Hoeffding's inequality gives
\[
\mathbb P_n(\mathcal E_n^c)
\le
2\exp\left\{-\frac{\kappa^2T_n}{2}\right\}.
\]
On $\mathcal E_n$, for either $\theta\in\{-1/2,1\}$,
\[
\begin{aligned}
\sum_{j=1}^{T_n}\log f_\theta(p_{n,j})
&=
\theta\sum_{j=1}^{T_n}(p_{n,j}-1/2)
-
T_n\psi(\theta)\\
&\le
\frac{\kappa T_n}{2}
-
\kappa T_n
=
-\frac{\kappa T_n}{2}.
\end{aligned}
\]
Averaging the two products therefore yields
\[
M_{n,T_n}\le e^{-\kappa T_n/2}
\qquad\text{on }\mathcal E_n.
\]

For every $u\in[0,1]$ and $\theta\in\{-1/2,1\}$,
\[
\log f_\theta(u)
=
\theta(u-1/2)-\psi(\theta)
\le\frac12.
\]
Thus, for every integer $\ell\ge0$, regardless of the post-change observations,
\[
\begin{aligned}
M_{n,T_n+\ell}
&=
\frac12
\sum_{\theta\in\{-1/2,1\}}
\left\{
\prod_{j=1}^{T_n}f_\theta(p_{n,j})
\prod_{j=T_n+1}^{T_n+\ell}f_\theta(p_{n,j})
\right\}\\
&\le
e^{\ell/2}M_{n,T_n}.
\end{aligned}
\]
Take $c_*:=\kappa/2$. On $\mathcal E_n$, uniformly over $0\le\ell\le\lfloor c_*T_n\rfloor$,
\[
M_{n,T_n+\ell}
\le
\exp\left\{-\frac{\kappa T_n}{2}+\frac{c_*T_n}{2}\right\}
=
e^{-\kappa T_n/4}
<
1
<
\frac1\alpha.
\]
Hence, on $\mathcal E_n$, no threshold crossing can occur between $T_n+1$ and $T_n+\lfloor c_*T_n\rfloor$.

Before time $T_n$, the CMM is a nonnegative martingale starting from one under $\mathbb P_n$. Ville's inequality gives
\[
\mathbb P_n\bigl(\tau_n^{\texttt{CMM}}(1/\alpha)>T_n\bigr)
\ge1-\alpha.
\]
Consequently,
\[
\begin{aligned}
&\mathbb P_n\left(
\tau_n^{\texttt{CMM}}(1/\alpha)>
T_n+\lfloor c_*T_n\rfloor
\,\middle|\,
\tau_n^{\texttt{CMM}}(1/\alpha)>T_n
\right)\\
&\qquad\ge
1-
\frac{\mathbb P_n(\mathcal E_n^c)}
{\mathbb P_n(\tau_n^{\texttt{CMM}}(1/\alpha)>T_n)}\\
&\qquad\ge
1-\frac{2e^{-\kappa^2T_n/2}}{1-\alpha}
\longrightarrow1.
\end{aligned}
\]
If $r_n=o(T_n)$ is deterministic and nonnegative, then $r_n\le\lfloor c_*T_n\rfloor$ for all sufficiently large $n$, so the same conditional probability limit holds with $r_n$ in place of $\lfloor c_*T_n\rfloor$. Taking $r_n=K\log T_n$ proves the final assertion.\hfill$\square$
\bigskip 

\textbf{Proof of example \ref{ex:arl-cmm-separation}.}
We prove the assertions for $\tau_n^{\texttt{CMM-SR}}(\gamma_n)$ and then deduce the corresponding conclusions for $\tau_n^{\texttt{CMM-CUSUM}}(\gamma_n)$. Put $\mathbb P_n:=\mathbb P_{P_0,T_n,P_1}$, and let $\mathbb E_n$ denote expectation under $\mathbb P_n$. Let $p_{n,j}$ denote the global conformal p-values and $\mathcal G_{n,t}:=\sigma(p_{n,1},\ldots,p_{n,t})$. For $\theta\in\{-1/2,1\}$, put
\[
L_{n,t}^{\theta}:=
\prod_{j=1}^t f_\theta(p_{n,j}),
\qquad
L_{n,0}^{\theta}:=1.
\]
The rowwise CMM is
\[
M_{n,t}
=
\frac12L_{n,t}^{-1/2}
+
\frac12L_{n,t}^{1}
>
0,
\]
so every ratio in \eqref{eq:CMM CUSUM SR} is well defined. Write
\[
R_{n,t}:=
\sum_{i=0}^{t-1}\frac{M_{n,t}}{M_{n,i}},
\qquad
R_{n,0}:=0.
\]
Since $\gamma_n>1$, the term $i=t$ in the CUSUM maximum cannot cause an alarm. Every CUSUM threshold crossing therefore implies an SR threshold crossing at that time or earlier, giving
\[
\tau_n^{\texttt{CMM-SR}}(\gamma_n)
\le
\tau_n^{\texttt{CMM-CUSUM}}(\gamma_n)
\qquad\text{pathwise}.
\]

Under any iid no-change law, the conformal p-values are iid uniform and $M_{n,t}$ is a positive martingale. The recursion
\[
R_{n,t}
=
\frac{M_{n,t}}{M_{n,t-1}}(1+R_{n,t-1})
\]
therefore implies
\[
\mathbb E_{P_0,\infty}[R_{n,t}\mid\mathcal G_{n,t-1}]
=
R_{n,t-1}+1.
\]
Optional sampling at $\tau_n^{\texttt{CMM-SR}}(\gamma_n)\wedge T_n$ gives
\[
\begin{aligned}
&\gamma_n\,
\mathbb P_{P_0,\infty}
\bigl(\tau_n^{\texttt{CMM-SR}}(\gamma_n)\le T_n\bigr)\\
&\qquad\le
\mathbb E_{P_0,\infty}
R_{n,\tau_n^{\texttt{CMM-SR}}(\gamma_n)\wedge T_n}\\
&\qquad=
\mathbb E_{P_0,\infty}
\bigl[\tau_n^{\texttt{CMM-SR}}(\gamma_n)\wedge T_n\bigr]
\le T_n.
\end{aligned}
\]
The alternative and no-change laws agree through time $T_n$. Hence
\[
\mathbb P_n\bigl(\tau_n^{\texttt{CMM-SR}}(\gamma_n)>T_n\bigr)
\ge
1-\frac{T_n}{\gamma_n}
>0,
\]
and this lower bound tends to one.

Define
\[
\psi(\theta):=
\log\int_0^1e^{\theta(u-1/2)}\,du,
\qquad
d:=\psi(1)-\psi(1/2)>0.
\]
The positivity of $d$ follows from the evenness and strict convexity of $\psi$. Let
\[
B_{n,t}:=\frac{L_{n,t}^{1}}{L_{n,t}^{-1/2}}.
\]
Since
\[
\log\frac{f_1(u)}{f_{-1/2}(u)}
=
\frac32(u-1/2)-d,
\]
we have
\[
\log B_{n,T_n}
=
\frac32\sum_{j=1}^{T_n}(p_{n,j}-1/2)-dT_n.
\]
Define
\[
\mathcal A_n:=
\left\{
\log B_{n,T_n}\le-\frac{dT_n}{2}
\right\}.
\]
The pre-change conformal p-values are iid uniform, so Hoeffding's inequality gives
\[
\mathbb P_n(\mathcal A_n^c)
\le
\mathbb P_n\left(
\sum_{j=1}^{T_n}(p_{n,j}-1/2)>\frac{dT_n}{3}
\right)
\le
\exp\left\{-\frac{2d^2T_n}{9}\right\}.
\]

We next choose a window on which the component $f_1$ remains negligible relative to $f_{-1/2}$. Define
\[
q:=
\max\left\{
\int_0^1f_{-1/2}(u)^{3/4}\,du,\,
\int_0^1f_1(u)f_{-1/2}(u)^{3/4}\,du
\right\}.
\]
Strict concavity of $x\mapsto x^{3/4}$ and nonconstancy of $f_{-1/2}$ give
\[
\int_0^1f_{-1/2}(u)^{3/4}\,du<1.
\]
Also,
\[
\int_0^1f_1(u)f_{-1/2}(u)\,du
=
\exp\{\psi(1/2)-\psi(1)-\psi(-1/2)\}
=
e^{-\psi(1)}
<
1.
\]
Jensen's inequality under $P_1$ consequently gives
\[
\int_0^1f_1(u)f_{-1/2}(u)^{3/4}\,du
\le
\left(
\int_0^1f_1(u)f_{-1/2}(u)\,du
\right)^{3/4}
<
1.
\]
Thus $0<q<1$. Put
\[
\eta:=-\frac23\log q,
\qquad
\bar q:=\sqrt q,
\qquad
c_*:=\min\left\{\eta,\frac d3\right\},
\qquad
r_n:=\lfloor c_*T_n\rfloor.
\]
All these constants, except $r_n$, are independent of $n$.

For every $u\in[0,1]$,
\[
\log\frac{f_1(u)}{f_{-1/2}(u)}
=
\frac32(u-1/2)-d
\le\frac34.
\]
Therefore, on $\mathcal A_n$, uniformly over $0\le\ell\le r_n$,
\[
\log B_{n,T_n+\ell}
\le
-\frac{dT_n}{2}+\frac{3\ell}{4}
\le
-\frac{dT_n}{4}.
\]
In particular, $B_{n,t}\le1$ for $T_n\le t\le T_n+r_n$ on $\mathcal A_n$. For these $t$ and every $0\le i<t$,
\[
\frac{M_{n,t}}{M_{n,i}}
=
\frac{L_{n,t}^{-1/2}}{L_{n,i}^{-1/2}}
\frac{1+B_{n,t}}{1+B_{n,i}}
\le
2\frac{L_{n,t}^{-1/2}}{L_{n,i}^{-1/2}}.
\]
Hence
\[
R_{n,t}
\le
2\sum_{k=1}^t\prod_{j=k}^t f_{-1/2}(p_{n,j}),
\qquad
T_n\le t\le T_n+r_n,
\qquad
\text{on }\mathcal A_n.
\]

Let $\widehat F_{n,T_n}$ be the empirical CDF of $X_{n,1},\ldots,X_{n,T_n}$, and define
\[
\mathcal D_n:=
\left\{
\|\widehat F_{n,T_n}-F_{P_0}\|_\infty\le\eta
\right\}.
\]
The Dvoretzky--Kiefer--Wolfowitz inequality gives
\[
\mathbb P_n(\mathcal D_n^c)\le2e^{-2T_n\eta^2}.
\]
Both data laws are continuous, so ties occur with probability zero. For $\ell\ge1$, splitting the global rank at $T_n$ gives
\[
p_{n,T_n+\ell}
=
\frac{
T_n\widehat F_{n,T_n}(X_{n,T_n+\ell})+V_{n,\ell}
}{T_n+\ell},
\]
where
\[
V_{n,\ell}
:=
\sum_{j=T_n+1}^{T_n+\ell-1}
\mathbf1\{X_{n,j}<X_{n,T_n+\ell}\}
+
\lambda_{n,T_n+\ell}
\in[0,\ell].
\]
Since $F_{P_0}(X_{n,T_n+\ell})=X_{n,T_n+\ell}$ almost surely,
\[
|p_{n,T_n+\ell}-X_{n,T_n+\ell}|
\le
\|\widehat F_{n,T_n}-F_{P_0}\|_\infty
+
\frac{\ell}{T_n+\ell}.
\]
Thus, for all sufficiently large $n$, on $\mathcal D_n$,
\[
\max_{T_n<j\le T_n+r_n}|p_{n,j}-X_{n,j}|
\le
\eta+\frac{r_n}{T_n}
\le
2\eta.
\]
The function $\log f_{-1/2}$ is $1/2$-Lipschitz. Consequently,
\[
f_{-1/2}(p_{n,j})
\le
e^\eta f_{-1/2}(X_{n,j}),
\qquad
T_n<j\le T_n+r_n,
\qquad
\text{on }\mathcal D_n.
\]

Define comparison multipliers
\[
Y_{n,j}:=
\begin{cases}
f_{-1/2}(p_{n,j}),&j\le T_n,\\
e^\eta f_{-1/2}(X_{n,j}),&j>T_n.
\end{cases}
\]
For each $n$, these variables are independent under $\mathbb P_n$: the pre-change conformal p-values are iid uniform, and the post-change observations are independent of the entire pre-change sample and its randomizers. Moreover,
\[
\mathbb E_nY_{n,j}^{3/4}
\le
\begin{cases}
q\le\bar q,&j\le T_n,\\
e^{3\eta/4}q=\bar q,&j>T_n.
\end{cases}
\]
Put
\[
\widetilde R_{n,t}:=
\sum_{k=1}^t\prod_{j=k}^tY_{n,j}.
\]
Subadditivity of $x\mapsto x^{3/4}$ and independence yield
\[
\begin{aligned}
\mathbb E_n\widetilde R_{n,t}^{3/4}
&\le
\sum_{k=1}^t
\prod_{j=k}^t\mathbb E_nY_{n,j}^{3/4}\\
&\le
\sum_{\ell=1}^t\bar q^\ell
\le
\frac{\bar q}{1-\bar q}.
\end{aligned}
\]
This is an unconditional moment bound; no independence after conditioning on $\mathcal A_n$ or $\mathcal D_n$ is required. On $\mathcal A_n\cap\mathcal D_n$,
\[
R_{n,t}\le2\widetilde R_{n,t},
\qquad
T_n<t\le T_n+r_n.
\]
The early-alarm bound, Markov's inequality, and a union bound therefore give
\[
\begin{aligned}
&\mathbb P_n\bigl(
\tau_n^{\texttt{CMM-SR}}(\gamma_n)\le T_n+r_n
\bigr)\\
&\quad\le
\frac{T_n}{\gamma_n}
+
\mathbb P_n(\mathcal A_n^c)
+
\mathbb P_n(\mathcal D_n^c)
+
\sum_{t=T_n+1}^{T_n+r_n}
\mathbb P_n\left(\widetilde R_{n,t}\ge\frac{\gamma_n}{2}\right)\\
&\quad\le
\frac{T_n}{\gamma_n}
+
e^{-2d^2T_n/9}
+
2e^{-2T_n\eta^2}
+
\frac{2^{3/4}r_n\bar q}
{(1-\bar q)\gamma_n^{3/4}}
\longrightarrow0.
\end{aligned}
\]
Here $T_n=\lfloor\sqrt{\gamma_n}\rfloor$ and $r_n\le c_*T_n$. Dividing the resulting late-alarm probability by the survival probability yields
\[
\mathbb P_n\left(
\tau_n^{\texttt{CMM-SR}}(\gamma_n)>T_n+r_n
\,\middle|\,
\tau_n^{\texttt{CMM-SR}}(\gamma_n)>T_n
\right)
\longrightarrow1.
\]

We next establish finiteness of the expectations. Since $f_{-1/2}$ is strictly decreasing and $f_1$ is strictly increasing, and both integrate to one, $f_{-1/2}(0)>1$ and $f_1(1)>1$. By continuity, there exist constants $\varepsilon\in(0,1/2)$ and $\beta>1$ such that
\[
f_{-1/2}(u)\ge\beta
\quad\text{for }u\in[0,\varepsilon],
\qquad
f_1(u)\ge\beta
\quad\text{for }u\in[1-\varepsilon,1].
\]
Put
\[
L_n:=
\left\lceil\frac{\log(2\gamma_n)}{\log\beta}\right\rceil.
\]
Consider any iid no-change law $\mathbb P_{P,\infty}$ and a block beginning immediately after time $s$. Conditional on $\mathcal G_{n,s}$, at least one of
\[
\frac{L_{n,s}^{-1/2}}{2M_{n,s}},
\qquad
\frac{L_{n,s}^{1}}{2M_{n,s}}
\]
is at least $1/2$. Choose the corresponding interval $[0,\varepsilon]$ or $[1-\varepsilon,1]$. With conditional probability $\varepsilon^{L_n}$, the next $L_n$ conformal p-values all fall in that interval. On this event,
\[
\begin{aligned}
\frac{M_{n,s+L_n}}{M_{n,s}}
&=
\sum_{\theta\in\{-1/2,1\}}
\frac{L_{n,s}^{\theta}}{2M_{n,s}}
\prod_{j=s+1}^{s+L_n}f_\theta(p_{n,j})\\
&\ge
\frac12\beta^{L_n}
\ge
\gamma_n.
\end{aligned}
\]
A CUSUM alarm must therefore occur by the end of that block. Iterating this conditional bound over successive blocks gives
\[
\mathbb P_{P,\infty}\left(
\tau_n^{\texttt{CMM-CUSUM}}(\gamma_n)>mL_n
\right)
\le
(1-\varepsilon^{L_n})^m,
\qquad m\ge0.
\]
Consequently,
\[
\mathbb E_{P,\infty}
\tau_n^{\texttt{CMM-SR}}(\gamma_n)
\le
\mathbb E_{P,\infty}
\tau_n^{\texttt{CMM-CUSUM}}(\gamma_n)
\le
L_n\varepsilon^{-L_n}
<
\infty.
\]

In fact, the null ARL of the SR alarm is of order $\gamma_n$. Since $f_\theta(u)\le e^{1/2}$ for both densities,
\[
\frac{M_{n,t}}{M_{n,t-1}}\le e^{1/2}.
\]
The SR recursion therefore gives
\[
R_{n,\tau_n^{\texttt{CMM-SR}}(\gamma_n)\wedge m}
\le
e^{1/2}(1+\gamma_n)
\qquad\text{for every }m.
\]
Dominated convergence in the stopped-martingale identity yields
\[
\gamma_n
\le
\mathbb E_{P,\infty}
\tau_n^{\texttt{CMM-SR}}(\gamma_n)
=
\mathbb E_{P,\infty}
R_{n,\tau_n^{\texttt{CMM-SR}}(\gamma_n)}
\le
e^{1/2}(1+\gamma_n).
\]
The CUSUM ARL lower bound follows from the pathwise ordering of the alarms.

Under the alternative, the observations and randomizers have the same law as under $\mathbb P_{P_1,\infty}$ except for the first $T_n$ observations. Their likelihood ratio satisfies
\[
\frac{d\mathbb P_n}{d\mathbb P_{P_1,\infty}}
=
\prod_{j=1}^{T_n}\frac1{f_1(X_{n,j})}
=
(e-1)^{T_n}
\exp\left\{-\sum_{j=1}^{T_n}X_{n,j}\right\}
\le
(e-1)^{T_n}.
\]
It follows that
\[
\mathbb E_n\tau_n^{\texttt{CMM-CUSUM}}(\gamma_n)
\le
(e-1)^{T_n}L_n\varepsilon^{-L_n}
<
\infty,
\]
and the same finiteness conclusion holds for CMM-SR. Since the survival probabilities are strictly positive, both conditional expectations are finite.

Finally,
\[
\begin{aligned}
&\mathbb E_n\left[
\tau_n^{\texttt{CMM-SR}}(\gamma_n)-T_n
\,\middle|\,
\tau_n^{\texttt{CMM-SR}}(\gamma_n)>T_n
\right]\\
&\qquad\ge
r_n\,
\mathbb P_n\left(
\tau_n^{\texttt{CMM-SR}}(\gamma_n)>T_n+r_n
\,\middle|\,
\tau_n^{\texttt{CMM-SR}}(\gamma_n)>T_n
\right).
\end{aligned}
\]
Because $r_n/\sqrt{\gamma_n}\to c_*$, this proves the expected-delay lower bound. Also $r_n/\log\gamma_n\to\infty$, so for every fixed $K>0$,
\[
\begin{aligned}
&\mathbb P_n\left(
\frac{\tau_n^{\texttt{CMM-SR}}(\gamma_n)-T_n}
{\log\gamma_n}>K
\,\middle|\,
\tau_n^{\texttt{CMM-SR}}(\gamma_n)>T_n
\right)\\
&\qquad\ge
\mathbb P_n\left(
\tau_n^{\texttt{CMM-SR}}(\gamma_n)>T_n+r_n
\,\middle|\,
\tau_n^{\texttt{CMM-SR}}(\gamma_n)>T_n
\right)
\longrightarrow1.
\end{aligned}
\]

For CMM-CUSUM, the pathwise ordering gives
\[
\mathbb P_n\bigl(
\tau_n^{\texttt{CMM-CUSUM}}(\gamma_n)\le T_n+r_n
\bigr)
\le
\mathbb P_n\bigl(
\tau_n^{\texttt{CMM-SR}}(\gamma_n)\le T_n+r_n
\bigr)
\longrightarrow0,
\]
and its survival probability is at least $1-T_n/\gamma_n$. Thus its conditional late-alarm probability also tends to one. Applying the same conditional tail lower bound and using $r_n/\sqrt{\gamma_n}\to c_*$ and $r_n/\log\gamma_n\to\infty$ proves the expected-delay and logarithmic-scale conclusions for CMM-CUSUM as well.\hfill$\square$
\bigskip

\textbf{Proof of Theorem~\ref{th:weighted-null-validity}.}
Fix $P_0$. Under the no-change law $\mathbb P_{P_0,\infty}$,
Proposition~\ref{prop: pval iid} gives
$p_1,p_2,\ldots\iid U(0,1)$.
For each $k\geq1$, define
$$
\Lambda_t^{(k)}
:=
\begin{cases}
1, & t<k,\\
M_{k,t}, & t\geq k.
\end{cases}
$$
For convenience, set $M_{k,k-1}:=1$. For fixed $\theta$, define
$$
L_{k,t}^{\theta}
:=
\prod_{j=k}^t f_\theta(p_j),
\qquad t\geq k-1,
$$
where the empty product at $t=k-1$ is one. Then
$L_{k,k-1}^{\theta}=1$. For every $t\geq k$, using the independence
and uniformity of $p_t$ under $\mathbb P_{P_0,\infty}$,
$$
\mathbb E_{P_0,\infty}
\left[
L_{k,t}^{\theta}\mid\mathcal G_{t-1}
\right]
=
L_{k,t-1}^{\theta}
\mathbb E_{P_0,\infty}
\left[
f_\theta(p_t)\mid\mathcal G_{t-1}
\right]
=
L_{k,t-1}^{\theta}
\int_0^1 f_\theta(u)\,du
=
L_{k,t-1}^{\theta}.
$$
Hence $(L_{k,t}^{\theta})_{t\geq k-1}$ is a nonnegative
$(\mathcal G_t)$-martingale.
Since
$$
M_{k,t}
=
\int_\Theta L_{k,t}^{\theta}\,d\Pi(\theta),
$$
conditional Tonelli gives, for every $t\geq k$,
$$
\mathbb E_{P_0,\infty}
\left[
M_{k,t}\mid\mathcal G_{t-1}
\right]
=
\int_\Theta
\mathbb E_{P_0,\infty}
\left[
L_{k,t}^{\theta}\mid\mathcal G_{t-1}
\right]
d\Pi(\theta)
=
\int_\Theta L_{k,t-1}^{\theta}\,d\Pi(\theta)
=
M_{k,t-1}.
$$
Therefore $(M_{k,t})_{t\geq k-1}$ is a nonnegative martingale
starting from one, and consequently
$(\Lambda_t^{(k)})_{t\geq0}$ is a nonnegative martingale.

Suppose first that $\|w\|_1<\infty$. Then
$$
\widetilde S_t^{(w)}
=
\sum_{k=1}^{\infty}w_k\Lambda_t^{(k)}
=
S_t^{(w)}+\sum_{k>t}w_k.
$$
The process is integrable since, by Tonelli,
$$
\mathbb E_{P_0,\infty}
\left[
\sum_{k=1}^{\infty}w_k\Lambda_t^{(k)}
\right]
=
\sum_{k=1}^{\infty}
w_k\mathbb E_{P_0,\infty}\Lambda_t^{(k)}
=
\sum_{k=1}^{\infty}w_k
=
\|w\|_1
<
\infty.
$$
Conditional monotone convergence and the martingale property of
each $\Lambda^{(k)}$ therefore give
$$
\mathbb E_{P_0,\infty}
\left[
\widetilde S_t^{(w)}
\mid\mathcal G_{t-1}
\right]
=
\sum_{k=1}^{\infty}
w_k
\mathbb E_{P_0,\infty}
\left[
\Lambda_t^{(k)}
\mid\mathcal G_{t-1}
\right]
=
\sum_{k=1}^{\infty}w_k\Lambda_{t-1}^{(k)}
=
\widetilde S_{t-1}^{(w)}.
$$
Thus $\widetilde S^{(w)}$ is a nonnegative martingale with
$
\widetilde S_0^{(w)}=\|w\|_1.
$
Since
$$
Z_t^{(w)}
\leq
S_t^{(w)}
\leq
\widetilde S_t^{(w)},
$$
Ville's inequality yields
$$
\mathbb P_{P_0,\infty}
\{\tau_{S,w}(b)<\infty\}
\leq
\mathbb P_{P_0,\infty}
\left\{
\sup_{t\geq0}\widetilde S_t^{(w)}\geq b
\right\}
\leq
\frac{\|w\|_1}{b},
$$
and similarly,
$
\mathbb P_{P_0,\infty}
\{\tau_{Z,w}(b)<\infty\}
\leq
\frac{\|w\|_1}{b}.
$
This proves \eqref{eq:weighted-PFA-bound}.

We next prove the exact weighted stopping inequality. Let $\tau$ be
a stopping time satisfying
$
\mathbb E_{P_0,\infty}\tau<\infty.
$
Then $\tau<\infty$ $\mathbb P_{P_0,\infty}$-almost surely. For each
fixed $k\geq1$, define
$
A_k:=\{\tau\geq k\}.
$
Since $\tau$ is a stopping time,
$A_k=\{\tau>k-1\}\in\mathcal G_{k-1}$. Define
$$
\sigma_k
:=
\tau\mathbf 1_{A_k}
+
(k-1)\mathbf 1_{A_k^c}.
$$
We first verify that $\sigma_k$ is a stopping time. For $t<k-1$,
$\{\sigma_k\leq t\}=\varnothing$, while for $t\geq k-1$,
$$
\{\sigma_k\leq t\}
=
A_k^c
\cup
\left(A_k\cap\{\tau\leq t\}\right)
\in\mathcal G_t.
$$
Moreover, $\sigma_k\geq k-1$.

For every integer $N\geq k-1$, $\sigma_k\wedge N$ is a bounded
stopping time for the martingale $(M_{k,t})_{t\geq k-1}$. Hence,
by optional sampling,
$$
\mathbb E_{P_0,\infty}
M_{k,\sigma_k\wedge N}
=
M_{k,k-1}
=
1.
$$
Because $\tau<\infty$ almost surely, $\sigma_k<\infty$ almost surely,
and therefore
$$
M_{k,\sigma_k\wedge N}
\longrightarrow
M_{k,\sigma_k}
\qquad
\mathbb P_{P_0,\infty}\text{-a.s.}
$$
as $N\to\infty$. Since the martingale is nonnegative, Fatou's lemma
gives
$$
\mathbb E_{P_0,\infty}M_{k,\sigma_k}
\leq
\liminf_{N\to\infty}
\mathbb E_{P_0,\infty}M_{k,\sigma_k\wedge N}
=
1.
$$
By the definition of $\sigma_k$,
$$
M_{k,\sigma_k}
=
\mathbf 1_{A_k}M_{k,\tau}
+
\mathbf 1_{A_k^c}M_{k,k-1}
=
\mathbf 1_{\{\tau\geq k\}}M_{k,\tau}
+
\mathbf 1_{\{\tau<k\}}.
$$
Consequently,
$$
\mathbb E_{P_0,\infty}
\left[
\mathbf 1_{\{\tau\geq k\}}M_{k,\tau}
\right]
+
\mathbb P_{P_0,\infty}(\tau<k)
\leq
1,
$$
and hence
\begin{equation}
\label{eq:local-optional-weighted}
\mathbb E_{P_0,\infty}
\left[
\mathbf 1_{\{\tau\geq k\}}M_{k,\tau}
\right]
\leq
\mathbb P_{P_0,\infty}(\tau\geq k).
\end{equation}
Since $\tau<\infty$ almost surely,
$$
S_\tau^{(w)}
=
\sum_{k=1}^{\infty}
w_k
\mathbf 1_{\{\tau\geq k\}}
M_{k,\tau}.
$$
All terms are nonnegative, so Tonelli's theorem together with
\eqref{eq:local-optional-weighted} gives
\begin{align*}
\mathbb E_{P_0,\infty}S_\tau^{(w)}
&=
\sum_{k=1}^{\infty}
w_k
\mathbb E_{P_0,\infty}
\left[
\mathbf 1_{\{\tau\geq k\}}M_{k,\tau}
\right]\\
&\leq
\sum_{k=1}^{\infty}
w_k
\mathbb P_{P_0,\infty}(\tau\geq k)\\
&=
\mathbb E_{P_0,\infty}
\left[
\sum_{k=1}^{\infty}
w_k\mathbf 1_{\{\tau\geq k\}}
\right] =
\mathbb E_{P_0,\infty}W_\tau.
\end{align*}
The expectation on the right can be infinite.
Since
$
Z_\tau^{(w)}\leq S_\tau^{(w)}
$
pathwise, 
this proves \eqref{eq:weighted-exact-edetector}.

We now prove \eqref{eq:weighted-finite-horizon-bound}. Let $\sigma$
be an arbitrary stopping time. When stopping times are allowed to
take the value infinity, set
$$
W_\infty:=\sum_{k=1}^{\infty}w_k,
$$
and interpret $\{\tau_{S,w}(b)\leq\sigma\}$ as the event that
$\tau_{S,w}(b)<\infty$ and the alarm occurs no later than $\sigma$.
Write
$
\tau_S:=\tau_{S,w}(b).
$
For each $m\geq1$, define the bounded stopping time
$$
\rho_m^S
:=
\tau_S\wedge\sigma\wedge m.
$$
Since $\rho_m^S\leq m$, the exact weighted stopping inequality
\eqref{eq:weighted-exact-edetector} applies and gives
$$
\mathbb E_{P_0,\infty}S_{\rho_m^S}^{(w)}
\leq
\mathbb E_{P_0,\infty}W_{\rho_m^S}.
$$
Define
$$
B_m^S
:=
\{\tau_S\leq\sigma,\ \tau_S\leq m\}.
$$
On $B_m^S$, one has $\rho_m^S=\tau_S$, and therefore, by the
definition of $\tau_S$,
$$
S_{\rho_m^S}^{(w)}
=
S_{\tau_S}^{(w)}
\geq b.
$$
Since $S^{(w)}$ is nonnegative,
$
S_{\rho_m^S}^{(w)}
\geq
b\mathbf 1_{B_m^S}.
$
Thus
$$
b\,\mathbb P_{P_0,\infty}(B_m^S)
\leq
\mathbb E_{P_0,\infty}S_{\rho_m^S}^{(w)}
\leq
\mathbb E_{P_0,\infty}W_{\rho_m^S}.
$$
As $m\to\infty$,
$
B_m^S
\uparrow
\{\tau_S\leq\sigma\},
$
while
$
\rho_m^S
\uparrow
\tau_S\wedge\sigma.
$
Because the weights are nonnegative, $W_t$ is nondecreasing in
$t$, and hence
$
W_{\rho_m^S}
\uparrow
W_{\tau_S\wedge\sigma}.
$
Continuity from below for the probabilities and the monotone
convergence theorem therefore give
$$
b\,
\mathbb P_{P_0,\infty}
\{\tau_{S,w}(b)\leq\sigma\}
\leq
\mathbb E_{P_0,\infty}
W_{\tau_{S,w}(b)\wedge\sigma}.
$$
Equivalently,
$$
\mathbb P_{P_0,\infty}
\{\tau_{S,w}(b)\leq\sigma\}
\leq
\frac{
\mathbb E_{P_0,\infty}
W_{\tau_{S,w}(b)\wedge\sigma}
}{b}.
$$

For the maximum statistic, write
$
\tau_Z:=\tau_{Z,w}(b)
$
and define
$$
\rho_m^Z:=\tau_Z\wedge\sigma\wedge m,
\qquad
B_m^Z:=\{\tau_Z\leq\sigma,\ \tau_Z\leq m\}.
$$
Since $\rho_m^Z$ is bounded,
\eqref{eq:weighted-exact-edetector} gives
$$
\mathbb E_{P_0,\infty}Z_{\rho_m^Z}^{(w)}
\leq
\mathbb E_{P_0,\infty}W_{\rho_m^Z}.
$$
On $B_m^Z$, $\rho_m^Z=\tau_Z$ and
$Z_{\tau_Z}^{(w)}\geq b$. Hence
$$
b\,\mathbb P_{P_0,\infty}(B_m^Z)
\leq
\mathbb E_{P_0,\infty}Z_{\rho_m^Z}^{(w)}
\leq
\mathbb E_{P_0,\infty}W_{\rho_m^Z}.
$$
Letting $m\to\infty$ exactly as above yields
$$
\mathbb P_{P_0,\infty}
\{\tau_{Z,w}(b)\leq\sigma\}
\leq
\frac{
\mathbb E_{P_0,\infty}
W_{\tau_{Z,w}(b)\wedge\sigma}
}{b}.
$$
This proves \eqref{eq:weighted-finite-horizon-bound}.

Finally, suppose $\|w\|_\infty<\infty$. For every finite
$t$,
$$
W_t
=
\sum_{k=1}^t w_k
\leq
t\|w\|_\infty.
$$
Therefore, for every stopping time $\tau$ with finite expectation,
$$
\mathbb E_{P_0,\infty}W_\tau
\leq
\|w\|_\infty
\mathbb E_{P_0,\infty}\tau.
$$
Consequently, when $\|w\|_\infty>0$,
$$
\mathbb E_{P_0,\infty}
\left[
\frac{S_\tau^{(w)}}{\|w\|_\infty}
\right]
\leq
\mathbb E_{P_0,\infty}\tau,
\qquad
\mathbb E_{P_0,\infty}
\left[
\frac{Z_\tau^{(w)}}{\|w\|_\infty}
\right]
\leq
\mathbb E_{P_0,\infty}\tau.
$$
If $\mathbb E_{P_0,\infty}\tau=\infty$, the e-detector inequality
is automatic in the extended sense. Thus
$S^{(w)}/\|w\|_\infty$ and
$Z^{(w)}/\|w\|_\infty$ are e-detectors.

For the ARL bound, consider first
$
\tau_S=\tau_{S,w}(b).
$
If
$\mathbb E_{P_0,\infty}\tau_S=\infty$, the desired lower bound is
immediate. Otherwise $\tau_S<\infty$ almost surely, and hence
$$
S_{\tau_S}^{(w)}\geq b
$$
almost surely. Applying \eqref{eq:weighted-exact-edetector} at
$\tau_S$ gives
$$
b
\leq
\mathbb E_{P_0,\infty}S_{\tau_S}^{(w)}
\leq
\mathbb E_{P_0,\infty}W_{\tau_S}
\leq
\|w\|_\infty
\mathbb E_{P_0,\infty}\tau_S.
$$
Therefore
$$
\mathbb E_{P_0,\infty}\tau_{S,w}(b)
\geq
\frac{b}{\|w\|_\infty}.
$$
The same argument applies to $\tau_Z=\tau_{Z,w}(b)$,
proving \eqref{eq:weighted-ARL-bound}.

It remains to derive the simplified optional-horizon inequality.
For either
$$
\tau_\star\in
\{\tau_{S,w}(b),\tau_{Z,w}(b)\},
$$
nonnegativity of the weights gives, pathwise,
$$
W_{\tau_\star\wedge\sigma}
=
\sum_{k=1}^{\tau_\star\wedge\sigma}w_k
\leq
\|w\|_\infty(\tau_\star\wedge\sigma)
\leq
\|w\|_\infty\sigma.
$$
Therefore,
$
\mathbb E_{P_0,\infty}
W_{\tau_\star\wedge\sigma}
\leq
\|w\|_\infty
\mathbb E_{P_0,\infty}[\sigma],
$
where the right-hand side is allowed to be infinity. Combining
this with \eqref{eq:weighted-finite-horizon-bound} yields
$$
\mathbb P_{P_0,\infty}
\{\tau_{S,w}(b)\leq\sigma\}
\vee
\mathbb P_{P_0,\infty}
\{\tau_{Z,w}(b)\leq\sigma\}
\leq
\frac{
\|w\|_\infty\,
\mathbb E_{P_0,\infty}[\sigma]
}{b}.
$$
This completes the proof.
\hfill$\blacksquare$

\bigskip

\textbf{Proof of Proposition~\ref{prop:fixed-changepoint-impossibility}.}
For the first part, fix $P_0\in\M$. Choose $S\in\M\setminus\{P_0\}$ and, for $\varepsilon\in(0,1)$, define
$$
P_{1,\varepsilon}:=(1-\varepsilon)P_0+\varepsilon S.
$$
By convexity, $P_{1,\varepsilon}\in\M$, and $P_{1,\varepsilon}\neq P_0$. Since
$$
P_{1,\varepsilon}^{\otimes T}\ge(1-\varepsilon)^T P_0^{\otimes T},
$$
we have
$
\mathbb{P}_{P_{1,\varepsilon},\infty}
\ge
(1-\varepsilon)^T
\mathbb{P}_{P_0,T,P_{1,\varepsilon}}.
$
Therefore,
$$
\mathbb{P}_{P_0,T,P_{1,\varepsilon}}(\tau<\infty)
\le
\frac{\mathbb{P}_{P_{1,\varepsilon},\infty}(\tau<\infty)}
{(1-\varepsilon)^T}
\le
\frac{\alpha}{(1-\varepsilon)^T}.
$$
Taking the infimum over $P_1\in\M\setminus\{P_0\}$ and then letting $\varepsilon\downarrow0$ gives
$$
\inf_{P_1\in\M\setminus\{P_0\}}
\mathbb{P}_{P_0,T,P_1}(\tau<\infty)
\le\alpha.
$$
Now suppose $P_1\ll P_0$. Since $T<\infty$,
$
\mathbb{P}_{P_1,\infty}\ll\mathbb{P}_{P_0,T,P_1}.
$
Moreover,
$$
\mathbb{P}_{P_1,\infty}(\tau=\infty)
\ge1-\alpha>0.
$$
Absolute continuity therefore implies
$
\mathbb{P}_{P_0,T,P_1}(\tau=\infty)>0,
$
and hence
$
\mathbb{P}_{P_0,T,P_1}(\tau<\infty)<1.
$
For the second part, fix $P_1\in\M$ and, for $b>0$, define
$$
\sigma_b:=\inf\{n\ge0:E_n\ge b\}.
$$
For every $m\in\mathbb{N}$,
$$
b\,\mathbb{P}_{P_1,\infty}(\sigma_b\le m)
\le
\mathbb{E}_{P_1,\infty}[E_{\sigma_b\wedge m}]
\le1.
$$
Letting $m\to\infty$ gives
$$
\mathbb{P}_{P_1,\infty}(\sigma_b<\infty)\le\frac{1}{b}.
$$
Since
$$
\left\{\sup_{n\ge0}E_n=\infty\right\}
\subseteq
\{\sigma_b<\infty\}
$$
for every $b>0$, letting $b\to\infty$ yields
$$
\mathbb{P}_{P_1,\infty}\left(\sup_{n\ge0}E_n=\infty\right)=0.
$$
If $P_0\ll P_1$, then
$
\mathbb{P}_{P_0,T,P_1}\ll\mathbb{P}_{P_1,\infty}.
$
Consequently,
$$
\mathbb{P}_{P_0,T,P_1}\left(\sup_{n\ge0}E_n=\infty\right)=0,
$$
or equivalently,
$
\mathbb{P}_{P_0,T,P_1}\left(\sup_{n\ge0}E_n<\infty\right)=1.
$
\hfill$\blacksquare$
\bigskip

\textbf{Proof of Theorem~\ref{th:weighted-KL-upper}:}
Write $B_n=\mathcal B_{n,w}(b_n)$ and set $r_n=\lfloor AB_n\rfloor$. Then $r_n\to\infty$ and $r_n=o(T_n)$. Let $V_{n,j}=F_{P_0}(X_{n,j})$. By Lemma~\ref{lem:local-comparison} and Lipschitz continuity of $\phi=\log h$,
\[
\frac1{r_n}\sum_{j=T_n+1}^{T_n+r_n}
\{\phi(p_{n,j})-\phi(V_{n,j})\}
\xrightarrow{\mathbb P}0.
\]
The variables $\phi(V_{n,j})$ are iid bounded random variables with mean
\[
\int_0^1\log h(u)\,dH(u)=D_{\mathrm{KL}}(H\|U).
\]
Hence
\[
\frac1{r_n}\sum_{j=T_n+1}^{T_n+r_n}\phi(p_{n,j})
\xrightarrow{\mathbb P}D_{\mathrm{KL}}(H\|U).
\]
Assumption~\ref{assn:local-KL-support} gives
\[
\log M_{n,T_n+1,T_n+r_n}
\geq
\sum_{j=T_n+1}^{T_n+r_n}\phi(p_{n,j})
-a\log r_n-c.
\]
Thus, for every $\epsilon>0$,
\[
\log\{w_{T_n+1}M_{n,T_n+1,T_n+r_n}\}-\log b_n
\geq
r_n(D_{\mathrm{KL}}(H\|U)-\epsilon)-B_n-a\log r_n-c
\]
with probability tending to one. Choosing $\epsilon$ so that $A(D_{\mathrm{KL}}(H\|U)-\epsilon)>1$ makes the right-hand side tend to infinity. The local term is contained in both weighted statistics, proving the crossing-time assertion. If $\liminf_n q_{n,w}(b_n)>0$, the same division and subtraction argument used in the proof of Theorem~\ref{th:weighted-RCMM-log-delay} gives the stated conditional and correct-window consequences. \hfill$\blacksquare$
\bigskip

\textbf{Proof of Remark~\ref{remark:exp-tilt-KL-approx}:}
Let
\[
\bar\phi:=\phi-\int_0^1\phi(u)\,du,
\qquad
c_\phi:=\|\bar\phi\|_\infty.
\]
Since $D_{\mathrm{KL}}(H\|U)>0$, $c_\phi>0$. The function $g:=\bar\phi/c_\phi$ lies in $\mathcal C_{0,1}$ and is Lipschitz. Moreover,
\[
\log h(u)
=
c_\phi g(u)-\psi_g(c_\phi),
\qquad
\psi_g(c_\phi):=\log\int_0^1e^{c_\phi g(v)}\,dv,
\]
because $\int e^{\phi}=1$. By density of $(g_\ell)$, for every $\delta>0$ there is $\ell$ such that
\[
\|\log f_{\ell,c_\phi}-\log h\|_\infty<\delta.
\]
Indeed, the map $g\mapsto\log\int e^{c_\phi g}$ is $c_\phi$-Lipschitz in the uniform norm. Hence
\[
I_\ell:=\int_0^1\log f_{\ell,c_\phi}(u)\,dH(u)
\geq D_{\mathrm{KL}}(H\|U)-\delta.
\]
The strict inequality $c_\phi<\theta_0$ places this parameter in the interior of the path. Since
$|\partial_\theta\log f_{\ell,\theta}(u)|\leq2$, the prior mass on the fixed index $\ell$ and a parameter interval of radius $1/(4r)$ gives, for every $z_1,\ldots,z_r$ and all large $r$,
\[
\log\int_\Theta\prod_{i=1}^rf_\theta(z_i)\,d\Pi(\theta)
\geq
\sum_{i=1}^r\log f_{\ell,c_\phi}(z_i)-\log r-O_\ell(1).
\]
Choose $\delta>0$ so small that $A(D_{\mathrm{KL}}(H\|U)-2\delta)>1$. Lemma~\ref{lem:local-comparison} and the weak law applied to the bounded Lipschitz function $\log f_{\ell,c_\phi}$ give
\[
\frac1{r_n}\sum_{j=T_n+1}^{T_n+r_n}
\log f_{\ell,c_\phi}(p_{n,j})
\xrightarrow{\mathbb P}I_\ell,
\qquad
r_n:=\lfloor A\mathcal B_{n,w}(b_n)\rfloor.
\]
The displayed mixture lower bound then exceeds
$\mathcal B_{n,w}(b_n)$ with probability tending to one. The local restart term belongs to both weighted statistics, proving the crossing-time assertion. The survival, conditional, and correct-window statements follow from \eqref{eq:weighted-survival-lower} exactly as in Theorem~\ref{th:weighted-KL-upper}. \hfill$\blacksquare$
\bigskip

\textbf{Proof of Corollary~\ref{cor:RCMM-pointwise-KL}:}
For polynomial weights and fixed $\alpha$,
\[
\mathcal B_{n,w}(1/\alpha)
=(1+\eta)\log T_n+O(1).
\]
The crossing-time claim follows from Theorem~\ref{th:weighted-KL-upper}. Since the weights are normalized to have total mass at most one, \eqref{eq:weighted-survival-lower} gives $q_{n,w}(1/\alpha)\geq1-\alpha$, and the correct-window and conditional claims follow from the same theorem. For near-harmonic weights,
\[
\mathcal B_{n,w}(1/\alpha)
=
\log T_n+2\log\log T_n+O(1),
\]
so the same theorem gives leading constant $1/D_{\mathrm{KL}}(H\|U)$. \hfill$\blacksquare$
\bigskip

\textbf{Proof of Corollary~\ref{cor:ARL-KL-sharp}:}
For unit weights, $\mathcal B_{n,w}(\gamma_n)=\log\gamma_n$, so the crossing-time claim follows from Theorem~\ref{th:weighted-KL-upper}. Moreover, \eqref{eq:weighted-survival-lower} gives $q_{n,w}(\gamma_n)\geq1-T_n/\gamma_n$; the additional assumption $T_n=o(\gamma_n)$ therefore gives the correct-window and conditional conclusions. \hfill$\blacksquare$
\bigskip

\textbf{Proof of Theorem~\ref{th:restricted-PFA-minimax}:}
We first prove the common lower bound. Fix any feasible pair $(\tau,r)$ in either stopping-time class and set
\[
a_N:=\lceil\kappa N\rceil,
\qquad
M:=\left\lfloor\frac{N-a_N}{r}\right\rfloor+1,
\qquad
\nu_m:=a_N+(m-1)r,
\quad 1\leq m\leq M.
\]
The events $E_m:=\{\nu_m<\tau\leq\nu_m+r\}$ are disjoint and every $\nu_m$ belongs to $I_N(\kappa)$. Hence, with
$q_m:=\mathbb P_{P_0,\infty}(E_m)$ and
$p_m:=\mathbb P_{P_0,\nu_m,P_1}(E_m)$,
\[
\sum_{m=1}^Mq_m\leq\alpha,
\qquad
p_m\geq1-\delta.
\]
On $\mathcal H_{\nu_m+r}$ the alternative-to-null likelihood ratio is
\[
\Lambda_{m,r}:=\prod_{j=\nu_m+1}^{\nu_m+r}L(X_j);
\]
the auxiliary uniforms have likelihood ratio one. Put $\ell:=\log L$. Finite KL implies $\ell\in L^1(P_1)$ because
\[
\mathbb E_{P_1}(-\ell)_+
=\int_{\{L<1\}}-L\log L\,dP_0\leq e^{-1},
\]
and finiteness of $D=\mathbb E_{P_1}\ell$ then controls the positive part. Thus, for each $\epsilon>0$,
\[
\eta_r(\epsilon)
:=
P_1^{\otimes r}
\left\{\sum_{j=1}^r\ell(Y_j)>r(D+\epsilon)\right\}
\longrightarrow0.
\]
Changing measure on the upper-typical likelihood event gives
\begin{align*}
q_m
&\geq
\mathbb E_{P_0,\nu_m,P_1}
\left[
\Lambda_{m,r}^{-1}\mathbf 1_{E_m}
\mathbf 1\{\log\Lambda_{m,r}\leq r(D+\epsilon)\}
\right]\\
&\geq
e^{-r(D+\epsilon)}\{1-\delta-\eta_r(\epsilon)\}.
\end{align*}
Consequently, whenever the last factor is positive,
\begin{equation}
\label{eq:restricted-minimax-lower-proof}
r(D+\epsilon)
\geq
\log M+
\log\{1-\delta-\eta_r(\epsilon)\}-\log\alpha.
\end{equation}
Any feasible sequence has $r_N\to\infty$: otherwise the same binary-KL argument, applied on $\mathcal H_{\nu_m+r}$ when randomization is present, would bound every $q_m$ below by a fixed positive constant while $M$ is of order $N$. Along a subsequence on which $r_N/\log N$ is bounded, $r_N=O(\log N)=o(N)$ and $\log M=\log N+o(\log N)$; on every other subsequence the desired lower bound is immediate. Divide \eqref{eq:restricted-minimax-lower-proof} by $\log N$, then let $N\to\infty$ and $\epsilon\downarrow0$. This proves both lower bounds.

For the raw upper bound, extend each likelihood product by one before its start and form
\[
\widetilde S_t^{\rm LR}
:=
\sum_{k=1}^\infty w_k\Lambda_t^{(k)},
\qquad
\Lambda_t^{(k)}
:=
\begin{cases}
1,&t<k,\\
\prod_{j=k}^tL(X_j),&t\geq k.
\end{cases}
\]
Conditional monotone convergence shows that this is a nonnegative $P_0$-martingale with initial value one. The weighted maximum is dominated by it, so Ville's inequality gives
$\mathbb P_{P_0,\infty}(\inf\left\{t\geq1:\max_{1\leq k\leq t}w_k\prod_{j=k}^t L(X_j)\geq\frac1{\bar\alpha}\right\}<\infty)\leq\bar\alpha$.
At a change after $\nu$, the local term starting at $\nu+1$ crosses by $\nu+r$ whenever
\[
\sum_{j=\nu+1}^{\nu+r}\ell(X_j)
\geq
B_\nu,
\qquad
B_\nu:=\log\frac1{\bar\alpha w_{\nu+1}}.
\]
Uniformly for $\nu\in I_N(\kappa)$,
\[
B_\nu=\log N+2\log\log N+O_{\bar\alpha,\kappa}(1).
\]
For $r_N=\lceil c\log N\rceil$ with $c>1/D$, one has
$\max_{\nu\in I_N(\kappa)}B_\nu/r_N\to1/c<D$. The weak law for the integrable variables $\ell(X_j)$ therefore gives
\[
\sup_{\nu\in I_N(\kappa)}
\mathbb P_{P_0,\nu,P_1}
\{\inf\left\{t\geq1:\max_{1\leq k\leq t}w_k\prod_{j=k}^t L(X_j)\geq\frac1{\bar\alpha}\right\}>\nu+r_N\}
\longrightarrow0.
\]
Moreover,
\begin{align*}
    \mathbb P_{P_0,\nu,P_1}\left(\inf\left\{t\geq1:\max_{1\leq k\leq t}w_k\prod_{j=k}^t L(X_j)\geq\frac1{\bar\alpha}\right\}\leq\nu\right)
&=\mathbb P_{P_0,\infty}\left(\inf\left\{t\geq1:\max_{1\leq k\leq t}w_k\prod_{j=k}^t L(X_j)\geq\frac1{\bar\alpha}\right\}\leq\nu\right) \\
&\leq\bar\alpha.
\end{align*}

Subtracting the pre-alarm and late-alarm probabilities gives uniform correct-window probability at least $1-\bar\alpha-o(1)>1-\delta$, proving the raw equality.

For the conformal upper bound, Proposition~\ref{prop:score-data-processing} gives $D_{\mathrm{KL}}(H\|U)=D$ for the likelihood-ratio score. Fix $c>1/D$, set $r_N=\lceil c\log N\rceil$, and put $\varepsilon_N:=1/\log N$. Apply Proposition~\ref{prop:weighted-second-order} at each $\nu\in I_N(\kappa)$ with $T_n=\nu$, $r=r_N$, $b=1/\bar\alpha$, and failure probability $\varepsilon_N$. Uniformly over this range, the effective boundary is $\log N+2\log\log N+O(1)$, whereas the correction terms in \eqref{eq:weighted-quantile-corollary} are respectively
\[
O(\log\log N),
\quad
O\{\sqrt{\log N\log\log N}\},
\quad
O\left\{\log N\sqrt{\frac{\log\log N}{N}}\right\},
\quad
O\left\{\frac{(\log N)^2}{N}\right\}.
\]
All are $o(\log N)$, while $r_ND=cD\log N+O(1)$ and $cD>1$. Thus the quantile condition holds simultaneously for every $\nu\in I_N(\kappa)$ for all sufficiently large $N$, and
\[
\sup_{\nu\in I_N(\kappa)}
\mathbb P_{P_0,\nu,P_1}
\{\tau>\nu+r_N\}
\leq\varepsilon_N\to0,
\qquad
\tau\in\{\tau_{Z,w}(\frac{1}{\bar\alpha}),\tau_{S,w}(\frac{1}{\bar\alpha})\}.
\]
By Villes's inequality the PFA is at most $\bar\alpha$; subtracting pre-alarm and late-alarm probabilities again gives eventual uniform power exceeding $1-\delta$. The randomized lower bound now yields the final equality. \hfill$\blacksquare$
\bigskip

\textbf{Derivation of the classical ARL bounds}
Let $Y_1,Y_2,\ldots$ be iid $P_1$ and put
$\ell(Y):=\log(dP_1/dP_0)(Y)$. Since $D<\infty$, $\ell(Y)$ is integrable under $P_1$: indeed,
$\mathbb E_{P_1}[(-\ell(Y))_+] = \int_{\{L<1\}}-L\log L\,dP_0\leq e^{-1}$, and finiteness of $D$ then controls the positive part. Hence
\begin{equation}
\label{eq:raw-oracle-LLN}
\frac1m\sum_{i=1}^m\ell(Y_i)
\longrightarrow D
\quad P_1^{\otimes\infty}\text{-a.s.}
\end{equation}
Fix $\beta\in(0,1)$ and $\epsilon>0$. There is $m_\epsilon$ such that
\[
\eta_m(\epsilon):=
\mathbb P_{P_1}\left\{
\sum_{i=1}^m\ell(Y_i)>m(D+\epsilon)
\right\}
\leq\beta/2,
\qquad m\geq m_\epsilon.
\]

For Lorden's criterion, fix $\tau\in\mathcal C_\gamma^{\mathrm{ARL}}$ and write
$J:=\mathcal J_L^{P_1}(\tau)$. If $J=\infty$, the claim is immediate. Set
\[
m:=\max\left\{m_\epsilon,\left\lceil\frac{J}{1-\beta}\right\rceil\right\}.
\]
For every deterministic $\nu$, Lorden's definition and Markov's inequality give, on $\{\tau>\nu\}$,
\[
\mathbb P_{P_0,\nu,P_1}
\{\nu<\tau\leq\nu+m\mid\mathcal A_\nu\}
\geq\beta.
\]
Let
$L_{\nu,m}:=\sum_{i=\nu+1}^{\nu+m}\ell(X_i)$. Intersecting with
$\{L_{\nu,m}\leq m(D+\epsilon)\}$ loses at most $\eta_m(\epsilon)$ under the alternative. Changing measure back to the no-change law gives, on $\{\tau>\nu\}$,
\[
\mathbb P_{P_0,\infty}
\{\nu<\tau\leq\nu+m\mid\mathcal A_\nu\}
\geq
q_m:=e^{-m(D+\epsilon)}\{\beta-\eta_m(\epsilon)\}.
\]
Applying this at $\nu=jm$ yields
$\mathbb P_{P_0,\infty}(\tau>jm)\leq(1-q_m)^j$ and hence
\[
\mathbb E_{P_0,\infty}\tau
\leq
\frac{m}{q_m}.
\]
Because $\gamma\leq\mathbb E_{P_0,\infty}\tau<\infty$,
\[
\log\gamma
\leq
m(D+\epsilon)+\log m-
\log\{\beta-\eta_m(\epsilon)\}.
\]
This first forces $J\to\infty$ along any sequence $\gamma\to\infty$, so eventually $m=\lceil J/(1-\beta)\rceil$. Consider any sequence of admissible stopping times indexed by $\gamma$. If $J/\log\gamma$ is unbounded along a subsequence, the desired lower bound is immediate there. Along every subsequence on which $J/\log\gamma$ is bounded, one has $m=O(\log\gamma)$ and therefore $\log m=o(\log\gamma)$. Rearranging the preceding display then gives
\[
J
\geq
(1-\beta)\frac{\log\gamma-o(\log\gamma)}{D+\epsilon}.
\]
Taking the infimum over $\tau$ and then letting $\beta\downarrow0$ and $\epsilon\downarrow0$ proves Lorden's bound.

For Pollak's criterion, let $J:=\mathcal J_P^{P_1}(\tau)$ and use the same definition of $m$. For every $\nu$ with
$\mathbb P_{P_0,\nu,P_1}(\tau>\nu)>0$, Markov's inequality gives
\[
\mathbb P_{P_0,\nu,P_1}
\{\nu<\tau\leq\nu+m\mid\tau>\nu\}
\geq\beta.
\]
The event $\{\tau>\nu\}$ is $\mathcal A_\nu$-measurable, so the same likelihood-ratio argument gives the same no-change block hazard $q_m$. The geometric-survival argument and the preceding algebra are unchanged, proving Pollak's bound. \hfill$\blacksquare$
\bigskip

\textbf{Proof of Theorem~\ref{thm:restricted-arl-lower}.} \textbf{Part 1.}
Put $\ell:=\log L$. Since
\[
\mathbb E_{P_1}(-\ell)^+
=
\int_{\{L<1\}}-L\log L\,dP_0
\leq e^{-1}
\]
and $D<\infty$, we have $\ell\in L^1(P_1)$. Consequently, for every
$\epsilon>0$,
\[
\eta_r(\epsilon)
:=
P_1^{\otimes r}
\left\{\sum_{i=1}^r\ell(Y_i)>r(D+\epsilon)\right\}
\longrightarrow0.
\]
Fix $\delta\in(0,1)$ and set
\[
r_\gamma
:=\left\lfloor\frac{(1-\delta)\log\gamma}{D}\right\rfloor,
\qquad
\epsilon_\delta:=\frac{D\delta}{2(1-\delta)}.
\]
Then $r_\gamma\to\infty$, $r_\gamma=o(N_\gamma)$, and
\begin{equation}
\label{eq:arl-lower-likelihood-boundary}
r_\gamma(D+\epsilon_\delta)
\leq(1-\delta/2)\log\gamma.
\end{equation}

Fix either stopping-time class and any
$\tau\in\mathcal C_\gamma^{\rm lin}(\mathfrak T)$. Define
\[
a_\gamma:=\lceil\kappa N_\gamma\rceil,
\qquad
M_\gamma
:=\left\lfloor\frac{N_\gamma-a_\gamma}{r_\gamma}\right\rfloor+1,
\qquad
\nu_j:=a_\gamma+(j-1)r_\gamma,
\quad 1\leq j\leq M_\gamma.
\]
The events $E_j:=\{\nu_j<\tau\leq\nu_j+r_\gamma\}$ are disjoint,
and every $\nu_j$ belongs to $I_{N_\gamma}(\kappa)$. Writing
$q_j:=\mathbb P_{P_0,\infty}(E_j)$ gives
\[
\sum_{j=1}^{M_\gamma}q_j
\leq
\mathbb P_{P_0,\infty}\{\tau\leq N_\gamma+r_\gamma\}
\leq\frac{N_\gamma+r_\gamma}{\gamma}.
\]
Since $M_\gamma\asymp N_\gamma/r_\gamma$, some index $j$ satisfies
\begin{equation}
\label{eq:arl-lower-small-window}
q_j\leq
\frac{N_\gamma+r_\gamma}{\gamma M_\gamma}
\leq C_\kappa\frac{r_\gamma}{\gamma}
\end{equation}
for all sufficiently large $\gamma$, with $C_\kappa$ independent of
$\tau$.

On $\mathcal H_{\nu_j+r_\gamma}$, the alternative-to-null likelihood
ratio is
\[
\Lambda_{j,r_\gamma}
:=\prod_{i=\nu_j+1}^{\nu_j+r_\gamma}L(X_i).
\]
The auxiliary randomizers have the same independent law under both
measures, so this expression also covers randomized stopping times.
Changing measure in the forward direction and splitting at the
upper-typical likelihood event yields
\begin{align*}
\mathbb P_{P_0,\nu_j,P_1}(E_j)
&=\mathbb E_{P_0,\infty}
  [\Lambda_{j,r_\gamma}\mathbf1_{E_j}]\\
&\leq
 e^{r_\gamma(D+\epsilon_\delta)}q_j
 +\eta_{r_\gamma}(\epsilon_\delta)\\
&\leq
 C_\kappa r_\gamma\gamma^{-\delta/2}
 +\eta_{r_\gamma}(\epsilon_\delta)
 =:\zeta_\gamma\longrightarrow0,
\end{align*}
where the last bound uses
\eqref{eq:arl-lower-likelihood-boundary}--\eqref{eq:arl-lower-small-window}.
The sequence $\zeta_\gamma$ does not depend on $\tau$.
Moreover,
\[
\mathbb P_{P_0,\nu_j,P_1}(\tau>\nu_j)
=
\mathbb P_{P_0,\infty}(\tau>\nu_j)
\geq1-\frac{N_\gamma}{\gamma}.
\]
Therefore
\begin{align*}
\mathcal J_\gamma^\kappa(\tau)
&\geq
r_\gamma\,
\mathbb P_{P_0,\nu_j,P_1}
\{\tau>\nu_j+r_\gamma\mid\tau>\nu_j\}\\
&\geq
r_\gamma\left(1-
\frac{\zeta_\gamma}{1-N_\gamma/\gamma}\right).
\end{align*}
Taking the infimum over $\tau$, dividing by $\log\gamma$, and using
\eqref{eq:restricted-arl-regime} gives
\[
\liminf_{\gamma\to\infty}
\frac{\mathcal R_\gamma^{\rm lin}(\mathfrak T;\kappa)}{\log\gamma}
\geq\frac{1-\delta}{D}.
\]
Letting $\delta\downarrow0$ proves the result for both classes.
\hfill$\blacksquare$
\bigskip

\textbf{Part 2.}
Write $\tau=\Bar{\tau}_{S,w}(\gamma)$ for the augmented unit-weight rule.
It is a $(\mathcal G_t)$-stopping time and hence belongs to
$\mathfrak T_{\rm rand}$. By
Theorem~\ref{th:weighted-null-validity} and
Proposition~\ref{prop:weighted-null-atom},
\begin{equation}
\label{eq:arl-attainment-null-bounds}
\mathbb P_{P_0,\infty}(\tau\leq m)\leq\frac{m}{\gamma},
\quad m\in\mathbb N,
\qquad
\gamma\leq\mathbb E_{P_0,\infty}\tau
\leq U_\gamma:=\left\lceil\frac{\gamma}{\rho}\right\rceil.
\end{equation}
In fact, $\overline S_t^{(w,\rho)}\geq\rho t$ gives
$\tau\leq U_\gamma$ pathwise. This proves membership in
$\mathcal C_\gamma^{\rm lin}(\mathfrak T_{\rm rand})$.

Proposition~\ref{prop:score-data-processing} gives
\begin{equation}
\label{eq:arl-attainment-information}
D_{\mathrm{KL}}(H\|U)=D.
\end{equation}
Let $\phi,a,c$ be as in Assumption~\ref{assn:local-KL-support}, and
let $L_\phi,K_\phi$ be as in
Proposition~\ref{prop:weighted-second-order}. Here $K_\phi>0$,
since $D>0$. The augmented local CMMs satisfy
\[
\log\overline M_{k,k+r-1}^{\rho}
\geq
\sum_{i=k}^{k+r-1}\phi(p_i)-a\log r-c
-\log\frac1{1-\rho}.
\]
Thus the augmented betting mixture satisfies the same local KL
support inequality with $c$ replaced by
$c+\log\{1/(1-\rho)\}$.

Fix $\epsilon>0$ and put
\[
B_\gamma:=\log\frac{\gamma}{1-\rho},
\qquad
r_{0,\gamma}:=
\left\lceil\frac{(1+\epsilon)B_\gamma}{D}\right\rceil,
\qquad
R_\gamma:=\left\lfloor\sqrt{N_\gamma\log\gamma}\right\rfloor.
\]
By \eqref{eq:restricted-arl-regime},
\[
r_{0,\gamma}=O(\log\gamma),
\qquad
\frac{R_\gamma}{\log\gamma}\longrightarrow\infty,
\qquad
R_\gamma=o(N_\gamma).
\]
Set
\[
s_\gamma:=\sqrt{\frac{2\log\gamma}{\kappa N_\gamma}},
\qquad
\eta_\gamma:=s_\gamma+
\frac{2R_\gamma+1}{\kappa N_\gamma}\longrightarrow0,
\]
and, for $1\leq r\leq R_\gamma$, define
\[
x_\gamma(r)
:=rD-B_\gamma-a\log r-c-L_\phi r\eta_\gamma.
\]
For every $\nu\in I_{N_\gamma}(\kappa)$ and $r\leq R_\gamma$,
\[
s_\gamma+\frac{2r+1}{\nu}\leq\eta_\gamma,
\qquad
2e^{-2\nu s_\gamma^2}\leq2\gamma^{-4}.
\]
Consequently, Proposition~\ref{prop:weighted-second-order}, applied
to the augmented mixture with $T=\nu$, threshold $\gamma$, and
$x=x_\gamma(r)>0$, yields
\begin{equation}
\label{eq:arl-attainment-uniform-tail}
\sup_{\nu\in I_{N_\gamma}(\kappa)}
\mathbb P_{P_0,\nu,P_1}\{\tau>\nu+r\}
\leq
\exp\left\{-\frac{2x_\gamma(r)^2}{rK_\phi^2}\right\}
+2\gamma^{-4}.
\end{equation}
The proposition applies to the score stream $s^*(X_i)$; its
continuity conditions follow from $F_{s^*,P_0}\in C(\mathbb R)$ and
$H\ll U$.

Since $B_\gamma=\log\gamma+O(1)$,
\[
a\log r_{0,\gamma}+c+
L_\phi r_{0,\gamma}\eta_\gamma=o(B_\gamma).
\]
Hence, for all sufficiently large $\gamma$,
\[
x_\gamma(r_{0,\gamma})\geq\frac{\epsilon B_\gamma}{2},
\qquad
x_\gamma(r+1)-x_\gamma(r)
=D-a\log(1+1/r)-L_\phi\eta_\gamma
\geq\frac D2
\]
for $r_{0,\gamma}\leq r<R_\gamma$.
On $[r_{0,\gamma},2r_{0,\gamma}]$, the exponential term in
\eqref{eq:arl-attainment-uniform-tail} is at most
$e^{-c_\epsilon B_\gamma}$ for some $c_\epsilon>0$.
For $2r_{0,\gamma}\leq r\leq R_\gamma$,
$x_\gamma(r)\geq Dr/4$, so it is at most $e^{-c_0r}$,
where $c_0:=D^2/(8K_\phi^2)>0$. Therefore
\begin{equation}
\label{eq:arl-attainment-tail-sum}
\sum_{r=r_{0,\gamma}}^{R_\gamma-1}
\exp\left\{-\frac{2x_\gamma(r)^2}{rK_\phi^2}\right\}
=o(1),
\qquad
\exp\left\{-\frac{2x_\gamma(R_\gamma)^2}
{R_\gamma K_\phi^2}\right\}
\leq e^{-c_0R_\gamma}.
\end{equation}

For $Y_\nu:=(\tau-\nu)^+$, we have $Y_\nu\leq U_\gamma$.
Eventually $r_{0,\gamma}<R_\gamma<U_\gamma$. Splitting the tail sum
at $r_{0,\gamma}$ and $R_\gamma$, using monotonicity beyond
$R_\gamma$, and applying
\eqref{eq:arl-attainment-uniform-tail}--\eqref{eq:arl-attainment-tail-sum}
gives, uniformly over $\nu\in I_{N_\gamma}(\kappa)$,
\begin{align*}
\mathbb E_{P_0,\nu,P_1}Y_\nu
&\leq r_{0,\gamma}
 +\sum_{r=r_{0,\gamma}}^{R_\gamma-1}
   \mathbb P_{P_0,\nu,P_1}(\tau>\nu+r)
 +(U_\gamma-R_\gamma)
   \mathbb P_{P_0,\nu,P_1}(\tau>\nu+R_\gamma)\\
&\leq r_{0,\gamma}+o(1)
 +2U_\gamma\gamma^{-4}+U_\gamma e^{-c_0R_\gamma}\\
&=r_{0,\gamma}+o(1).
\end{align*}
The last equality uses $U_\gamma=O(\gamma)$ and
$R_\gamma/\log\gamma\to\infty$.

Finally, by \eqref{eq:arl-attainment-null-bounds} and equality of the
pre-change laws,
\[
\mathbb P_{P_0,\nu,P_1}(\tau>\nu)
=\mathbb P_{P_0,\infty}(\tau>\nu)
\geq1-\frac{N_\gamma}{\gamma}.
\]
Since
\[
\mathbb E_{P_0,\nu,P_1}[\tau-\nu\mid\tau>\nu]
=
\frac{\mathbb E_{P_0,\nu,P_1}(\tau-\nu)^+}
{\mathbb P_{P_0,\nu,P_1}(\tau>\nu)},
\]
we obtain
\[
\mathcal J_\gamma^\kappa\{\Bar{\tau}_{S,w}(\gamma)\}
\leq\frac{r_{0,\gamma}+o(1)}{1-N_\gamma/\gamma}.
\]
Divide by $\log\gamma$ and let $\gamma\to\infty$, then
$\epsilon\downarrow0$, to prove the upper bound. Membership in the
admissible class gives
\[
\mathcal R_\gamma^{\rm lin}(\mathfrak T_{\rm rand};\kappa)
\leq\mathcal J_\gamma^\kappa\{\Bar{\tau}_{S,w}(\gamma)\}.
\]
Combining this inequality with
Theorem~\ref{thm:restricted-arl-lower} proves both limits.
\hfill$\blacksquare$
\bigskip

\textbf{Proof of Proposition~\ref{prop:score-data-processing}:}
Let $T:=T_s(X)$. Continuity of $F_{s,P_0}$ gives $T\sim U$ under $P_0$. Put
\[
h_s(T):=\mathbb E_{P_0}(L\mid T).
\]
For every bounded measurable $a$,
\[
\mathbb E_{P_1}a(T)
=
\mathbb E_{P_0}\{La(T)\}
=
\mathbb E_{P_0}\{h_s(T)a(T)\},
\]
so $h_s=dH_s/dU$. Conditional Jensen for the strictly convex function
$x\mapsto x\log x$ yields
\begin{align*}
D_{\mathrm{KL}}(H_s\|U)
&=
\mathbb E_{P_0}\{h_s(T)\log h_s(T)\}\\
&\leq
\mathbb E_{P_0}\{L\log L\}
=
D_{\mathrm{KL}}(P_1\|P_0).
\end{align*}
Equality holds exactly when $L=\mathbb E_{P_0}(L\mid T)$ almost surely, equivalently when $L$ is $\sigma(T)$-measurable. If $s^*=L$ and the $P_0$-law of $L$ is continuous, then the generalized inverse of its CDF recovers $L$ from $F_{L,P_0}(L)$ almost surely, so the equality condition holds. \hfill$\blacksquare$
\bigskip

\textbf{Proof of Theorem~\ref{th:weighted-RCMM-late-alarm-tail}:}
Let
\[
k_n:=T_n+1,
\qquad
t_n:=T_n+r,
\qquad
\theta_*:=\gamma_0\operatorname{sign}(J).
\]
Because both weighted statistics contain the local restart $k_n$,
\[
S_{n,t_n}^{(w)}\geq Z_{n,t_n}^{(w)}\geq w_{k_n}M_{n,k_n,t_n}.
\]
It is therefore enough to show that $w_{k_n}M_{n,k_n,t_n}\geq b_n$ outside an event of probability at most $c_1e^{-c_2r}$.

Assumption~\ref{assn:directional-local-detectability} gives
\[
\log f_{\theta_*}(u)
\geq
\theta_*g(u)-C\theta_*^2.
\]
Hence
\begin{equation}
\label{eq:weighted-fixed-bet-lower}
\sum_{j=T_n+1}^{t_n}\log f_{\theta_*}(p_{n,j})
\geq
\theta_*\sum_{j=T_n+1}^{t_n}g(p_{n,j})
-Cr\gamma_0^2.
\end{equation}
Let $V_{n,j}=F_{P_0}(X_{n,j})$. These variables are iid with CDF $H$. Let $L_g$ be a Lipschitz constant of $g$ and $G:=\|g\|_\infty$. Enlarge $L_g$ and the derivative bound $B$ if necessary so that $L_g\geq1$ and $B\geq1$. By Lemma~\ref{lem:local-comparison},
\[
\max_{T_n<j\leq t_n}|p_{n,j}-V_{n,j}|
\leq
\|\widehat F_{P_0,T_n}-F_{P_0}\|_\infty
+\frac{2r+1}{T_n}.
\]
Choose $\delta_0\in(0,1)$ sufficiently small that, for all large $n$ and all $r\leq\delta_0T_n$,
\[
\gamma_0L_g\frac{2r+1}{T_n}
\leq
\frac{I_R}{8}.
\]
By the Dvoretzky--Kiefer--Wolfowitz inequality,
\[
\mathbb P\left\{
\|\widehat F_{P_0,T_n}-F_{P_0}\|_\infty
>
\frac{I_R}{8\gamma_0L_g}
\right\}
\leq
2\exp\left\{-\frac{I_R^2T_n}{32\gamma_0^2L_g^2}\right\}.
\]
Outside this event,
\[
\left|
\theta_*\frac1r\sum_{j=T_n+1}^{t_n}
\{g(p_{n,j})-g(V_{n,j})\}
\right|
\leq
\frac{I_R}{4}.
\]
Also, Hoeffding's inequality gives
\[
\mathbb P\left\{
\theta_*\left[
\frac1r\sum_{j=T_n+1}^{t_n}g(V_{n,j})-J
\right]
<-\frac{I_R}{4}
\right\}
\leq
\exp\left\{-\frac{I_R^2r}{32\gamma_0^2G^2}\right\}.
\]
Combining the last two displays, outside an event of probability at most $e^{-c r}+2e^{-c'T_n}$,
\[
\theta_*\frac1r\sum_{j=T_n+1}^{t_n}g(p_{n,j})
\geq
\gamma_0|J|-\frac{I_R}{2}.
\]
Substituting into \eqref{eq:weighted-fixed-bet-lower} yields
\[
\sum_{j=T_n+1}^{t_n}\log f_{\theta_*}(p_{n,j})
\geq
\frac{I_R}{2}r.
\]

For every $\theta$ satisfying $|\theta-\theta_*|\leq1/(2Br)$, the derivative bound in Assumption~\ref{assn:directional-local-detectability} gives
\[
\sum_{j=T_n+1}^{t_n}\log f_\theta(p_{n,j})
\geq
\sum_{j=T_n+1}^{t_n}\log f_{\theta_*}(p_{n,j})-\frac12.
\]
The local prior mass condition gives
\[
\Pi\left\{\theta:|\theta-\theta_*|\leq\frac1{2Br}\right\}
\geq
\frac{c_\Pi}{2Br}
\]
for all sufficiently large $r$. Hence, after absorbing finitely many small $r$ into a constant $c_0$,
\[
\log M_{n,k_n,t_n}
\geq
\frac{I_R}{2}r-\log r-c_0
\]
outside the preceding exceptional event. Under \eqref{eq:weighted-tail-boundary},
\[
\log\{w_{k_n}M_{n,k_n,t_n}\}
\geq
\log w_{k_n}+\mathcal B_{n,w}(b_n)
=
\log b_n.
\]
Thus both weighted statistics cross $b_n$ by time $T_n+r$. Since $r\leq\delta_0T_n$, the term $e^{-c'T_n}$ is bounded by a constant multiple of $e^{-c_2r}$. This proves the theorem. \hfill$\blacksquare$
\bigskip

\textbf{Proof of Corollary~\ref{cor:weighted-conditional-delay}:}
Before time $T_n$, the changepoint law agrees with the no-change law. Theorem~\ref{th:weighted-null-validity} gives
\[
\mathbb P_{P_0,T_n,P_1}\{\tau\leq T_n\}
\leq
\min\left\{
1,
\frac{W_{T_n}}{b_n}
\right\},
\]
Hence $\mathbb P(\tau>T_n)\geq q_{n,w}(b_n)$. Dividing the unconditional late-alarm bound in Theorem~\ref{th:weighted-RCMM-late-alarm-tail} by this survival probability proves the result. \hfill$\blacksquare$
\bigskip

\textbf{Proof of Theorem~\ref{th:weighted-RCMM-log-delay}:}
Write $B_n:=\mathcal B_{n,w}(b_n)$ and $r_n:=\lfloor AB_n\rfloor$. Then $r_n\to\infty$ and $r_n=o(T_n)$. Since $A>1/I_\star$, choose $\gamma_0\in(0,\theta_0/2)$ such that
\(
A I(\gamma_0)>1,
\)
and put $\theta_*:=\gamma_0\operatorname{sign}(J)$. Lemma~\ref{lem:local-comparison} and the weak law imply
\[
\frac1{r_n}\sum_{j=T_n+1}^{T_n+r_n}g(p_{n,j})
\xrightarrow{\mathbb P}J.
\]
Therefore, for every sufficiently small $\epsilon>0$,
\[
\sum_{j=T_n+1}^{T_n+r_n}\log f_{\theta_*}(p_{n,j})
\geq
r_n\{I(\gamma_0)-\epsilon\}
\]
with probability tending to one. The same local-prior argument as in Theorem~\ref{th:weighted-RCMM-late-alarm-tail} gives
\[
\log M_{n,T_n+1,T_n+r_n}
\geq
r_n\{I(\gamma_0)-\epsilon\}-\log r_n-O(1)
\]
with probability tending to one. Hence
\begin{align*}
\log\{w_{T_n+1}M_{n,T_n+1,T_n+r_n}\}-\log b_n
 \geq
\{A(I(\gamma_0)-\epsilon)-1\}B_n-o(B_n),
\end{align*}
which tends to infinity after choosing $\epsilon$ so that
$A\{I(\gamma_0)-\epsilon\}>1$. The local term is contained in both weighted statistics, proving \eqref{eq:weighted-first-order-delay}. If $\liminf_n q_{n,w}(b_n)>0$, then
\[
\mathbb P\{\tau>T_n+r_n\mid\tau>T_n\}
\leq
\frac{\mathbb P\{\tau>T_n+r_n\}}{q_{n,w}(b_n)}
\to0,
\]
and
$\mathbb P\{T_n<\tau\leq T_n+r_n\}\geq q_{n,w}(b_n)-o(1)$, for either weighted stopping time.

For the almost-sure assertion, use the $\gamma_0$ and $\epsilon$ specified in the theorem. Hoeffding's inequality gives the first displayed summable bound in the theorem for the oracle average. The condition
$\sum_n e^{-aT_n}<\infty$ for every $a>0$ and DKW give
$\|\widehat F_{P_0,T_n}-F_{P_0}\|_\infty\to0$ almost surely. Since $r_n/T_n\to0$, Lemma~\ref{lem:local-comparison} transfers the oracle bound to the conformal ranks. The preceding mixture lower bound then holds eventually almost surely with $I(\gamma_0)-2\epsilon$ in place of $I(\gamma_0)-\epsilon$, and the condition
$A\{I(\gamma_0)-2\epsilon\}>1$ completes the proof. \hfill$\blacksquare$
\bigskip

\textbf{Proof of Corollary~\ref{cor:RCMM-late-alarm-tail}:}
For the stated weights,
\[
\mathcal B_{n,w}(1/\alpha)
=
\log(1/\alpha)+(1+\eta)\log(T_n+1)
+
\log\sum_{\ell=1}^\infty\ell^{-(1+\eta)}.
\]
Thus $\mathcal B_{n,w}(1/\alpha)=\log(1/\alpha)+(1+\eta)\log T_n+O(1)$. A sufficiently large constant $C_0$ makes the lower bound on $r$ imply \eqref{eq:weighted-tail-boundary}; Theorem~\ref{th:weighted-RCMM-late-alarm-tail} then gives the result. \hfill$\blacksquare$
\bigskip

\textbf{Proof of Corollary~\ref{cor:RCMM-log-delay}:}
For fixed $\alpha$ and polynomial weights,
\[
\mathcal B_{n,w}(1/\alpha)
=(1+\eta)\log T_n+O(1).
\]
If $A>(1+\eta)/I_\star$, choose $A'$ such that
$1/I_\star<A'<A/(1+\eta)$. For all sufficiently large $n$,
\[
A'\mathcal B_{n,w}(1/\alpha)
\leq
A\log T_n.
\]
Theorem~\ref{th:weighted-RCMM-log-delay} applied with $A'$ proves the crossing-time conclusion. Moreover, $W_{T_n}\leq1$ and $b=1/\alpha$, so \eqref{eq:weighted-survival-lower} gives
$q_{n,w}(1/\alpha)\geq1-\alpha$. The correct-window and conditional conclusions therefore follow from the corresponding part of Theorem~\ref{th:weighted-RCMM-log-delay}. Its almost-sure part gives the asserted almost-sure crossing statement under the stated summability conditions. \hfill$\blacksquare$
\bigskip

\textbf{Proof of Corollary~\ref{cor:ARL-log-delay}:}
For unit weights, $\mathcal B_{n,w}(\gamma_n)=\log\gamma_n$, so the crossing-time assertion follows from Theorem~\ref{th:weighted-RCMM-log-delay}. Also,
$q_{n,w}(\gamma_n)\geq1-T_n/\gamma_n$ by \eqref{eq:weighted-survival-lower}; hence $T_n=o(\gamma_n)$ makes the survival probability tend to one and yields the correct-window and conditional conclusions. \hfill$\blacksquare$
\bigskip

\textbf{Proof of Proposition~\ref{prop:weighted-truncated-delay}:}
Let $Y=(\tau-T_n)^+$. For an integer-valued nonnegative random variable,
\[
\mathbb E(Y\wedge L_n)
=
\sum_{r=0}^{L_n-1}\mathbb P(Y>r).
\]
The terms below $r_{n,w}^*(b_n)$ contribute at most $r_{n,w}^*(b_n)$. By the definition of $r_{n,w}^*(b_n)$, Theorem~\ref{th:weighted-RCMM-late-alarm-tail} applies to every remaining integer $r\leq L_n-1$, and hence their contribution is at most
\[
c_1\sum_{r=r_{n,w}^*(b_n)}^{L_n-1} e^{-c_2r}
\leq
c_1\sum_{r=r_{n,w}^*(b_n)}^\infty e^{-c_2r}
=\frac{c_1e^{-c_2r_{n,w}^*(b_n)}}{1-e^{-c_2}}.
\]
This proves \eqref{eq:weighted-truncated-mean}. The conditional bound follows by dividing by $\mathbb P(\tau>T_n)\geq q_{n,w}(b_n)$.

For the exponential moment, fix $0\leq\lambda<c_2$ and put
\[
W:=\left(Y\wedge L_n-r_{n,w}^*(b_n)\right)_+.
\]
For every nonnegative integer-valued $W$,
\[
\mathbb Ee^{\lambda W}
=
1+(e^\lambda-1)
\sum_{u=0}^\infty e^{\lambda u}\mathbb P(W>u).
\]
Since
\[
\mathbb P(W>u)
\leq
c_1e^{-c_2\{r_{n,w}^*(b_n)+u\}},
\]
the geometric series converges and gives \eqref{eq:weighted-truncated-exp}. \hfill$\blacksquare$
\bigskip

\textbf{Proof of Proposition~\ref{prop:weighted-null-atom}:}
For each $k$, $\overline M_{k,t}^{\rho}=\rho+(1-\rho)M_{k,t}$ is a nonnegative martingale starting from one. The proof of Theorem~\ref{th:weighted-null-validity} therefore applies without change, establishing the same PFA and ARL bounds. Moreover,
\[
\overline S_t^{(w,\rho)}
=
\rho\sum_{k=1}^tw_k+(1-\rho)S_t^{(w)}
\geq
\rho W_t
\]
pathwise, which gives the deterministic stopping bound. For unit weights, $W_t=t$, so $\inf\{t\geq1:\overline S_t^{(w,\rho)}\geq\gamma\}\leq\lceil\gamma/\rho\rceil$, while the ARL lower bound is $\gamma$. Finally,
\[
\overline S_{n,T_n+r}^{(w,\rho)}
\geq
(1-\rho)w_{T_n+1}M_{n,T_n+1,T_n+r},
\]
so every delay proof applies with effective boundary
\[
\log\frac{b_n}{(1-\rho)w_{T_n+1}}
=
\mathcal B_{n,w}(b_n)+\log\frac1{1-\rho}.
\]
\hfill$\blacksquare$
\bigskip

\textbf{Proof of Theorem~\ref{th:weighted-oracle-inequality}:}
The atom of mass $v_f$ gives
\[
M_{n,T_n+1,T_n+r}
\geq
v_f\prod_{j=T_n+1}^{T_n+r}f(p_{n,j}).
\]
Since $\log f$ is bounded and Lipschitz, Lemma~\ref{lem:local-comparison} and the weak law give
\[
\frac1r\sum_{j=T_n+1}^{T_n+r}\log f(p_{n,j})
\xrightarrow{\mathbb P}
I_f(H)
\]
whenever $r\to\infty$ and $r=o(T_n)$. Take
$r_n=\left\lfloor A\mathcal C_{n,w,f}(b_n)\right\rfloor.$
For every $\epsilon>0$, with probability tending to one,
\begin{align*}
\log\{w_{T_n+1}M_{n,T_n+1,T_n+r_n}\}-\log b_n
&\geq
r_n\{I_f(H)-\epsilon\}
-\mathcal B_{n,w}(b_n)-\log(1/v_f)\\
&=
\{A(I_f(H)-\epsilon)-1\}\mathcal C_{n,w,f}(b_n)+o(\mathcal C_{n,w,f}(b_n)).
\end{align*}
Choose $\epsilon$ so that $A(I_f(H)-\epsilon)>1$. This proves the atomic crossing-time statement. The survival, conditional, and correct-window consequences follow from \eqref{eq:weighted-survival-lower} as before. \hfill$\blacksquare$
\bigskip

\textbf{Proof of Theorem~\ref{th:weighted-nonparametric-upper}:}
Write $r=r_n$ and $s=s_n$ for the fixed row.
Define
\[
G_r^p(t)
:=
\frac1r\sum_{j=T_n+1}^{T_n+r}\mathbf 1\{p_{n,j}\leq t\},
\qquad
G_r^V(t)
:=
\frac1r\sum_{j=T_n+1}^{T_n+r}\mathbf 1\{V_{n,j}\leq t\}.
\]
On $\{\|\widehat F_{P_0,T_n}-F_{P_0}\|_\infty\leq s\}$, Lemma~\ref{lem:local-comparison} gives
\[
\max_{T_n<j\leq T_n+r}|p_{n,j}-V_{n,j}|
\leq
\eta
:=
s+\frac{2r+1}{T_n}.
\]
Therefore
\[
G_r^V(t-\eta)
\leq
G_r^p(t)
\leq
G_r^V(t+\eta),
\]
and the $L$-Lipschitz property of $H$ implies
\[
\|G_r^p-H\|_\infty
\leq
\|G_r^V-H\|_\infty+L\eta.
\]
By assumption, $L\eta\leq\Delta/4$. The DKW inequality gives
\[
\mathbb P\{\|G_r^V-H\|_\infty>\Delta/4\}
\leq
2e^{-r\Delta^2/8},
\]
while
\[
\mathbb P\{\|\widehat F_{P_0,T_n}-F_{P_0}\|_\infty>s\}
\leq
2e^{-2T_ns^2}.
\]
Outside these events, $\|G_r^p-H\|_\infty\leq\Delta/2$. Hence
\[
\|G_r^p-F_{\mathrm U}\|_\infty
\geq
\|H-F_{\mathrm U}\|_\infty-\|G_r^p-H\|_\infty
\geq
\frac\Delta2.
\]
Also,
\[
\|G_r^p-F_{\mathrm U}\|_\infty
\leq
\frac{\epsilon_0}{2}+\frac\Delta2
\leq
\epsilon_0.
\]
Assumption~\ref{assn:uniform-local-mixture-richness} therefore gives
\[
\log M_{n,T_n+1,T_n+r}
\geq
\frac\kappa4r\Delta^2-K\log\frac2\Delta-K.
\]
Condition \eqref{eq:weighted-nonparametric-condition} makes
\[
w_{T_n+1}M_{n,T_n+1,T_n+r}\geq b_n.
\]
Both weighted statistics therefore stop by $T_n+r$ outside the stated exceptional event. \hfill$\blacksquare$
\bigskip

\textbf{Proof of Theorem~\ref{thm:local-post-change-pvalues}:}
Write $T:=T_n$, $r:=r_n$, and $m:=T+r$. Let
\[
\widehat U_n(t):=\frac1T\sum_{j=1}^T\mathbf 1\{p_{n,j}\leq t\},
\qquad
\widehat P_n(t):=\frac1r\sum_{j=T+1}^{T+r}\mathbf 1\{p_{n,j}\leq t\}.
\]
The pre-change conformal ranks are iid uniform, so $\widehat U_n$ is the empirical CDF of $T$ iid $U(0,1)$ variables. For $j>T$, put $V_{n,j}:=F_{P_0}(X_{n,j})$ and let
\[
\widehat H_n(t):=\frac1r\sum_{j=T+1}^{T+r}\mathbf 1\{V_{n,j}\leq t\}.
\]
Then the $V_{n,j}$ are iid with CDF $H$.

The deterministic rank comparison used later in Lemma~\ref{lem:local-comparison} gives
\begin{equation}
\label{eq:local-proof-rank-bound}
\eta_n:=\max_{T<j\leq T+r}|p_{n,j}-V_{n,j}|
\leq
\|\widehat F_{P_0,T}-F_{P_0}\|_\infty+\frac{2r+1}{T},
\end{equation}
where $\widehat F_{P_0,T}$ is the empirical CDF of $X_{n,1},\ldots,X_{n,T}$. Hence $\eta_n\to0$ in probability. If
$\omega_H(\delta):=\sup\{|H(u)-H(v)|:|u-v|\leq\delta\}$, then monotonicity gives
\[
\|\widehat P_n-H\|_\infty
\leq
\|\widehat H_n-H\|_\infty+
\omega_H(\eta_n).
\]
By DKW and continuity of $H$, the right-hand side converges to zero in probability.

Since
\[
\widehat F_{n,m}
=
\frac{T}{m}\widehat U_n+\frac{r}{m}\widehat P_n,
\]
we have the exact identity
\[
\frac{m}{r}(\widehat F_{n,m}-F_{\mathrm U})
=
\frac{T}{r}(\widehat U_n-F_{\mathrm U})
+(\widehat P_n-H)
+(H-F_{\mathrm U}).
\]
Consequently,
\begin{align*}
\left|
\frac{m}{r}\|\widehat F_{n,m}-F_{\mathrm U}\|_\infty-
\Delta
\right|
&\leq
\frac{T}{r}\|\widehat U_n-F_{\mathrm U}\|_\infty
+
\|\widehat P_n-H\|_\infty.
\end{align*}
The first term is $O_{\mathbb P}(\sqrt T/r)=o_{\mathbb P}(1)$, proving \eqref{eq:conv to Delta}. Since $m/T\to1$ and $r/\sqrt T\to\infty$,
\[
\sqrt m\|\widehat F_{n,m}-F_{\mathrm U}\|_\infty
=
\frac{r}{\sqrt m}
\left\{\frac{m}{r}\|\widehat F_{n,m}-F_{\mathrm U}\|_\infty\right\}
\longrightarrow\infty
\]
in probability.

Now assume $\frac{r_n^2}{T_n\log n}\to\infty,$. Together with $r/T\to0$, it implies $\frac{T}{\log n}\to\infty,
\frac{r}{\log n}\to\infty.$
For every fixed $\epsilon>0$, DKW gives
\[
\mathbb P\left\{\frac{T}{r}\|\widehat U_n-F_{\mathrm U}\|_\infty>\epsilon\right\}
\leq
2\exp\left(-2\epsilon^2\frac{r^2}{T}\right),
\]
and the right-hand side is summable. The DKW bounds for $\widehat H_n$ and $\widehat F_{P_0,T}$ are also summable because $r/\log n\to\infty$ and $T/\log n\to\infty$. Borel--Cantelli, \eqref{eq:local-proof-rank-bound}, and uniform continuity of $H$ therefore make every preceding convergence almost sure. \hfill$\blacksquare$
\bigskip

\textbf{Proof of Theorem~\ref{thm:CMM-growth-corrected} under Assumption~\ref{assn:uniform-local-mixture-richness}:}
Put $m_n:=T_n+r_n$ and
\[
d_n:=\|\widehat F_{n,m_n}-F_{\mathrm U}\|_\infty.
\]
Theorem~\ref{thm:local-post-change-pvalues} and \eqref{eq:r_n} imply that, with probability tending to one,
\[
\frac{\Delta r_n}{2m_n}
\leq d_n\leq
\frac{2\Delta r_n}{m_n}.
\]
The same event occurs eventually almost surely under the additional condition \eqref{eq:CMM-growth-as-condition}. Since $r_n=o(T_n)$, the upper bound is eventually below $\epsilon_0$. On this event, Assumption~\ref{assn:uniform-local-mixture-richness} gives
\[
\log M_{n,m_n}
\geq
\kappa m_nd_n^2-K\log(1/d_n)-K
\geq
\frac{\kappa\Delta^2}{4}\frac{r_n^2}{m_n}
-K\log\frac{2m_n}{\Delta r_n}-K.
\]
Because $m_n\asymp T_n$ and $r_n^2/(T_n\log T_n)\to\infty$, the logarithmic terms are
$o(r_n^2/T_n)$. Hence there is $c_0>0$ such that
\[
\log M_{n,T_n+r_n}\geq c_0\frac{r_n^2}{T_n}
\]
with probability tending to one, and eventually almost surely under \eqref{eq:CMM-growth-as-condition}. \hfill$\blacksquare$
\bigskip

\textbf{Proof of Theorem~\ref{thm:CMM-growth-corrected} under Assumption~\ref{assn:directional-local-detectability}:}
Again put $m_n:=T_n+r_n$ and $a_n:=r_n/m_n$. Let
$s:=\operatorname{sign}(J)$ and choose $\gamma>0$ such that
\[
d_\gamma:=\gamma|J|-C\gamma^2>0.
\]
Set $\theta_n:=\gamma s a_n$. Then $|\theta_n|\leq\theta_0/2$ for all sufficiently large $n$, and
\[
\sum_{j=1}^{m_n}\log f_{\theta_n}(p_{n,j})
\geq
\theta_n\sum_{j=1}^{m_n}g(p_{n,j})
-Cm_n\theta_n^2.
\]
The pre-change ranks are iid uniform with mean-zero $g$, so
\[
\frac1{r_n}\sum_{j=1}^{T_n}g(p_{n,j})
\xrightarrow{\mathbb P}0.
\]
For the post-change block, the deterministic rank comparison, Lipschitz continuity of $g$, and the weak law give
\[
\frac1{r_n}\sum_{j=T_n+1}^{T_n+r_n}g(p_{n,j})
\xrightarrow{\mathbb P}
\int g\,dH=J.
\]
Thus
\[
\sum_{j=1}^{m_n}g(p_{n,j})=r_nJ+o_{\mathbb P}(r_n),
\]
and therefore
\[
\sum_{j=1}^{m_n}\log f_{\theta_n}(p_{n,j})
\geq
\{d_\gamma+o_{\mathbb P}(1)\}\frac{r_n^2}{m_n}.
\]
Let $B_+:=\max\{B,1\}$. For
$|\vartheta-\theta_n|\leq(2B_+m_n)^{-1}$, the derivative bound gives a total log-likelihood loss at most $1/2$. The local prior condition then yields
\[
\log M_{n,m_n}
\geq
\{d_\gamma+o_{\mathbb P}(1)\}\frac{r_n^2}{m_n}
-
\log m_n-O(1).
\]
The last two terms are negligible under \eqref{eq:r_n}, proving the in-probability assertion.

Under \eqref{eq:CMM-growth-as-condition}, Hoeffding's inequality makes the pre-change deviation probability summable at rate
$\exp\{-c r_n^2/T_n\}$. It also makes the post-change oracle-average deviation summable at rate $\exp(-cr_n)$, while DKW controls the rank approximation; as shown in the proof of Theorem~\ref{thm:local-post-change-pvalues}, the additional condition implies both $T_n/\log n\to\infty$ and $r_n/\log n\to\infty$. Borel--Cantelli therefore upgrades every $o_{\mathbb P}$ term above to an almost-sure $o(1)$ term. The same mixture bound then holds eventually almost surely. \hfill$\blacksquare$
\bigskip

\textbf{Proof of Theorem~\ref{th: upper bound on detection delay}:}
Let
\[
r_n:=\left\lceil C\sqrt{T_n\log b_n}\right\rceil,
\]
where $C>0$ will be chosen below. Since $b_n\to\infty$, we have $r_n\to\infty$. The assumption $\log b_n=o(T_n)$ gives
\[
\frac{r_n}{T_n}
\leq
C\sqrt{\frac{\log b_n}{T_n}}+o(1)
\to0.
\]
Moreover,
\[
\frac{r_n^2}{T_n\log T_n}
\geq
C^2\frac{\log b_n}{\log T_n}+o(1)
\to\infty
\]
because $\log T_n=o(\log b_n)$. Hence $(r_n)$ satisfies the conditions of Theorem~\ref{thm:CMM-growth-corrected}. There exists $c_0>0$ such that
\[
\mathbb P\left\{
\log M_{n,T_n+r_n}
\geq
c_0\frac{r_n^2}{T_n}
\right\}
\to1.
\]
Choose $C>c_0^{-1/2}$. Then
\[
c_0\frac{r_n^2}{T_n}
\geq
c_0C^2\log b_n+o(\log b_n)
\geq
\log b_n
\]
for all sufficiently large $n$. Thus
\[
\mathbb P\{M_{n,T_n+r_n}\geq b_n\}\to1.
\]
By definition of $\tau_n^{\texttt{CMM}}(b_n)$,
\[
\mathbb P\{\tau_n^{\texttt{CMM}}(b_n)\leq T_n+r_n\}
\to1.
\]
Before time $T_n$ the changepoint law agrees with the no-change law, so Ville's inequality gives
\[
\mathbb P_{P_0,T_n,P_1}\{\tau_n^{\texttt{CMM}}(b_n)\leq T_n\}
\leq b_n^{-1}\to0.
\]
Subtracting this pre-change probability and absorbing the ceiling into $C$ proves \eqref{eq:prob conv}. Finally,
\[
\frac{\sqrt{T_n\log b_n}}{T_n}
=
\sqrt{\frac{\log b_n}{T_n}}
\to0,
\]
which gives the relative-delay conclusion. \hfill$\blacksquare$
\bigskip

\textbf{Proof of Theorem~\ref{th:CMM-late-alarm-tail}:}
Write $m:=T_n+r, \hspace{.2cm}
a:=\frac{r}{m}.$
Let $g$ and $\{f_\theta:|\theta|\leq\theta_0\}$ be as in Assumption~\ref{assn:directional-local-detectability}, and write
\[
J:=\int_0^1g(u)\,dH(u)\neq0.
\]
Let $L_g$ be a Lipschitz constant of $g$. Enlarge $L_g$ and the derivative bound $B$ if necessary so that $L_g\geq1$ and $B\geq1$. Choose $\delta_0\in(0,1)$ sufficiently small that, for every $r\leq\delta_0T_n$ and all sufficiently large $n$,
\[
\frac{2r+1}{T_n}
\leq
\frac{|J|}{16L_g}.
\]
Let $s:=\operatorname{sign}(J),
\hspace{.2cm}
\theta:=\gamma s a,$
where $\gamma>0$ is fixed sufficiently small that
\[
\gamma\frac{r}{T_n+r}\leq\frac{\theta_0}{2}
\]
for $r\leq\delta_0T_n$ and
\[
c_*:=\frac{\gamma|J|}{2}-C\gamma^2>0.
\]
Assumption~\ref{assn:directional-local-detectability} gives
\[
\sum_{j=1}^m\log f_\theta(p_{n,j})
\geq
\theta\sum_{j=1}^m g(p_{n,j})-Cm\theta^2.
\]
Split the score sum at $T_n$. Since the pre-change conformal $p$-values are iid uniform and $\int g=0$, Hoeffding's inequality gives
\[
\mathbb P\left\{
\left|\sum_{j=1}^{T_n}g(p_{n,j})\right|>\frac{|J|r}{8}
\right\}
\leq
2\exp\left(-c_1\frac{r^2}{T_n}\right).
\]
For $j>T_n$, put $V_{n,j}=F_{P_0}(X_{n,j})$. The variables $g(V_{n,j})$ are iid, bounded, and have mean $J$, so
\[
\mathbb P\left\{
\left|\sum_{j=T_n+1}^m g(V_{n,j})-rJ\right|>\frac{|J|r}{8}
\right\}
\leq
2e^{-c_2r}.
\]
By Lemma~\ref{lem:local-comparison},
\[
\eta_{n,r}
:=
\max_{T_n<j\leq T_n+r}|p_{n,j}-V_{n,j}|
\leq
\|\widehat F_{P_0,T_n}-F_{P_0}\|_\infty+\frac{2r+1}{T_n}.
\]
The DKW inequality gives
\[
\mathbb P\left\{
\|\widehat F_{P_0,T_n}-F_{P_0}\|_\infty>\frac{|J|}{16L_g}
\right\}
\leq
2e^{-c_3T_n}.
\]
Outside the union of these events, $s\sum_{j=1}^m g(p_{n,j})
\geq
\frac{|J|r}{2}.$
Since $r\leq\delta_0T_n$, the terms $e^{-c_2r}$ and $e^{-c_3T_n}$ are bounded by a constant multiple of $\exp(-c r^2/T_n)$. Hence there exist constants $C_1,C_2>0$ such that
\[
\mathbb P\left\{
s\sum_{j=1}^m g(p_{n,j})<\frac{|J|r}{2}
\right\}
\leq
C_1\exp\left(-C_2\frac{r^2}{T_n}\right).
\]
On the complementary event,
\[
\theta\sum_{j=1}^m g(p_{n,j})-Cm\theta^2
\geq
c_*\frac{r^2}{m}.
\]
For every $\vartheta$ satisfying $|\vartheta-\theta|\leq1/(2Bm)$, the derivative bound gives
\[
\sum_{j=1}^m\log f_\vartheta(p_{n,j})
\geq
\sum_{j=1}^m\log f_\theta(p_{n,j})-\frac12.
\]
The local prior mass condition therefore yields
\[
\log M_{n,m}
\geq
c_*\frac{r^2}{m}-\log m-O(1)
\]
outside an event of probability at most $C_1\exp(-C_2r^2/T_n)$. Since $m\leq(1+\delta_0)T_n$, a sufficiently large $C_0$ makes
\[
r\geq C_0\sqrt{T_n\{\log T_n+\log b_n\}}
\]
imply
\[
c_*\frac{r^2}{m}-\log m-O(1)
\geq
\log b_n.
\]
Thus
\[
\mathbb P\{M_{n,T_n+r}<b_n\}
\leq
C_1\exp\left(-C_2\frac{r^2}{T_n}\right).
\]
The event $\{\tau_n^{\texttt{CMM}}(b_n)>T_n+r\}$ implies $M_{n,T_n+r}<b_n$, proving the theorem. \hfill$\blacksquare$
\bigskip

\textbf{Proof of Corollary~\ref{cor:CMM-fixed-threshold}:}
Theorem~\ref{thm:CMM-growth-corrected} gives
\[
\mathbb P\left\{
\log M_{n,T_n+r_n}\geq c_0r_n^2/T_n
\right\}\to1.
\]
Since \eqref{eq:r_n} implies $r_n^2/T_n\to\infty$, the right-hand side inside the event eventually exceeds the fixed value $\log b$. Hence
\[
\mathbb P\{\tau_n^{\texttt{CMM}}(b)\leq T_n+r_n\}\to1.
\]
Moreover,
\[
\mathbb P_{P_0,T_n,P_1}\{\tau_n^{\texttt{CMM}}(b)\leq T_n\}
\leq b^{-1},
\]
so subtracting the pre-change event gives the stated lower bound for a correct post-change alarm. Under \eqref{eq:CMM-growth-as-condition}, the growth inequality holds eventually almost surely, which gives the asserted almost-sure crossing-time conclusion.

If $T_n=\Theta(n)$ and $r_n=a_n\sqrt{n\log n}$ with
$a_n\to\infty$ and $a_n=o\{\sqrt{n/\log n}\}$, then $r_n=o(T_n)$ and both \eqref{eq:r_n} and \eqref{eq:CMM-growth-as-condition} hold. \hfill$\blacksquare$
\bigskip

\textbf{Proof of Lemma~\ref{lem:late-alarm-negligible}:}
Let $C_0,C_1,C_2,\delta_0$ be the constants from Theorem~\ref{th:CMM-late-alarm-tail}. Choose $A>C_0$ such that $C_2A^2>2$ and define
\[
d_n:=\left\lceil A\sqrt{T_n\log(T_nU_n)}\right\rceil.
\]
Condition~\eqref{eq:threshold size} gives
\[
\frac{d_n}{T_n^{1-\gamma}}
\leq
A\sqrt{\frac{\log(T_nU_n)}{T_n^{1-2\gamma}}}
+
\frac{1}{T_n^{1-\gamma}}
\longrightarrow0.
\]
Hence $d_n=o(T_n)$, $d_n\to\infty$, and $d_n\leq\delta_0T_n$ for all sufficiently large $n$. Since $b>1$ is fixed, eventually
\[
d_n
\geq
C_0\sqrt{T_n\{\log T_n+\log b\}}.
\]
Theorem~\ref{th:CMM-late-alarm-tail} therefore yields
\[
\mathbb P\{\tau_n^{\texttt{CMM}}(b)>T_n+d_n\}
\leq
C_1\exp\left(-C_2\frac{d_n^2}{T_n}\right)
\leq
C_1(T_nU_n)^{-C_2A^2}.
\]
Multiplying by $U_n$ and using $U_n\leq T_nU_n$ gives
\[
U_n\,\mathbb P\{\tau_n^{\texttt{CMM}}(b)>T_n+d_n\}
\leq
C_1(T_nU_n)^{1-C_2A^2}
\longrightarrow0.
\]
\hfill$\blacksquare$
\bigskip

\textbf{Proof of Proposition~\ref{prop:exp-tilt-KL-support}.}
By definition of $\psi$,
\[
\int_0^1 f_\theta(u)\,du
=
e^{-\psi(\theta)}
\int_0^1e^{\theta g(u)}\,du
=1,
\]
so every $f_\theta$ is a probability density on $[0,1]$.

Since $P_0=U$ and $s(x)=x$, its score CDF is
$F_{P_0}(x)=x$ on $[0,1]$. Hence, if $Y\sim P_1$,
\(
F_{P_0}(Y)=Y,
\)
so the transformed post-change law is $H=P_1$ and
$dH/dU=h=f_{\theta_\star}$.
Also,
\[
\phi(u)
=
\theta_\star g(u)-\psi(\theta_\star),
\]
and therefore $\phi$ is bounded and Lipschitz. Further,
\[
\frac{\partial}{\partial\theta}\log f_\theta(u)
=
g(u)-\psi'(\theta).
\]
Since
\(
\psi'(\theta)
=
\int_0^1g(v)f_\theta(v)\,dv
\)
and $g(u)\in[-1/2,1/2]$, we get
\(
|\psi'(\theta)|\le\frac12
\)
and hence
\[
\sup_{\theta\in[-\Theta_0,\Theta_0]}
\sup_{u\in[0,1]}
\left|
\frac{\partial}{\partial\theta}\log f_\theta(u)
\right|
\le1.
\]

For $r\ge1$, let
\[
I_r
:=
\left[
\theta_\star-\frac{d_\star}{2r},
\theta_\star+\frac{d_\star}{2r}
\right].
\]
Since $d_\star\le\Theta_0-|\theta_\star|$,
\(
I_r\subset[-\Theta_0,\Theta_0].
\)
For every $\theta\in I_r$, the mean-value theorem gives
\[
|\log f_\theta(z_i)-\log f_{\theta_\star}(z_i)|
\le|\theta-\theta_\star|,
\]
and therefore
\[
\sum_{i=1}^r\log f_\theta(z_i)
\ge
\sum_{i=1}^r\log h(z_i)-\frac{d_\star}{2}.
\]
Moreover,
\(
\Pi(I_r)
=
\frac{|I_r|}{2\Theta_0}
=
\frac{d_\star}{2\Theta_0 r}.
\)
Consequently,
\[
\begin{aligned}
\int_{\Theta}\prod_{i=1}^r f_\theta(z_i)\,d\Pi(\theta)
&\ge
\int_{I_r}\prod_{i=1}^r f_\theta(z_i)\,d\Pi(\theta)\\
&\ge
\frac{d_\star}{2\Theta_0 r}
\exp\left\{
\sum_{i=1}^r\log h(z_i)-\frac{d_\star}{2}
\right\}.
\end{aligned}
\]
Taking logarithms,
\[
\log\int_{\Theta}\prod_{i=1}^r f_\theta(z_i)\,d\Pi(\theta)
\ge
\sum_{i=1}^r\log h(z_i)
-\log r
-\frac{d_\star}{2}
-\log\frac{2\Theta_0}{d_\star}.
\]
Thus Assumption \ref{assn:local-KL-support} holds with
\(
a=1,
c=c_\star.
\)
Finally,
\[
\begin{aligned}
D_{\mathrm{KL}}(H\|U)
&=
\int_0^1
\log f_{\theta_\star}(u)
f_{\theta_\star}(u)\,du\\
&=
\theta_\star
\int_0^1g(u)f_{\theta_\star}(u)\,du
-\psi(\theta_\star)\\
&=
\theta_\star\psi'(\theta_\star)-\psi(\theta_\star).
\end{aligned}
\]
Since $\theta_\star\neq0$ and $g$ is nonconstant,
$f_{\theta_\star}\not\equiv1$, so
$D_{\mathrm{KL}}(H\|U)>0$.
\hfill$\blacksquare$
\bigskip

\textbf{Proof of Proposition~\ref{ex:threshold-betting-family}:} 
First, observe that each $f_{a,\theta}$ is a probability density on $[0,1]$.
Indeed,
\[
\int_0^1 g_a(u)\,du
=
\int_0^1\{\mathbf 1(u\leq a)-a\}\,du
=
a-a
=
0,
\]
and hence
\[
\int_0^1 f_{a,\theta}(u)\,du
=
1+\theta\int_0^1g_a(u)\,du
=
1.
\]
Also, since $g_a(u)\in[-1,1]$ and $|\theta|\leq1/2$, we have $f_{a,\theta}(u)\geq\frac12.$
Thus $f_{a,\theta}$ is a valid betting density.
Let $G_m$ be any empirical CDF on $[0,1]$ and write
\[
\delta:=\|G_m-F_{\mathrm U}\|_\infty.
\]
If $\delta=0$, there is nothing to prove. Assume $\delta>0$. Define
\[
D(a):=G_m(a)-a .
\]
Choose $a_0\in[0,1]$ such that $|D(a_0)|\geq \frac34\delta .$
Let $s:=\operatorname{sign}(D(a_0))$. Suppose first that $s=1$. Then $D(a_0)\geq3\delta/4$. Since
$G_m(a_0)\leq1$, we must have $a_0\leq1-3\delta/4$. Thus
\[
I_a:=[a_0,a_0+\delta/4]\subseteq[0,1].
\]
For every $a\in I_a$, since $G_m(a)\geq G_m(a_0)$,
\[
D(a)=G_m(a)-a
\geq
G_m(a_0)-a
=
D(a_0)-(a-a_0)
\geq
\frac34\delta-\frac14\delta
=
\frac12\delta .
\]
If $s=-1$, then $G_m(a_0)\geq0$ and $D(a_0)\leq-3\delta/4$ imply $a_0\geq3\delta/4$. Hence the same argument with
\(
I_a:=[a_0-\delta/4,a_0]
\)
gives
\(
D(a)\leq -\frac12\delta
\text{ for every }a\in I_a.
\)
In both cases, $|I_a|=\delta/4$ and
\(
sD(a)\geq \frac12\delta
\text{ for every }a\in I_a.
\)
Now define
\[
I_\theta
:=
\left\{\theta:s\theta\in[\delta/16,\delta/8]\right\}.
\]
Then $|I_\theta|=\delta/16$ and, for every $a\in I_a$ and
$\theta\in I_\theta$,
\[
\theta D(a)\geq \frac{\delta^2}{32},
\qquad
\theta^2\leq \frac{\delta^2}{64}.
\]
Since $|\theta g_a(u)|\leq1/2$, the inequality
\(
\log(1+x)\geq x-x^2, \text{ for } |x|\leq1/2,
\)
implies
\[
\begin{split}
\int_0^1\log f_{a,\theta}(u)\,dG_m(u)
&=
\int_0^1\log(1+\theta g_a(u))\,dG_m(u)\\
&\geq
\theta\int_0^1g_a(u)\,dG_m(u)
-
\theta^2\int_0^1g_a(u)^2\,dG_m(u)\\
&\geq
\theta D(a)-\theta^2 \geq
\frac{\delta^2}{64}.
\end{split}
\]
Therefore
\[
\begin{split}
&\int_{\mathcal F_{\mathrm{thr}}}
\exp\left\{
m\int_0^1\log f(u)\,dG_m(u)
\right\}
d\Pi_{\mathrm{thr}}(f)\\
&\qquad\geq
\int_{I_a}\int_{I_\theta}
\exp\left\{
m\int_0^1\log f_{a,\theta}(u)\,dG_m(u)
\right\}
d\theta\,da\\
&\qquad\geq
|I_a||I_\theta|
\exp\left\{\frac{m\delta^2}{64}\right\}\\
&\qquad=
\frac{\delta^2}{64}
\exp\left\{\frac{m\delta^2}{64}\right\}.
\end{split}
\]
Taking logarithms gives
\[
\log\int_{\mathcal F_{\mathrm{thr}}}
\exp\left\{
m\int_0^1\log f(u)\,dG_m(u)
\right\}
d\Pi_{\mathrm{thr}}(f)
\geq
\frac{m\delta^2}{64}
-
2\log\frac1\delta
-
\log 64.
\]
Thus Assumption~\ref{assn:uniform-local-mixture-richness} holds with, for example, $\kappa=\frac1{64},
\hspace{.2cm}
K=2+\log64,
\hspace{.2cm}
\epsilon_0=1.$
This proves the proposition. \hfill $\blacksquare$
\bigskip \\
\textbf{Proof of Proposition~\ref{ex:countable-exp-tilt-family}:}
Let $\nu:=H-U$ be the finite signed Borel measure induced by the difference of the two probability laws on $[0,1]$. Since $H\neq F_{\mathrm U}$, $\nu\neq0$. Finite signed Borel measures on the compact metric space $[0,1]$ are regular, and continuous functions separate them; hence there is $\varphi\in C[0,1]$ such that $\int\varphi\,d\nu\neq0$. Set
\[
\varphi_0:=\varphi-\int_0^1\varphi(u)\,du,
\qquad
c:=\max\{1,\|\varphi_0\|_\infty\},
\qquad
g:=\varphi_0/c.
\]
Then $g\in\mathcal C_{0,1}$ and $\int g\,d\nu\neq0$. Choose $\ell$ so that $\|g_\ell-g\|_\infty
<
\frac14\left|\int g\,d\nu\right|.$
Since $|\nu|([0,1])\leq2$,
\[
\left|\int(g_\ell-g)\,d\nu\right|
\leq
\|g_\ell-g\|_\infty |\nu|([0,1])
<
\frac12\left|\int g\,d\nu\right|.
\]
Thus
\[
J_\ell:=\int_0^1g_\ell(u)\,dH(u)
=
\int g_\ell\,d\nu
\neq0,
\]
where the equality uses $\int_0^1g_\ell(u)\,du=0$.

Fix this $\ell$ and consider the path $f_\theta:=f_{\ell,\theta}$. We have $f_0\equiv1$. Moreover,
\[
\psi_\ell(0)=\psi_\ell'(0)=0,
\qquad
\psi_\ell''(\theta)
=
\operatorname{Var}_{f_{\ell,\theta}}\{g_\ell(U)\}
\leq1.
\]
Taylor's theorem therefore gives
\[
\sup_{u\in[0,1]}
|\log f_\theta(u)-\theta g_\ell(u)|
=|\psi_\ell(\theta)|
\leq\frac12\theta^2.
\]
Also
\[
\left|\frac{\partial}{\partial\theta}\log f_\theta(u)\right|
=|g_\ell(u)-\psi_\ell'(\theta)|
\leq2.
\]
Finally, for $\theta\in[-\theta_0/2,\theta_0/2]$ and all sufficiently small $\epsilon>0$, the prior mass of the $\epsilon$-neighborhood on this path is at least
$w_\ell\frac{2\epsilon}{2\theta_0}
=
\frac{w_\ell}{\theta_0}\epsilon.$
Thus Assumption~\ref{assn:directional-local-detectability} holds with direction $g_\ell$, $C=1/2$, $B=2$, and $c_\Pi=w_\ell/\theta_0$. \hfill$\blacksquare$
\bigskip

\textbf{Proof of Theorem~\ref{th: impossibility theorem}:}
For $n\leq T$, the history law under $\mathbb P_{P_0,T,P_1}$ is $P_0^{\otimes(n-1)}$ and the conditional law of $X_n$ is $P_0$. The asserted transition inequality therefore follows directly from the supermartingale property under $P_0^{\otimes\infty}$.

Fix $n>T$ and write
\[
\nu_{n-1}:=P_0^{\otimes T}\otimes P_1^{\otimes(n-1-T)}
\]
for the law of $(X_1,\ldots,X_{n-1})$ under $H_1$. Let $\Lambda:=\mathbb Q\cap(0,1)$ and, for $\lambda\in\Lambda$, put
\[
P_\lambda:=\lambda P_0+(1-\lambda)P_1\in\mathcal M.
\]
The supermartingale property under $P_\lambda^{\otimes\infty}$ implies that, outside a $P_\lambda^{\otimes(n-1)}$-null set,
\begin{equation}
\label{eq:mixture-transition-proof}
\int_{\Omega}m_n(x_1,\ldots,x_{n-1},x)\,dP_\lambda(x)
\leq
m_{n-1}(x_1,\ldots,x_{n-1}).
\end{equation}
Because $P_\lambda\geq\lambda P_0$ and $P_\lambda\geq(1-\lambda)P_1$, one has
\(
\nu_{n-1}\ll P_\lambda^{\otimes(n-1)}.
\)
Thus the exceptional set in \eqref{eq:mixture-transition-proof} is also $\nu_{n-1}$-null. Since $\Lambda$ is countable, there is a single $\nu_{n-1}$-null set outside which \eqref{eq:mixture-transition-proof} holds for every $\lambda\in\Lambda$. On this common full-measure set, nonnegativity of $m_n$ gives
\[
(1-\lambda)
\int_{\Omega}m_n(x_1,\ldots,x_{n-1},x)\,dP_1(x)
\leq
m_{n-1}(x_1,\ldots,x_{n-1}).
\]
Letting rational $\lambda\downarrow0$ yields
\[
\int_{\Omega}m_n(x_1,\ldots,x_{n-1},x)\,dP_1(x)
\leq
m_{n-1}(x_1,\ldots,x_{n-1})
\quad\nu_{n-1}\text{-a.e.}
\]
This is exactly
$\mathbb E_{P_0,T,P_1}(M_n\mid\mathcal A_{n-1})\leq M_{n-1}$ for $n>T$. Since $P_0$, $P_1$, and $T$ were arbitrary, $(M_n,\mathcal A_n)$ is a test supermartingale under every distribution in $H_1$. \hfill$\blacksquare$
\bigskip

\textbf{Proof of Proposition~\ref{prop: convergence of conformal product martingale}:}
For $k\geq1$, let $q_k$ be the randomized conformal rank of $Z_{T+k}$ within the post-change block $Z_{T+1},\ldots,Z_{T+k}$, using the same randomizer $\lambda_{T+k}$ as $p_{T+k}$. Proposition~\ref{prop: pval iid}, applied to the iid post-change sequence, gives
$q_1,q_2,\ldots\stackrel{\mathrm{iid}}{\sim}U(0,1).$
Writing the numerator of $p_{T+k}$ as the contribution from the $T$ pre-change observations plus $kq_k$, one obtains the deterministic bound $|p_{T+k}-q_k|
\leq
\frac{T}{T+k}.$
Because $f$ is continuous and strictly positive on the compact interval $[0,1]$, $\log f$ is bounded and uniformly continuous. A standard split into $k\leq K$ and $k>K$, followed by $K\to\infty$, therefore gives
\[
\frac1n\sum_{k=1}^{n-T}
\left\{\log f(p_{T+k})-\log f(q_k)\right\}
\longrightarrow0
\quad\text{almost surely}.
\]
The first $T$ terms contribute $o(1)$ after division by $n$, and the strong law applied to $(q_k)$ yields
\[
\frac1n\log M_n
\longrightarrow
\int_0^1\log f(u)\,du
\quad\text{almost surely}.
\]
Finally, strict Jensen inequality gives
\[
\int_0^1\log f(u)\,du
<
\log\int_0^1f(u)\,du
=0,
\]
because $f$ is not identically one. Hence the result follows. \hfill$\blacksquare$
\bigskip

\textbf{Proof of Proposition~\ref{prop:weighted-second-order}:}
Let
\[
\mathcal E_n(s):=\{\|\widehat F_{P_0,T_n}-F_{P_0}\|_\infty\leq s\}.
\]
On $\mathcal E_n(s)$, Lemma~\ref{lem:local-comparison} gives, for $T_n<j\leq T_n+r$,
\[
|p_{n,j}-V_{n,j}|
\leq
s+\frac{2r+1}{T_n}.
\]
Therefore
\[
\sum_{j=T_n+1}^{T_n+r}\phi(p_{n,j})
\geq
\sum_{j=T_n+1}^{T_n+r}\phi(V_{n,j})
-L_\phi r\left(s+\frac{2r+1}{T_n}\right).
\]
The variables $\phi(V_{n,j})$ are iid, have mean $D_{\mathrm{KL}}(H\|U)$, and have range length $K_\phi$. Hoeffding's inequality gives
\[
\mathbb P\left\{
\sum_{j=T_n+1}^{T_n+r}\phi(V_{n,j})<rD_{\mathrm{KL}}(H\|U)-x
\right\}
\leq
\exp\left\{-\frac{2x^2}{rK_\phi^2}\right\}.
\]
The DKW inequality gives $\mathbb P\{\mathcal E_n(s)^c\}\leq2e^{-2T_ns^2}$. Outside the union of these exceptional events, Assumption~\ref{assn:local-KL-support} implies
\begin{align*}
\log\{w_{T_n+1}M_{n,T_n+1,T_n+r}\}
&\geq
\log w_{T_n+1}+rD_{\mathrm{KL}}(H\|U)-x\\
&\quad-L_\phi r\left(s+\frac{2r+1}{T_n}\right)-a\log r-c.
\end{align*}
Condition \eqref{eq:weighted-quantile-condition} makes this at least $\log b_n$, proving \eqref{eq:weighted-quantile-tail}. The probability-$1-\delta$ statement follows by taking
\[
x=K_\phi\sqrt{\frac{r\log(2/\delta)}2},
\qquad
s=\sqrt{\frac{\log(4/\delta)}{2T_n}}.
\]
\hfill$\blacksquare$
\bigskip

\textbf{Proof of Lemma~\ref{lem:local-comparison}:}
Fix $j=T_n+\ell$ with $1\leq\ell\leq r$. Continuity of $F_{P_0}$ makes $P_0$ atomless, while continuity of the pushforward CDF $H$ makes $P_1$ atomless on every level set of $F_{P_0}$ and hence atomless. Cross-ties also have probability zero because $P_0$ is atomless. Thus, almost surely, there are no ties between $X_{n,j}$ and the other observations used in its rank. Hence $p_{n,j}
=
\frac1j\sum_{i=1}^j\mathbf 1\{X_{n,i}<X_{n,j}\}
+
\frac{\lambda_{n,j}}j.$
With $V_{n,j}:=F_{P_0}(X_{n,j})$ and $\widehat F_{P_0,T_n}(x):=\frac1{T_n}\sum_{i=1}^{T_n}\mathbf 1\{X_{n,i}\leq x\},$
splitting the rank into pre- and post-change parts gives
\begin{align*}
|p_{n,j}-V_{n,j}|
&\leq
\|\widehat F_{P_0,T_n}-F_{P_0}\|_\infty
+
\left|\frac{T_n}{j}-1\right|
+
\frac{\ell-1}{j}
+
\frac1j \\
&\leq
\|\widehat F_{P_0,T_n}-F_{P_0}\|_\infty
+
\frac{2r+1}{T_n}.
\end{align*}
This proves \eqref{eq:local-comparison}. Now take $r=r_n$. DKW and $r_n=o(T_n)$ imply convergence to zero in probability. For any bounded $L_\varphi$-Lipschitz function $\varphi$, $\left|
\frac1{r_n}\sum_{j=T_n+1}^{T_n+r_n}
\{\varphi(p_{n,j})-\varphi(V_{n,j})\}
\right|
\leq
L_\varphi
\max_{T_n<j\leq T_n+r_n}|p_{n,j}-V_{n,j}|.$
The oracle variables are iid with CDF $H$, so the weak law yields the asserted convergence in probability.

If $T_n/\log n\to\infty$ and $r_n/\log n\to\infty$, the DKW bound for $\widehat F_{P_0,T_n}$ and Hoeffding's bound for the oracle average are summable in $n$. Borel--Cantelli and the displayed deterministic comparison give the almost-sure assertions. \hfill$\blacksquare$
\bigskip

\end{document}